\DeclareFontFamily{U}{mathb}{}
\DeclareFontShape{U}{mathb}{m}{n}{ <5> <6> <7> <8> <9> <10> <12> gen * mathb <11> mathb10}{}
\DeclareSymbolFont{mathb}{U}{mathb}{m}{n}
\DeclareMathSymbol{\Asterisk}     {2}{mathb}{"06}
\numberwithin{equation}{section}
\theoremstyle{plain}
\newtheorem{theorem}{Theorem}[section]
\newtheorem{lemma}[theorem]{Lemma}
\newtheorem{corollary}[theorem]{Corollary}
\newtheorem{proposition}[theorem]{Proposition}
\newtheorem{observation}[theorem]{Observation}
\theoremstyle{definition}
\newtheorem{definition}[theorem]{Definition}
\newtheorem{convention}[theorem]{Convention}
\newtheorem{example}[theorem]{Example}
\theoremstyle{remark}
\newtheorem{remark}[theorem]{Remark}
\newcommand{\R}{\mathbb{R}}
\newcommand{\C}{\mathbb{C}}
\renewcommand{\H}{\mathbb{H}}
\newcommand{\Q}{\mathbb{Q}}
\newcommand{\Z}{\mathbb{Z}}
\newcommand{\N}{\mathbb{N}}
\newcommand{\A}{\mathbb{A}}
\newcommand{\J}{\mathbb{J}}
\newcommand{\F}{\mathbb{F}}
\newcommand{\calO}{\mathcal{O}}
\newcommand{\defeq}{\mathrel{\mathop{:}}=}
\newcommand{\eqdef}{=\mathrel{\mathop{:}}}
\newcommand{\abs}[1]{\lvert #1 \rvert}
\newcommand{\norm}[1]{\lVert #1 \rVert}
\newcommand{\gen}[1]{\langle #1 \rangle}
\newcommand{\id}{\operatorname{id}}
\newcommand{\bigast}{\mathop{\Asterisk}}
\newcommand{\Hom}{\operatorname{Hom}}
\newcommand{\cato}{\textsc{cat}(\oldstylenums{0})}
\newcommand{\into}{\hookrightarrow}
\newcommand{\GL}{\operatorname{GL}}
\newcommand{\SL}{\operatorname{SL}}
\newcommand{\bfG}{\mathbf{G}}
\newcommand{\bfH}{\mathbf{H}}
\newcommand{\bfP}{\mathbf{P}}
\newcommand{\bfQ}{\mathbf{Q}}
\newcommand{\bfR}{\mathbf{R}}
\newcommand{\bfS}{\mathbf{S}}
\newcommand{\bfT}{\mathbf{T}}
\newcommand{\bfU}{\mathbf{U}}
\newcommand{\bfL}{\mathbf{L}}
\newcommand{\bfZ}{\mathbf{Z}}
\newcommand{\calK}{\mathcal{K}}
\newcommand{\calV}{\mathcal{V}}
\newcommand{\calC}{\mathcal{C}}
\newcommand{\Stab}{\operatorname{Stab}}
\newcommand{\Fix}{\operatorname{Fix}}
\newcommandx{\X}[1][1={}]{X^*_{#1}}
\newcommandx{\coX}[1][1={}]{X_*^{#1}}
\newcommand{\TGod}{\bar{\mathbf{T}}}
\newcommand{\PGod}{\bar{\mathbf{P}}}
\newcommand{\OmegaGod}{\bar{\Omega}}
\newcommand{\Ch}{\mathcal{C}}
\newcommand{\Vt}{\mathcal{V}}
\newcommand{\VR}{\operatorname{VR}}
\newcommand{\FP}{\textit{FP}}
\newcommand{\Cay}{\operatorname{Cay}}
\newcommand{\Ner}{\operatorname{Ner}}
\newcommand{\Gal}{\operatorname{Gal}}
\newcommand{\Isom}{\operatorname{Isom}}
\newcommand{\Aut}{\operatorname{Aut}}
\newcommand{\pr}{\operatorname{pr}}
\newcommand{\rk}{\operatorname{rk}}
\newcommand{\typ}{\operatorname{typ}}
\renewcommand{\min}{\operatorname{min}}
\newcommand{\fin}{^{\text{fin}}}
\renewcommand{\inf}{^{\text{inf}}}
\newcommand{\infimum}{\operatorname{inf}}
\renewcommand{\setminus}{\smallsetminus}
\newcommand{\chr}{\operatorname{char}}
\DeclareMathOperator{\NC}{NCo}
\newcommand{\newcomment}[4]{%
\newcounter{#2counter}
\expandafter\newcommand\csname #1\endcsname[1]{%
\refstepcounter{#2counter}%
{\color{#4}(#3\arabic{#2counter})}\marginpar{\scriptsize\raggedright\textbf{\color{#4}(#2 \arabic{#2counter}):} ##1}%
}}
\definecolor{darkgreen}{rgb}{0,0.6,0}
\begin{document}
\title[Finiteness properties of arithmetic approximate lattices]{Higher finiteness properties of\\arithmetic approximate lattices:\\The Rank Theorem for number fields}
\date{\today}
\subjclass[2010]{Primary 20F65;   
                Secondary %
                11F75, 
                20G30, 
                20G35, 
                51E24, 
                52C23, 
                57M07} 

\keywords{}

\author[T.~Hartnick]{Tobias Hartnick}
\address{Karlsruher Institut für Technologie, D-76128 Karlsruhe, Germany}
\email{tobias.hartnick@kit.edu}

\author[S.~Witzel]{Stefan Witzel}
\address{Mathematisches Institut, JLU Gießen, Arndtstr.\ 2, D-35392 Gießen, Germany}
\thanks{S.W.\ was supported through the DFG projects WI 4079/2, WI 4079/6, and a Feodor Lynen Scholarship of the Humboldt Foundation.}
\email{stefan.witzel@math.uni-giessen.de}

\begin{abstract}
We introduce geometric and homological  finiteness properties for countable approximate groups via coarse geometry and then study these finiteness properties for $S$-arithmetic reductive approximate groups. For $S$-arithmetic approximate groups without infinite places we show that the finiteness length is finite and compute this finiteness length explicitly. In the simple case it is one less than the sum of the local ranks. This extends the Rank Theorem of Bux, Köhl and the second author from positive characteristic to characteristic zero. Our proof is based on a geometric version of their proof, but except for some input from reduction theory it is characteristic free. This indicates that the apparent differences between arithmetic groups in characteristic zero and positive characteristic concerning finiteness properties are entirely due to the presence of infinite places.
\end{abstract}

\maketitle

\section{Introdution}

\subsection{A characteristic free Rank Theorem}
Recall that a group $\Gamma$ is said to be \emph{of type $F$} (respectively \emph{of type $F_n$} for some $n \in \mathbb N$) if it admits a compact classifying space (respectively a classifying space with compact $n$-skeleton). It is said to be \emph{of type $FP$} (respectively \emph{of type $FP_n$} for some $n \in \mathbb N$) if the trivial $\Z\Gamma$-module $\Z$ admits a finite resolution (respectively a finite partial resolution of length $n$) by finitely generated projective modules. We say that $\Gamma$ has \emph{finiteness length} (respectively, \emph{homological finiteness length}) $\ell$ if it is of type $F_\ell$, but not of type $F_{\ell+1}$ (respectively type $\FP_\ell$, but not $\FP_{\ell+1}$). If it is of type $F_\ell$ (or $\FP_\ell$) for all $\ell$ we say that the (homological) finiteness length is infinite.

Examples of groups with finite finiteness length arise naturally as $S$-arithmetic subgroups of isotropic reductive groups over global fields in positive characteristic. The precise finiteness lengths of these groups had long been predicted conjecturally and were ultimately established by Bux, Köhl and the second author in their Rank Theorem \cite{bux13} extending previous work notably by Abels, Abramenko~\cite{AbelsAbramenko,Abramenko}, Bux--Wortmann~\cite{BuxWortman07,BuxWortman11} (see also \cite{Gandini12}), and the second author \cite{Witzel14}.

At first sight it seems that the Rank Theorem does not have a counterpart in characteristic zero. Namely, it is a classical result by Borel and Serre \cite[Théorème~6.2]{BorelSerre76} (see also \cite[Section~VII.2]{Brown89}) that an $S$-arithmetic subgroup of a reductive group over a global field of characteristic zero (a number field) is virtually of type $F$.

On closer inspection, however, it becomes clear that the distinction between finite or infinite finiteness length is about whether the natural space on which $\Gamma$ acts is a product of Euclidean buildings or whether it has a factor that is a symmetric space. In other words, whether or not the set $S$ contains an infinite (i.e.\ Archimedean) place. Now a global field of positive characteristic cannot have an infinite place while the definition of an $S$-arithmetic group in characteristic zero requires $S$ to contain all of the infinite places, of which there is at least one.

It is the goal of the present article to explain that the Rank Theorem generalizes to characteristic zero provided one is willing to leave the class of $S$-arithmetic groups and work in the more general setting of \emph{$S$-arithmetic approximate groups}, which are approximate lattices in the sense of \cite{BH}. In this wider context, there exist $S$-arithmetic approximate groups of characteristic zero with and without infinite places. Anticipating terminology explained below, we prove the following characteristic free version of the Rank Theorem:

\begin{theorem}[Rank Theorem]\label{MainTheorem}
Let $k$ be a global field of arbitrary characteristic and let $S$ be a finite, non-empty set of finite places of $k$. Let $\bfG$ be a non-commutative, absolutely almost simple $k$-isotropic $k$-group. Then the approximate group $\bfG(\calO_S)$ has finiteness length and homological finiteness length $d-1$, where $d = \sum_{s \in S} \rk_{k_s} \bfG$.
\end{theorem}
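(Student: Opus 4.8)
The plan is to follow the strategy of Bux--Köhl--Witzel's Rank Theorem, but to reorganize the argument so that it becomes, as far as possible, characteristic-free, and then to upgrade it from honest $S$-arithmetic groups to $S$-arithmetic approximate groups. The natural space to act on is the product $X = \prod_{s \in S} X_s$, where $X_s$ is the Bruhat--Tits building of $\bfG(k_s)$; since $S$ consists only of finite places this is a product of Euclidean buildings of total dimension $d = \sum_{s \in S} \rk_{k_s} \bfG$, and $\bfG(\calO_S)$ acts on it properly (as an approximate group, in the appropriate coarse sense developed earlier in the paper). The two halves of the theorem are genuinely different in flavor: proving that $\bfG(\calO_S)$ is \emph{not} of type $FP_d$ (the negative, or ``obstruction'', direction) and proving that it \emph{is} of type $F_{d-1}$ (the positive, or ``connectivity'', direction).

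For the positive direction, I would produce a cocompact action of $\bfG(\calO_S)$ on a suitable subspace (or filtration piece) of $X$ that is $(d-2)$-connected. The key technical device is a Morse-theoretic argument on $X$ using a height function built from reduction theory: one exhausts $X$ by sublevel sets $X_{\le r}$ of a function that measures how far a point is from the thick part, shows that $\bfG(\calO_S)$ acts cocompactly on each $X_{\le r}$, and then shows that the inclusions $X_{\le r} \hookrightarrow X_{\le r'}$ are highly connected, equivalently that the descending links of the Morse function are $(d-2)$-connected. This is where the bulk of the work lies, and it is the step I expect to be the main obstacle: one must analyze the descending links, which are built out of spherical buildings at infinity (links in $X$) together with combinatorial data coming from the horospherical/parabolic structure, and show they are spherical of the right dimension or else sufficiently highly connected. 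The Solomon--Tits theorem handles the building part; the interaction with reduction theory (controlling which simplices are ``descending'' in terms of the $k$-parabolics and the behavior of roots) is the delicate combinatorial core, and it must be phrased so as not to use positivity of the characteristic. Input from reduction theory over number fields (Borel--Serre, Harder) is used as a black box exactly here, to guarantee the cocompactness and to identify the relevant boundary combinatorics.

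For the negative direction, I would exhibit a nontrivial class showing the failure of $FP_d$. The standard mechanism is to find, ``at infinity'' in each factor, a configuration witnessing that $H_{d-1}$ of the relevant system of sublevel sets does not stabilize --- concretely, one uses that the building $X_s$ at infinity contributes a sphere of dimension $\rk_{k_s}\bfG - 1$, and the product over $s \in S$ of these spheres, together with the unboundedly many translates produced by the unipotent radicals of $k$-parabolics (whose $\calO_S$-points are infinite because $S$ is nonempty), produces infinitely generated $(d-1)$-st homology of the horospherical subsets, hence an obstruction to $FP_d$. Making this precise amounts to a careful bookkeeping with the $\bfG(\calO_S)$-equivariant Morse function and the observation that the top descending links, while $(d-2)$-connected, are \emph{not} $(d-1)$-connected, and that this non-connectivity persists equivariantly. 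Again the argument should be set up so that the only characteristic-dependent ingredient is reduction theory.

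Finally, to pass from the $S$-arithmetic \emph{group} statement to the \emph{approximate group} $\bfG(\calO_S)$ in the intended sense, I would invoke the coarse-geometric framework for finiteness properties of countable approximate groups set up earlier in the paper: finiteness length is a coarse invariant, a $\bfG(\calO_S)$ (approximate-)action on a contractible-enough, appropriately-connected complex with the right cocompactness yields the upper bound on $F_{d-1}$, and the homological obstruction yields the matching lower bound; one checks that the Morse-theoretic input above is compatible with the approximate-group action, i.e. that the relevant stabilizers and quotients behave as in the classical case. The main obstacle remains the connectivity analysis of descending links in the Morse function on the product of buildings --- everything else is either a black-box appeal to reduction theory over number fields or a now-standard translation between buildings, Morse theory, and finiteness properties, adapted to the approximate-group setting.
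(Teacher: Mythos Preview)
Your high-level strategy matches the paper's: work on the product of Bruhat--Tits buildings $X = \prod_{s \in S} X_s$, use a height function coming from reduction theory to filter $X$, and reduce the finiteness statement to the essential $(d-2)$-connectivity and non-$(d-1)$-connectivity of that filtration, with the Morse-theoretic core imported from \cite{bux13}. So the architecture is right.

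However, your emphasis is misplaced in a way that hides the actual new content. You identify the ``connectivity analysis of descending links'' as the main obstacle, but in fact that analysis is taken over verbatim from \cite{bux13}, whose Sections~1--10 make no assumption on the characteristic. What the paper actually has to supply is: (a) adelic reduction theory in characteristic zero, extracted from Godement rather than Harder, and recast in the geometric language of invariant horofunctions and rescaled Busemann functions (Sections~\ref{sec:busemann}--\ref{sec:adelic_reduction}); (b) a descent from adelic to $S$-adic reduction theory via the general cut-and-project descent principle (Proposition~\ref{prop:descent}), which is what lets you bypass the usual requirement that $S$ contain all infinite places; and (c) the systematic replacement of group-theoretic cocompactness arguments by coarse/filtration-equivalence arguments.

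This last point is where your proposal has a genuine gap. You write that ``$\bfG(\calO_S)$ acts cocompactly on each $X_{\le r}$'' and that one ``checks that the relevant stabilizers and quotients behave as in the classical case.'' They do not. Since $\bfG(\calO_S)$ is only an approximate subgroup, there is no quotient $X_{\le r}/\bfG(\calO_S)$, and more subtly the quasi-action on $X_S$ (through $\bfG_S$) and the action on the rational building $\Delta_k \subset \partial X_S$ (through $\bfG(k)$) are \emph{not} compatible when $S$ omits infinite places (Remark~\ref{NonCommutingDiagram}). The consequence is that the Busemann functions are only \emph{quasi}-invariant under $\bfG(\calO_S)$ (Theorem~\ref{thm:busemann_family}, Proposition~\ref{prop:horo_quasi-inv}), not invariant, and the reduction datum is only $\Lambda$-cobounded rather than cocompact. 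The paper handles this by reformulating the entire argument as a chain of \emph{equivalent filtrations} $(N_r(\Lambda.o))_r \sim (Y_t)_t \sim (\hat h^{-1}[0,t])_t \sim (h^{-1}[0,t])_t \sim (f^{-1}(t,0,0))_t$, proved equivalent by direct metric estimates rather than by quotienting; only the last filtration is then fed into the characteristic-free Morse analysis of \cite{bux13}. Your sketch would need to be rewritten in this filtration language to go through.
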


Thus, the dichotomy between the results of Borel--Serre and Bux--Köhl--Witzel is really about the presence of infinite places and not about the characteristic of the underlying global field.

As we will explain below, the positive characteristic case of Theorem \ref{MainTheorem} follows immediately from \cite{bux13}, since in this case every $S$-arithmetic approximate group is commensurable to an $S$-arithmetic group; thus our contribution is to prove the characteristic zero part of the theorem. Nevertheless, we prefer to state the above characteristic free formulation, since the geometric parts of the proof are completely independent of the characteristic, following closely the proof in \cite{bux13}. The only input into the proof that currently still depends on the characteristic is adelic reduction theory, for which no characteristic free version seems to be available. In the positive characteristic case \cite{bux13} uses Harder's version of reduction theory \cite{harder69}, in characteristic zero the appropriate form of reduction theory is proved by Godement \cite{godement64}.

As in the positive characteristic case, the Rank Theorem determines the finiteness properties of $S$-arithmetic approximate subgroups in arbitrary reductive groups:

\begin{corollary}\label{cor:reductive}
Let $k$ and $S$ be as above. Let $\bfG$ be a reductive $k$-isotropic $k$-group. Then there exists a finite extension $\ell/k$, non-commutative and absolutely almost simple groups $\bfH_1, \dots, \bfH_k$  and a $k$-isogeny $R_{\ell/k} \bfH \to \mathscr{D}\bfG^0$, where
$\bfH \defeq \bfH_1 \times \dots \times \bfH_k$. Moreover, if $T$ denotes the set of places of $\ell$ above $S$ and $d_i \defeq \sum_{s \in T} \rk_{\ell_s} \bfH_i$ for $1 \le i \le k$, then the approximate group $\bfG(\calO_S)$ has finiteness length and homological finiteness length $d-1$, where
 $d = \infimum_i d_i$.
\end{corollary}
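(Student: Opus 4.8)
The plan is to derive Corollary~\ref{cor:reductive} from the Rank Theorem (Theorem~\ref{MainTheorem}) by peeling $\bfG$ down to a product of absolutely almost simple groups while tracking the (homological) finiteness length; the guiding principle is that every reduction step replaces $\bfG(\calO_S)$ by a commensurable approximate group, and that both notions of finiteness length are commensurability invariants of countable approximate groups, a fact supplied by the coarse framework of the earlier sections. First I would pass from $\bfG$ to its identity component: $\bfG^0(\calO_S)$ sits inside $\bfG(\calO_S)$ with finite index and is therefore commensurable to it, so it suffices to treat $\bfG^0$. Writing $\bfR$ for the radical of $\bfG^0$, the multiplication map $\mathscr{D}\bfG^0 \times \bfR \to \bfG^0$ is a central $k$-isogeny; since $\bfR$ is a $k$-torus, $\bfR(\calO_S)$ is a finitely generated abelian group, hence of type $F_\infty$, so this torus direct factor is irrelevant for finiteness length. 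Using once more that a central $k$-isogeny induces a commensuration between the associated $S$-arithmetic approximate groups, the computation is reduced to the semisimple group $\mathscr{D}\bfG^0$.

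Next I would invoke the structure theory of semisimple groups: $\mathscr{D}\bfG^0$ is $k$-isogenous to $R_{\ell/k}\bfH$ for a finite extension $\ell/k$ (in general a finite product of separable field extensions of $k$) and $\bfH = \bfH_1 \times \dots \times \bfH_k$ with each $\bfH_i$ absolutely almost simple, and automatically non-commutative, over the corresponding factor of $\ell$ — exactly the decomposition in the statement. Restriction of scalars is compatible with $S$-integral points, so $(R_{\ell/k}\bfH)(\calO_S)$ equals $\bfH(\calO_T) = \bfH_1(\calO_T) \times \dots \times \bfH_k(\calO_T)$, where $T$ is the set of places of $\ell$ above $S$; combined with the isogeny invariance of the previous step, $\bfG(\calO_S)$ has the same finiteness lengths as $\prod_i \bfH_i(\calO_T)$.

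It then remains to combine this with the Rank Theorem. The finiteness length and the homological finiteness length of a finite direct product of countable approximate groups equal the minima of those of the factors — the approximate-group analogue of the classical fact for groups, which should follow by the usual Künneth and retract arguments carried out coarsely. Each $\bfH_i$ is non-commutative and absolutely almost simple over a global field, so Theorem~\ref{MainTheorem} gives that $\bfH_i(\calO_T)$ has finiteness length and homological finiteness length $d_i - 1$, where $d_i = \sum_{s \in T} \rk_{\ell_s}\bfH_i$. (Any $\bfH_i$ that is anisotropic at every place above $S$ yields a finite group, of type $F_\infty$, and so can be discarded from the product.) Taking minima over $i$ then gives finiteness length and homological finiteness length $\min_i(d_i - 1) = (\min_i d_i) - 1 = d - 1$ for $\bfG(\calO_S)$.

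The crux is not the group theory, which is routine, but the verification that none of these reductions changes the answer: restricting to $\bfG^0$, splitting off the radical torus, and applying the Weil restriction of an isogeny each only yield a commensurable — not isomorphic — approximate group, and the passage to a direct product must likewise be controlled. The whole argument therefore rests on the two structural facts about countable approximate groups established earlier, namely that (homological) finiteness length is a commensurability invariant and that it equals the minimum over the factors of a finite direct product; granting these, the corollary is bookkeeping on top of Theorem~\ref{MainTheorem}.
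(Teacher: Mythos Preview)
Your overall plan matches the paper's proof: reduce to the identity component, split off the central torus, pass via isogeny and restriction of scalars to a product of absolutely almost simple groups, apply the Main Theorem to each factor, and combine using the product lemma. Two steps, however, are genuinely incomplete in the approximate setting.

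First, your claim that ``$\bfR(\calO_S)$ is a finitely generated abelian group, hence of type $F_\infty$'' is false as stated: in the situation of the corollary $S$ need not contain any infinite place, so $\bfR(\calO_S)$ is only an approximate subgroup and is not a group at all, let alone finitely generated. The conclusion you want (type $F_\infty$) is still true, but it requires the argument in the paper's abelian remark: the approximate group $\bfR(\calO_S)$ is a model set inside the $S^+$-arithmetic unit group, hence a discrete approximate subgroup of some $\R^n$, and by Schreiber/Fish it is relatively dense in a linear subspace, hence coarsely Euclidean.

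Second, and more seriously, you repeatedly assert that a central $k$-isogeny (and restriction of scalars) ``induces a commensuration between the associated $S$-arithmetic approximate groups'', but you give no argument for this in the approximate setting. The classical results you have in mind (Behr for isogenies, Weil for restriction of scalars) are statements about $S$-arithmetic \emph{groups}. The paper bridges this gap via Lemma~\ref{lem:coarse_equiv}: each approximate group $\bfL(\calO_S)$ is a model set for the cut-and-project scheme $(\bfL_S,\bfL_{S^+\setminus S},\bfL(\calO_{S^+}))$, and to get a coarse equivalence between model sets it suffices that the map on the underlying \emph{lattices} $\bfL(\calO_{S^+})$ have finite kernel and finite-index image. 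Since $S^+$ contains all infinite places, these lattices are honest $S^+$-arithmetic groups, and now Behr and Weil apply. Without this reduction-to-groups step, your isogeny and restriction-of-scalars invariance claims are unproven assertions, and they are precisely the point where the approximate setting differs from the classical one.
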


\subsection{Killing primes and $S$-arithmetic approximate groups}

We now explain the setting of Theorem \ref{MainTheorem} and Corollary \ref{cor:reductive} in more detail, starting with the notion of an $S$-arithmetic approximate group. To motivate this notion, we recall the classical construction of \emph{killing a prime} of an $S$-arithmetic group. Consider for example the $S$-arithmetic group
\begin{equation}\label{CheapExample}
\Gamma := \mathrm{SL}_n({\Z[1/p]}) < \mathrm{SL}_n(\R) \times \mathrm{SL}_n(\Q_p),
\end{equation}
which is an irreducible lattice under the diagonal embedding. If we project $\Gamma$ onto the first factor $\mathrm{SL}_n(\R)$, we obtain a dense subgroup. If, on the other hand, we first intersect $\Gamma$ with a compact open subgroup $K$ of $\mathrm{SL}_n(\Q_p)$ and then project the intersection to the first factor, then we obtain a lattice: If we choose $K\defeq \mathrm{SL}_n(\Z_p)$ then the resulting lattice is $\mathrm{SL}_n(\Z)$ and any other choice of $K$ will provide a commensurable subgroup. We say that this lattice arises from $\Gamma$ by killing the prime $p$. 
With this terminology, every $S$-arithmetic group arises from a group over a global field $k$ by killing all the primes of $k$ which are not contained in $S$.

The example \eqref{CheapExample} illustrates an asymmetry: namely, it is not possible to intersect $\Gamma$ with a compact open subgroup of $\SL_n(\R)$ and then project the result to obtain a lattice in $\SL_n(\Q_p)$, simply because $\SL_n(\R)$ is connected and non-compact, so it does not contain compact open subgroups. Thus, in the context of $S$-arithmetic groups, infinite places cannot be killed in the same way as finite ones. 

This asymmetry is removed naturally in the context of $S$-arithmetic approximate groups; in particular, by killing infinite places we will be able to produce $S$-arithmetic approximate groups without infinite places even in characteristic zero. 

Formally, an \emph{approximate subgroup} of a group $G$ is a subset $\Lambda$ which is symmetric, contains the identity and satisfies $\Lambda \cdot \Lambda \subset \Lambda \cdot F$ for some finite subset $F \subset G$ (cf.\ \cite{Tao}). A discrete approximate subgroup of a locally compact group $G$ is called a \emph{uniform approximate lattice} if there exists a compact subset $K \subset G$ such that $G = \Lambda K$ \cite{BH}. For the notion of a non-uniform approximate lattices there are several competing definitions. For the purposes of this article, we follow Hrushovski \cite{Hrushovski} and call a discrete approximate subgroup $\Lambda \subset G$ an \emph{approximate lattice} if there exists a subset $L \subset G$ of finite Haar measure such that $G = \Lambda L$.

Example of approximate lattices arise from lattices $\Gamma$ in products $G \times H$ of locally compact groups as follows. One chooses a compact subset $W \subset H$ of non-empty interior, forms the intersection $\Gamma \cap (G \times W)$ and projects to $G$. The outcome $\Lambda = \Lambda(G, H, \Gamma, W)$ is an approximate lattice in $G$; it is uniform or non-uniform according to whether $\Gamma$ has the corresponding property and up to commensurability does not depend on the choice of $W$. This construction of approximate lattices, also known as \emph{cut-and-project construction}, was first introduced by Meyer \cite{Meyer} in the abelian case and later generalized to non-abelian groups in \cite{BHP1}.

For example, if $\Gamma$ is as in \eqref{CheapExample} and $B$ is some compact identity neighborhood in $\mathrm{SL}_n(\R)$, then we can kill the infinite place by passing to the approximate lattice $\Lambda = \Lambda(\mathrm{SL}_n(\Q_p), \mathrm{SL}_2(\R), \Gamma, B)$ and obtain a non-uniform approximate lattice in $\mathrm{SL}_n(\Q_p)$. 

In general, we refer to an approximate lattice obtained from an $S$-arithmetic group by killing finitely many (finite or infinite) places as an \emph{$S$-arithmetic approximate group}. It was established recently by Hrushovski \cite{Hrushovski} (generalizing previous work of Machado \cite{Machado}) that every approximate lattice in a product of noncompact, adjoint simple groups over local fields is commensurable to a product of lattices and $S$-arithmetic approximate lattices $\Lambda_i$ in subproducts. Moreover, the factors $\Lambda_i$ can be chosen to be irreducible in the sense that they are not commensurable to a product or, equivalently, do not project discretely onto any proper quotient. In particular, if $\Lambda$ is an irreducible approximate lattice in such a product, then it is either $S$-arithmetic or commensurable to a group. If, moreover, $\Lambda$ is non-uniform and the underlying local fields are of characteristic zero, then the group case cannot occur, and hence arithmeticity even holds in rank one. This allows us to give a stronger formulation of the Rank Theorem in characteristic zero:
\begin{corollary}
If $k$ is a number field, then Theorem \ref{MainTheorem} and Corollary \ref{cor:reductive} hold for any irreducible non-uniform approximate lattice.
\end{corollary}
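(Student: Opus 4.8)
The strategy is to deduce the statement from Theorem~\ref{MainTheorem} and Corollary~\ref{cor:reductive} by feeding an arbitrary irreducible non-uniform approximate lattice into the arithmeticity theorem of Hrushovski (built on work of Machado) quoted above, and then using that finiteness length and homological finiteness length are invariants of the commensurability class of a countable approximate group --- a basic feature of the theory developed in this paper. So fix the ambient group $G = \prod_{s \in S}\bfG(k_s)$ of Theorem~\ref{MainTheorem} (in the reductive setting, the ambient group of Corollary~\ref{cor:reductive}), let $\Lambda \subset G$ be an irreducible non-uniform approximate lattice, and let $d$ be the integer associated to the respective data; we must show that the (homological) finiteness length of $\Lambda$ equals $d-1$.

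First I would pass to the adjoint form. The isogeny $\pi\colon \prod_{s\in S}\bfG(k_s) \to \prod_{s\in S}\bfG^{\mathrm{ad}}(k_s) \eqdef \bar G$ (in the reductive case, first project to the derived group of the identity component, then Weil-restrict and pass to the adjoint form, producing the groups $\bfH_i$ of Corollary~\ref{cor:reductive}) has finite central kernel, so it carries $\Lambda$ to an irreducible non-uniform approximate lattice $\bar\Lambda \subset \bar G$ without affecting the finiteness length. By $k$-isotropy, $\rk_{k_s}\bfG \ge \rk_k\bfG \ge 1$, so each factor of $\bar G$ is a non-compact adjoint simple group over the local field $k_s$, which has characteristic zero because $k$ is a number field. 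This is exactly the hypothesis of the arithmeticity theorem: since $\bar\Lambda$ is irreducible it is commensurable either to a lattice or to an $S$-arithmetic approximate lattice, and since it is non-uniform and all $k_s$ are of characteristic zero the lattice alternative is excluded. Hence $\bar\Lambda$, and therefore $\Lambda$, is up to commensurability an $S$-arithmetic approximate group $\bfH'(\calO_{S'})$ obtained by killing places from an isotropic reductive group over some number field.

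It remains to match the numerical data. Since $\bfH'(\calO_{S'})$ is commensurable to $\bar\Lambda \subset \bar G$, it is (up to commensurability) a non-uniform approximate lattice in $\bar G$; comparing the underlying semisimple groups up to isogeny identifies the invariant $\sum_{s'\in S'}\rk_{k'_{s'}}\bfH'$ attached to $\bfH'(\calO_{S'})$ by Theorem~\ref{MainTheorem} --- respectively the infimum $\infimum_i d_i$ of Corollary~\ref{cor:reductive} --- with the number $d$. By Theorem~\ref{MainTheorem} (resp.\ Corollary~\ref{cor:reductive}) the approximate group $\bfH'(\calO_{S'})$ then has finiteness length and homological finiteness length $d-1$, and transporting this conclusion along the commensurabilities and along $\pi$ gives the same statement for $\Lambda$.

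The main obstacle I anticipate is precisely this last bookkeeping step: one must show that commensurability of approximate lattices in isogenous ambient groups pins down the defining arithmetic datum up to the equivalence under which $d$ is well defined --- in effect a uniqueness statement for the arithmetic model of a given approximate commensurability class --- and one must track the central isogeny $\pi$ in both directions carefully enough to be sure that it preserves the (homological) finiteness length. By contrast the geometric substance of the corollary is nil: it is purely a matter of combining Theorem~\ref{MainTheorem} with the structure theory of approximate lattices over number fields.
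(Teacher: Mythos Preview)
Your proposal is correct and follows exactly the line the paper itself indicates: the paper gives no separate proof of this corollary, but derives it from the paragraph immediately preceding it, namely Hrushovski's arithmeticity theorem (an irreducible approximate lattice in a product of non-compact adjoint simple groups over local fields of characteristic zero is either commensurable to a lattice or $S$-arithmetic, and the non-uniform hypothesis rules out the lattice case), combined with Theorem~\ref{MainTheorem} / Corollary~\ref{cor:reductive} and the commensurability invariance of finiteness properties built into the paper's coarse-geometric definitions.

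One remark on the obstacle you flag. In the setting of Theorem~\ref{MainTheorem} the integer $d = \sum_{s\in S}\rk_{k_s}\bfG$ is nothing but the dimension of the Euclidean building $X_S$ of the ambient group $G$, hence an invariant of $G$ itself and not of the particular arithmetic presentation. So once Hrushovski hands you data $(k',S',\bfH')$ with $\bfH'_{S'}$ isogenous to $\bar G$, the number $\sum_{s'\in S'}\rk_{k'_{s'}}\bfH'$ is again the dimension of the same building and hence equals $d$; moreover all places in $S'$ are automatically finite because every factor of $\bar G$ is non-Archimedean, so Theorem~\ref{MainTheorem} applies directly to $(k',S',\bfH')$. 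The bookkeeping you anticipate is therefore lighter than you suggest, at least in the almost simple case.
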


\subsection{Finiteness properties via coarse geometry}
Having clarified the notion of an ($S$-arithmetic) approximate subgroup, we now need to define finiteness properties for such objects, as the definitions given at the beginning of the article to not make sense for them. Our starting point is an observation due to Alonso  \cite{alonso94} that being of type $F_n$ as well as being of type $FP_n$ are coarse invariants among countable groups for all $n \in \mathbb N$. More precisely, a countable group $\Gamma$ is of type $F_n$ (respectively, of type $FP_n$) if and only if $\Gamma$ is coarsely $n$-connected (respectively coarsely $n$-acyclic) with respect to some (hence any) proper left-invariant metric.

As pointed out in \cite{BH, CHT}, if $\Lambda$ is a countable approximate subgroup of some group $G$ then all proper left-invariant metrics on the group $\Lambda^\infty$ generated by $\Lambda$ restrict to coarsely equivalent metrics on $\Lambda$. This allows us to make the following definition:

\begin{definition}
  A countable approximate subgroup $\Lambda$ is said to be of \emph{type $F_n$} (respectively \emph{type $FP_n$}) if it is coarsely $n$-connected (respectively coarsely $n$-acyclic) with respect to the restriction of some (hence any) proper left-invariant metric on $\Lambda^\infty$.
\end{definition}

By the aforementioned result of Alonso, this definition is compatible with the classical definition for groups. The finiteness property $F_1$ for countable approximate groups admits many different characterizations (e.g.\ the existence of a proper and cocompact quasi-isometric quasi-action on a connected graph) and corresponds precisely to \emph{geometric finite generation} of $\Lambda$ in the sense of \cite{CHT}; it implies in particular, that every finite index subset of $\Lambda$ generates a finitely-generated group. The higher finiteness properties for countable approximate groups are much less understood, and in particular it is not quite clear in which sense $F_2$ can be seen as a version of finite presentation.

\subsection{Comparison to Bux-Köhl-Witzel}
As mentioned before, our proof of the Rank Theorem (Theorem \ref{MainTheorem}) is largely parallel to the proof in the positive characteristic case \cite{bux13}. However, there are two main differences. 

The first difference was already mentioned above and concerns adelic reduction theory. 
The main theorems of adelic reduction theory in positive characteristic were established by Harder \cite{harder69} and recast in geometric terms in \cite{bux13}. Here we extract the analogues of Harder's theorems for number fields from work of Godement \cite{godement64} and redo the translation to geometry including infinite places and approximate groups. We also use the opportunity to elaborate on the role of rescaled Busemann functions in reduction theory.

We would like to point out that the reduction theory we develop in Sections~\ref{sec:adelic_reduction} and~\ref{sec:s-arithmetic_reduction} differs from the classical treatment as in \cite{Borel63} or \cite{PlatonovRapinchuk} in one regard: the fundamental role is played not by Siegel sets but by their saturations, namely orbits under the unipotent rational points of the parabolic. Rather than considering a single Siegel set whose translates cover (an existence statement) and that meets only finitely many translates (a rough uniqueness statement), we consider saturations of Siegel sets of different levels and show that their translates cover if they are large enough (existence statement) and that translates of small enough saturations meet only when they meet at infinity (uniqueness statement). This is in line with \cite{harder69} and \cite{bux13}. For instance, in the well known example of $\SL_2(\R)$ the saturated Siegel sets are horoballs in the upper halfplane $\H^2$ centered at $\infty$, the translates under $\SL_2(\Z)$ of a large enough horoball cover $\H^2$ while the translates of a small enough horoball are disjoint.

The second difference to \cite{bux13} is more subtle. Since we deal with approximate groups rather than groups, all arguments involving (cocompact) group actions need to be replaced by geometric arguments throughout. At several occasion this actually renders the picture clearer when coarse equivalences can seamlessly be used instead of a group theoretic argument that takes a short justification.

\subsection{A general descent principle}
Both the cut-and-project construction of Meyer in the theory of approximate groups and the passage form adelic to $S$-adic reduction theory in the theory of $S$-arithmetic group are based on very similar descent principles, but these are usually formulated slightly differently in the two fields. The following simple, but very general descent principle (Proposition~\ref{prop:descent} in the text) simultaneously encompasses standard arguments from approximate group theory and from reduction theory and may be of independent interest :
 \begin{proposition}\label{prop:descentintro}
Let $G$ and $H$ be topological groups and $\Gamma < G \times H$ a discrete subgroup that maps injectively to $G$ and densely to $H$. Let $W \subseteq H$ be a compact symmetric identity neighborhood and let
\[
\Lambda \defeq \pi_G((G \times W) \cap \Gamma)\text{.}
\]
Then for all compact subsets $I, K \subseteq H$ there exist finite sets $E,F \subseteq G$ such that for all subsets $\Pi \subseteq G$,  
\[
\Lambda F \Pi \cap E \Lambda \Pi \supseteq \pi_G(\Gamma (\Pi\times K) \cap (G \times I))\text{.}
\]
\end{proposition}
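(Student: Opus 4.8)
The plan is to run a density-and-compactness packing argument. The crucial observation is that the $H$-component $\pi_H(\gamma)$ of any $\gamma \in \Gamma$ that contributes to the right-hand side is trapped in a fixed compact subset of $H$; density of $\pi_H(\Gamma)$ then lets me replace $\gamma$, up to an error lying inside $W$, by one of finitely many fixed elements, and an error inside $W$ is exactly what is needed to land in $\Lambda$. Concretely, I would first set $J \defeq I K^{-1}$, which is compact because $H$ is a topological group and $J$ is the image of the compact set $I \times K$ under $(a,b) \mapsto ab^{-1}$. If $y = \gamma(p,c)$ with $\gamma \in \Gamma$, $p \in \Pi$, $c \in K$, and $y \in G \times I$, then $y = (\pi_G(\gamma)p,\ \pi_H(\gamma)c)$, so $\pi_H(\gamma)c \in I$ forces $\pi_H(\gamma) \in J$, while $\pi_G(y) = \pi_G(\gamma)p$. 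Hence it suffices to place every element of the form $\pi_G(\gamma)p$ with $\gamma \in \Gamma$, $\pi_H(\gamma) \in J$, and $p \in \Pi$ into $\Lambda F\Pi \cap E\Lambda\Pi$, for finite sets $E,F$ chosen independently of $\Pi$.

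Next I would use that $W$, being an identity neighborhood, has non-empty interior $W^{\circ}$, and that $\pi_H(\Gamma)$ is dense in $H$, to see that both families $\{\pi_H(\delta)W^{\circ}\}_{\delta \in \Gamma}$ and $\{W^{\circ}\pi_H(\delta)\}_{\delta \in \Gamma}$ are open covers of $H$: for any $h \in H$ the non-empty open set $h(W^{\circ})^{-1}$ (respectively $(W^{\circ})^{-1}h$) meets the dense set $\pi_H(\Gamma)$. Compactness of $J$ then yields finite subsets $D_E, D_F \subseteq \Gamma$ with $J \subseteq \bigcup_{\delta \in D_E}\pi_H(\delta)W^{\circ}$ and $J \subseteq \bigcup_{\delta \in D_F}W^{\circ}\pi_H(\delta)$, and I would put $E \defeq \pi_G(D_E)$ and $F \defeq \pi_G(D_F)$, which are finite and independent of $\Pi$.

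To conclude, given $\gamma \in \Gamma$ with $\pi_H(\gamma) \in J$ and $p \in \Pi$, I would choose $\delta \in D_E$ with $\pi_H(\gamma) \in \pi_H(\delta)W^{\circ}$; then $\pi_H(\delta^{-1}\gamma) = \pi_H(\delta)^{-1}\pi_H(\gamma) \in W$, so $\delta^{-1}\gamma \in (G \times W) \cap \Gamma$ and therefore $\pi_G(\delta)^{-1}\pi_G(\gamma) = \pi_G(\delta^{-1}\gamma) \in \Lambda$, which gives $\pi_G(\gamma)p \in \pi_G(\delta)\Lambda\Pi \subseteq E\Lambda\Pi$. Symmetrically, choosing $\delta \in D_F$ with $\pi_H(\gamma) \in W^{\circ}\pi_H(\delta)$ yields $\pi_G(\gamma)\pi_G(\delta)^{-1} \in \Lambda$ and hence $\pi_G(\gamma)p \in \Lambda\pi_G(\delta)\Pi \subseteq \Lambda F\Pi$. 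Intersecting the two memberships gives the asserted inclusion.

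I do not expect a serious obstacle here: the whole thing is a compactness-and-density packing argument, essentially careful bookkeeping. The points that do need attention are, first, isolating the correct compact ``budget'' $J = IK^{-1}$ and keeping straight which coordinate and which side each quantity lives on, so that the approximation error is produced on the side from which it can be absorbed into $\Lambda$ --- this is exactly why one needs two (in general distinct) finite sets $E$ and $F$, one coming from the left-translated cover and one from the right-translated cover --- and, second, the small but essential point that $J$ must be covered by translates of the open interior $W^{\circ}$ rather than of $W$ itself, so that the compactness extraction applies. (Note that the hypothesis that $\Gamma$ embeds into $G$ is not actually used for this particular inclusion; it will matter elsewhere.)
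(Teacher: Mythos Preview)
Your proof is correct and follows essentially the same approach as the paper's: both isolate the compact set $J = IK^{-1}$ containing $\pi_H(\gamma)$, cover it by finitely many one-sided translates of $W$ (or $W^\circ$) using density of $\pi_H(\Gamma)$ and compactness, and then absorb the resulting error into $\Lambda$. The only cosmetic differences are that the paper uses the $*$-map $\tau$ in place of your $\pi_H$ and phrases the covering as $IK^{-1} \subseteq WF'$ (resp.\ $E'W$) rather than as an explicit open cover by $W^\circ$-translates; your remark that injectivity of $\pi_G|_\Gamma$ is unused is also correct (the paper notes separately that compactness of $W$ is unused).
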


\subsection{Future directions and open problems}

Theorem \ref{MainTheorem} conclusively describes the finiteness properties of $S$-arithmetic approximate subgroups of reductive groups in the case where $S$ contains no infinite places. On the other hand, the aforementioned theorem of Borel and Serre covers the case where $S$ contains all infinite places. It applies even in the approximate case, since we will see below that if $S$ contains all infinite places, then every $S$-arithmetic approximate group is commensurable to an $S$-arithmetic group.

Concerning the determination of finiteness properties of $S$-arithmetic approximate subgroups of reductive groups, the remaining open question is therefore what happens if $S$ contains some but not all infinite places. We expect that such approximate groups will be of type $F_\infty$, putting them on the Borel--Serre side of the divide. In other words, \emph{any} symmetric space factor should suffice to guarantee good finiteness properties. 

Beyond the reductive case results on finiteness properties of $S$-arithmetic (approximate) groups are restricted to the solvable case and are fragmentary \cite{Abels87,Bux04,Schesler,Witzel13}.

Besides finiteness properties there remain several important open problems. In the arithmetic case (i.e.\ if $S$ consists of all infinite places) Leuzinger--Young \cite{LeuzingerYoung21} have established that the homological filling function changes from polynomial to exponential in the critical degree $d$. Thus while there is no \emph{qualitative} change (i.e.\ affecting finiteness properties) visible in this case, there is a \emph{quantitative} change (i.e.\ affecting the filling functions). One might expect that this quantitative change of behavior also appears in the general case of $S$-arithmetic approximate groups as soon as $S$ contains a single Archimedean place. However, even if $S$ is assumed to contain \emph{all} Archimedean places (and hence may be assumed to be a group) no such result is currently known --- not even in the presence of a single finite place. A related, but potentially simpler, question, is whether approximate lattices in products of symmetric spaces and Euclidean buildings are always undistorted. This is currently only known for $S$-arithmetic groups due to Lubotzky, Mozes and Raghunathan \cite{LMR}. The question of undistortedness is open for approximate lattices, where it was asked by Machado, as well as for non-uniform lattices on exotic buildings, though no examples of the latter are currently known. The proof in \cite{LMR} makes substantial use of dynamical methods but a purely geometric proof (also applying to the approximate setting) may be possible.

\subsection{Organization of the article}
The article is organized as follows. In Section~\ref{sec:fin_props} we introduce approximate groups and discuss their canonical coarse structure and finiteness properties. In particular, we prove Alonso's criterion (which he formulated in the quasi-isometric category) for coarse spaces and use it to define property $F_d$ for approximate groups. We also prove Proposition~\ref{prop:descentintro}
as Proposition~\ref{prop:descent}. In Section~\ref{sec:sarith_approx} we explain that $S$-arithmetic groups arise from a cut-and-project procedure and introduce $S$-arithmetic approximate groups. Section~\ref{sec:geometry_reductive_local} is concerned with the geometry pertaining to reductive groups over global fields: Bruhat--Tits buildings, symmetric spaces, and the building at infinity. In Section~\ref{sec:busemann} we show how Busemann functions on the spaces from the previous section arise from what we call an invariant horofunction, which plays a central role in reduction theory. Sections~\ref{sec:adelic_reduction} and~\ref{sec:s-arithmetic_reduction} are devoted to algebraic reduction theory. In Section~\ref{sec:adelic_reduction} we develop the adelic theory and in Section~\ref{sec:s-arithmetic_reduction} we deduce the $S$-arithmetic version from it. In Section~\ref{sec:geometric_reduction} we translate the algebraic reduction theory from the previous sections to geometry based on the content of Section~\ref{sec:busemann} and in analogy to \cite{bux13}. Theorem \ref{MainTheorem} is proven in Section~\ref{sec:proof_main}. In Section~\ref{sec:reductive} we discuss the assumptions of Theorem \ref{MainTheorem} and prove Corollary~\ref{cor:reductive}.

\subsection*{Acknowledgments} We want to thank Kai-Uwe Bux, Enrico Leuzinger, Bertrand Rémy, and Guy Rousseau for helpful discussions related to this article.

\setcounter{tocdepth}{1} 
\tableofcontents

\section{Finiteness properties of approximate groups}\label{sec:fin_props}

\subsection{Coarse connectedness properties}

The first goal of this section is to define coarse versions of higher connectedness properties of metric spaces. Throughout we will use the following terminology:

Let $X$ and $Y$ be metric spaces. We say that two maps $\varphi,\psi \colon X \to Y$ have \emph{bounded distance $C \ge 0$} if $d(\varphi(x),\psi(x)) \le C$ for all $x \in X$. We say that a map $\varphi \colon X \to Y$ is a  \emph{coarse Lipschitz map} if there exists a map  $\rho \colon \R_{\ge 0} \to \R_{\ge 0}$ with $\lim_{t \to \infty} \rho(t) = \infty$ such that
\[
d(\varphi(x),\varphi(x')) \le \rho(d(x,x')) \text{ for all }x,x' \in X.
\]
We then call $\rho$ an \emph{upper control} for $\varphi$ and say that $\varphi$ is \emph{$\rho$-Lipschitz}.

Being of bounded distance defines an equivalence class on coarse Lipschitz maps between metric spaces and the corresponding equivalence classes can be composed (by composing representatives). Metric spaces and equivalence classes of coarse Lipschitz maps form a category, called the \emph{category of coarse metric spaces}. Isomorphisms in this category are called \emph{coarse equivalences} and retracts in this category are called \emph{coarse retracts}.

More quantitatively, if there exist an upper control $\rho$ and some $C>0$, then $X$ is a \emph{$(\rho,C)$-retract} of $Y$ if there exist
$\rho$-Lipschitz maps $\iota \colon X \to Y$ and $\pi \colon Y \to X$ such that $\pi \circ \iota$ has distance $C$ from the identity. If in addition $\iota \circ \pi$ also has distance $C$ from the identity, then we say that $X$ and $Y$ are \emph{$(\rho,C)$-equivalent.} 

Given a metric space $(X, d)$ we denote by $[(X,d)]_c$ its coarse equivalence class. A property of metric spaces is called a \emph{coarse property} if it depends only on this coarse equivalence class. We now discuss coarse properties of metric spaces which can be seen as coarse versions of being $n$-connected. With every metric space we associate a directed system of topological spaces:
\begin{definition}
Let $X$ be a metric space. For $r \ge 0$, the \emph{Vietoris--Rips complex} $\VR_r X$ is the simplicial complex whose vertex set is $X$ and such that $x_0, \dots, x_n \in X$ are the vertices of an $n$-simplex if and only if $d(x_i,x_j) \le r$ for all $i,j \in \{0, \dots, n\}$. The \emph{Vietoris--Rips filtration} is the directed system $(\VR_r X)_{r \ge 0}$ with maps given by the inclusions $\VR_r X \into \VR_s X$ for $r \le s$.
\end{definition}
We now define two families of coarse connectedness properties for such directed families of spaces:
\begin{definition} Let $\mathcal A$ be a directed set and let $(S_\alpha)_{\alpha \in \mathcal A}$ (respectively $(G_\alpha)_{\alpha \in \mathcal A}$) be a directed system of topological spaces (respectively groups or pointed sets).
\begin{enumerate}[(i)]
\item $(G_\alpha)_{\alpha \in \mathcal A}$ is \emph{essentially trivial} if for every $\alpha \in \mathcal A$ there exists a $\beta \ge \alpha$ such that the map $G_\alpha \to G_\beta$ is trivial.
\item $(S_\alpha)_{\alpha \in \mathcal A}$ is \emph{essentially $n$-connected} if $(\pi_i(S_\alpha))_{\alpha \in \mathcal A}$ is essentially trivial for all $i \le n$.
\item  $(S_\alpha)_{\alpha \in \mathcal A}$ is \emph{essentially $n$-acyclic} if $(\tilde{H}_i(S_\alpha))_{\alpha \in \mathcal A}$  is essentially trivial for all $i \le n$.
\end{enumerate}
\end{definition}
We will usually apply this to the case where $(S_\alpha)_{\alpha \in \mathcal A}$ is a filtration of a space $S$ by subsets, the maps being given by inclusions. In this context, given a space $X$, we say that two filtrations $(Y_i)_i$ and $(Z_j)_j$ of $X$ are \emph{equivalent} if every term in one of them is contained in some term of the other, i.e.\ $\forall i\, \exists j\, Y_i \subseteq Z_j$, $\forall j\, \exists i\, Z_j \subseteq Y_i$. Equivalence of filtrations of groups is defined similarly.
\begin{lemma}\label{lem:equivalent_filtrations}
  If $(H_i)_i$ and $(K_j)_j$ are equivalent filtrations of a group $G$ then one is essentially trivial if and only if the other is. In particular, if $(Y_i)_i$ and $(Z_j)_j$ are equivalent filtrations of a topological space $X$ then one is essentially $k$-connected ($k$-acyclic) if and only if the other one is.
\end{lemma}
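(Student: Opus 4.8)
The statement concerns equivalent filtrations, so the natural strategy is to unfold the definition of "equivalent" and use cofinality. Recall that $(H_i)_i$ and $(K_j)_j$ equivalent means: for every $i$ there is $j$ with $H_i \subseteq K_j$, and for every $j$ there is $i$ with $K_j \subseteq H_i$. I will prove the first sentence; the second sentence then follows by applying the first to each of the directed systems $(\pi_k(Y_i))_i$, $(\pi_k(Z_j))_j$ (resp. $(\tilde H_k(Y_i))_i$, $(\tilde H_k(Z_j))_j$), since an inclusion $Y_i \subseteq Z_j$ of topological spaces induces a homomorphism of homotopy/homology groups compatible with the filtration maps, so the two systems of groups are again equivalent in the sense just described (here "$\subseteq$" is replaced by "there is a compatible map factoring through").

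First I would observe that equivalence of filtrations forces mutual cofinality in a precise sense: define $\sigma \colon i \mapsto j(i)$ with $H_i \subseteq K_{j(i)}$ and $\tau \colon j \mapsto i(j)$ with $K_j \subseteq H_{i(j)}$. Then for every $i$ we have a chain $H_i \subseteq K_{\sigma(i)} \subseteq H_{\tau\sigma(i)}$, and likewise $K_j \subseteq H_{\tau(j)} \subseteq K_{\sigma\tau(j)}$, with all inclusions compatible with the structure maps of the respective directed systems. Now suppose $(K_j)_j$ is essentially trivial. Fix $i$; I want $i' \ge i$ such that $H_i \to H_{i'}$ is trivial. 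Apply essential triviality of $(K_j)_j$ to $j = \sigma(i)$ to get $j' \ge \sigma(i)$ with $K_{\sigma(i)} \to K_{j'}$ trivial; then set $i' \defeq \tau(j')$ (enlarging if necessary so that $i' \ge i$, using directedness). The composite $H_i \to K_{\sigma(i)} \to K_{j'} \to H_{i'}$ equals $H_i \to H_{i'}$ because all squares commute, and its middle arrow is trivial, hence the whole composite is trivial. By symmetry the converse holds, proving the first sentence.

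For the second sentence, I would simply note that applying $\pi_k(-)$ (or $\tilde H_k(-)$) to the inclusions $Y_i \into Z_j$, $Z_j \into Y_i$ witnessing equivalence of the two filtrations of $X$ yields group homomorphisms that are compatible with the directed-system maps; so the systems $(\pi_k Y_i)_i$ and $(\pi_k Z_j)_j$ satisfy exactly the hypotheses needed to run the argument of the first sentence (one does not even need honest inclusions — only a commuting triangle of maps, which is what one gets). Essential $k$-connectedness (resp. $k$-acyclicity) of one filtration then transfers to the other, for each $k$. The only mild subtlety — and the closest thing to an obstacle — is bookkeeping the directedness adjustments (ensuring the indices produced are actually $\ge$ the given ones) and checking that all the relevant squares in the directed systems genuinely commute; but these are routine once the maps $\sigma,\tau$ are fixed, so I expect no real difficulty.
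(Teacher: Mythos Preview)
The proposal is correct and follows essentially the same argument as the paper: sandwich $H_i \subseteq K_j \subseteq K_{j'} \subseteq H_{i'}$ using equivalence and essential triviality, then observe that the composite $H_i \to H_{i'}$ factors through the trivial map $K_j \to K_{j'}$. Your version is in fact slightly more careful than the paper's about the index bookkeeping (ensuring $i' \ge i$ via directedness), and your remark that the second sentence follows by applying a functor to the inclusions is exactly what the paper dismisses as ``an immediate consequence.''
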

\begin{proof}
  We prove the statement about groups, the other statements being an immediate consequence. Assume that $(K_j)_j$ is essentially trivial.
  Let $i$ be arbitrary. By equivalence there exists a $j \ge i$ such that $H_i < K_j$. By essential triviality there exists $k > j$ such that $K_j \to K_{k}$ is trivial. By equivalence there exists an $\ell \ge k$ such that $K_k < H_\ell$. Then $H_i \to H_\ell$ is trivial factoring through the trivial map $K_j \to K_k$. This shows that $(H_i)_i$ is essentially trivial. The converse follows by symmetry.
\end{proof}
We now specialize to the case of the Vietoris-Rips filtration:
\begin{definition}
Let $X$ be a metric space. We say that $X$ is \emph{coarsely $n$-connected} (respectively \emph{coarsely $n$-acyclic}) if $(\VR_r X)_{r \ge 0}$ is essentially $n$-connected (respectively essentially $n$-acyclic).
\end{definition}

\begin{remark}\label{rem:hurewicz}
We have the following relations between coarse connectedness and coarse acyclicity:
\begin{enumerate}
\item Essential $n$-connectedness implies essential $n$-acyclicity, and hence coarse $n$-connectedness implies coarse $n$-acyclicity.
\item Hurewicz's theorem implies that essential $1$-connectedness together with essential $n$-acyclicity implies essential $n$-connectedness, and thus coarse $1$-connectedness together with coarse $n$-acyclicity implies coarse $n$-connectedness.
\end{enumerate}
\end{remark}
Our next goal is to show that coarse $n$-connectedness and coarse $n$-acyclicity are coarse properties. The proof is essentially the same as the one given in Alonso \cite{alonso94} for quasi-isometry invariance and is based on the following lemma:
\begin{lemma}\label{lem:homotopy}
Let $\varphi,\psi \colon X \to Y$ be $\rho$-Lipschitz maps that have distance $C$. Then for $q \ge \rho(p) + C$ the induced maps $\varphi_*,\psi_*\colon \VR_p(X) \to \VR_q(Y)$ are homotopic.
\end{lemma}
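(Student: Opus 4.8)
The plan is to construct an explicit simplicial homotopy, in the form of a map on the prism over $\VR_p(X)$, realizing the homotopy between $\varphi_*$ and $\psi_*$. First I would observe that it suffices to exhibit, for every simplex of $\VR_p(X)$ with vertices $x_0,\dots,x_n$, a chain of simplices in $\VR_q(Y)$ interpolating between the images under $\varphi$ and $\psi$. Concretely, I would take the standard prism decomposition of $\Delta^n \times [0,1]$ into $(n+1)$-simplices with vertices $(x_0,\dots,x_i,\psi\text{-side};\varphi\text{-side},x_i,\dots,x_n)$ — that is, the simplices spanned by the ordered vertex sequences
\[
\varphi(x_0),\ \varphi(x_1),\ \dots,\ \varphi(x_i),\ \psi(x_i),\ \psi(x_{i+1}),\ \dots,\ \psi(x_n)
\]
for $i = 0,\dots,n$. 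The content of the lemma is that each such vertex set does indeed span a simplex of $\VR_q(Y)$, i.e.\ that all pairwise distances among these $n+2$ points are at most $q$.

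The key computation is the distance bound, and this is where the hypotheses enter. I would split the pairwise distances into three cases. For two vertices of the form $\varphi(x_a), \varphi(x_b)$ with $a,b \le i$: since $d(x_a,x_b) \le p$ (they are vertices of a common simplex of $\VR_p(X)$) and $\varphi$ is $\rho$-Lipschitz, we get $d(\varphi(x_a),\varphi(x_b)) \le \rho(p) \le q$; symmetrically for two $\psi$-vertices. For a mixed pair $\varphi(x_a), \psi(x_b)$ with $a \le i \le b$: using the triangle inequality through $\psi(x_a)$,
\[
d(\varphi(x_a),\psi(x_b)) \le d(\varphi(x_a),\psi(x_a)) + d(\psi(x_a),\psi(x_b)) \le C + \rho(p) \le q,
\]
where the first term is bounded by $C$ because $\varphi$ and $\psi$ have distance $C$, and the second by $\rho(p)$ as before. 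Thus every vertex set in the prism decomposition spans a genuine simplex of $\VR_q(Y)$, so these simplices assemble (compatibly on faces, by the standard combinatorics of the prism operator) into a simplicial map $\VR_p(X) \times [0,1] \to \VR_q(Y)$ whose restrictions to $\VR_p(X) \times \{0\}$ and $\VR_p(X) \times \{1\}$ are $\varphi_*$ and $\psi_*$.

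I do not expect any serious obstacle here; the only thing requiring a little care is the bookkeeping of face maps in the prism decomposition, to confirm that the simplices chosen for a face of $[x_0,\dots,x_n]$ agree with the appropriate faces of the simplices chosen for $[x_0,\dots,x_n]$ itself — but this is exactly the classical prism argument from the proof of homotopy invariance of simplicial homology, and it goes through verbatim since it is purely combinatorial and does not interact with the metric. One could alternatively phrase the conclusion as: the two maps into $\VR_q(Y)$ agree on a subcomplex large enough (the "mapping cylinder" sitting inside $\VR_q(Y)$) — but the prism construction is the cleanest. Hence $\varphi_* \simeq \psi_*$ as claimed.
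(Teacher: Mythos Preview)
Your proof is correct, but the paper's argument is shorter and exploits a feature of Vietoris--Rips complexes that your prism decomposition obscures. The paper observes that for any simplex $\{x_0,\dots,x_n\}$ of $\VR_p(X)$, the \emph{entire} set $\{\varphi(x_0),\dots,\varphi(x_n),\psi(x_0),\dots,\psi(x_n)\}$ spans a single simplex of $\VR_q(Y)$, using exactly your mixed-pair estimate $d(\varphi(x_i),\psi(x_j)) \le \rho(p) + C \le q$. Since $\VR_q(Y)$ is a flag complex, this one distance bound suffices; there is no need to subdivide the prism into $(n+1)$-simplices or to check face compatibilities. In other words, $\varphi_*$ and $\psi_*$ are \emph{contiguous} simplicial maps, and contiguous maps are homotopic by the straight-line homotopy inside each common simplex. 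Your prism simplices are all faces of this one big simplex, so your construction is a refinement of an argument that does not need refining. The upshot: your approach works and would be necessary in a complex that is not flag, but here the paper's one-line contiguity argument is both simpler and avoids the combinatorial bookkeeping you flagged as needing care.
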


\begin{proof}
We want to show that if $\{x_0,\ldots,x_n\}$ is a simplex in $\VR_p(X)$ then its images under $\varphi_*$ and $\psi_*$ are contained in a common simplex. But in fact since $d(x_i,x_j) < p$ by assumption,
\[
d(\varphi(x_i),\psi(x_j)) \le d(\varphi(x_i),\varphi(x_j)) + d(\varphi(x_j),\psi(x_j)) < C + \rho(p) \le q\text{.}
\]
Hence $\{\varphi(x_0),\ldots,\varphi(x_n),\psi(x_0),\ldots,\psi(x_n)\}$ is a simplex in $\VR_q(Y)$.
\end{proof}

\begin{proposition}\label{prop:retracts}
If $X$ is a coarse retract of  $Y$ and $Y$ is coarsely $n$-connected (respectively $n$-acyclic) then so is $X$.
\end{proposition}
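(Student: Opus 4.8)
The plan is to transport essential triviality of the Vietoris--Rips systems along the simplicial maps that $\iota$ and $\pi$ induce after rescaling radii, using Lemma~\ref{lem:homotopy} to identify a suitable composite with an inclusion. So I would first unwind the hypothesis: a coarse retract provides coarse Lipschitz maps $\iota\colon X\to Y$ and $\pi\colon Y\to X$ and a constant $C\ge 0$ with $d(\pi(\iota(x)),x)\le C$ for all $x\in X$; after passing to a common upper control (and, as is implicitly done in Lemma~\ref{lem:homotopy}, taking it non-decreasing) we may assume $\iota$ and $\pi$ are both $\rho$-Lipschitz for one non-decreasing $\rho$. Non-decreasingness is what makes $\iota$ and $\pi$ simplicial after rescaling: for every $r\ge 0$ we get simplicial maps $\iota_*\colon\VR_r X\to\VR_{\rho(r)}Y$ and $\pi_*\colon\VR_r Y\to\VR_{\rho(r)}X$, since a set of pairwise distance $\le r$ is sent to one of pairwise distance $\le\rho(r)$. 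If $X=\emptyset$ there is nothing to prove; otherwise fix a basepoint $x_0\in X$ and use $\iota(x_0)$ as basepoint of the complexes built on $Y$.

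The heart of the proof is the radius bookkeeping. Fix $p\ge 0$. Since $Y$ is coarsely $n$-connected and only finitely many degrees $i\le n$ need to be controlled, there is a single $s\ge\rho(p)$ such that the inclusion-induced map $\pi_i(\VR_{\rho(p)}Y)\to\pi_i(\VR_s Y)$ is trivial for all $i\le n$. Set $\rho'(t)\defeq\max(\rho(\rho(t)),t)$, which is a non-decreasing common upper control for $\pi\circ\iota$ and $\id_X$, and put
\[
q\defeq\max\!\big(\rho(s),\ \rho'(p)+C\big)\text{.}
\]
Then $q\ge p$, $q\ge\rho(\rho(p))$ because $s\ge\rho(p)$, and $q\ge\rho'(p)+C$, so the composite
\[
\VR_p X\xrightarrow{\ \iota_*\ }\VR_{\rho(p)}Y\into\VR_s Y\xrightarrow{\ \pi_*\ }\VR_{\rho(s)}X\into\VR_q X
\]
is well-defined and equals, on vertices, $x\mapsto\pi(\iota(x))$; hence it is the simplicial map induced by $\pi\circ\iota$. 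Since $\pi\circ\iota$ and $\id_X$ are $\rho'$-Lipschitz of distance $C$, Lemma~\ref{lem:homotopy} (applicable as $q\ge\rho'(p)+C$) shows this composite is homotopic to the inclusion $\VR_p X\into\VR_q X$, after the harmless change of basepoint in $X$ along the edge from $\pi(\iota(x_0))$ to $x_0$, which exists because $C\le q$. Consequently, for each $i\le n$ the inclusion-induced map $\pi_i(\VR_p X)\to\pi_i(\VR_q X)$ factors through the trivial map $\pi_i(\VR_{\rho(p)}Y)\to\pi_i(\VR_s Y)$, hence is trivial. As $p$ was arbitrary, $(\pi_i(\VR_r X))_{r\ge 0}$ is essentially trivial for all $i\le n$, i.e.\ $X$ is coarsely $n$-connected.

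The coarsely $n$-acyclic statement follows by the same argument with reduced homology $\tilde H_i$ in place of $\pi_i$; Lemma~\ref{lem:homotopy} still yields equal induced maps and no basepoint issue arises. I expect the only real obstacle to be the bookkeeping --- choosing $q$ so that every rescaled map above is simultaneously defined and Lemma~\ref{lem:homotopy} applies --- together with the small but crucial observation that the three-step composite is literally $(\pi\circ\iota)_*$; everything else is formal.
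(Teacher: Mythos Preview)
Your proof is correct and follows essentially the same route as the paper's: push along $\iota_*$ into the Vietoris--Rips system of $Y$, use essential triviality there, pull back along $\pi_*$, and invoke Lemma~\ref{lem:homotopy} to identify the composite with the inclusion. The paper presents this via a commuting diagram with a single auxiliary radius $r=\rho(q)+C$, while you are more explicit about the control-function bookkeeping (introducing $\rho'$ and choosing $q$ as a maximum) and about basepoints; these are cosmetic differences, not a different argument.
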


\begin{proof}
Assume that $X$ is a $(\rho,C)$-retract of $Y$ via maps $\iota \colon X \to Y$ and $\pi \colon Y \to X$. Let $K$ be either of $\tilde{H}_i$ or $\pi_i$ for $i \le n$. Let $p$ be arbitrary and let $q$ be such that $K(\VR_{\rho(r)}(Y) \to \VR_q(Y))$ is trivial. Let $r = \rho(q) + C$. Consider the following diagram:
\begin{center}
\begin{tikzpicture}[xscale=4,yscale=2]
\node (xp) at (0,0) {$K(\VR_p(X))$};
\node (yp) at (1,0) {$K(\VR_{\rho(p)}(Y))$};
\node (yq) at (1,-1) {$K(\VR_{q}(Y))$};
\node (xr) at (0,-1) {$K(\VR_{r}(X))$};
\draw (xp) edge[->] node[above] {$\iota_*$} (yp) (yp) edge[->] node[right] {$\id_*$} (yq) (yq) edge[->] node[below] {$\pi_*$} (xr) (xp) edge[->,bend right] node[left] {$\id_*$} (xr) (xp) edge[->,bend left] node[right] {$(\pi \circ \iota)_*$} (xr);
\end{tikzpicture}
\end{center}
The right squares commutes by construction and the left bigon commutes by Lemma~\ref{lem:homotopy}, hence the diagram commutes. The map on the right is trivial by assumption, hence so is the map on the left.
\end{proof}
\begin{corollary}\label{cor:coarse_connected_invariant}
Coarse $n$-connectedness and coarse $n$-acyclicity are coarse properties for all $n \in \mathbb N$.\qed
\end{corollary}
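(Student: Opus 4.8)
The plan is to derive the corollary as an immediate formal consequence of Proposition~\ref{prop:retracts}. The key observation is that a coarse equivalence exhibits each of the two spaces as a coarse retract of the other. Concretely, suppose $(X,d_X)$ and $(Y,d_Y)$ are coarsely equivalent. Then there are coarse Lipschitz maps $\iota\colon X \to Y$ and $\pi\colon Y \to X$ such that both $\pi\circ\iota$ and $\iota\circ\pi$ have bounded distance from the respective identity maps. Choosing an upper control $\rho$ that works for both $\iota$ and $\pi$ (e.g.\ the pointwise maximum of two upper controls) and a constant $C$ bounding both $d_X(\pi\iota(x),x)$ and $d_Y(\iota\pi(y),y)$, the pair $(\iota,\pi)$ realizes $X$ as a $(\rho,C)$-retract of $Y$, and the pair $(\pi,\iota)$ realizes $Y$ as a $(\rho,C)$-retract of $X$.

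Now apply Proposition~\ref{prop:retracts} in both directions: if $Y$ is coarsely $n$-connected (respectively coarsely $n$-acyclic), then, $X$ being a coarse retract of $Y$, so is $X$; conversely, if $X$ has the property then so does $Y$. Hence coarse $n$-connectedness and coarse $n$-acyclicity depend only on the coarse equivalence class $[(X,d)]_c$, which is precisely the assertion that they are coarse properties. Since $n \in \mathbb N$ was arbitrary, this completes the argument.

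I do not expect any genuine obstacle here: the entire content has been packaged into Proposition~\ref{prop:retracts}, and the corollary is just the observation that coarse equivalences are, in particular, coarse retracts in both directions. The only point requiring a word of care is the passage from the qualitative notion of coarse equivalence to the quantitative $(\rho,C)$-data used inside the proof of Proposition~\ref{prop:retracts}, which is handled by taking a common upper control and a common bounded-distance constant as above.
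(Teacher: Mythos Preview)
Your proposal is correct and matches the paper's intended argument exactly: the corollary is marked with a bare \qed\ precisely because it follows immediately from Proposition~\ref{prop:retracts} by noting that a coarse equivalence exhibits each space as a coarse retract of the other.
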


In the sequel, we say that a coarse equivalence class $[(X, d)]_c$ is coarsely $n$-connected if some (hence any) representative $(X,d)$ has this property. For future reference we record the following:

\begin{lemma}\label{lem:coarse_product}
Let $X_1,\ldots,X_k$ be metric spaces. The product $X = X_1 \times \ldots \times X_k$ is coarsely $n$-connected if and only if each $X_i$ is.
\end{lemma}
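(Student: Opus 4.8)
The plan is to reduce everything to the case $k=2$ by induction and then exploit that the natural product metric on $X_1 \times X_2$ (say the $\ell^\infty$-metric $d((x_1,x_2),(y_1,y_2)) = \max_i d_i(x_i,y_i)$, which is coarsely equivalent to the $\ell^1$-metric, so it does not matter which one we pick) interacts cleanly with Vietoris--Rips complexes: for every $r \ge 0$ one has a natural isomorphism of simplicial complexes $\VR_r(X_1 \times X_2) \cong \VR_r(X_1) \times \VR_r(X_2)$, since $\{(x_0,y_0),\dots,(x_n,y_n)\}$ spans a simplex on the left exactly when $d_i(x_a,x_b) \le r$ and $d_j(y_a,y_b) \le r$ for all indices, i.e.\ exactly when $\{x_0,\dots,x_n\}$ and $\{y_0,\dots,y_n\}$ span simplices in the respective factors. (Here ``product'' of simplicial complexes means the product of the underlying posets of simplices, whose geometric realization is the topological product; this uses that $\VR$ complexes are flag complexes.) These isomorphisms are compatible with the inclusions $\VR_r \into \VR_s$, so the Vietoris--Rips filtration of the product is, as a directed system of spaces, the ``diagonal'' of the product of the two filtrations.

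For the ``only if'' direction I would argue that each factor is a coarse retract of the product: the inclusion $\iota_1\colon X_1 \to X_1 \times X_2$, $x_1 \mapsto (x_1, x_2^0)$ for a fixed basepoint $x_2^0$, and the projection $\pr_1\colon X_1 \times X_2 \to X_1$ are both $1$-Lipschitz (for the $\ell^\infty$-metric) and satisfy $\pr_1 \circ \iota_1 = \id_{X_1}$, so $X_1$ is a coarse retract of $X_1 \times X_2$; by Proposition~\ref{prop:retracts}, coarse $n$-connectedness (resp.\ $n$-acyclicity) passes from the product to $X_1$, and similarly to $X_2$.

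For the ``if'' direction, assume both $X_1$ and $X_2$ are coarsely $n$-connected. Fix $r \ge 0$; we must find $s \ge r$ so that $\pi_i(\VR_r(X_1\times X_2)) \to \pi_i(\VR_s(X_1\times X_2))$ is trivial for all $i \le n$. Choose $s \ge r$ large enough that both $\pi_i(\VR_r X_1) \to \pi_i(\VR_s X_1)$ and $\pi_i(\VR_r X_2) \to \pi_i(\VR_s X_2)$ are trivial for all $i \le n$ (take the max of the two thresholds coming from coarse $n$-connectedness of the factors). Via the identification $\VR_t(X_1\times X_2) \cong \VR_t X_1 \times \VR_t X_2$ and the K\"unneth/product formula $\pi_i(A\times B) \cong \pi_i(A)\times\pi_i(B)$, the map in question is the product of the two trivial maps, hence trivial. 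For the acyclic statement one runs the same argument replacing $\pi_i$ by $\tilde H_i$, using the reduced K\"unneth formula: since $\tilde H_j(\VR_t X_\bullet)$ vanishes in the relevant range up to the threshold, $\tilde H_i(A \times B)$ is built from $\bigoplus_{j+j'=i}\tilde H_j(A)\otimes\tilde H_{j'}(B)$ together with $\tilde H_i(A) \oplus \tilde H_i(B)$ and ${\rm Tor}$-terms in lower degrees, and each summand is killed after enlarging $r$ to $s$ (choosing $s$ to kill all degrees $\le n$ in both factors simultaneously makes every contributing term die). The inductive step from $k-1$ to $k$ factors is immediate by writing $X = (X_1\times\dots\times X_{k-1}) \times X_k$ and applying the two-factor case together with the induction hypothesis.

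The main obstacle is purely bookkeeping rather than conceptual: one must be slightly careful that ``$\VR_r$ of a product equals the product of the $\VR_r$'s'' holds on the nose (this is where choosing the sup-metric, or at least a metric bi-Lipschitz to it, is essential, and where one uses that these complexes are flag), and in the acyclic case one must track the finitely many K\"unneth summands in degrees $\le n$ and verify that a single choice of $s$ annihilates all of them. Everything else is a direct application of Proposition~\ref{prop:retracts} and Lemma~\ref{lem:equivalent_filtrations}.
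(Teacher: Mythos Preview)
Your overall strategy matches the paper's: relate $\VR_r$ of the product to the product of the $\VR_r$'s, then use $\pi_i(A\times B)\cong\pi_i(A)\times\pi_i(B)$. The coarse-retract argument for the ``only if'' direction via Proposition~\ref{prop:retracts} is a pleasant alternative that the paper does not single out.

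There is, however, a concrete error in your key step: $\VR_r(X_1\times X_2)$ is \emph{not} isomorphic, as a simplicial complex or as a poset of faces, to the product of $\VR_r X_1$ and $\VR_r X_2$. Take $X_1=X_2=\{0,1\}$ with $d(0,1)=1$ and $r=1$: each $\VR_1 X_i$ is an edge with a $3$-element face poset, so the product poset has $9$ elements, whereas $\VR_1(X_1\times X_2)$ is the full $3$-simplex on four vertices, with $15$ faces. Your observation that $\{(x_a,y_a)\}$ spans a simplex iff both projections do is correct, but the assignment $S\mapsto(\pi_1 S,\pi_2 S)$ is many-to-one, so no poset isomorphism results. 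What is true---and what the paper actually uses---is that $\prod_i\VR_r X_i$ is a \emph{deformation retract} of $\VR_r X$: the identity on vertices extends affinely on each simplex of $\VR_r X$ to a map into the product, and conversely each product cell $\prod_i|\sigma_i|$ embeds into the full simplex on $\prod_i V(\sigma_i)$; the composites are homotopic to the identity by straight-line homotopies inside those simplices, and everything is compatible with the inclusions for $r\le s$. Once you replace ``isomorphism'' by this homotopy equivalence, the rest of your argument goes through unchanged. (The K\"unneth discussion is superfluous, incidentally: the lemma as stated concerns only coarse $n$-connectedness.)
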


Note that up to coarse equivalence it is irrelevant which (reasonable) metric we choose on the product. We take $d_X((x_1,\ldots,x_k),(y_1,\ldots,y_k)) = \max_i d(x_i,y_i)$.

\begin{proof}
By our choice of metric, for every $r$ the product $\VR_r X_1 \times \ldots \times \VR_r X_k$ is a deformation retract of $\VR_r X$: taking the identity map on vertices extends to affine maps in both directions. Moreover, for $r \le s$ the diagram

\begin{center}
\begin{tikzpicture}[xscale=4,yscale=2]
\node (rp) at (0,0) {$\VR_r(X_1) \times \ldots \times \VR_r(X_k)$};
\node (rx) at (1,0) {$\VR_r(X)$};
\node (sp) at (0,-1) {$\VR_s(X_1) \times \ldots \times \VR_s(X_k)$};
\node (sx) at (1,-1) {$\VR_s(X)$};
\draw (rx) edge[->>,bend right] (rp)
(rp) edge[right hook->,bend right] (rx)
(sx) edge[->>,bend right] (sp)
(sp) edge[right hook->,bend right] (sx)
(rp) edge[->](sp)
(rx) edge[->] (sx);
\end{tikzpicture}
\end{center}
commutes. The claim now follows from \cite[Theorem~4.2]{hatcher}.
\end{proof}
\subsection{Finiteness properties of groups and their subsets}
Throughout this subsection, $\Gamma$ denotes a countable group. We pick a left-invariant proper pseudo-metric $d$ on $\Gamma$ and denote by $[\Gamma]_c := [(\Gamma, d)]_c$ the associated coarse equivalence class, which is independent of $d$ \cite[Theorem~2.B.4, Corollary~4.A.6]{CornulierDeLaHarpe16}. Explicit representatives of $[\Gamma]_c$ can be constructed as follows: 

Let $S_\ell, \ell \in \N_{>0}$ be an ascending sequence of finite symmetric subsets of $\Gamma$ such that $\bigcup_\ell S_\ell$ generates $\Gamma$. For instance, if $\Gamma = \{g_1,g_2,\ldots\}$ one may take $S_\ell = \{g_1^{\pm 1},\ldots,g_\ell^{\pm 1}\}$. Put $T_\ell = S_\ell \setminus S_{\ell-1}$ with the understanding that $S_o = \emptyset$. The \emph{weighted Cayley graph} $\Cay(\Gamma,(S_\ell)_\ell)$ is the simplicial metric graph with vertex set $\Gamma$ where $g$ and $h$ are connected by an edge of length $\ell$ if $g^{-1}h \in T_\ell$. If $\Gamma = \gen{S}$ is finitely generated with finite generating set $S$, one may take $S_\ell = S$ for all $\ell \ge 1$ and recovers the usual Cayley graph.
The weighted Cayley graph induces a path metric on $\Gamma$ which we denote by $d_{\Gamma,(S_i)_i}$ or just $d_{(S_i)_i}$ if the group is clear from context. This metric is obviously proper and left-invariant, hence $(\Gamma,  d_{(S_i)_i})$, or equivalently $\Cay(\Gamma,(S_\ell)_\ell)$, represents $[\Gamma]_c$.

Recall from the introduction the definition of the finiteness properties $F_n$ and $\FP_n$ for $\Gamma$. The following observation was made in \cite{alonso94} (for $n \geq2$).
\begin{proposition}[Alonso]\label{FnVsFnClassic}
A countable group $\Gamma$ is of type $F_n$ (respectively type $FP_n$) if and only if $[\Gamma]_c$ is coarsely $n$-connected (respectively coarsely $n$-acyclic).
\end{proposition}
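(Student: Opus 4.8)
### Proof proposal

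The plan is to prove both directions by comparing the two standard models associated to a countable group $\Gamma$: on the one hand, a $K(\Gamma,1)$ built with controlled skeleta (equivalently, the chain complex of the universal cover of such a space), and on the other hand, the Vietoris--Rips filtration of $(\Gamma, d_{(S_i)_i})$. The bridge between them is the weighted Cayley graph $\Cay(\Gamma,(S_\ell)_\ell)$ introduced just above, which represents $[\Gamma]_c$ and which carries a cocompact (in the weighted sense: finitely many edge-orbits per length) $\Gamma$-action.

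First I would treat the homological statement, i.e.\ the equivalence of type $\FP_n$ with coarse $n$-acyclicity, since the homotopical one will then follow formally. Recall that $\Gamma$ is of type $\FP_n$ if and only if it acts freely on an acyclic CW-complex with finitely many orbits of cells in each dimension $\le n$; starting from the weighted Cayley graph, one builds such a complex $E$ skeleton by skeleton, attaching $\Gamma$-orbits of $i$-cells ($i \le n$) to kill $\tilde H_{i-1}$, using exactly the $\FP_n$ hypothesis to ensure finitely many orbits suffice. Now compare $E$ with the Vietoris--Rips filtration. One shows that there are $\Gamma$-equivariant maps $\VR_r \Gamma \to E^{(n)}$ (the $i$-skeleton mapping into $E^{(i)}$ by general position, using that $E$ is built on the vertex set $\Gamma$) and, conversely, that for $s$ large enough $E^{(n)} \to \VR_s \Gamma$; the compositions are $\Gamma$-equivariantly homotopic to the respective identities on skeleta, because both spaces are built with vertex set $\Gamma$ and the relevant attaching maps have uniformly bounded image (this is where "finitely many orbits of cells" enters: it bounds the diameters of cells). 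Since $\tilde H_i(E)=0$ for $i\le n-1$ and $E^{(n)}$ is a finite-type-up-to-orbits complex, a diagram chase exactly like the one in the proof of Proposition~\ref{prop:retracts} shows $(\tilde H_i(\VR_r\Gamma))_r$ is essentially trivial for $i\le n$; conversely, if the Vietoris--Rips filtration is essentially $n$-acyclic, the stabilization $\VR_r\Gamma \to \VR_s\Gamma$ being trivial on $\tilde H_i$ combined with the equivariant maps to and from $E^{(n)}$ lets one replace $E^{(n)}$ by a complex with trivial reduced homology through degree $n-1$, hence complete it to an acyclic complex witnessing $\FP_n$. Since $\VR_r\Gamma$ and $\VR_r(\Gamma,d)$ for any proper left-invariant $d$ are coarsely equivalent, Corollary~\ref{cor:coarse_connected_invariant} makes the statement independent of the choice of $d$.

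For the homotopical statement one argues in parallel, replacing $\tilde H_i$ by $\pi_i$ and "acyclic" by "contractible", using that type $F_n$ means $\Gamma$ acts freely on a contractible complex with finitely many orbits of cells in each dimension $\le n$, and that $\VR_r\Gamma$ is simply connected for $r$ large (since the weighted Cayley graph is connected and $\Gamma$ admits a presentation, though one only needs this when $n\ge 2$; for $n\le 1$ the statement is the elementary fact that $F_1$ is finite generation is coarse connectedness of the Cayley graph). Alternatively, one can deduce it from the homological case via Remark~\ref{rem:hurewicz}: coarse $1$-connectedness of $[\Gamma]_c$ is equivalent to finite presentability-free finite generation of $\Gamma$ plus simple connectivity of large Vietoris--Rips complexes, and together with coarse $n$-acyclicity ($\Leftrightarrow \FP_n$) and the classical fact that $F_n \Leftrightarrow FP_n + F_2$ for $n\ge 2$, the Hurewicz argument closes the loop.

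The main obstacle I anticipate is the careful bookkeeping in the comparison between $E$ and the Vietoris--Rips filtration: making precise the equivariant maps in both directions, tracking how the Rips parameter must grow at each skeletal dimension, and verifying that the homotopies respect the directed-system structure so that the essential-triviality diagram chase goes through. This is exactly the content of Alonso's argument \cite{alonso94}, reworked so that it applies verbatim to the weighted Cayley graph rather than to a genuine Cayley graph of a finitely generated group; the only genuinely new point over \cite{alonso94} is handling non-finitely-generated countable $\Gamma$ via the weighting $S_\ell$, for which the properness of $d_{(S_i)_i}$ and the finiteness of each $T_\ell$ are what keep all constructions locally finite.
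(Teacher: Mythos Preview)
Your approach is workable but substantially more laborious than the paper's, and in effect re-derives Brown's criterion by hand. The paper proceeds in one stroke: observe that the union $\bigcup_{r \in \N}\VR_r(\Gamma,d_{(S_i)_i})$ is the full simplex on the set $\Gamma$, hence contractible; each $\VR_r$ is locally finite (properness of $d_{(S_i)_i}$), and $\Gamma$ acts on it freely and vertex-transitively, hence properly and cocompactly. Thus $(\VR_r\Gamma)_r$ is precisely a filtration of a contractible complex by $\Gamma$-invariant cocompact subcomplexes, and Brown's criterion (Theorem~\ref{thm:browns_criterion}) applies verbatim to give the equivalence with $F_n$ and $\FP_n$.

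By contrast, you build a separate model $E$ for the classifying space and then produce comparison maps $\VR_r\Gamma \leftrightarrow E^{(n)}$. This is the content of Brown's criterion itself (or at least of its proof), so you are duplicating work. Moreover, some steps in your sketch would need care: the ``general position'' map $\VR_r\Gamma \to E^{(n)}$ cannot be defined on all of $\VR_r\Gamma$ (which has arbitrarily high-dimensional simplices) but only on its $n$-skeleton, and one must check that restricting to skeleta does not affect the relevant $\pi_i$ or $\tilde H_i$; likewise the converse direction, producing a witness for $\FP_n$ from essential acyclicity, is exactly the hard direction of Brown's criterion and deserves more than a sentence. None of this is wrong, but the paper's route avoids all of it by recognising that the Vietoris--Rips filtration \emph{is already} the filtration to which Brown's criterion applies.
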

Proposition \ref{FnVsFnClassic} is an immediate consequence of the following special case of Brown's criterion \cite{brown87}.
\begin{theorem}[Brown's criterion]\label{thm:browns_criterion}
Let $\Gamma$ be a countable group acting properly on a contractible $CW$-complex $S$. Let $(S_\alpha)_{\alpha \in D}$ be a filtration of $S$ by $\Gamma$-invariant, cocompact subcomplexes. In this situation $\Gamma$ is of type $F_n$ (respectively $\FP_n$) 
if and only if $(S_\alpha)_{\alpha}$ is essentially $(n-1)$-connected (respectively $(n-1)$-acyclic).\qed
\end{theorem}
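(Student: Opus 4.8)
The plan is to follow the proof of K.~S.~Brown's finiteness criterion \cite{brown87}. First I would normalize the setup: after an equivariant subdivision I may assume cell stabilizers fix their cells pointwise, and, replacing $D$ by a cofinal subset (harmless, since every step uses only finitely many indices), I may assume $D$ carries a cofinal chain $\alpha_0 \le \alpha_1 \le \cdots$. The first real step is a \emph{reduction to free actions}. Since the action is proper, each cell stabilizer $\Gamma_\sigma$ is finite, hence of type $F_\infty$, so I would fix a free contractible $\Gamma$-CW-complex $E\Gamma$ and replace $S$ by $\widehat S \defeq S \times E\Gamma$ with the diagonal (now free) action, filtered by $\widehat S_\alpha \defeq S_\alpha \times E\Gamma$. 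The projection $\widehat S \to S$ is a $\Gamma$-homotopy equivalence compatible with the filtrations, so $(\widehat S_\alpha)_\alpha$ has the same essential connectedness and acyclicity as $(S_\alpha)_\alpha$; and because each $\Gamma_\sigma$ is of type $F_\infty$ and each $S_\alpha$ has finitely many orbits of cells, each $\widehat S_\alpha/\Gamma$ has finite skeleta in all dimensions. Hence I may assume the action free; then $S_\alpha$ cocompact means $Y_\alpha \defeq S_\alpha/\Gamma$ is finite, $Y \defeq S/\Gamma = \bigcup_\alpha Y_\alpha$ is a $K(\Gamma,1)$, and $C_\bullet(S)$ is a free $\Z\Gamma$-resolution of $\Z$ filtered by the subcomplexes $C_\bullet(S_\alpha)$, each finitely generated in every degree.

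\emph{The homological statement.} Identifying $\widetilde H_k(S_\alpha)$ with the homology of the augmented complex $C_\bullet(S_\alpha)\to\Z$, essential $(n-1)$-acyclicity of $(S_\alpha)_\alpha$ says precisely that the resolution $C_\bullet(S)\to\Z$, filtered by the degreewise finitely generated subcomplexes $C_\bullet(S_\alpha)$, is ``essentially exact through degree $n-1$''. I would then manufacture from such a filtered resolution a genuinely finitely generated partial free resolution of $\Z$ of length $n$, by splicing together pieces of finitely many stages $C_\bullet(S_{\beta_0}),\dots,C_\bullet(S_{\beta_n})$ chosen to realize the essential exactness step by step; this exhibits $\Gamma$ as being of type $\FP_n$. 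Conversely, comparing any finitely generated partial free resolution of length $n$ with $C_\bullet(S)$ via chain maps and chain homotopies — which are $\Z\Gamma$-linear, hence determined by finitely many orbit values — one sees that the transition maps of the filtration die through degree $n-1$ within a bounded stage, i.e.\ $(S_\alpha)_\alpha$ is essentially $(n-1)$-acyclic. The one step that genuinely uses cocompactness (rather than just contractibility of $S$) is the uniformity implicit in ``essentially trivial'', and it is supplied by finite generation of $C_\bullet(S_\alpha)$ over $\Z\Gamma$.

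\emph{The homotopical statement.} For $n=0$ both sides hold vacuously; for $n=1$ both are equivalent to finite generation of $\Gamma$ (a connected cocompact free $\Gamma$-subcomplex $S_\alpha$ has $\pi_1(S_\alpha/\Gamma)$ finitely generated and surjecting onto $\Gamma$; conversely a finite generating set lets one connect up a bounded stage). For $n\ge 2$ I would use that type $F_n$ is equivalent to type $\FP_n$ together with finite presentability: the $\FP_n$ half is the previous paragraph; essential $1$-connectedness yields finite presentability by a van~Kampen argument (relators are read off in a cocompact stage in which the system has become $1$-connected); and the converse implications follow by combining the previous paragraph with the same comparison argument and the Hurewicz theorem of Remark~\ref{rem:hurewicz}, which lets one pass between acyclicity and connectedness of the system. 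Equivalently, and closer to \cite{bux13}, one argues geometrically: type $F_n$ furnishes a free $(n-1)$-connected $\Gamma$-CW-complex $E$ cocompact in dimensions $\le n$; choosing $\Gamma$-maps between the skeleta of $E$ and $S$ and comparing them by obstruction theory, cocompactness of $E^{(n)}$ and of each $S_\alpha$ confines the relevant images and homotopies to a bounded stage, giving essential $(n-1)$-connectedness; and in the other direction, starting from $S_{\alpha_0}$ and attaching, for $j=1,\dots,n-1$, finitely many $\Gamma$-orbits of $(j+1)$-cells to kill the successive homotopy groups produces an $(n-1)$-connected, cocompact-in-dimensions-$\le n$, free $\Gamma$-complex, hence type $F_n$.

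\emph{Main obstacle.} The genuine content sits in two places. One is the reduction to free actions, where one must check the filtration stages remain of finite type after twisting by $E\Gamma$ — this uses crucially that finite groups are of type $F_\infty$. The other, and the real crux, is the $(\Leftarrow)$ directions: one must see that at each stage only finitely many $\Gamma$-orbits of cells — equivalently, finitely generated syzygies — are needed, and that all required fillings and homotopies can be performed inside one cocompact subcomplex. This is exactly the point where essential $(n-1)$-connectedness of the filtration, rather than mere connectivity of its colimit $S$, is indispensable, and it is what the splicing in the homological step and the cell attachments in the homotopical step must be set up to exploit.
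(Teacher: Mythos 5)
First, note that the paper does not prove this statement at all: Theorem~\ref{thm:browns_criterion} is quoted from Brown \cite{brown87} and closed with an immediate \qed, so there is no in-paper argument to compare yours against. Your proposal is therefore being measured against Brown's original proof, of which it is a broadly reasonable roadmap; but as written it contains one concrete gap, in the reduction to free actions. You claim that for a fixed free contractible $\Gamma$-CW-complex $E\Gamma$, each quotient $\widehat S_\alpha/\Gamma = (S_\alpha \times E\Gamma)/\Gamma$ has finite skeleta because the stabilizers are finite. This is false for an arbitrary (indeed for any) choice of $E\Gamma$ when $\Gamma$ is infinite: the $\Gamma$-orbits of cells of $S_\alpha \times E\Gamma$ lying over a single orbit $\Gamma\sigma$ biject with the $\Gamma_\sigma$-orbits of cells of $E\Gamma$, and since $\Gamma_\sigma$ is finite while $E\Gamma$ necessarily has infinitely many cells in each dimension (the action being free and $\Gamma$ infinite), there are infinitely many such orbits already in dimension $0$. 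So $\widehat S_\alpha/\Gamma$ is never of finite type this way, and the whole reduction collapses. The standard repair is more delicate: one replaces each orbit of cells $\Gamma\times_{\Gamma_\sigma}\sigma$ by $\Gamma\times_{\Gamma_\sigma}(E\Gamma_\sigma\times\sigma)$, where $E\Gamma_\sigma$ is a model with finitely many cells in each dimension (available because $\Gamma_\sigma$ is finite, e.g.\ the Milnor join), and glues these together over the poset of cells; alternatively, for the $\FP_n$ half one argues purely algebraically, using that each $\Z[\Gamma/\Gamma_\sigma]$ admits a $\Z\Gamma$-free resolution that is finitely generated in every degree, which is how Brown actually proceeds.

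Beyond that, the two steps you yourself flag as the crux --- the splicing of a finitely generated partial resolution out of finitely many stages $C_\bullet(S_{\beta_j})$, and the converse comparison showing the transition maps die at a bounded stage --- are exactly where the content of \cite{brown87} lives, and your sketch asserts rather than performs them. They do work out as you describe (the forward direction is an induction building the partial resolution degree by degree, using essential exactness to factor finitely many generators of each kernel through a later stage; the converse uses that a chain map and chain homotopy between the f.g.\ partial resolution and $C_\bullet(S)$ land in some $C_\bullet(S_\beta)$ by finite generation), so I would call the proposal a correct outline modulo the free-action reduction, which needs to be replaced by one of the two standard constructions above.
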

\begin{proof}[Proof of Proposition~\ref{FnVsFnClassic}]
We equip $\Gamma$ with the left-invariant metric of a weighted Cayley graph coming from an ascending sequence of finite subsets $(S_i)_i$.
The complexes $\VR_r(\Gamma,d_{(S_i)_i})$ are locally finite because the $S_i$ are finite. Since $\Gamma$ acts transitively on vertices, it acts cocompactly on $\VR_r(\Gamma,d_{(S_i)_i})$. Since $G$ acts freely on vertices, it acts properly on $\VR_r(\Gamma,d_{(S_i)_i})$. The union $\bigcup_{r \in \N} \VR_r(\Gamma,d_{(S_i)_i})$ is the complete simplex with vertex set $G$ which is contractible. We can therefore apply Brown's criterion which proves the claim.
\end{proof}
Now let $A$ be a subset of a countable group $\Gamma$. If $d_1, d_2$ are proper left-invariant metrics on $\Gamma$ then the identity $(\Gamma, d_1) \to (\Gamma, d_2)$ is a coarse equivalence and hence restricts to a coarse equivalence $(A, d_1) \to (A, d_2)$. We may thus define a coarse equivalence class $[A]_c := [(A, d_1|_{A \times A})]_c = [(A, d_2|_{A \times A})]_c$. Note that if $\Lambda < \Gamma$ is a subgroup containing $A$ and $d$ is a proper left-invariant metric on $\Lambda$, then $(A, d|_{A \times A})$ represents $[A]_c$; we may thus assume that $\Gamma$ is generated by $A$. Now Proposition \ref{FnVsFnClassic} motivates the following definition:
\begin{definition}\label{DefFn} A subset $A$ of a countable group $\Gamma$ is said to be of type $F_n$ (respectively type $FP_n$) if $[A]_c$ is coarsely $n$-connected (respectively coarsely $n$-acyclic).
\end{definition}
If $A$ is a subgroup of $\Gamma$, we recover the usual finiteness properties. In the context of Definition~\ref{DefFn} Remark~\ref{rem:hurewicz} states that
\[
F_n \implies \FP_n\quad\text{and}\quad F_2 \wedge \FP_n \implies F_n\text{.}
\]

\begin{remark}[Beyond countable groups]
The assumption that $\Gamma$ be countable can be weakened substantially - all we need here is a canonical coarse class of pseudo-metrics on $\Gamma$. For example, using the class of left-invariant proper continuous metrics \cite[Milestone~4.A.8]{CornulierDeLaHarpe16} one can define finiteness properties of subsets of locally compact second countable groups in the same way. If one is willing to work with coarse equivalence classes of coarse spaces in the sense of Roe \cite{Roe} rather than metric spaces, then the countability assumptions can be dropped; in fact, a canonical coarse structure can be defined not only on all locally compact groups, but even on all locally bounded groups in the sense of \cite{Rosendal}. For such groups and their subsets, finiteness properties can be defined as above.
\end{remark}

\subsection{Approximate subgroups}
We will be interested in finiteness properties of a special class of subsets of groups called approximate subgroups. We recall the definition and fix some notation:
\begin{definition}
Let $\Gamma$ be a group. A subset $\Lambda \subseteq \Gamma$ is called an \emph{approximate subgroup} if 
\begin{enumerate}[({AG}1)]
\item $\Lambda$ is symmetric and unital, i.e.\ $\Lambda = \Lambda^{-1}$ and $e \in \Lambda$;
\item there exists a finite subset $F_\Lambda \subseteq  G$ such that $\Lambda^2 \subseteq \Lambda F_\Lambda$. 
\end{enumerate}
If $\Lambda$ generates $\Gamma$ then the pair $(\Lambda, \Gamma)$ is called an \emph{approximate group}. 
\end{definition}
If $\Lambda$ is an approximate subgroup of $\Gamma$ then we denote by $\Lambda^\infty$ the subgroup of $\Gamma$ the group generated by $\Gamma$. By definition, $(\Lambda, \Lambda^\infty)$ is then an approximate group. A \emph{global morphism} between approximate groups $(\Lambda, \Lambda^\infty)$ and $(\Delta, \Delta^\infty)$ is a group homomorphism $\Lambda^\infty \to \Delta^\infty$ which maps $\Lambda \to \Delta$ and there is an obvious notion of a (global) isomorphism.

As before, we can associate with every countable approximate group $(\Lambda, \Lambda^\infty)$ a coarse equivalence class $[\Lambda]_c$.  We say that two approximate subgroups $\Lambda_1$ and $\Lambda_2$ are \emph{coarsely equivalent} if $[\Lambda_1]_c = [\Lambda_2]_c$. This implies in particular that they have the same finiteness properties.

By definition, isomorphic approximate subgroups are coarsely equivalent. Moreover, let us call two subsets $\Lambda, \Lambda'$ of a group $G$ \emph{commensurable} if there exist finite subsets $F_1, F_2 \subseteq G$ such that $\Lambda \subseteq \Lambda' F_1$ and $\Lambda' \subseteq \Lambda F_2$. We observe that two commensurable approximate subgroups of a group are coarsely equivalent. 

\subsection{Model sets and descent}\label{SecModelSets}
Examples of approximate subgroups arise from cut-and-project constructions as follows. We say that $(G, H, \Gamma)$ is a \emph{generalized cut-and-project scheme} if $G$ and $H$ are topological groups and $\Gamma < G \times H$ is a discrete subgroup, which is mapped injectively under the projection $\pi_G \colon G \times H \to G$ and to a dense subgroup under the projection $\pi_H\colon G \times H \to H$. If moreover $G$ and $H$ are locally compact second countable  groups (\emph{lcsc} from now on) and $\Gamma$ is a (possibly non-uniform) lattice in $G\times H$ then we refer to $(G, H, \Gamma)$ as a \emph{cut-and-project scheme}. This is in accordance with the terminology from \cite{BH, BHP1}.

Given a generalized cut-and-project scheme $(G, H, \Gamma)$ we set $\Gamma_G \defeq \pi_G(\Gamma)$, $\Gamma_H \defeq \pi_H(\Gamma)$ and define the associated \emph{$*$-map} $\tau = \pi_H \circ \pi_G|_{\Gamma}^{-1}\colon \Gamma_G \to H$. 

\begin{definition} Let $(G, H, \Gamma)$ be a generalized cut-and-project scheme. A compact symmetric identity neighborhood $W\subseteq H$ will be called a \emph{window}. Given a window $W$, the associated \emph{generalized model set} is defined as 
\[
\Lambda(G, H, \Gamma, W) \defeq \tau^{-1}(W) = \pi_G((G\times W) \cap \Gamma).
\]
If $(G, H, \Gamma)$ is actually a cut-and-project scheme then $\Lambda(G, H, \Gamma, W) $ is called a \emph{model set}.
\end{definition}
\begin{lemma}\label{GenModelSetAppGrp} Every generalized model set $\Lambda = \Lambda(G, H, \Gamma, W)$ is an approximate subgroup of the ambient topological group $G$.
\end{lemma}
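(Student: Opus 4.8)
The plan is to verify the two axioms (AG1) and (AG2) directly from the definitions, using the density of $\Gamma_H$ in $H$ and the fact that $\tau$ is a group homomorphism on $\Gamma_G$ (being the composite $\pi_H \circ (\pi_G|_\Gamma)^{-1}$ of group homomorphisms, since $\pi_G|_\Gamma$ is a bijection onto $\Gamma_G$). Symmetry and unitality in (AG1) are immediate: $e \in \Lambda$ because $\tau(e) = e \in W$ as $W$ is an identity neighborhood, and $\Lambda = \Lambda^{-1}$ because $W = W^{-1}$ and $\tau$ is a homomorphism, so $\gamma \in \Lambda \iff \tau(\gamma) \in W \iff \tau(\gamma)^{-1} = \tau(\gamma^{-1}) \in W \iff \gamma^{-1} \in \Lambda$.

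The substantive point is (AG2): producing a finite set $F_\Lambda \subseteq G$ with $\Lambda^2 \subseteq \Lambda F_\Lambda$. First I would observe that since $\tau$ is a homomorphism, $\Lambda^2 = \tau^{-1}(W) \cdot \tau^{-1}(W) \subseteq \tau^{-1}(W^2)$ (inside $\Gamma_G$), and $W^2$ is again a compact symmetric identity neighborhood. So it suffices to show that $\tau^{-1}(W^2)$ is covered by finitely many right translates of $\tau^{-1}(W)$. The key input is that $\Gamma$ is \emph{discrete} in $G \times H$: this forces $\Gamma \cap (G \times W^2)$, and in particular its image $\tau^{-1}(W^2) \times$-graph, to be locally finite in the $H$-direction in the following sense. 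Cover the compact set $W^2$ by finitely many translates $w_1 W, \dots, w_m W$ of the interior of $W$ — possible since $W$ is an identity neighborhood so $\{w\, \mathring{W} : w \in W^2\}$ is an open cover of $W^2$. For each index $i$ with $\tau^{-1}(w_i W) \neq \emptyset$, pick a witness $\gamma_i \in \Gamma_G$ with $\tau(\gamma_i) \in w_i W$; then for any $\lambda \in \tau^{-1}(W^2)$ we have $\tau(\lambda) \in w_i W$ for some $i$, whence $\tau(\gamma_i^{-1}\lambda) = \tau(\gamma_i)^{-1}\tau(\lambda) \in W^{-1} w_i^{-1} w_i W = W^{-1} W = W^2$ — which is not yet $W$, so a single cover is not enough and one must iterate or be more careful.

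The correct finer argument: fix a compact symmetric identity neighborhood $W_0$ with $W_0^2 \subseteq W$, which exists by continuity of multiplication in $H$. Cover $W^2$ by finitely many translates $t_1 W_0, \dots, t_m W_0$ with $t_j \in W^2$. Discard those $j$ for which $t_j W_0 \cap \tau(\Gamma_G) = \emptyset$; for the rest pick $\delta_j \in \Gamma_G$ with $\tau(\delta_j) \in t_j W_0$, and set $F_\Lambda \defeq \{\pi_G(\delta_j)\}_j \subseteq G$, viewing $\delta_j \in \Gamma_G \subseteq G$. Then given $\lambda \in \Lambda^2 \subseteq \tau^{-1}(W^2)$, choose $j$ with $\tau(\lambda) \in t_j W_0$; then $\tau(\lambda \delta_j^{-1}) = \tau(\lambda)\tau(\delta_j)^{-1} \in t_j W_0 W_0^{-1} t_j^{-1} \subseteq t_j W_0^2 t_j^{-1}$, which still involves conjugation and need not land in $W$. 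I therefore expect the cleanest route is to work on the $H$-side with $\Gamma_H \cong \Gamma_G$ and exploit that $\tau^{-1}(W^2)$, transported to $H$ via $\tau$, is exactly $\Gamma_H \cap W^2$, a discrete (since $\Gamma$ is discrete and $W^2$ is compact, $\Gamma_H \cap W^2$ is finite!) subset of $H$ — wait, $\Gamma_H$ need not be discrete. The honest resolution: $\Gamma \cap (G \times W^2)$ need not be finite because $G$ is not compact, but its projection to $H$ has \emph{relatively compact} closure, and one covers $W^2$ by finitely many left-translates $w_1 W, \dots, w_m W$ of $W$ with each $w_i \in \tau(\Gamma_G)$ (using density of $\Gamma_H = \tau(\Gamma_G)$ in $H$ to choose the $w_i$ inside the image), pick $\gamma_i \in \Gamma_G$ with $\tau(\gamma_i) = w_i$; then $\tau(\lambda) \in w_i W = \tau(\gamma_i) W$ gives $\tau(\gamma_i^{-1}\lambda) \in W$, so $\gamma_i^{-1}\lambda \in \tau^{-1}(W) = \Lambda$, i.e. $\lambda \in \gamma_i \Lambda$. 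This shows $\Lambda^2 \subseteq \bigcup_i \gamma_i \Lambda$, a \emph{left}-translate covering; applying symmetry (invert everything) converts this to the right-translate form $\Lambda^2 \subseteq \Lambda F_\Lambda$ with $F_\Lambda = \{\gamma_i^{-1} : i\} \subseteq G$. The main obstacle is thus purely the bookkeeping of choosing the finite cover of $W^2$ by $W$-translates \emph{centered at points of the dense image $\Gamma_H$} rather than at arbitrary points of $H$ — density of $\Gamma_H$ in $H$ is exactly what makes this possible, and compactness of $W^2$ makes it finite.
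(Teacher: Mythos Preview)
Your final argument is correct and is precisely the standard proof from \cite[Proposition~2.13(ii)]{BH} that the paper cites without reproducing: cover the compact set $W^2$ by finitely many translates $\tau(\gamma_i)\mathring{W}$ with $\gamma_i \in \Gamma_G$ (possible since $\Gamma_H$ is dense and $W$ has non-empty interior), deduce $\Lambda^2 \subseteq \tau^{-1}(W^2) \subseteq \bigcup_i \gamma_i\Lambda$, and invert to get the right-translate form. The several abandoned attempts preceding it (the $W_0^2 \subseteq W$ route, the discreteness red herring) are unnecessary and should be excised---the key point you identify at the end, that the cover of $W^2$ should be by translates \emph{centered in the dense image} $\Gamma_H$, is exactly right and is all that is needed.
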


For actual model sets this statement is \cite[Proposition~{2.13}(ii)]{BH} but the proof carries over verbatim to the general case. In the actual model set case one can say much more: In the terminology of \cite{BH}, the model set $\Lambda(G, H, \Gamma, W)$ is a strong approximate lattice in $G$, and even a strong uniform approximate lattice if $\Gamma$ is a uniform lattice, cf. \cite[Proposition~2.13]{BH} and \cite[Corollary~3.5]{BHP1}.

Note that if $\Lambda = \Lambda(G, H, \Gamma, W)$ is a generalized model set and $\widetilde{\Lambda} \defeq (G \times W) \cap \Gamma \subseteq G \times H$ then $\pi_G$ induces an isomorphism $(\widetilde{\Lambda}, \widetilde{\Lambda}^\infty) \to (\Lambda, \Lambda^\infty)$ of approximate groups and $\widetilde{\Lambda}^\infty < \Gamma$ is a discrete subgroup of $G \times H$.

 The following proposition provides a very general descent principle for model sets. It generalizes at the same time the argument from \cite[Proposition 2.13(iii)]{BH} concerning relative density of uniform model sets and the arguments used in \cite[§12]{godement64} to reduce $S$-adelic reduction theory to adelic reduction theory. We will use this principle below to reduce  $S$-adelic approximate reduction theory to adelic reduction theory.
 
 \begin{proposition}[General descent principle]\label{prop:descent}
Let $(G, H, \Gamma)$ be a generalized cut-and-project scheme, let $W \subseteq H$ a window and let $\Lambda = \Lambda(G, H, \Gamma, W)$. Then for all compact subsets $I, K \subseteq H$ there exist finite sets $E,F \subseteq G$ such that for all subsets $\Pi \subseteq G$,  
\[
\Lambda F \Pi, E \Lambda \Pi \supseteq \pi_G(\Gamma (\Pi\times K) \cap (G \times I))\text{.}
\]
\end{proposition}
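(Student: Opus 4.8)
The plan is to reduce the statement to a finiteness property of the discrete group $\Gamma$ acting on $G\times H$, exploiting that $W$ is a neighbourhood of the identity. The key point is that $\widetilde\Lambda = (G\times W)\cap\Gamma$ ``fills up'' the slab $G\times I$ in the sense that finitely many left translates of $\widetilde\Lambda$ by elements of $\Gamma$ lying in $G\times(\text{something compact})$ already cover $G\times I\cap\Gamma$-translates of a given piece. Concretely, fix compact $I,K\subseteq H$. Since $\Gamma$ maps densely into $H$, we have $H = \Gamma_H\cdot \mathring W$ where $\mathring W$ is the interior of $W$; intersecting with the compact set $I W^{-1}$ (or $K W^{-1}$, whichever is needed) and using compactness, there is a \emph{finite} subset $\Phi\subseteq\Gamma$ such that $IW^{-1}\subseteq \pi_H(\Phi)\cdot \mathring W\subseteq \pi_H(\Phi)\cdot W$, and likewise a finite $\Psi\subseteq\Gamma$ for the pair involving $K$. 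The finite sets $E$ and $F$ will then be $\pi_G(\Phi)^{\pm1}$ and $\pi_G(\Psi)^{\pm1}$ (up to taking inverses and reordering, dictated by which side of the product one wants).

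First I would unwind the right-hand side: an element of $\pi_G(\Gamma(\Pi\times K)\cap(G\times I))$ has the form $\pi_G(\gamma)$ where $\gamma = \delta\cdot(\pi,k)$ with $\delta\in\Gamma$, $\pi\in\Pi$, $k\in K$, and $\pi_H(\gamma)\in I$. (Here I'm implicitly viewing $\Pi\subseteq G$ as sitting inside $G\times H$ via $\pi\mapsto(\pi,e)$; one should be slightly careful about conventions, but the content is that $\pi_H(\delta)\,k\in I$, hence $\pi_H(\delta)\in I K^{-1}$.) Now write $\pi_H(\delta)\in IK^{-1}$: by the covering property there is $\phi\in\Phi$ (enlarging $\Phi$ to handle the compact set $IK^{-1}$ rather than $IW^{-1}$) with $\pi_H(\phi)^{-1}\pi_H(\delta)\in W$, i.e.\ $\phi^{-1}\delta\in (G\times W)$. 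Since $\phi^{-1}\delta\in\Gamma$ as well, we get $\phi^{-1}\delta\in\widetilde\Lambda$, so $\delta\in\phi\,\widetilde\Lambda$ and therefore $\pi_G(\delta)\in\pi_G(\phi)\,\pi_G(\widetilde\Lambda) = \pi_G(\phi)\,\Lambda$. Finally $\pi_G(\gamma)=\pi_G(\delta)\,\pi_G(\pi,k)=\pi_G(\delta)\,\pi$ (if $\pi\in\Pi$ is taken in the $G$-slot), so $\pi_G(\gamma)\in \pi_G(\phi)\,\Lambda\,\pi \subseteq E\Lambda\Pi$ with $E\defeq\pi_G(\Phi)$. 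This gives the inclusion into $E\Lambda\Pi$.

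For the inclusion into $\Lambda F\Pi$, I would run the same argument but push the correction term to the other side: instead of $\phi^{-1}\delta\in\widetilde\Lambda$, write $\delta = (\delta\psi^{-1})\psi$ and arrange $\delta\psi^{-1}\in\widetilde\Lambda$ by choosing $\psi\in\Gamma$ with $\pi_H(\delta)\pi_H(\psi)^{-1}\in W$, which is possible (for $\psi$ ranging over a finite set $\Psi$) again because $\pi_H(\delta)$ lies in the compact set $IK^{-1}$ and $\Gamma_H$ is dense, using $W=W^{-1}$; then $F\defeq \pi_G(\Psi)^{-1}$, and $\pi_G(\gamma)=\pi_G(\delta\psi^{-1})\,\pi_G(\psi)\,\pi \in \Lambda\,F\,\Pi$ once one checks $\pi_G(\psi)\in F$ and commutes things into the stated order — here one uses that the statement only asserts containment, so reordering via the finite set is harmless (absorb $\pi_G(\psi)$ into $F$ by symmetrizing $F$ if necessary). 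The claimed finite sets $E,F$ depend only on $I,K$ (through the compact set $IK^{-1}$) and on $W$, not on $\Pi$, which is exactly what is asserted.

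The main obstacle is bookkeeping rather than mathematics: one must be careful that the density-plus-compactness covering argument produces a \emph{single} finite set that works uniformly, and that the identifications of $\Pi\subseteq G$ with $\Pi\times\{e\}\subseteq G\times H$, the placement of $\Pi$ on the left versus right, and the symmetrization of $E$ and $F$ are all consistent with the precise form $\Lambda F\Pi \cap E\Lambda\Pi\supseteq\cdots$ demanded in the statement. The genuinely essential input is only that $W$ is a compact identity \emph{neighbourhood} (so its translates under the dense subgroup cover any compact set in $H$) together with $\Gamma\cap(G\times W)=\widetilde\Lambda$ projecting onto $\Lambda$; everything else is elementary manipulation of the group law in $G\times H$.
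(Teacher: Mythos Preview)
Your proposal is correct and follows essentially the same route as the paper's proof: both use density of $\Gamma_H$ together with compactness of $IK^{-1}$ to produce a finite set of $\Gamma$-elements whose $H$-projections, multiplied against $W$, cover $IK^{-1}$, and then peel off such an element from $\delta\in\Gamma$ (your notation; the paper writes $(\gamma,\tau(\gamma))$) to land in $\widetilde\Lambda$. The only differences are cosmetic---the paper phrases things via the $*$-map $\tau\colon\Gamma_G\to H$ rather than working directly in $G\times H$---and your slip writing $F=\pi_G(\Psi)^{-1}$ where it should be $F=\pi_G(\Psi)$ is harmless, as you yourself note.
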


\begin{proof}
Since $\Gamma_H$ is dense in $H$ and $W$ has non-empty interior, the set $W\Gamma_H$ covers all of $H$ and in particular the compact set $I K^{-1}$. By compactness there is a finite set $F' \subseteq \Gamma_H$ such that $I K^{-1}$ is already contained in $WF'$. We can write $F'$ as $F' = \tau(F)$ for some finite subset $F \subseteq \Gamma_G$.

We claim that $F$ is as needed. To see this let $g \in \pi_G(\Gamma (\Pi \times K) \cap (G \times I))$ be arbitrary. This means that there is an $h \in I$ such that $(g,h) \in \Gamma (\Pi \times K)$. Write $(g,h) = (\gamma,\tau(\gamma))(\pi,k)$ according to that decomposition.

Then
\[
(g\pi^{-1},hk^{-1}) = (\gamma,\tau(\gamma)) \in (g\Pi^{-1} \times hK^{-1}) \cap \Gamma \subseteq (g\Pi^{-1} \times IK^{-1}) \cap \Gamma\text{.}
\]
In particular, the right hand side is non-empty.

By choice of $F$ we find that
\begin{align*}
\pi_G(G \times IK^{-1} \cap \Gamma) &\subseteq \pi_G(G \times (W\tau(F)) \cap \Gamma)\\
& = \pi_G(G \times W \cap \Gamma)F\\
&= \Lambda F\text{.}
\end{align*}

This shows that $g\Pi^{-1} \cap \Lambda F \ne \emptyset$ meaning that $g \in \Lambda F \Pi$.

The proof for $E$ is similar: $\Gamma_H W$ covers $IK^{-1}$ giving rise to a finite $E' \subseteq \Gamma_H$ with $IK^{-1} \subseteq E'W$. Writing $E' = \tau(E)$ gives the required set. Indeed, $\pi_G(G \times IK^{-1} \cap \Gamma) \subseteq E\Lambda$ so that, for $g$ as above, $g\Pi^{-1} \cap E\Lambda \ne \emptyset$ meaning $g \in E\Lambda\Pi$.
\end{proof}

\begin{remark}
Compactness of $W$ is not needed in Proposition~\ref{prop:descent}.
\end{remark}

\begin{remark}\label{rem:descent_detail}
The sets $E$ and $F$ in Proposition~\ref{prop:descent} are in fact contained in $\Gamma_G = \pi_G(\Gamma)$.
\end{remark}

As a first application of Proposition \ref{prop:descent} we show that up to commensurability model sets do not depend on the choice of window.

\begin{corollary}\label{Commens} Let $(G, H, \Gamma)$ and $(G,H,\Gamma')$ be generalized cut-and-project schemes with $\Gamma$ and $\Gamma'$ commensurable and let $W, W' \subseteq H$ be two windows. Then $\Lambda \defeq \Lambda(G, H, \Gamma, W)$ and $\Lambda' \defeq \Lambda(G, H, \Gamma', W')$ are commensurable.
\end{corollary}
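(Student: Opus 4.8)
The plan is to apply the general descent principle (Proposition \ref{prop:descent}) twice, once in each direction, after absorbing the commensurability of $\Gamma$ and $\Gamma'$ into suitable compact sets. First I would record what commensurability of the lattices buys us: there exist finite sets $C_1, C_2 \subseteq G \times H$ with $\Gamma \subseteq \Gamma' C_1$ and $\Gamma' \subseteq \Gamma C_2$. Writing the $G$- and $H$-components separately, the $H$-components of $C_1$ form a finite (hence compact) subset $K_1 \subseteq H$, and similarly we get a compact $K_2 \subseteq H$ from $C_2$. The point is that an element of $\Lambda'$ comes from a point of $\Gamma' \cap (G \times W')$, which lies in $\Gamma C_2 \cap (G\times W')$, so its $H$-coordinate after translating by the appropriate component of $C_2$ lands in the compact set $I \defeq W' K_2^{-1}$ (up to reshuffling). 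This realizes $\Lambda'$ (projected appropriately) inside a set of the form $\pi_G(\Gamma(\Pi \times K) \cap (G \times I))$ to which Proposition \ref{prop:descent} applies with $\Pi$ a singleton, or more precisely with $\Pi$ absorbing the finitely many $G$-components of $C_2$.

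Concretely, for the inclusion $\Lambda' \subseteq \Lambda F$ (finite $F$), I would take $\Pi = \{e\}$, let $K = K_2$ record the $H$-parts of $C_2$ and $E_G$ the $G$-parts of $C_2$, and set $I = W' K_2^{-1}$. Then for $g \in \Lambda'$, choosing $(g,h)\in \Gamma' \cap (G \times W')$ and writing $(g,h) = \gamma c$ with $\gamma \in \Gamma$, $c \in C_2$, we get that $g$ (after right-translation by the $G$-part of $c^{-1}$, which ranges over a finite set $E_G^{-1}$) lies in $\pi_G(\Gamma(\{e\}\times K_2)\cap(G\times W'))$. Shrinking to a point and feeding $I = W'K_2^{-1}$, $K = K_2$ into Proposition \ref{prop:descent} produces a finite $F \subseteq G$ with $\pi_G(\Gamma(\{e\}\times K_2)\cap(G\times W')) \subseteq \Lambda F$; combining with the finite translation set $E_G$ gives $\Lambda' \subseteq \Lambda (F E_G)$, a finite translate. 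The reverse inclusion $\Lambda \subseteq \Lambda' F'$ is symmetric, using $C_1$ and the cut-and-project scheme $(G,H,\Gamma')$ with window $W'$.

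The main obstacle is purely bookkeeping: one must be careful that Proposition \ref{prop:descent} is stated for a single scheme $(G,H,\Gamma)$ and window $W$, whereas here two different lattices and two different windows appear, so the commensurability data $C_1, C_2$ must be pushed entirely into the compact sets $I$ and $K$ and the finite sets $\Pi$ (or post-composed finite translations) before the proposition can be invoked. There is no genuine difficulty beyond this — the density of $\Gamma_H$ (and $\Gamma'_H$) in $H$ needed to apply Proposition \ref{prop:descent} holds by the standing hypothesis that these are generalized cut-and-project schemes, and compactness of windows ensures all the relevant sets $W'K_2^{-1}$, etc., are compact. I would also remark that it suffices to treat the case $\Gamma = \Gamma'$ and $W \ne W'$ (taking $K_1 = K_2 = \{e\}$), and then the general case by composing with the case $W = W'$, $\Gamma \ne \Gamma'$; but doing it in one stroke as above is no harder.
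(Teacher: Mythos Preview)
Your proof is correct and follows essentially the same strategy as the paper: both reduce the commensurability to an application of the descent principle (Proposition~\ref{prop:descent}), with the compact data supplied by the windows and the finite data by the commensurability of $\Gamma$ and $\Gamma'$. The paper streamlines by first reducing without loss of generality to $W \subseteq W'$ and $\Gamma \subseteq \Gamma'$ (making one inclusion trivial and needing only a single application with $K = \pi_H(T)$ for a transversal $T$ of $\Gamma\backslash\Gamma'$), whereas you treat both directions symmetrically; your bookkeeping is fine, though note that $I = W'$ already suffices in your application---the choice $I = W'K_2^{-1}$ is harmlessly larger than needed.
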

\begin{proof}
We may assume without loss of generality that $W \subseteq W'$ and that $\Gamma \subseteq \Gamma'$ so that $\Lambda \subseteq \Lambda'$. For the converse let $T$ be a transversal for $\Gamma \backslash \Gamma'$ and apply Proposition \ref{prop:descent} with $I \defeq W'$, $K\defeq \pi_H(T)$ and $\Pi \defeq \{1_G\}$ to find a finite set $F$ such that $\Lambda F \supset \Lambda'$.
\end{proof}

In the situation of Corollary \ref{Commens} we refer to the commensurability class of $\Lambda$ as the \emph{canonical commensurability class} of the generalized cut-and-project scheme $(G, H, \Gamma)$ and denote the associated coarse class by $[(G, H, \Gamma)]_c$. For later reference we also record the following.

\begin{lemma}\label{lem:coarse_equiv}
Let $(G_1,H_1,\Gamma_1)$ and $(G_2,H_2,\Gamma_2)$ be generalized cut-and-project schemes and let $\varphi = \varphi_G \times \varphi_H \colon G_1 \times H_1 \to G_2 \times H_2$ be a homomorphism respecting the product decomposition. Assume that $\ker \varphi \cap \Gamma_1$ and $[\Gamma_2 : \varphi(\Gamma_1) \cap \Gamma_2]$ are finite. Then, for any choice of windows $W_1$ and $W_2$, the approximate subgroups $\Lambda_1 = \Lambda(G_1,H_1,\Gamma_1,W_1)$ and $\Lambda_2 = \Lambda(G_2,H_2,\Gamma_2,W_2)$ are coarsely equivalent.
\end{lemma}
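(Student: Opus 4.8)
The plan is to reduce the statement to two simpler situations that are each handled by the descent principle (Proposition~\ref{prop:descent}), namely the case where $\varphi$ is injective with finite-index image on the lattice, and the case where $\varphi$ is a quotient map with finite kernel intersected with the lattice. More precisely, factor $\varphi$ as $G_1\times H_1 \twoheadrightarrow (G_1\times H_1)/(\ker\varphi) \hookrightarrow G_2\times H_2$. Both intermediate steps respect the product decomposition since $\varphi$ does, so it suffices to show that the model set of a generalized cut-and-project scheme is coarsely invariant under (i) replacing $\Gamma$ by a commensurable subgroup in the same $G\times H$, (ii) passing to a finite-index overlattice, and (iii) pushing forward along a product homomorphism whose kernel meets $\Gamma$ in a finite group. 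Case (i) (and hence (ii), since finite index in either direction is a special case of commensurability up to replacing $\varphi(\Gamma_1)$ by $\varphi(\Gamma_1)\cap\Gamma_2$) is essentially Corollary~\ref{Commens}: commensurable lattices give commensurable model sets, hence coarsely equivalent ones. The only genuinely new ingredient is case (iii).

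For case (iii), suppose $\varphi = \varphi_G\times\varphi_H$ is surjective onto $G_2\times H_2$ with $N \defeq \ker\varphi \cap \Gamma_1$ finite, and set $\Gamma_2 \defeq \varphi(\Gamma_1)$; then $(G_2,H_2,\Gamma_2)$ is again a generalized cut-and-project scheme (injectivity of $\pi_{G_2}|_{\Gamma_2}$ follows because $\ker\varphi$ is a product $N_G\times N_H$ with $N_G \subseteq$ a complement that maps into $\ker\pi_{G_1}$... — more carefully, an element of $\Gamma_2$ in $\ker\pi_{G_2}$ lifts to an element of $\Gamma_1$ lying in $\varphi_G^{-1}(1)\times H_1$, and one checks using injectivity of $\pi_{G_1}|_{\Gamma_1}$ together with finiteness of $N$ that this forces the element to be trivial; density of $\pi_{H_2}(\Gamma_2)$ is automatic since $\varphi_H$ is continuous with dense image composed with a dense subgroup). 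The key point is to compare $\Lambda_1 = \Lambda(G_1,H_1,\Gamma_1,W_1)$ with $\varphi_G(\Lambda_1)$ on the one hand and with $\Lambda_2 = \Lambda(G_2,H_2,\Gamma_2,\varphi_H(W_1))$ on the other. For the first comparison, $\varphi_G$ restricted to $\Lambda_1^\infty$ is a group homomorphism with finite kernel $N_G \defeq \ker\varphi_G\cap\Lambda_1^\infty$ (finite because it injects into $N$ via the $*$-map, or directly because $\ker\pi_{G_1}|_{\Gamma_1}$ is trivial so $\ker\varphi\cap\Gamma_1 = N$ surjects onto $N_G$), hence it is a coarse equivalence onto its image $\varphi_G(\Lambda_1)$ when both carry the restrictions of word metrics, and $\varphi_G(\Lambda_1) = \pi_{G_2}((G_2\times\varphi_H(W_1))\cap\Gamma_2)$ up to the finite ambiguity coming from $N$. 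For the second, once we know $\varphi_G(\Lambda_1)$ is commensurable to $\Lambda_2$ inside $G_2$, the two are coarsely equivalent by the observation recorded just before Section~\ref{SecModelSets}. The commensurability $\varphi_G(\Lambda_1) \sim \Lambda_2$ is immediate once windows are adjusted: $\varphi_G((G_1\times W_1)\cap\Gamma_1) \subseteq (G_2\times\varphi_H(W_1))\cap\Gamma_2 \cdot(\text{finite})$ and conversely, where the finite correction again absorbs $N$; alternatively one invokes Corollary~\ref{Commens} after noting any two windows yield commensurable model sets.

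Assembling the pieces: starting from $\Lambda_1 = \Lambda(G_1,H_1,\Gamma_1,W_1)$, first replace $\Gamma_1$ by the finite-index subgroup $\Gamma_1' \defeq \varphi^{-1}(\varphi(\Gamma_1)\cap\Gamma_2)\cap\Gamma_1$ (finite index because $[\Gamma_2:\varphi(\Gamma_1)\cap\Gamma_2]<\infty$ and $\ker\varphi\cap\Gamma_1$ is finite), which by Corollary~\ref{Commens} changes the model set only within its commensurability class; then apply case~(iii) to the product homomorphism $\varphi$ restricted appropriately, landing inside $\Gamma_2$ with finite kernel, to conclude $\Lambda(G_1,H_1,\Gamma_1',W_1)$ is coarsely equivalent to $\Lambda(G_2,H_2,\varphi(\Gamma_1')\wedge\Gamma_2, W_2')$; finally use Corollary~\ref{Commens} once more to replace $\varphi(\Gamma_1')\cap\Gamma_2$ by $\Gamma_2$ and $W_2'$ by $W_2$. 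Transitivity of coarse equivalence finishes the argument.

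I expect the main obstacle to be purely bookkeeping: verifying that after restricting and pushing forward one still has a \emph{bona fide} generalized cut-and-project scheme — in particular that $\pi_{G_2}|_{\Gamma_2}$ is injective — and tracking the finite sets so that the various "up to finite index / up to a finite set" corrections genuinely combine into a commensurability. The conceptual content is light: every step is either Corollary~\ref{Commens}, the fact that a group homomorphism with finite kernel induces a coarse equivalence on any countable group (hence on approximate subgroups, with their canonical coarse structure), or the remark that commensurable approximate subgroups are coarsely equivalent. None of these individually is hard; the care lies in checking that $\ker\varphi$ respecting the product decomposition lets us factor $\varphi$ through a scheme at each stage without destroying injectivity on the $G$-side or density on the $H$-side.
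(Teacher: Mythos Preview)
Your approach is correct and rests on the same two ingredients the paper uses: Corollary~\ref{Commens} to freely change lattices within a commensurability class and windows within the same scheme, and the observation that a group homomorphism with finite kernel is a coarse equivalence onto its image. The difference is purely organizational. The paper avoids your factorization through $(G_1\times H_1)/\ker\varphi$ entirely: it first invokes Corollary~\ref{Commens} to declare the choice of windows immaterial, then \emph{chooses} $W_1 = \varphi_H^{-1}(W_2)$ and replaces $\Gamma_1$ by the finite-index subgroup $\Gamma_1\cap\varphi^{-1}(\Gamma_2)$, and with these choices computes directly that $\varphi_G(\Lambda_1)\subseteq\Lambda_2$. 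The map $\varphi_G\vert_{\Lambda_1}\colon\Lambda_1\to\Lambda_2$ then has finite fibers (from $\ker\varphi\cap\Gamma_1$ finite together with injectivity of $\pi_{G_2}\vert_{\Gamma_2}$) and image commensurable to $\Lambda_2$ (by Corollary~\ref{Commens} again, comparing $\varphi(\Gamma_1)$ with $\Gamma_2$), hence is a coarse equivalence.

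What the paper's route buys you is precisely the elimination of the bookkeeping you flag at the end: because no intermediate scheme is introduced, there is nothing to verify about injectivity of $\pi_{G_2}$ on an auxiliary lattice or density on the $H$-side at any intermediate stage. Your detour through an explicit quotient scheme is harmless but unnecessary; the clever window choice $W_1=\varphi_H^{-1}(W_2)$ does all the work of aligning the two model sets so that $\varphi_G$ lands one inside the other on the nose.
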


\begin{proof}
The choice of windows is immaterial by Corollary~\ref{Commens}. Having chosen $W_2$ arbitrarily, we take $W_1 = \varphi_H^{-1}(W_2)$. Replacing $\Gamma_1$ by the finite-index subgroup $\Gamma_1 \cap \varphi^{-1}(\Gamma_2)$ we may assume that $\varphi(\Gamma_1) < \Gamma_2$. We then have
\begin{multline*}
\varphi_G(\Lambda_1) = \varphi_G(\pi_{G_1}(\Gamma_1 \cap (G_1 \times W_1))) = \pi_{G_2}(\varphi(\Gamma_1 \cap (G_1 \times W_1))) =\\
\pi_{G_2}(\varphi_G(\Gamma_1) \cap (G_2 \times W_2)) \subseteq \pi_{G_2}(\varphi_G(\Gamma_2) \cap (G_2 \times W_2)) = \Lambda_2
\end{multline*}
The restriction of the map $\Lambda_1 \to \Lambda_2$ has finite fibers by assumption and relatively dense image by Corollary~\ref{Commens}. Hence it is a coarse equivalence.
\end{proof}

\subsection{Finiteness properties of model sets in $\cato$-groups}

In this subsection, $G$ denotes a locally compact second countable group. We refer to a left-invariant, proper, continuous pseudo-metric $d$ on $G$ as a \emph{left-admissible pseudo-metric}. Every lcsc group admits a left-admissible metric, and if $d, d'$ are left-admissible pseudo-metrics on $G$ then the identity map $\id \colon (G, d) \to (G, d')$ is a coarse equivalence \cite[Theorem~2.B.4, Corollary~4.A.6]{CornulierDeLaHarpe16}. We denote the associated coarse equivalence class by $[G]_c := [(G,d)]_c$. 

If $d$ is a left-admissible pseudo-metric on $G$ and $L<G$ is a closed subgroup, then $d|_{L \times L}$ is a left-admissible metric on $L$. We deduce that if $\Lambda$ is an approximate subgroup of $G$ such that $\Lambda^\infty \eqdef L$ is closed in $G$, then $[\Lambda]_c = [(\Lambda, d|_{\Lambda \times \Lambda})]_c$. However, in many cases of interest, notably for many model sets, this is not the case. Nevertheless we would like to represent $[\Lambda]_c$ using the geometry of $G$. At least for model sets, this is always possible:
\begin{lemma}\label{ModelCoarseMetric} Let $\Lambda = \Lambda(G, H, \Gamma, W)$ be a model set as above and let $d_G$ be a left-admissible pseudo-metric on $G$. Then  $[\Lambda]_c = [(\Lambda, d_G|_{\Lambda \times \Lambda})]_c$.
\end{lemma}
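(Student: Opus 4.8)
The plan is to compare the intrinsic coarse class $[\Lambda]_c$ — computed via a weighted Cayley metric on the abstract group $\Lambda^\infty$ — with the extrinsic metric $d_G|_{\Lambda \times \Lambda}$ inherited from $G$, and show the identity map between them is a coarse equivalence. One direction is essentially automatic: since $\Lambda \subseteq G$ and $d_G$ is a left-admissible (hence coarsely Lipschitz for the word metric of any compactly generated piece) pseudo-metric, the orbit map will be coarsely Lipschitz in one direction once we exhibit the generators of $\Lambda^\infty$ as moving points a bounded $d_G$-distance. Concretely, recall from the remark following Lemma \ref{GenModelSetAppGrp} that $\pi_G$ induces an isomorphism of approximate groups $(\widetilde\Lambda, \widetilde\Lambda^\infty) \to (\Lambda, \Lambda^\infty)$ where $\widetilde\Lambda = (G \times W)\cap\Gamma$ and $\widetilde\Lambda^\infty < \Gamma$ is \emph{discrete} in $G \times H$. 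So I would first replace $\Lambda$ by $\widetilde\Lambda$ and work inside $\Gamma < G \times H$, where the generated group is discrete and the situation is cleaner.

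Next I would build an explicit weighted Cayley metric on $\Lambda^\infty \cong \widetilde\Lambda^\infty$ adapted to the window. Since $\Lambda$ is an approximate subgroup we have $\Lambda^2 \subseteq \Lambda F_\Lambda$; the powers $\Lambda^n$ are covered by $\Lambda F_\Lambda^{n-1}$, so $\Lambda^\infty = \bigcup_n \Lambda^n$ and one may choose an ascending sequence of finite symmetric subsets $S_\ell$ with $S_\ell \supseteq \Lambda \cap (\text{ball of radius }\ell \text{ in } d_G)$ together with enough extra elements of $\Lambda^\infty$ to generate; take the associated weighted Cayley metric $d_{(S_\ell)_\ell}$, which represents $[\Lambda^\infty]_c$ and hence restricts to a metric representing $[\Lambda]_c$ on $\Lambda$. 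By construction the inclusion $(\Lambda, d_{(S_\ell)_\ell}|_\Lambda) \to (\Lambda, d_G|_\Lambda)$ is coarsely Lipschitz: an edge of length $\ell$ joins points within $d_G$-distance $\ell$ of each other (after a fixed additive constant absorbing the finitely many ``extra'' generators, each of which moves points a bounded $d_G$-amount). That handles one inequality.

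The reverse direction is the main obstacle and is where the model-set hypothesis (rather than merely generalized model set) is essential: I must show that if $\lambda, \mu \in \Lambda$ are $d_G$-close then they are connected by a short path in the weighted Cayley graph, i.e. $\lambda^{-1}\mu$ lies in a bounded power $\Lambda^N$ with $N$ depending only on $d_G(\lambda,\mu)$. Equivalently, writing $r = d_G(\lambda,\mu)$, I want $\Lambda \cap (r\text{-ball around }\lambda)$ to be contained in $\lambda \cdot \Lambda^{N(r)}$. Up in $G \times H$ this says: given $\gamma, \gamma' \in \widetilde\Lambda$ with $\pi_G(\gamma),\pi_G(\gamma')$ at $d_G$-distance $\le r$, the element $\gamma^{-1}\gamma' \in \Gamma$ has $G$-component in a bounded ball and $H$-component in $W^{-1}W \subseteq WW$, so $\gamma^{-1}\gamma'$ lies in $\Gamma \cap (B_r^G \times WW)$. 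Here the lattice property of $\Gamma$ in the lcsc group $G \times H$ enters: since $\Gamma$ is a lattice, it is in particular \emph{coarsely dense} / acts with the appropriate covolume condition, and more to the point $\Gamma \cap (B_r^G \times WW)$ can be covered by boundedly many translates of $\widetilde\Lambda = \Gamma \cap (G \times W)$. This last covering statement is exactly the content of the general descent principle: applying Proposition \ref{prop:descent} with $I = WW$, $K = WW$ (or suitable compact sets containing the relevant pieces) and $\Pi = \{1_G\}$ produces finite sets $E, F \subseteq G$ with $\pi_G(\Gamma \cdot (\{1\}\times K) \cap (G \times I)) \subseteq \Lambda F$, which after intersecting with the $d_G$-ball $B_r^G$ and noting $F$ is finite shows every such $\gamma^{-1}\gamma'$ lies in $\Lambda F$ with $F$ a fixed finite set; iterating/combining with the $d_G$-ball bound gives the required $N(r)$ controlling the word length. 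Assembling the two inequalities, the identity map $(\Lambda, d_{(S_\ell)_\ell}|_\Lambda) \to (\Lambda, d_G|_\Lambda)$ is a coarse equivalence, so $[\Lambda]_c = [(\Lambda, d_G|_{\Lambda\times\Lambda})]_c$.
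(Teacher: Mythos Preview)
Your argument can be made to work, but it takes a much longer route than the paper's. The paper's proof is essentially a two-line observation: pick any left-admissible metric $d_H$ on $H$ and form the max-metric $d$ on $G\times H$; since $\widetilde\Lambda^\infty<\Gamma$ is discrete (hence closed) in $G\times H$, the general remark immediately preceding the lemma applies and gives $[\widetilde\Lambda]_c=[(\widetilde\Lambda,d|_{\widetilde\Lambda\times\widetilde\Lambda})]_c$. Transporting along the isomorphism $\pi_G\colon\widetilde\Lambda\to\Lambda$, the metric representing $[\Lambda]_c$ is $d'(\lambda_1,\lambda_2)=\max\{d_G(\lambda_1,\lambda_2),d_H(w_1,w_2)\}$ with $(\lambda_j,w_j)\in\widetilde\Lambda$; since $W$ is compact and $d_H$ continuous, the second term is uniformly bounded, so $d'$ and $d_G|_{\Lambda\times\Lambda}$ differ by at most a constant. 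No Cayley graphs, no descent.

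By contrast you rebuild the coarse class via an explicit weighted Cayley metric and verify both coarse-Lipschitz directions by hand. Two points deserve comment. First, your appeal to Proposition~\ref{prop:descent} in the reverse direction is unnecessary: you have already noted that $\gamma^{-1}\gamma'\in\Gamma\cap(B_r^G\times W^2)$, and since $\Gamma$ is discrete and $B_r^G\times W^2$ is relatively compact (properness of $d_G$, compactness of $W$), this intersection is \emph{finite}. Hence $\lambda^{-1}\mu$ ranges over a fixed finite subset of $\Lambda^2\subseteq\Lambda^\infty$, which bounds the Cayley distance directly; the descent principle adds nothing here. Second, your claim that ``the model-set hypothesis (rather than merely generalized model set) is essential'' is misleading: neither the paper's argument nor yours uses that $\Gamma$ is a lattice. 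What is used is only that $G,H$ are lcsc (so left-admissible metrics exist) and that $\Gamma$ is discrete. The ``extra elements of $\Lambda^\infty$ to generate'' in your Cayley construction are also superfluous, since $\bigcup_\ell(\Lambda\cap B_\ell^G)=\Lambda$ already generates $\Lambda^\infty$.
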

\begin{proof} Fix an auxiliary left-admissible metric $d_H$ on $H$ and let $\widetilde{\Lambda} \defeq (G \times W) \cap \Gamma \subseteq G \times H$. A left-admissible metric on $G\times H$ is given by \[d((g_1,h_1), (g_2, h_2)) \defeq \max\{d_G(g_1, g_2), d_H(h_1, h_2)\}.\] Since $\widetilde{\Lambda}^\infty$ is discrete in $G \times H$, by the previous remark we have $[\widetilde{\Lambda}]_c = [\widetilde{\Lambda}, d|_{\widetilde{\Lambda} \times \widetilde{\Lambda}}]$. Since $\pi_G\colon (\widetilde{\Lambda}, \widetilde{\Lambda}^\infty) \to (\Lambda, \Lambda^\infty)$ is an isomorphism of approximate groups, the coarse equivalence class $[\Lambda]_c$ is represented by the metric $d'$ on $\Lambda$ given by
\[
d'(\lambda_1, \lambda_2) \defeq d(\lambda_1', \lambda_2'), \quad \text{where }\{\lambda_j'\} = \widetilde{\Lambda} \cap \pi_G^{-1}(\lambda_j).
\]
If we write $\lambda_j' = (\lambda_j, w_j)$ with $w_j \in W$ then we have
\[
d'(\lambda_1, \lambda_2) = \max\{d_G(\lambda_1, \lambda_2), d_H(w_1, w_2)\}.
\]
Since $d_H$ is continuous, hence bounded on the compact set $W \times W$, we deduce that $d'$ and $d_G$ restrict to coarsely equivalent metrics on $\Lambda$.
\end{proof}
More can be said if $G$ acts properly and continuously on a $\cato$-space $X$. Given a basepoint $o \in X$ and an approximate subgroup $\Lambda \subset G$ we denote by 
\[
\Lambda.o \defeq \{\lambda.o \mid \lambda \in \Lambda\} \subseteq X
\]
the quasi-orbit of $\Lambda$ in $X$. Given $r >0$ we will denote by $N_r(\Lambda.o)$ the $r$-neighborhood of this quasi-orbit; for different choices of $o$ these filtrations of $X$ are equivalent (cf. \cite{CHT}), hence they have the same essential connectedness properties.
\begin{proposition}[Finiteness criterion for $\cato$ model sets]\label{CheckFn} Let $(X, d)$ be a $\cato$-space with base point $o$ and let $G$ be a lcsc group acting properly and continuously by isometries on $(X,d)$. Then a model set $\Lambda \subseteq G$ is of type $F_{n}$ (respectively type $\FP_n$) if and only if the filtration $(N_r(\Lambda.o))_{r>0}$ is essentially $(n-1)$-connected (respectively essentially $(n-1)$-acyclic).
\end{proposition}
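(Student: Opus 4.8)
The plan is to reduce the statement to Brown's criterion (Theorem~\ref{thm:browns_criterion}) applied to a suitable contractible complex on which a genuine \emph{group} acts, and then transport the conclusion back to $\Lambda$ via the coarse machinery of Subsection~\ref{SecModelSets} and Lemma~\ref{ModelCoarseMetric}. First I would recall that, by the remark preceding Proposition~\ref{prop:descent}, the projection $\pi_G$ induces an isomorphism of approximate groups $(\widetilde\Lambda,\widetilde\Lambda^\infty)\to(\Lambda,\Lambda^\infty)$, where $\widetilde\Lambda=(G\times W)\cap\Gamma$ and $\widetilde\Lambda^\infty<\Gamma$ is a discrete subgroup of $G\times H$; in particular $[\Lambda]_c=[\widetilde\Lambda]_c$ and it suffices to compute the finiteness properties of the subset $\widetilde\Lambda$ of the countable group $\widetilde\Lambda^\infty$. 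The key geometric point is that, by Lemma~\ref{ModelCoarseMetric} (and its proof), $[\Lambda]_c$ is represented by $(\Lambda,d_G|_{\Lambda\times\Lambda})$, hence by the quasi-orbit $\Lambda.o\subseteq X$ with the induced metric: since $G$ acts properly and continuously by isometries on the $\cato$-space $X$, the orbit map $g\mapsto g.o$ is a coarse embedding of $(G,d_G)$ into $X$, so its restriction to $\Lambda$ is a coarse equivalence onto $\Lambda.o$, and $(N_r(\Lambda.o))_{r>0}$ is a filtration of $X$ cofinal among neighborhoods of this quasi-orbit (independent of $o$ up to equivalence of filtrations, cf.\ \cite{CHT} and Lemma~\ref{lem:equivalent_filtrations}).

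Next I would compare the Vietoris--Rips filtration of $\Lambda.o$ with the filtration $(N_r(\Lambda.o))_{r>0}$. Because $X$ is $\cato$, hence a geodesic space, there is a standard cofinality: for each $r$ the inclusion $\VR_r(\Lambda.o)\hookrightarrow N_{r'}(\Lambda.o)$ (for $r'$ of the order of $r$) and a nerve-type map $N_{r'}(\Lambda.o)\to\VR_{r''}(\Lambda.o)$ are mutually ``inverse up to the filtration'', so the two directed systems have the same essential connectedness and acyclicity. Concretely one uses that $N_r(\Lambda.o)$ deformation retracts (in $X$, which is contractible) in a controlled way, and that a point of $N_r(\Lambda.o)$ lies within $r$ of some vertex; this is exactly the kind of argument underlying Proposition~\ref{prop:retracts}, applied to the coarse equivalence between $\Lambda.o$ and any $r$-net in $N_r(\Lambda.o)$. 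Combining this with Corollary~\ref{cor:coarse_connected_invariant}, $\Lambda$ is of type $F_n$ (resp.\ $\FP_n$) iff $[\Lambda]_c=[\Lambda.o]_c$ is coarsely $(n-1)$-connected\,---\,wait, more precisely iff it is coarsely $n$-connected (resp.\ $n$-acyclic), which by the cofinality just discussed holds iff $(N_r(\Lambda.o))_{r>0}$ is essentially $n$-connected (resp.\ $n$-acyclic).

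Let me reorganize the indexing cleanly: by Definition~\ref{DefFn} and Definition of coarse $n$-connectedness, $\Lambda$ is of type $F_n$ iff $(\VR_r(\Lambda.o))_{r\ge 0}$ is essentially $n$-connected; by the cofinality of $(\VR_r(\Lambda.o))_r$ and $(N_r(\Lambda.o))_r$ as filtrations (of the contractible space $X$, using that $X$ is geodesic so balls of comparable radii interpolate the two systems) together with Lemma~\ref{lem:equivalent_filtrations}, this is equivalent to $(N_r(\Lambda.o))_{r>0}$ being essentially $n$-connected; and the same chain with $\tilde H_i$ in place of $\pi_i$ handles the $\FP_n$ case. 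The statement of the Proposition uses the shift to ``$(n-1)$-connected'' because of the convention in Brown's criterion; but since we are working directly with Vietoris--Rips and neighborhood filtrations rather than a classifying space, I would double-check the indexing against Proposition~\ref{FnVsFnClassic}: there $[\Gamma]_c$ coarsely $n$-connected $\iff$ type $F_n$, with no shift, because the contractible complex (the full simplex, or $\VR_r$) has the orbit itself as its $0$-skeleton. Hence I expect the correct formulation to match the Proposition as stated provided one reads ``essentially $(n-1)$-connected'' as referring to a filtration by \emph{subcomplexes of a model of $E\Lambda^\infty$}, whereas the neighborhood filtration $(N_r(\Lambda.o))_r$ should be essentially $n$-connected; I would reconcile this discrepancy explicitly, most likely concluding that in the intended reading $(N_r(\Lambda.o))_r$ plays the role of the \emph{coarsening} so that the $(n-1)$ is correct after one applies Proposition~\ref{FnVsFnClassic} to the group $\widetilde\Lambda^\infty$ acting on a $\Gamma$-complex rather than to $\Lambda.o$ directly.

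The main obstacle, then, is not any single hard estimate but getting the bookkeeping exactly right between three models of $[\Lambda]_c$: the Vietoris--Rips filtration of $\Lambda$ in a weighted-Cayley metric on $\widetilde\Lambda^\infty$, the Vietoris--Rips filtration of the quasi-orbit $\Lambda.o$, and the neighborhood filtration $(N_r(\Lambda.o))_r$ in $X$. The first equivalence is Lemma~\ref{ModelCoarseMetric} plus properness of the action (orbit map is a coarse embedding of $\widetilde\Lambda^\infty$, restricting to a coarse equivalence on $\widetilde\Lambda\cong\Lambda$); the second is a purely coarse statement proved exactly as in Alonso's argument / Proposition~\ref{prop:retracts}, using that $X$ is a geodesic contractible space so that $\VR_r$ and metric $r$-neighborhoods are cofinal filtrations with controlled comparison maps; and throughout one invokes Lemma~\ref{lem:equivalent_filtrations} and Corollary~\ref{cor:coarse_connected_invariant} to pass between equivalent filtrations and coarsely equivalent spaces. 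I would present the argument as: (1) $[\Lambda]_c=[\Lambda.o]_c$; (2) coarse $n$-connectedness of $\Lambda.o$ $\iff$ essential $n$-connectedness of $(N_r(\Lambda.o))_r$ (and ditto for acyclicity), by cofinality in the geodesic space $X$; (3) combine with Proposition~\ref{FnVsFnClassic}/Definition~\ref{DefFn} to translate into type $F_n$/$\FP_n$, taking care of the index shift exactly as in Brown's criterion.
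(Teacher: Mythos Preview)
Your outline in the final paragraph --- (1) identify $[\Lambda]_c$ with $[\Lambda.o]_c$ via the orbit map, (2) show that coarse $(n{-}1)$-connectedness of $\Lambda.o$ is equivalent to essential $(n{-}1)$-connectedness of $(N_r(\Lambda.o))_r$, (3) conclude --- is exactly the paper's argument. Step (1) is precisely Lemma~\ref{ModelCoarseMetric} plus properness of the action, and step (2) is isolated as Lemma~\ref{lem:metric_filtration}. The opening plan to ``reduce to Brown's criterion applied to a contractible complex with a genuine group action'' is a red herring: the paper never does this, and neither do you once you actually start arguing. Drop that framing.

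Two concrete points. First, the indexing: the paper itself is inconsistent (Definition~\ref{DefFn} and Proposition~\ref{FnVsFnClassic} say ``coarsely $n$-connected'', while the proof of Proposition~\ref{CheckFn} and Brown's criterion use the $(n{-}1)$ shift), so your confusion is understandable; just adopt the convention used in the proof, namely type $F_n$ $\Leftrightarrow$ coarsely $(n{-}1)$-connected, and move on. Second, and more substantively, your step (2) is too vague. You write ``nerve-type map'' and ``using that $X$ is geodesic'', but geodesic alone is not enough. The paper's argument (Lemma~\ref{lem:metric_filtration}) compares three objects: $N_r(\Lambda.o)$, the nerve $\operatorname{Ner}_r^X(\Lambda.o)$ of its cover by $r$-balls, and $\VR_r(\Lambda.o)=\operatorname{Ner}_r^{\Lambda.o}(\Lambda.o)$. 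The crucial step is that $N_r(\Lambda.o)\simeq\operatorname{Ner}_r^X(\Lambda.o)$ by the nerve lemma, and this requires the cover to be \emph{good}, i.e.\ that finite intersections of balls be contractible. That is where the $\cato$ hypothesis is actually used: metric balls in a $\cato$ space are convex, hence so are their intersections. You should make this explicit; otherwise the comparison between the Vietoris--Rips and neighborhood filtrations is not justified.
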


\begin{proof}
First observe that since $G$ acts properly on $G$ the pseudo-metric $d_G$ on $G$ given by
\[
d_G(g,h) \defeq d(g.o, h.o)
\]
is proper. It is also continuous (since the action is continuous) and left-invariant (since the action is isometric), hence left-admissible. It thus follows from Lemma \ref{ModelCoarseMetric} that $[\Lambda]_c$ is represented by $(\Lambda, d_G|_{\Lambda \times \Lambda})$, which is isometric to $(\Lambda.o, d|_{\Lambda.o \times \Lambda.o})$ via the quasi-orbit map $\lambda \mapsto \lambda.o$. In particular, $\Lambda$ is of type $F_n$ if and only if $(\Lambda.o, d|_{\Lambda.o \times \Lambda.o})$ is coarsely $(n-1)$-connected.
We have thus reduced the claim to the following lemma.
\end{proof}

\begin{lemma}\label{lem:metric_filtration}
Let $n \in \mathbb N$, let $(X, d)$ be a $\cato$-space and let $\Xi \subseteq X$ be a subset. Then the filtration $(N_r(\Xi))_{r>0}$ is essentially $(n-1)$-connected if and only if $(\Xi, d|_{\Xi \times \Xi})$ is coarsely $(n-1)$-connected. The same statement is true with ``essentially/coarsely $(n-1)$-connected'' replaced by ``essentially/coarsely $(n-1)$-acyclic''.
\end{lemma}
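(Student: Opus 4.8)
The plan is to compare the two directed systems $(N_r(\Xi))_{r>0}$ and $(\VR_s(\Xi))_{s\ge 0}$ by constructing mutually (coarsely) inverse simplicial and continuous maps between cofinal subsystems, so that ``essential $k$-connectedness'' of one transfers to the other. The geometric input, and the reason the hypothesis that $X$ is $\cato$ is used, is that any finite configuration of points in $X$ that is pairwise within distance $r$ lies in a ball of radius $r$ (via circumcenters, or simply by coning off to a fixed point of the configuration using geodesics), so a simplex of $\VR_r(\Xi)$ can be filled inside $X$ in a controlled way; conversely, a simplex of $X$ near $\Xi$ can be pushed onto nearby points of $\Xi$.

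First I would fix, for each point $x\in X$, a nearest point $p(x)\in\Xi$ (choosing one if several exist) whenever $N_r(\Xi)$ is under consideration, so that $d(x,p(x))\le r$ for $x\in N_r(\Xi)$. This gives a ``projection'' used in one direction. For the other direction, note that $\Xi \subseteq N_r(\Xi)$ tautologically. The first main step is: if $(N_r(\Xi))_r$ is essentially $(n-1)$-connected, then $(\VR_s(\Xi))_s$ is too. Given $s$, a map $f\colon S^k\to \VR_s(\Xi)$ ($k\le n-1$) has image in a subcomplex on finitely many vertices; realize it geometrically inside $X$ by sending each simplex $\{x_0,\dots,x_j\}$ of $\VR_s(\Xi)$ to the geodesic cone (iterated, in a fixed order of the vertices) on those points, which lands in $N_s(\Xi)$ and is continuous and well defined by $\cato$-convexity. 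Since $(N_r(\Xi))_r$ is essentially $(n-1)$-connected, this extends to a disk mapping into $N_{r}(\Xi)$ for some $r\ge s$; composing with the nearest-point projection $p\colon N_r(\Xi)\to\Xi$ and recording that $p$ moves points by at most $r$, the image disk lands in $\VR_{s'}(\Xi)$ for $s' = 3r$ (or any explicit constant $\cdot r$), filling the original sphere there. Hence the composite $\VR_s(\Xi)\to\VR_{s'}(\Xi)$ kills $\pi_k$, which is essential triviality.

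The second main step is the converse: if $(\VR_s(\Xi))_s$ is essentially $(n-1)$-connected, so is $(N_r(\Xi))_r$. Here, given $r$, a compact sphere $S^k\to N_r(\Xi)$ (or more precisely a map we may assume cellular with respect to a fine triangulation; by a standard subdivision argument any continuous map can be approximated so that the diameter of the image of each cell is as small as we like, say $\le \epsilon$) is pushed to $\Xi$ by $p$, turning it into a simplicial map into $\VR_{2r+\epsilon}(\Xi)\subseteq\VR_{s}(\Xi)$ for suitable $s$, at bounded distance (namely $\le r+\epsilon$) from the original map — this uses Lemma~\ref{lem:homotopy} in spirit, or just straight-line homotopies in $X$. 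Essential $(n-1)$-connectedness of the Vietoris--Rips system fills this in $\VR_{s'}(\Xi)$, and realizing that filling geometrically in $X$ by $\cato$-cones again lands in $N_{s'}(\Xi)$; so $N_r(\Xi)\to N_{r'}(\Xi)$ kills $\pi_k$ with $r'$ a function of $r$ only. For the acyclic versions one runs the identical argument with reduced singular (or simplicial) homology replacing homotopy groups, using that simplicial approximation and the $\cato$-coning maps are chain maps; one may alternatively cite Lemma~\ref{lem:homotopy} and the machinery of Proposition~\ref{prop:retracts} nearly verbatim, since what is really being proved is that $(\Xi, d|_\Xi)$ and the ``coarse space'' underlying $(N_r(\Xi))_r$ are coarsely equivalent in a way compatible with the two filtrations.

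The step I expect to be the main obstacle is making the passage from a continuous map of a sphere into $N_r(\Xi)$ to a simplicial map into a Vietoris--Rips complex fully rigorous while keeping all constants depending only on $r$ (not on the map): this requires choosing a triangulation of the sphere fine enough that images of cells have small diameter, which is possible by compactness and uniform continuity, but one must check that the resulting simplicial map and the homotopy back to the original map have controls independent of the sphere. The $\cato$ hypothesis does the heavy lifting in the reverse direction (filling simplices of bounded diameter inside $X$ with bounded-diameter images), so the only genuinely delicate bookkeeping is the uniformity of these constants; once that is in place, both directions are formal, and the ``$n-1$'' indexing matches because $\pi_i$ and $\tilde H_i$ for $i\le n-1$ are exactly what essential $(n-1)$-connectedness and $(n-1)$-acyclicity control.
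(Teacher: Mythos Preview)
Your approach is essentially correct and the worry you flag (uniformity of constants in the simplicial approximation step) is only bookkeeping, not a genuine obstruction: once you fix $\epsilon$, the Vietoris--Rips scale you land in depends only on $r$ and $\epsilon$, not on the particular sphere, and the straight-line homotopy between the original map and the ``project to $\Xi$, then cone back'' map stays in $N_{Cr}(\Xi)$ for an explicit constant $C$ because balls in a $\cato$ space are convex.

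However, the paper takes a cleaner route that avoids simplicial approximation entirely. Instead of directly comparing $N_r(\Xi)$ and $\VR_r(\Xi)$, it interposes the nerve $\Ner_r^X(\Xi)$ of the cover of $N_r(\Xi)$ by the balls $B_r(x)$, $x\in\Xi$. Since balls in a $\cato$ space are convex, all finite intersections of cover elements are convex (hence contractible or empty), so the nerve lemma gives a homotopy equivalence $\Ner_r^X(\Xi)\simeq N_r(\Xi)$ for every $r$, compatibly with the filtration. One then has obvious simplicial maps $\VR_r(\Xi)\to\Ner_r^X(\Xi)$ (if all pairwise distances are $\le r$, the $r$-balls share a common point) and $\Ner_r^X(\Xi)\to\VR_{2r}(\Xi)$ (if the $r$-balls meet, pairwise distances are $<2r$). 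Chasing two small commutative diagrams then transfers essential triviality in either direction with an explicit factor of $2$ in the filtration index. What the paper's approach buys is that the $\cato$ hypothesis is used exactly once (convexity of balls for the nerve lemma) and no continuous maps need to be approximated by simplicial ones; what your approach buys is that it is self-contained and does not invoke the nerve lemma, at the cost of the bookkeeping you anticipated.
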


\begin{proof}
For every $r$ we will consider three spaces: the neighborhood $N_r(\Xi)$, the nerve $\Ner_r^X(\Xi)$ of its cover by the balls $B_r(x), x \in \Xi$, and the nerve $\Ner_r^\Xi(\Xi)$ of the cover of $\Xi$ by $r$-balls. Note that the latter is the Vietoris--Rips complex $\VR_r(\Xi)$ by definition.

It is clear that $r$-balls that meet in $\Xi$ also meet in $X$ hence there is a natural map $\Ner_r^\Xi(\Xi) \to \Ner_r^X(\Xi)$.

In the other direction, note that if $B_r(x_i),i=0,\ldots,n$ meet in a point $x$ then $d(x_i,x_j) < 2r$. This shows that there is a map $\Ner_r^X(\Xi) \to \Ner_{2r}^\Xi(\Xi)$.

Since $X$ is $\cato$, every metric ball in $X$ is convex. Therefore the nerve cover lemma implies that there is a homotopy equivalence $\Ner_r^X(\Xi) \to N_r(\Xi)$.

Now suppose that $(\VR_r(\Xi))_r$ is essentially $(n-1)$-connected. If $r$ is arbitrary, let $s \ge r$ be such that $\VR_{2r}(\Xi) \to \VR_s(\Xi)$ induces trivial maps for $\pi_i$ up to $i \le n-1$. The following commuting diagram

\begin{center}
\begin{tikzpicture}[xscale=3,yscale=-2]
\node (rneigh) at (0,0) {$N_r(\Xi)$};
\node (rnerve) at (1,0) {$\Ner_r^X(\Xi)$};
\node (rrips) at (2.5,0) {$\Ner_{2r}^\Xi(\Xi) = \VR_{2 r}(\Xi)$};
\node (sneigh) at (0,1) {$N_s(\Xi)$};
\node (snerve) at (1,1) {$\Ner_s^X(\Xi)$};
\node (srips) at (2.5,1) {$\Ner_{s}^\Xi(\Xi) =\VR_s(\Xi)$};
\draw (rnerve) edge[->] node[anchor=south] {$\simeq$} (rneigh);
\draw[->]  (rnerve) -- (rrips);
\draw (snerve) edge[->] node[anchor=south] {$\simeq$} (sneigh);
\draw[->] (srips) -- (snerve);
\draw[->] (rneigh) -- (sneigh);
\draw[->]  (rnerve) -- (snerve);
\draw[->]  (rrips) -- (srips);
\end{tikzpicture}
\end{center}

shows that $\id_*(N_r(\Xi) \to N_s(\Xi))$ factors through $\id_*(\VR_{2r}(\Xi) \to \VR_s(\Xi))$ for $i < n$.

Similarly, if $(N_r(\Xi))_r$ is essentially $(n-1)$-connected, let $s \ge r$ be such that $\id_*\colon N_r(\Xi) \to N_s(\Xi)$ is trivial. Then the diagram
\begin{center}
\begin{tikzpicture}[xscale=3,yscale=-2]
\node (rneigh) at (0,0) {$N_r(\Xi)$};
\node (rnerve) at (1,0) {$\Ner_r^X(\Xi)$};
\node (rrips) at (2.5,0) {$\Ner_{r}^\Xi(\Xi) =\VR_{r}(\Xi)$};
\node (sneigh) at (0,1) {$N_s(\Xi)$};
\node (snerve) at (1,1) {$\Ner_s^X(\Xi)$};
\node (srips) at (2.5,1) {$\Ner_{2s}^\Xi(\Xi) =\VR_{2s}(\Xi)$};
\draw (rnerve) edge[->] node[anchor=south] {$\simeq$} (rneigh);
\draw[->]  (rrips) -- (rnerve);
\draw (snerve) edge[->] node[anchor=south] {$\simeq$} (sneigh);
\draw[->] (snerve) -- (srips);
\draw[->] (rneigh) -- (sneigh);
\draw[->]  (rnerve) -- (snerve);
\draw[->]  (rrips) -- (srips);
\end{tikzpicture}
\end{center}
shows that $\VR_r(\Xi) \to \VR_{2s}(\Xi)$ is trivial as well.

The proof for ``$(n-1)$-acyclic'' is the same with $\pi_i$ replaced by $\tilde{H}_i$.
\end{proof}

\section{$S$-arithmetic approximate groups}\label{sec:sarith_approx}

\subsection{Adeles and Ideles}

By a \emph{global field} we shall mean either a \emph{number field}, namely a finite extension of $\Q$, or a \emph{global function field}, namely a finite extension of $\F_p(t)$ for some prime $p$. A \emph{place} of $k$ is an equivalence class of absolute values on $k$. We denote by $V_k$ or simply $V$ the set of all places of $k$.

Let $s \in V$ be a place. Taking the metric completion of $k$ with respect to any absolute value representing $s$ results in the \emph{completion} $k_s$ of $k$ at $s$ which is a locally compact topological field. The place $s$ is then called \emph{infinite} (or \emph{Archimedean}) if the completion $k_s$ is an Archimedean field, i.e.\ $\R$ or $\C$, and \emph{finite} (or \emph{non-Archimedean}) otherwise. A function field has no infinite places, whereas the set of infinite places of a number field is non-empty and finite. If $S$ is a set of places, we denote by $S\fin$ and $S\inf$ the subset of finite and infinite places in $S$ respectively.

If $s$ is a finite place, the ring of integers $\calO_s$ of $k_s$ is compact and open. If $s$ is an infinite place then $k_s$ contains neither a compact subring nor a proper subring with open interior. For a finite set $S$ of places that contains all infinite places, the ring of \emph{$S$-adeles} is defined as
\begin{equation}\label{eq:s-adeles}
\A_S \defeq \prod_{s \in S} k_s \times \prod_{s \not \in S} \calO_s\text{.}
\end{equation}
If $S \subseteq S'$ are finite subsets of $V$, then there is an obvious inclusion $\A_S \to \A_{S'}$ and the ring of adeles of $k$ is defined as the colimit
\begin{equation}\label{eq:adeles}
\A \defeq \lim_S \A_S\text{.}
\end{equation}
The corresponding  \emph{idele group} is defined as $\J \colon= \GL_1(\A)$. More explicitly, an element of $\J$ is a sequence $x = (x_s)_{s \in V}$ with $x_s \in k_s^\times$ and $x_s \in \calO_s^\times$ for all but finitely many $s$. To define a norm on $\J$ one proceeds as follows: Given a place $s \in V$, the \emph{normalized absolute value} $\abs{\cdot}_s$ on $k_s$ is defined as follows. The additive group $k_s$ is a locally compact group and thus admits a Haar measure $\mu^s$. For every $x \in k_s^\times$ the measure $\mu^s_x$ defined by $\mu^s_x(A) = \mu^s(x \cdot A)$ is a Haar measure as well and therefore is a multiple of $\mu^s$. This allows to define the normalized absolute value on $k_s$ via
\[
\mu_x^s = \abs{x}_s \cdot \mu^s\text{.}
\]
The \emph{idele norm} of an element  $x = (x_s)_{s \in V} \in \J$ is then defined as
\[
\abs{x} = \prod_{s \in V} \abs{x_s}_s\text{.}
\]
Note that $k$ (respectively $k^\times$) embeds diagonally into $\A$ (respectively $\J$), and this embedding has discrete image. Indeed, the latter follows from the fact that, with the normalization defined above, for elements of $k$ the local absolute values average out in the following sense, see \cite[Theorem~II.12]{CasselsFroehlich}:
\begin{proposition}[Product formula]\label{prop:product_formula}
$\abs{x} = \prod_{s \in V} \abs{x}_s = 1$ for $x \in k^\times$.
\end{proposition}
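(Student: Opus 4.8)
The plan is to deduce this classical fact by reducing to the prime global field $k_0$ of which $k$ is a finite extension --- that is, $k_0 = \Q$ when $k$ is a number field and $k_0 = \F_p(t)$ (for a suitable transcendental $t$) when $\chr k = p$ --- where the formula is an elementary consequence of unique factorisation, and then descending along $k/k_0$ via the norm map.

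For the base step, if $k_0 = \Q$ its places are the rational primes $p$, with $\abs{x}_p = p^{-v_p(x)}$, together with the Archimedean place $\infty$, for which $\abs{x}_\infty$ is the ordinary absolute value. Writing $x \in \Q^\times$ as $\pm\prod_p p^{a_p}$ with almost all $a_p = 0$ (the sign being irrelevant here), one has $\prod_p \abs{x}_p = \prod_p p^{-a_p}$ and $\abs{x}_\infty = \prod_p p^{a_p}$, whose product is $1$. If $k_0 = \F_p(t)$, the finite places correspond to the monic irreducibles $\pi \in \F_p[t]$, with $\abs{x}_\pi = p^{-(\deg\pi)\, v_\pi(x)}$ since the residue field $\F_p[t]/(\pi)$ has $p^{\deg\pi}$ elements, while the single place at infinity has $\abs{f}_\infty = p^{\deg f}$ for $f \in \F_p[t]$; factoring $x \in \F_p(t)^\times$ as a constant times a product of monic irreducibles and running the identical bookkeeping with degrees in place of exponents gives product $1$ again.

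For the descent, I would use the standard behaviour of the normalized absolute value under finite extensions: for a place $v$ of $k_0$ and a place $w$ of $k$ above it, $\abs{x}_w = \abs{N_{k_w/k_{0,v}}(x)}_v$, because the module of the $k_{0,v}$-linear map ``multiplication by $x$'' on the finite-dimensional space $k_w$ is the $v$-adic absolute value of its determinant $N_{k_w/k_{0,v}}(x)$. Combined with $k \otimes_{k_0} k_{0,v} \cong \prod_{w \mid v} k_w$, hence $N_{k/k_0}(x) = \prod_{w \mid v} N_{k_w/k_{0,v}}(x)$, this gives $\prod_{w \mid v}\abs{x}_w = \abs{N_{k/k_0}(x)}_v$. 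Multiplying over all $v$ and invoking the base step for $N_{k/k_0}(x) \in k_0^\times$,
\[
\prod_{w \in V} \abs{x}_w = \prod_{v \in V_{k_0}}\ \prod_{w \mid v}\abs{x}_w = \prod_{v \in V_{k_0}} \abs{N_{k/k_0}(x)}_v = 1 .
\]

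The base-step computations are routine; the real content is the normalization bookkeeping in the descent. One must check that the ``Haar module'' normalization fixed above is exactly the one for which $\abs{x}_w = \abs{N_{k_w/k_{0,v}}(x)}_v$ holds with no leftover local-degree factor --- for example that on a complex place the normalized absolute value is the \emph{square} of the usual modulus, which is precisely what makes it agree with $N_{\C/\R}(z) = \abs{z}^2$ --- and that the local norms above $v$ really multiply to the global norm. These are classical facts about completions of global fields, which I would cite rather than reprove. An alternative that sidesteps the descent entirely is to recognise $\abs{x}$ as the module of the automorphism ``multiplication by $x$'' of the adele ring $\A$: since it preserves $k$ it descends to an automorphism of the compact group $\A/k$ and hence has module $1$ --- but this route presupposes the compactness of $\A/k$, which one would then need to establish without recourse to the product formula.
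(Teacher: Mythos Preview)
The paper does not prove this proposition at all: it simply cites \cite[Theorem~II.12]{CasselsFroehlich} and moves on. Your sketch is the standard proof --- indeed essentially the one in Cassels--Fr\"ohlich --- and it is correct. The only point worth flagging is the one you already half-address with ``for a suitable transcendental $t$'': in positive characteristic one must choose $t$ so that $k/\F_p(t)$ is \emph{separable} (always possible for a global function field), since the isomorphism $k \otimes_{k_0} k_{0,v} \cong \prod_{w\mid v} k_w$ and the local-to-global norm identity require separability. Your alternative via the module of multiplication on $\A/k$ is also valid and non-circular, since the compactness of $\A/k$ can be established directly (Minkowski-style) without the product formula; but as you note, that would need its own justification.
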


\subsection{Adelic groups}
From now on we work in the following general setting; we will add additional assumptions as we move along.
\begin{convention}\label{ConventionAdelicGroups}
\begin{enumerate}
\item $k$ is a global field (of arbitrary characteristic) with set of places $V = V_k$, ring of adeles $\A = \A_k$ and idele group $\J = \J_k$; 
\item $V\fin$ and $V\inf$ denote the finite, respectively infinite places of $k$;
\item $\bfG$ is an algebraic group over $k$ and $\Gamma \defeq \bfG(k)$ denotes its group of $k$-points;
\item $\X[k](\bfG) \defeq \Hom_k(\bfG, \GL_1)$ denotes the abelian group of $k$-characters of $\bfG$, written additively;
\item we fix a faithful representation of $\bfG$ and thereby consider $\bfG$ as a $k$-closed subgroup of $\GL_N$ for some $N \in \mathbb N$.
\end{enumerate}
\end{convention}
As explained in  \cite[Section 5.1]{PlatonovRapinchuk}, one can associate with every $k$-variety $\mathbf X$ a locally compact space $\mathbf{X}(\A)$, the \emph{space of adelic points} of $\mathbf X$, and this construction is functorial  \cite[Prop. 2.1]{Conrad}. This functoriality implies that for $\bfG$ as above the space $\bfG(\A)$ is actually a locally compact topological group, called the \emph{group of adelic points} of $\bfG$, and that every $\chi \in \X[k](\bfG)$ extends to a continuous homomorphism $\chi_\A: \bfG(\A) \to \GL_1(\A) = \J$. We will often abuse notation and denote this homomorphism simply by $\chi$.

It will be important for us that the group $\bfG(\A)$ can be described more explicitly as follows, using the representation $\bfG \hookrightarrow \GL_n$, see \cite[p.249]{PlatonovRapinchuk}. Given a place $s \in V$ we denote by $\bfG(k_s) < \GL_n(k_s)$ the group of $k_s$-points of $\bfG$, and we define $\bfG(\calO_s) \defeq \bfG(k_s) \cap \GL_N(\calO_s)$.  For every finite subset $S \subseteq V$ containing all infinite places we then define the locally compact topological group $\bfG(\A_S)$ as
\[
\bfG(\A_S) \defeq \prod_{s \in S} \bfG(k_s) \times \prod_{s \in V \setminus S} \bfG(\calO_s).
\]
Then for every $S \subset T$ we have a natural inclusion map $\bfG(\A_S) \hookrightarrow \bfG(\A_T)$ and
\[
\bfG(\A) = \lim_S \bfG(\A_S)
\]
is the colimit of the corresponding system in the category of topological groups. In particular, this colimit is independent of the choice of representation $\bfG \hookrightarrow \GL_N$, whereas the groups $\bfG(\A_S)$ may indeed depend on this choice.

The group $\Gamma = \bfG(k)$ embeds diagonally into $\bfG(\A)$, and the image of this embedding is discrete. Note that if $\chi \in \X[k](\bfG)$, then for every $g \in \bfG(k) \subset \bfG(\A)$ we have $\chi_\A(g) \in \GL_1(k) = k^\times$ and hence $|\chi_\A(g)| = 1$ by the product formula (Proposition~\ref{prop:product_formula}). This shows that $\bfG(k)$ is contained in the subgroup
\begin{equation}\label{eq:ring}
\bfG(\A)^0 \defeq \bigcap_{\chi \in \X[k](\bfG)}  \{g \in \bfG(\A) \mid |\chi_\A(g)| = 1\} < \bfG(\A)\text{.}
\end{equation}

\subsection{Adelic cut-and-project schemes} We keep the notation of the previous section. We identify $\Gamma = \bfG(k)$ with its image under the diagonal embedding into $\bfG(\A)$.
In addition we consider a subset $S \subset V$ of places, not necessarily finite. We then have a continuous group homomorphisms 
\[
\pi_S \colon \bfG(\A) \to \prod_{s \in S}\bfG(k_s)\text{,}
\]
and we denote by $\bfG_S \defeq \pi_S(\bfG(\A))$ the image of this homomorphism, equipped with the quotient topology. If $S$ is finite, then  $\bfG_S = \prod_{s \in S} \bfG(k_s)$, but if $S$ is infinite then $\bfG_S$ is a restricted product and its topology is finer than the restriction of the product topology. 

We observe that, for every $S \subset V$, we have a splitting of topological groups 
\[
\bfG(\A) \cong \bfG_S \times \bfG_{V \setminus S}\text{,}
\]
where the projections onto the two factors are given by $\pi_S$ and $\pi_{V \setminus S}$ respectively. In particular, we can consider $\Gamma = \bfG(k)$ as a discrete subgroup of the product $\bfG_S \times \bfG_{V \setminus S}$, and its projection $\pi_S|_\Gamma$ onto the first factor is injective.Thus if we define
\[
\bfG_{V\setminus S}^\sharp \defeq \overline{\pi_{V \setminus S}(\Gamma)} < \bfG_{V\setminus S}, 
\]
then $(\bfG_S, \bfG_{V \setminus S}^\sharp, \bfG(k))$ is a generalized cut-and-project scheme.
\begin{definition}\label{adeliccup} If $S \subset V$ is finite, then $(\bfG_S, \bfG_{V \setminus S}^\sharp, \bfG(k))$ is called the \emph{adelic generalized cut-and-project scheme} with \emph{parameters} $(k, S, \bfG)$. Any approximate subgroup $\Lambda \subseteq \pi_S(\Gamma) \subseteq \bfG_S$ which is contained in the canonical commensurability class of $(\bfG_S, \bfG_{V\setminus S}^\sharp, \bfG(k))$ is called an \emph{$S$-arithmetic approximate subgroup}.
\end{definition}

More explicitly, if $W$ is any window in $\bfG_{V\setminus S}$, then a discrete approximate subgroup $\Lambda \subset \bfG_S$ is an $S$-arithmetic approximate subgroup if and only if it is commensurable to the generalized model set $\Lambda(\bfG_S, \bfG_{V\setminus S}^\sharp, \bfG(k), W \cap \bfG_{V\setminus S}^\sharp)$. In the sequel we will be mostly interested in $S$-arithmetic approximate subgroups of reductive groups. Nonetheless we take a moment to consider the more general situation.

\begin{remark} Let $(G,H, \Gamma)$ be an adelic generalized cut-and-project scheme with parameters $(k, S, \bfG)$. Then two questions suggest themselves:
\begin{enumerate}
\item Is $\Gamma$ a lattice in $G\times H$, or equivalently, is $(G, H, \Gamma)$ a cut-and-project scheme?  (This would imply in particular that $\Lambda(G, H, \Gamma, W)$ is a strong approximate lattice in the sense of \cite{BH}, which has major structural consequences.)
\item Is $H = \bfG_{V \setminus S}$ and hence $G \times H = \bfG(\A)$?
\end{enumerate}
The answer two both questions is negative in general, but it is positive in many cases of interest. By \cite[Thm. 5.5(2)]{PlatonovRapinchuk}, $\Gamma$ is a lattice in $\bfG(\A)$ if and only if $\X[k](\bfG^0) = \{0\}$, and in this case $\Gamma$ is in particular a lattice in $G \times H$. This shows, that the answer to (1) is positive whenever $\bfG$ is connected semisimple. For the rest of this discussion let us assume that $\bfG$ is connected;  then $\Gamma$ is always a lattice in $\bfG(\A)^0 \cap (G \times H)$ by \cite[Thm.~5.6]{PlatonovRapinchuk}, and hence (1) is equivalent to asking whether the subgroup
\[
\bfG(\A)^0 \cap (G \times H) \subset G \times H
\]
has finite covolume. The answer to this question is positive if $\bfG(\A)^0$ is cocompact in $\bfG(\A)$ or if $\bfG$ is reductive and $\bfG(k_s)$ has compact center for all $s \in S$ (since  $G \times H$ is the closure of $(G \times \{e\})\Gamma$ in $\bfG(\A)$), but negative even for many (non-semisimple) reductive groups. Question (2) is essentially the problem of strong approximation, which is well-studied in the literature. While the answer to (2) is negative for general reductive groups, it is positive for all $S$ if $\bfG$ is unipotent \cite[Lemma 5.5]{PlatonovRapinchuk} and we claim that it is also positive if $\bfG$ is connected, simply connected, absolutely almost simple and $k$-isotropic. Indeed, in this case $\bfG$ is absolutely semisimple as well as $k$-almost simple (since it is absolutely almost simple) and the assumption that $\bfG$ be $k$-isotropic implies that it is $k_s$-isotropic for every $s$, and hence that every $\bfG_S$ is non-compact; our claim thus follows from the strong approximation theorem of Prasad \cite[Theorem~A]{prasad77}.
\end{remark}
For our purposes here, the following partial answer to Questions (1) and (2) will be sufficient:
\begin{proposition}\label{PropNiceSimpleG} Let  $\bfG$ be connected, simply connected, absolutely almost simple and $k$-isotropic. Then $(G, H, \Gamma) = (\bfG_S, \bfG_{V \setminus S}, \bfG(k))$ is a cut-and-project scheme with total space $G \times H = \bfG(\A)$ and hence for every window $W \subset  \bfG_{V \setminus S}$ the set $\Lambda(G, H, \Gamma, W)$ is a strong approximate lattice.\qed
\end{proposition}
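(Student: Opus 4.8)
The plan is to assemble the proposition from the facts collected in the remark preceding it, the only genuine input being strong approximation. Concretely, I must establish three things: that $\Gamma = \bfG(k)$ is a lattice in $\bfG(\A)$; that $\bfG(k)$ projects densely onto $\bfG_{V\setminus S}$, so that $\bfG_{V\setminus S}^\sharp = \bfG_{V\setminus S}$ and the total space $G \times H$ of the cut-and-project scheme equals $\bfG_S \times \bfG_{V\setminus S} = \bfG(\A)$; and finally that a model set in a bona fide cut-and-project scheme is a strong approximate lattice, which is already on record.

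For the lattice property: $\bfG$ is connected and absolutely almost simple, hence connected semisimple, so $\bfG = \bfG^0$ and $\X[k](\bfG^0) = \{0\}$. By \cite[Thm.~5.5(2)]{PlatonovRapinchuk} this forces $\Gamma$ to be a lattice in $\bfG(\A)$. Discreteness of $\Gamma$ in $\bfG(\A) \cong \bfG_S \times \bfG_{V\setminus S}$, injectivity of $\pi_S|_\Gamma$, and the fact that $\bfG_S$ and $\bfG_{V\setminus S}$ are lcsc were all recorded earlier; thus the only axiom of a cut-and-project scheme still to be checked --- and the only thing needed to pin down the total space --- is the density of $\pi_{V\setminus S}(\Gamma)$ in $\bfG_{V\setminus S}$.

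That density is strong approximation for $\bfG$ with respect to $S$, so the second step is to verify its hypotheses. Since $\bfG$ is absolutely almost simple it is in particular almost $k$-simple, and it is connected and simply connected, so Prasad's strong approximation theorem \cite[Theorem~A]{prasad77} applies provided $\bfG_S = \prod_{s \in S}\bfG(k_s)$ is noncompact. Here $k$-isotropy does the work: a nontrivial $k$-split torus in $\bfG$ stays split, hence nontrivial, over each $k_s$, so $\bfG$ is $k_s$-isotropic for every place $s$; as $S \neq \emptyset$ and a $k_s$-isotropic semisimple group has noncompact group of $k_s$-points, $\bfG_S$ is noncompact. Prasad's theorem then yields density of $\bfG(k)$ in the restricted product of the $\bfG(k_s)$, $s \notin S$, i.e.\ $\bfG_{V\setminus S}^\sharp = \bfG_{V\setminus S}$. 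Together with the previous paragraph this shows that $(\bfG_S, \bfG_{V\setminus S}, \bfG(k))$ is a cut-and-project scheme with total space $\bfG(\A)$.

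It then remains only to note that, being a cut-and-project scheme, $(G,H,\Gamma)$ has $\Lambda(G,H,\Gamma,W)$ a model set for every window $W$, hence a strong approximate lattice in $G$ by \cite[Proposition~2.13]{BH} together with \cite[Corollary~3.5]{BHP1}, exactly as remarked after Lemma~\ref{GenModelSetAppGrp}. I do not anticipate a real obstacle: the substance is entirely imported from \cite[Thm.~5.5]{PlatonovRapinchuk} and Prasad's theorem, and the only delicate point is the bookkeeping around the hypotheses of the latter --- chiefly that $k$-isotropy passes to every completion, so that the noncompactness condition is satisfied, and that absolutely-almost-simple supplies the almost-$k$-simplicity that Prasad requires.
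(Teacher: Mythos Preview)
Your proposal is correct and follows essentially the same route as the paper: the proposition is stated with a \qed because it is meant to be read off directly from the preceding remark, and you have reproduced exactly that argument --- the lattice property via \cite[Thm.~5.5(2)]{PlatonovRapinchuk} using $\X[k](\bfG^0)=\{0\}$ for connected semisimple $\bfG$, the density via Prasad's strong approximation \cite[Theorem~A]{prasad77} after observing that $k$-isotropy passes to every completion so that $\bfG_S$ is noncompact, and the strong approximate lattice conclusion from \cite[Proposition~2.13]{BH} and \cite[Corollary~3.5]{BHP1}.
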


\subsection{Relation to $S$-arithmetic groups} 
We now compare our definition of an $S$-arithmetic approximate group to the usual definition of an $S$-arithmetic group. Assume that we are in the situation of Convention \ref{ConventionAdelicGroups} and that, in addition, $S \subset V$ is a non-empty set containing $V\inf$. In this situation, one defines the $S$-arithmetic group $\bfG(\calO_S)$ as
\begin{equation}\label{SArGroupCase}
\bfG(\calO_S) \defeq \pi_S(\bfG(k) \cap \bfG(\A_S)) < \bfG_S.
\end{equation}
While this group may depend on the chosen embedding, its commensurability class depends only on the parameters $(k, S, {\bf G})$. Any subgroup of $\bfG(k)$ in this commensurability class is called an \emph{$S$-arithmetic group}. The relation between $S$-arithmetic groups and $S$-arithmetic approximate groups is described by the following proposition.

\begin{proposition}\label{SGroupVsApproxGp} In the situation of Convention \ref{ConventionAdelicGroups} let $S \subseteq V$ be a non-empty set of places and let $S^+ \defeq S \cup V\inf$. Assume that
\begin{equation}\label{eq:compact_subgroups}
\bfG(k_s)\text{ is compact for all } s \in (V \setminus S)\inf\text{.}
\end{equation}
The every $S$-arithmetic approximate group $\Lambda \subset \bfG_S \subset \bfG_{S^+}$ with parameters $(k, S, \bf G)$ is commensurable to $\bfG(\calO_{S^+})$. In particular, this is the case if $\chr(k) > 0$ or if $\chr(k) = 0$ and $S$ contains all infinite places.
\end{proposition}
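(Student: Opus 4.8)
\emph{The plan} is to exhibit both $\Lambda$ and $\bfG(\calO_{S^+})$, up to commensurability, as generalized model sets of adelic cut-and-project schemes --- for $\Lambda$ the scheme with parameters $(k,S,\bfG)$, for $\bfG(\calO_{S^+})$ the one with parameters $(k,S^+,\bfG)$ --- and to match them by exploiting that, under hypothesis \eqref{eq:compact_subgroups}, the block of places $T\defeq S^+\setminus S=(V\setminus S)\inf$ contributes only a \emph{compact} factor. Concretely, since $S^+=S\sqcup T$ we may write $\bfG_{S^+}=\bfG_S\times\bfG_T$ with $\bfG_T=\prod_{s\in T}\bfG(k_s)$, which is compact (a finite product of compact groups by \eqref{eq:compact_subgroups}); and since $V\setminus S=T\sqcup(V\setminus S^+)$, with every place outside $S^+$ finite, we may write $\bfG_{V\setminus S}=\bfG_T\times\bfG_{V\setminus S^+}$. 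The whole point is that \eqref{eq:compact_subgroups} is exactly what permits moving the places of $T$ from the ``group side'' $\bfG_{S^+}$ to the ``window side'' of the $(k,S,\bfG)$-scheme.

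\emph{The key computation.} In the adelic generalized cut-and-project scheme $(\bfG_S,\bfG_{V\setminus S}^\sharp,\bfG(k))$ with parameters $(k,S,\bfG)$, set
\[
W_S\defeq\Bigl(\bfG_T\times\prod_{s\notin S^+}\bfG(\calO_s)\Bigr)\cap\bfG_{V\setminus S}^\sharp .
\]
This is a subgroup of $\bfG_{V\setminus S}^\sharp$; it is open in $\bfG_{V\setminus S}^\sharp$, since $\prod_{s\notin S^+}\bfG(\calO_s)$ is open in the restricted product $\bfG_{V\setminus S^+}$; and --- this is where \eqref{eq:compact_subgroups} is used --- it is compact, being a closed subset of the compact group $\bfG_T\times\prod_{s\notin S^+}\bfG(\calO_s)$. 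Hence $W_S$ is a window. Using $\bfG(k)\subseteq\bfG_S\times\bfG_{V\setminus S}^\sharp$, $\bfG_S\times\bfG_T=\bfG_{S^+}$, and $\pi_S=\pi_S^{S^+}\circ\pi_{S^+}$ (where $\pi_S^{S^+}\colon\bfG_{S^+}=\bfG_S\times\bfG_T\to\bfG_S$ is the projection), one computes directly that
\[
\Lambda\bigl(\bfG_S,\bfG_{V\setminus S}^\sharp,\bfG(k),W_S\bigr)=\pi_S\bigl(\bfG(k)\cap\bfG(\A_{S^+})\bigr)=\pi_S^{S^+}\bigl(\bfG(\calO_{S^+})\bigr),
\]
the last equality being \eqref{SArGroupCase} for $S^+$. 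Because $S\neq\emptyset$, the map $\pi_S$ is injective on $\bfG(k)$, so $\pi_S^{S^+}$ restricts to an \emph{isomorphism} of $\bfG(\calO_{S^+})$ onto its image; moreover $\bfG(\calO_{S^+})$ is discrete in $\bfG_{S^+}$ (recall $\bfG(\A_{S^+})$ is open in $\bfG(\A)$ and $\prod_{s\notin S^+}\bfG(\calO_s)$ is compact, so $\bfG(k)\cap\bfG(\A_{S^+})$ projects discretely to $\bfG_{S^+}$), and as $\bfG_T$ is compact its image $\pi_S^{S^+}(\bfG(\calO_{S^+}))$ under $\pi_S^{S^+}$ is a discrete subgroup of $\bfG_S$. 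Thus $\Lambda(\bfG_S,\bfG_{V\setminus S}^\sharp,\bfG(k),W_S)$ is a copy of $\bfG(\calO_{S^+})$ sitting discretely in $\bfG_S$.

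\emph{Conclusion.} By Definition~\ref{adeliccup} the set $\Lambda$ lies in the canonical commensurability class of the $(k,S,\bfG)$-scheme, which by Corollary~\ref{Commens} is independent of the window; applying this with the window $W_S$ shows that $\Lambda$ is commensurable, inside $\bfG_S$, to $\pi_S^{S^+}(\bfG(\calO_{S^+}))\cong\bfG(\calO_{S^+})$. For the ``in particular'' assertion: if $\chr(k)>0$, or if $\chr(k)=0$ and $S$ contains all infinite places, then $T=\emptyset$, so $S^+=S$, hypothesis \eqref{eq:compact_subgroups} is vacuous, and $\bfG(\calO_{S^+})=\bfG(\calO_S)$ already sits in $\bfG_S$.

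\emph{The main obstacle.} There is no deep step; the only thing that genuinely uses the hypothesis is the assertion that $W_S$ is an honest window, i.e.\ compact, which is precisely the content of \eqref{eq:compact_subgroups}. Dropping it breaks both the argument and the statement: e.g.\ for $\bfG=\SL_2$ over $\Q$ with $S=\{p\}$ the projection of $\SL_2(\Z[1/p])$ to $\SL_2(\Q_p)$ is dense, so it is not commensurable to the $\{p\}$-arithmetic approximate group. The remaining effort is the bookkeeping with the decompositions $S^+=S\sqcup T$ and $V\setminus S=T\sqcup(V\setminus S^+)$, together with the standard discreteness of $\bfG(k)$ in $\bfG(\A)$; alternatively, the commensurability can be read off from the general descent principle (Proposition~\ref{prop:descent}) applied to $\bfG(\A)=\bfG_S\times\bfG_{V\setminus S}\to\bfG_S$.
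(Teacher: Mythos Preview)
Your proof is correct and follows essentially the same approach as the paper: both arguments use the hypothesis \eqref{eq:compact_subgroups} to build the compact open subgroup window $W_S=\bfG_T\times\prod_{s\notin S^+}\bfG(\calO_s)$ (intersected with $\bfG_{V\setminus S}^\sharp$), identify the resulting model set with $\bfG(\calO_{S^+})$, and then invoke Corollary~\ref{Commens}. The only cosmetic difference is that the paper views $\Lambda$ as a subset of $\bfG_{S^+}$ and reads off the equality there, whereas you stay in $\bfG_S$ via the projection $\pi_S^{S^+}$; these are two ways of saying the same thing.
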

In view of the proposition, the theory of $S$-arithmetic approximate groups extends the theory of $S$-arithmetic groups, but as far as commensurability invariant properties are concerned, this extension is only interesting if $\chr(k) = 0$ and under the assumption that $S$ does \emph{not} contain all infinite places. In fact, in the present article we will focus on the case where $S$ contains no infinite places at all, leaving the mixed case for future work.
\begin{remark}
Assume that $\chr(k) = 0$ and let $\bfG$ be as in Proposition \ref{PropNiceSimpleG}. From the above discussion it is easily seen that for every finite place $s\in V\fin$ there are $S$-arithmetic approximate subgroups in $\bfG(k_s)$ that are not relatively dense, and these can be chosen to be cut-and-project sets. In the terminology of \cite{BH} and \cite{BHP1} such a cut-and-project set is a non-uniform strong approximate lattice. In particular, $\SL_n(\Q_p)$ admits non-uniform strong approximate lattices for all $n \geq 2$.

This should be compared to the fact that there cannot be a non-uniform lattice in $\SL_n(\Q_p), n \ge 3$. Indeed, by Margulis arithmeticity \cite[Theorem (1), p.2]{Margulis91} a lattice in $\SL_n(\Q_p)$ has to be arithmetic, commensurable to the image of an arithmetic group of the form $\bfH(\calO_S)$ where $\bfH$ is defined over a number field $k$, see \cite[Chapter~IX]{Margulis91}. But then $\bfH$ has to be $k_s$-anisotropic for all infinite places $s$ of $k$. In particular, it is $k$-anisotropic, so the lattice is in fact uniform. A more elementary argument for the non-existence of a finitely generated non-uniform lattice is that it would need to have unbounded torsion while by Selberg's lemma it is virtually torsion-free.
\end{remark}

For the proof of Proposition \ref{SGroupVsApproxGp} we observe that Condition~\eqref{eq:compact_subgroups} is equivalent to $H \defeq \bfG_{V\setminus S}$ admitting a compact identity neighborhood $W$ that is a \emph{subgroup}. Indeed, if $s$ is a finite place then $\bfG(k_s)$ admits a compact \emph{open} subgroup, for example
\[
\bfG(\calO_s) \defeq  \bfG(k_s) \cap \GL_N(\calO_s)\text{,}
\]
but if $s$ is infinite then any compact subgroup of $\bfG(k_s)$ has empty interior unless $\bfG(k_s)$ is compact itself. 
\begin{proof}[Proof of Proposition \ref{SGroupVsApproxGp}] Let $(G, H, \Gamma)$ be the adelic generalized cut-and-project scheme with parameters $(k, S, \bfG)$.
By the previous remark the subset
\[
W \defeq  \prod_{s \in V\fin \setminus S} \bfG(\calO_s) \times \prod_{s \in V\inf \setminus S}\bfG(k_s) \subset H
\]
is a compact open subgroup in $H$. Similarly,
\[
W\fin \defeq   \prod_{s \in V\fin \setminus S} \bfG(\calO_s) \subset \bfG_{V \setminus S^+}
\]
is a compact open subgroup in $\bfG_{V \setminus S^+}$. Unraveling definitions we then see that
\[
\bfG(\calO_{S^+}) = \pi_{S^+}(\bfG(k) \cap \bfG(\A_{S^+}))= \Lambda(\bfG_{S^+}, \bfG_{V\setminus S^+}^\sharp, \bfG(k), W\fin \cap \bfG_{V\setminus S^+}^\sharp).
\]
On the other hand we may assume that $\Lambda = \Lambda(\bfG_{S}, \bfG_{V\setminus S}^\sharp, \bfG(k), W \cap \bfG_{V\setminus S}^\sharp)$ by Corollary \ref{Commens}. Considering $\Lambda$ as a subset of $\bfG_{S^+}$ via the we then have
\[
\Lambda = \bfG(\calO_{S^+}) \cap (\bfG_S \times \prod_{s \in V\inf \setminus S}\bfG(k_s)),
\]
and hence $\Lambda$ is a commensurable subset of $\bfG(\calO_{S^+})$.
\end{proof}

\section{Geometry of reductive groups over local and global fields}\label{sec:geometry_reductive_local}

While $S$-arithmetic approximate subgroups can be defined for any $k$-algebraic group $\bfG$, in this article we will focus on $S$-arithmetic approximate subgroups of reductive groups. Our approach to study these approximate subgroups relies on the fact that reductive groups over local fields act geometrically on \cato{} spaces: Bruhat-Tits buildings in the non-Archimedean case and Riemannian symmetric spaces in the Archimedean case. Both of them have in common that their visual boundaries are spherical buildings and contain a copy of the spherical building of the underlying global field. 

The purpose of this section is to collect the basic facts concerning reductive groups over local fields and their natural  \cato{}-actions which will be needed in the sequel.
For most of the section we will restrict attention to semisimple groups to avoid various technicalities. The generalization to reductive groups will be discussed at the end of this section.

\subsection{Parabolic subgroups and their characters}\label{sec:pars_chars}
The geometry of semi\-simple groups is closely related to the combinatorics of their parabolic subgroups. For this reason we start this chapter by discussing our notations and normalizations related to parabolic subgroups and their characters. A general reference is \cite{BorelTits65}.
Throughout this section  $\bfG$ denotes a semisimple group defined over a field $k$. We denote by $r$ the $k$-rank of $\bfG$.

We fix once and for all an embedding of $\bfG$ as a $k$-closed subgroup of some $\GL_N$. Given a $k$-closed subgroup $\bfH < \bfG$ we denote by $\X[k](\bfH) \defeq \Hom_k(\bfH, \GL_1)$ the abelian group of $k$-characters of $\bfH$, written additively. We also denote by $\coX[k](\bfH) \defeq \Hom_k(\GL_1, \bfH)$ the group of $k$-cocharacters of $\bfH$. If $\bfP$ is a minimal parabolic subgroup and $\bfT < \bfP$ is a maximal $k$-split torus then the inclusion $\bfT \hookrightarrow \bfP$ induces an isomorphism $\X[k](\bfP) \otimes \R \cong \X[k](\bfT) \otimes \R$. 
 
 We now fix a minimal parabolic subgroup $\bfP$ and a maximal $k$-split torus $\bfT < \bfP$. We denote by $\Phi \defeq \Phi(\bfG, \bfT) \subseteq \X[k](\bfT)$ the set of $(\bfG, \bfT)$-roots, by $\Phi^+ \subseteq \Phi$ the positive subsystem corresponding to $\bfP$, and by $\Pi \subseteq \Phi^+$ the corresponding set of simple roots. We fix once and for all an enumeration $\Pi = \{\alpha_1^\bfP, \dots, \alpha_r^\bfP\}$ of $\Pi$ and identify $\Pi$, $\Phi^+$ and $\Phi$ with their respective images in the $\R$-vector space $\X[k](\bfP) \otimes \R$ under the maps
 \[
 \Pi \subseteq \Phi^+ \subseteq \Phi \subseteq \X[k](\bfT) \hookrightarrow \X[k](\bfT) \otimes \R \cong \X[k](\bfP) \otimes \R
 \]
In particular, $\Pi$ is then an $\R$-basis of $\X[k](\bfP) \otimes \R$ and every $\alpha \in \Phi$ (respectively $\Phi^+$) is a $\Z$-linear (respectively non-trivial $\N_0$-linear) combination of $\alpha_1^\bfP, \dots, \alpha_r^\bfP$.

An alternative basis of $\X[k](\bfP) \otimes \R$ can be given as follows: Given $i \in \{1, \dots, r\}$ we define 
        \[\bfP_i \defeq \langle \bfT, \bfU_\alpha, \alpha \in \Phi_i\rangle, \quad \text {where} \quad \Phi_i \defeq \bigg\{\sum_{j=0}^r n_j \alpha^\bfP_j \in \Phi \mid n_j \in \Z, n_i \geq 0\bigg\}
        \]
where $\bfU_\alpha$ is the root group associated to $\alpha$.
Then $\bfP_1, \dots, \bfP_r$ are precisely the maximal parabolic subgroup of $\bfG$ containing $\bfP$ and they are mutually not conjugate. In particular, for every $i \in \{1, \dots, r\}$ the space $\X[k](\bfP_i) \otimes \R$ is one-dimensional and embeds into $\X[k](\bfP)$ via the canonical restriction map. We then have
\[
\X[k](\bfP) \otimes \R = \bigoplus_{i=1}^r \X[k](\bfP_i) \otimes \R\text{.}
\]
Thus there is a basis of $\X[k](\bfP)\otimes \R$ consisting of generators of the $\X[k](\bfP_i), i \in \{1, \dots, r\}$. The precise choice of these generators will not be important, but it is convenient to choose canonical generators as follows.

Following \cite[Section 1.3]{harder69} we define the \emph{canonical character} $\chi^{\bfR} \in \X[k](\bfR)$ of a maximal parabolic $\bfR$ as follows. Take a sequence $R_u(\bfR) = \bfU_0 > \ldots > \bfU_{\ell+1} = 1$ of normal subgroups of the unipotent radical such that $\bfU_j/\bfU_{j+1}$ is a vector space of dimension $d_i$. Then $\bfR$ acts on the one-dimensional exterior power $\bigwedge^{d_j}(\bfU_j/\bfU_{j+1})$ via a character $\psi_{j}$ (the determinant of the action on $\bfU_j/\bfU_{j+1}$). The canonical character is the sum of these, $\chi^\bfR = \sum_{j=0}^{\ell} \psi_{j}$. We also write $\chi_i^\bfP$ for the canonical character $\chi^{\bfP_i}$ associated to $\bfP_i$. Then $(\chi^\bfP_1, \dots, \chi^\bfP_r)$ is a basis of $\X[k](\bfP)\otimes \R$. Explicitly, the Lie algebra $\mathfrak u_i$ of the unipotent radical of $\bfP_i$ decomposes under the adjoint action of $\bfT(k)$ into root spaces $\mathfrak u_{i,\alpha}$, and under the identification  $ \X[k](\bfP)\otimes \R \cong \X[k](\bfT) \otimes \R$ the $i$th canonical character $\chi^\bfP_i$ is given by
\[
\chi^\bfP_i(t) = \sum_\alpha \dim \mathfrak{u}_{i,\alpha}\, \alpha. 
\]

In the sequel we refer to the basis $\{\alpha_1^\bfP, \dots, \alpha^\bfP_r\}$ as the \emph{root basis} of $\X[k](\bfP) \otimes \R$ and to the basis $(\chi^\bfP_1, \dots, \chi^\bfP_r)$ as its \emph{weight basis}. The two bases are related by transformation matrices
\begin{equation}\label{RootsVsCharacters}
\alpha^\bfP_i = \sum_{j=1}^r c_{ij}\chi^\bfP_j \quad \text{and} \quad \chi^\bfP_j = \sum_{i=1}^r n_{ji} \alpha^\bfP_i.
\end{equation}
In the following lemma we omit the superscript $\bfP$ for all roots and weights.

\begin{lemma}\label{lem:root_weight}
The root basis and the weight basis satisfy
\begin{equation}\label{eq:root_weight}
\gen{\alpha_i,\chi_i} > 0 \quad \text{and} \quad \gen{\alpha_i,\chi_j} = 0 \text{ for } i \ne j\text{.}
\end{equation}
where $\gen{\cdot,\cdot}$ is a scalar product invariant under the Weyl group. The coefficients in \eqref{RootsVsCharacters} satisfy
\begin{equation}\label{eq:weight_coefficients}
n_{ii} > 0 \quad \text{and} \quad n_{ji} \ge 0.
\end{equation}
\end{lemma}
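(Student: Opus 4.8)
The plan is to reduce everything to standard facts about root systems. First I would recall the geometric meaning of the canonical character. Writing $\mathfrak{u}_i$ for the Lie algebra of the unipotent radical of $\bfP_i$, the root space decomposition $\mathfrak{u}_i = \bigoplus_\alpha \mathfrak{u}_{i,\alpha}$ runs over those $\alpha \in \Phi^+$ whose expansion in the simple roots has strictly positive coefficient on $\alpha_i^\bfP$; this is exactly the set $\Phi \setminus \Phi_i$ reflected, or more precisely $\Phi^+ \setminus \Phi_{\bfP_i}$ where $\Phi_{\bfP_i}$ is the root subsystem of the Levi. Hence $\chi_i^\bfP = \sum_{\alpha} (\dim \mathfrak{u}_{i,\alpha})\,\alpha$, the sum being over positive roots with positive $\alpha_i$-coefficient. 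The key point I would emphasise is that $\chi_i^\bfP$ is a non-negative combination of positive roots, which already makes the inequalities $n_{ii}>0$, $n_{ji}\ge 0$ plausible, but to get them precisely one wants the scalar-product characterisation first.

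For \eqref{eq:root_weight}: I would invoke the $W$-invariant inner product $\gen{\cdot,\cdot}$ on $\X[k](\bfT)\otimes\R$ (the restricted root system $\Phi$ may be non-reduced, of type $BC_r$, but it still has a Weyl-invariant inner product). The crucial fact is that $\chi_i^\bfP$, being the character by which $\bfP_i$ acts on $\bigwedge^{\mathrm{top}}\mathfrak{u}_i$, is fixed by the Weyl group $W_i$ of the Levi of $\bfP_i$ --- equivalently, it is orthogonal to every simple root $\alpha_j^\bfP$ with $j\ne i$, since $W_i$ is generated by the reflections in those $\alpha_j$. This gives $\gen{\alpha_j,\chi_i}=0$ for $i\ne j$ at once. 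For $\gen{\alpha_i,\chi_i}>0$: since $\chi_i^\bfP$ lies in the open cone spanned by the positive roots having positive $\alpha_i$-coefficient, and the fundamental coweight dual to $\alpha_i$ pairs non-negatively with all positive roots and strictly positively with those having positive $\alpha_i$-coefficient, one sees $\chi_i^\bfP$ is a positive multiple of the $i$th fundamental weight; equivalently $\chi_i^\bfP \ne 0$ and lies in the closed dominant cone, and being orthogonal to all $\alpha_j$ ($j\ne i$) it must have strictly positive pairing with $\alpha_i$ (otherwise it would be orthogonal to all simple roots, hence zero, contradicting $\chi_i^\bfP \ne 0$ which holds because $\mathfrak{u}_i\ne 0$ as $\bfP_i$ is a proper parabolic of an isotropic group).

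For \eqref{eq:weight_coefficients}: expand $\chi_j^\bfP = \sum_i n_{ji}\alpha_i^\bfP$. Pairing with the fundamental coweight $\varpi_i^\vee$ dual to the root basis (so $\gen{\alpha_k,\varpi_i^\vee}$ is positive for $k=i$ and zero otherwise, up to the transpose Cartan normalisation) isolates $n_{ji}$ up to a positive scalar as $\gen{\chi_j^\bfP,\varpi_i^\vee}$; since $\chi_j^\bfP$ is a non-negative combination of positive roots and each positive root pairs non-negatively with $\varpi_i^\vee$, we get $n_{ji}\ge 0$, and for $i=j$ the positivity $n_{jj}>0$ follows from $\gen{\alpha_j,\chi_j}>0$ established above together with $\gen{\alpha_k,\chi_j}=0$ for $k\ne j$. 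I expect the main technical obstacle to be bookkeeping around the possibly non-reduced restricted root system $\Phi$ (type $BC_r$) and the fact that the ``canonical character'' normalisation of Harder weights the root spaces by their dimensions $\dim\mathfrak{u}_{i,\alpha}$ rather than taking bare fundamental weights --- one must check these dimension weights do not destroy the sign pattern, which they do not because all $\dim\mathfrak{u}_{i,\alpha}\ge 0$ and the set of $\alpha$ with $\dim\mathfrak{u}_{i,\alpha}>0$ is precisely $W_i$-stable and spans the correct half-space. A clean alternative would be to cite \cite[Section~1.3]{harder69} or \cite{BorelTits65} for the statement that $\chi_i^\bfP$ is a positive rational multiple of the $i$th fundamental weight and deduce both \eqref{eq:root_weight} and \eqref{eq:weight_coefficients} formally from the standard duality between fundamental weights and simple coroots.
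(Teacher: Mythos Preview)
Your proposal is correct and uses the same two key ideas as the paper: Weyl-group invariance gives the orthogonality $\gen{\alpha_i,\chi_j}=0$ for $i\ne j$ (you phrase it as $W_i$-invariance of $\chi_i$, the paper as $s_i\chi_j=\chi_j$ because $s_i$ is represented in $\bfP_j$; these are dual formulations of the same fact), and the description of $\chi_j$ as a non-negative combination of positive roots gives $n_{ji}\ge 0$.

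The only real difference is in how the two strict positivities are obtained. The paper's argument is more economical here: having the orthogonality and $n_{ji}\ge 0$ in hand, it writes
\[
0 < \gen{\chi_i,\chi_i} = \sum_{j} n_{ji}\gen{\alpha_j,\chi_i} = n_{ii}\gen{\alpha_i,\chi_i},
\]
which forces $n_{ii}>0$ and $\gen{\alpha_i,\chi_i}>0$ simultaneously. Your route through ``$\chi_i$ is a positive multiple of the $i$th fundamental weight'' reaches the same conclusion but implicitly needs that $\varpi_i$ itself has positive $\alpha_i$-coefficient (positivity of the diagonal of the inverse Cartan matrix), an extra input the paper's one-line identity avoids. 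Your alternative observation that $\chi_i\ne 0$ and $\chi_i\perp\alpha_j$ for $j\ne i$ gives only $\gen{\alpha_i,\chi_i}\ne 0$, not the sign; the sign really does come from combining orthogonality with $n_{ji}\ge 0$, exactly as in the displayed identity above.
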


\begin{proof}
The coefficients satisfy $n_{ji} \ge 0$ for all $i,j$ because the unipotent radical of $\bfP_i$ is contained in $\bfP$ so the characters $\chi_{i,j}$ in the discussion above are sums of positive roots with positive coefficients.

Let $s_i$ be the element of the Weyl group such that $s_i\alpha_i = -\alpha_i$. Note that $s_i$ is represented by an element of $\bfP_j$ so that $s_i\chi_j=\chi_j$ for all $j \ne i$. Thus
\[
\gen{\alpha_i,\chi_j} = \gen{s_i\alpha_i,s_i\chi_j} = \gen{-\alpha_i,\chi_j} = -\gen{\alpha_i,\chi_j}
\]
showing the equations in \eqref{eq:root_weight}.

Now we have
\[
0 < \gen{\chi_i,\chi_i} = \sum_{j=1}^r n_{ji} \gen{\alpha_j,\chi_i} = n_{ii} \gen{\alpha_i,\chi_i}
\]
and the inequality in \eqref{eq:root_weight} follows, as well as $n_{ii} > 0$.
\end{proof}

So far we have discussed special bases of $\X[k](\bfP) \otimes \R$ for a fixed choice of minimal parabolic $\bfP$. We now extend our constructions to arbitrary minimal parabolic subgroups; the following basic fact  \cite[Théorème~4.13(b)]{BorelTits65} will be tacitly used from now on:
\begin{proposition}\label{prop:min_par_conj}
Any two minimal $k$-parabolics are conjugate under the action of $\bfG(k)$.\qed
\end{proposition}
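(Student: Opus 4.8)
This statement is classical --- it is \cite[Théorème~4.13(b)]{BorelTits65} --- and in the body of the paper it will be used only as a black box, so strictly speaking the ``proof'' is a reference. If one wants to reconstruct it from the structure theory recalled above, the plan is to split the conjugacy assertion into two halves: \emph{(a)} any two maximal $k$-split tori of $\bfG$ are conjugate under $\bfG(k)$; and \emph{(b)} for a fixed maximal $k$-split torus $\bfT$, any two minimal parabolic $k$-subgroups of $\bfG$ containing $\bfT$ are conjugate under the normalizer $N_\bfG(\bfT)(k)$. Granting both, one concludes formally: given minimal parabolic $k$-subgroups $\bfP$ and $\bfP'$, each of them contains a maximal $k$-split torus (e.g.\ any maximal $k$-split torus of a $k$-Levi subgroup), say $\bfT \subseteq \bfP$ and $\bfT' \subseteq \bfP'$; by \emph{(a)} choose $g \in \bfG(k)$ with $g\bfT g^{-1} = \bfT'$, so that $g\bfP g^{-1}$ and $\bfP'$ are both minimal parabolic $k$-subgroups containing $\bfT'$, and by \emph{(b)} a further conjugation by an element of $N_\bfG(\bfT')(k) \subseteq \bfG(k)$ takes one to the other.

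Part \emph{(b)} is the combinatorial half, and it is precisely here that the material of this section enters. A minimal parabolic $k$-subgroup $\bfQ$ containing $\bfT$ is determined by the set of $\bfT$-weights occurring on the Lie algebra of its unipotent radical $R_u(\bfQ)$ (since $\bfQ = Z_\bfG(\bfT) \cdot R_u(\bfQ)$ with $R_u(\bfQ)$ generated by the corresponding root groups $\bfU_\alpha$); that set of weights is exactly the set of positive roots for a uniquely determined ordering of the relative root system $\Phi(\bfG,\bfT)$, and conversely every such ordering is realized by a minimal parabolic. Since the relative Weyl group ${}_kW \defeq N_\bfG(\bfT)(k)/Z_\bfG(\bfT)(k)$ acts simply transitively on the set of orderings of $\Phi(\bfG,\bfT)$ (equivalently, on its Weyl chambers), the two minimal parabolics in \emph{(b)} differ by conjugation by a $k$-rational lift of the unique Weyl element carrying one ordering to the other.

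The genuine obstacle is part \emph{(a)}: conjugacy of maximal $k$-split tori over an arbitrary field is one of the cornerstones of Borel--Tits theory and does not reduce to elementary manipulations. The standard route is to fix a minimal parabolic $k$-subgroup $\bfP \supseteq \bfT$ with unipotent radical $\bfU = R_u(\bfP)$, and, using the relative Bruhat decomposition, to conjugate an arbitrary maximal $k$-split torus $\bfT'$ into $\bfT$ by \emph{some} element of $\bfG(\bar k)$; the crucial and nontrivial step is then a rationality statement ensuring that the conjugating element can in fact be taken in $\bfG(k)$, which one extracts from the triviality of the relevant (nonabelian) Galois cohomology of the split unipotent group $\bfU$. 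Since we only ever invoke the statement of the proposition and never its proof, we do not carry this out and simply rely on \cite{BorelTits65}.
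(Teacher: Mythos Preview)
Your proposal is correct and matches the paper exactly: the paper gives no proof at all, simply citing \cite[Théorème~4.13(b)]{BorelTits65} and placing a \qed after the statement. Your additional sketch of how the result is obtained (conjugacy of maximal $k$-split tori plus Weyl-group transitivity on positive systems) is accurate expository material that goes beyond what the paper provides.
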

We say that a maximal parabolic is of \emph{type $i$} if it is conjugate to $\bfP_i$. It follows from Proposition \ref{prop:min_par_conj} and the fact that $\bfP_1, \dots, \bfP_r$ are mutually non-conjugate that every maximal parabolic has a unique type and that every minimal parabolic subgroup is contained in a unique maximal parabolic subgroup of each type. If $\bfQ$ is a minimal parabolic subgroup then we denote by $\bfQ_i$ the unique maximal parabolic subgroup of type $i$ containing $\bfQ$. We then denote by $\chi^{\bfQ}_i \in \X[k](\bfQ)$ (or simply $\chi_i$ if $\bfQ$ is clear from context) the canonical character of $\bfQ_i$. Note that if $g \in \bfG(k)$ is such that $\bfQ = {}^g\bfP$ then $\bfQ_i = {}^g\bfP_i$ for all $i \in \{1, \dots, r\}$. We deduce that in this case $\chi_i^{\bfQ}(p) = \chi_i^{\bfP}(g^{-1}pg)$.

Similarly we define roots $\alpha_i^\bfQ \in \X[k](\bfQ)$ by $\alpha_i^\bfQ(p) = \alpha_i^\bfP(g^{-1}pg)$. This is well-defined since roots are conjugation-invariant and $\bfP$ is self-normalizing. With these definitions $(\alpha_1^\bfQ,\ldots,\alpha_r^\bfQ)$ and $(\chi_1^\bfQ,\ldots,\chi_r^\bfQ)$ are bases of $\X[k](\bfQ) \otimes \R$ and the relations \eqref{RootsVsCharacters} hold for every minimal parabolic $\bfQ$ with constants $c_{ij}$ and $n_{ji}$ independent of $\bfQ$. In particular, Lemma~\ref{lem:root_weight} holds for arbitrary minimal parabolics.

\subsection{Euclidean buildings and Riemannian symmetric spaces}\label{sec:local_spaces}
We now assume that $k, V, V\fin, V\inf$ and $\bfG \subset \GL_N$ are as in Convention \ref{ConventionAdelicGroups}. Moreover, we assume that $\bfG$ is semisimple. For every place $s \in V$ we denote by $k_s$ the corresponding completion of $k$. We are going to associate a \cato{}-space $X_s$ with each of the groups $\bfG(k_s)$.

Assume first that $s \in V\inf$ is an infinite place, so that $k_s \in \{\R, \C\}$ is Archimedean. In this case, $\bfG(k_s)$ is a semisimple real Lie group with finite center and hence
the Lie algebra $\mathfrak g_s$ of $\bfG(k_s)$ admits a Cartan decomposition $\mathfrak g_s = \mathfrak k_s \oplus \mathfrak p_s$ in the sense of \cite[Section III.7]{Helgason}, which is unique up to inner automorphisms. We fix a choice of Cartan decomposition for each $s \in V\inf$ and set $C_s \defeq \exp(\mathfrak k_s)$ and $X_s \defeq \bfG(k_s)/C_s$. The tangent space $\mathfrak p_s$ of $X_s$ is then a Lie triple system \cite[Section IV.7]{Helgason}, and hence gives rise to the structure of a Riemannian symmetric space on $X_s$, which is either trivial (if $\bfG(k_s)$ is compact) or of non-compact type. In any case, $X_s$ is a \cato{} space (see \cite[Theorem~II.10.58]{BridsonHaefliger}) and $\bfG(k_s)$ acts properly and transitively by isometries on $X_s$. It then follows from the Bruhat--Tits fixed-point theorem that $C_s$ is a maximal compact subgroup of $\bfG(k_s)$ and that any other maximal compact subgroup of $\bfG(k_s)$ is conjugate to $C_s$. Note that $X_s$ is independent of the choice of Cartan decomposition, whereas the basepoint $C_s$ depends on this choice. We refer to $X_s$ as the \emph{symmetric space} of $\bfG(k_s)$ and denote by $o_s \defeq 1 \cdot C_s \in X_s$ our chosen basepoint.

We now consider the case of a finite place $s\in V^{\mathrm{fin}}$ so that $k_s$ is non-Archimedean. By \cite{BruhatTits2} the group $\bfG(k_s)$ admits a root group datum with a valuation which by \cite{BruhatTits1} gives rise to a Euclidean building $X_s$ whose dimension is the $k_s$-rank $\mathrm{rk}_{k_s}(\bfG)$ of $\bfG$, see also \cite{KalethaPrasad}. If we consider $X_s$ as a metric space, by turning each apartment into a Euclidean space of dimension $\mathrm{rk}_{k_s}(\bfG)$ then $X_s$ is a \cato{} space by \cite[Lemme~2.5.14]{BruhatTits1}, see also \cite[Theorem~11.16]{abramenko08}. Moreover, the action of $\bfG(k_s)$ of $X_s$ is proper and cocompact.

Again the Bruhat--Tits fixed point theorem implies that the maximal compact subgroups of $\bfG(k_s)$ are stabilizers of points in $X_s$. This time, however, there is more than one such group up to conjugation. Our embedding $\bfG(k_s) \hookrightarrow \GL_N(k_s)$ allows us to define a compact-open subgroup \[C_s' \defeq \bfG(k_s) \cap \GL_N(\mathcal O_s)\]
of $\bfG(k_s)$. The following example shows that in general we cannot expect $C_s'$ to be maximal for \emph{all} places $s$:
\begin{example}\label{exmp:not_maximal}
Consider $\bfG < \GL_N$ and let $g \in \bfG(k)$ be arbitrary. Then the subgroup $\bfH$ of $\GL_{2N}$ consisting of block matrices
\[
\left(
\begin{array}{c|c}
A&0\\
\hline
0&gAg^{-1}
\end{array}
\right)
\]
with $A \in \bfG$ is clearly isomorphic to $\bfG$ over $k$. But $\bfH(k_s) \cap \GL_{2N}(\calO_s) = C_s' \cap g^{-1}C_s'g$ which is a generally proper subgroup of $C_s'$ for the finitely many places $s$ for which the entries of $g$ and $g^{-1}$ do not lie in $\calO_s$. In particular, using $\bfH$ instead of $\bfG$ leads to subgroups $C_s'$ that are not maximal.
\end{example}
\begin{lemma}\label{obs:c_c'}
The group $C'_s$ is maximal compact for almost all $s \in V\fin$ and of finite index in a maximal compact subgroup $C_s$ for the remaining $s \in V\fin$.
\end{lemma}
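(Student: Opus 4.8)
The plan is to treat the two assertions separately: the statement about the remaining (exceptional) places is essentially immediate, and the real content is the claim at almost all places, for which I would produce a well-behaved integral model of $\bfG$.

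First I would record that $C_s' = \bfG(k_s) \cap \GL_N(\calO_s)$ is a compact open subgroup of $\bfG(k_s)$: it is open because $\GL_N(\calO_s)$ is open in $\GL_N(k_s)$, and compact because $\GL_N(\calO_s)$ is compact and $\bfG(k_s)$ is closed in $\GL_N(k_s)$. By the Bruhat--Tits fixed point theorem $C_s'$ fixes a point of $X_s$, hence is contained in a maximal compact subgroup $C_s$ of $\bfG(k_s)$ (every compact subgroup of a reductive group over a non-Archimedean local field lies in a maximal one; see \cite{KalethaPrasad,PlatonovRapinchuk}). Since $C_s'$ is open in $\bfG(k_s)$, it is open, and therefore of finite index, in the compact group $C_s$. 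This proves the second assertion, for every finite place $s$.

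For the first assertion I would spread $\bfG$ out. Fix a finite set of places $S_0 \supseteq V\inf$ and let $\mathcal G \subseteq \GL_{N,\calO_{S_0}}$ be the schematic closure of $\bfG$; this is a flat affine group scheme of finite type over $\calO_{S_0}$ with generic fibre $\bfG$. As $\bfG$ is smooth and connected semisimple, and as smoothness of a flat finitely presented morphism together with connectedness and reductivity of the fibres of a smooth affine group scheme are all open conditions on the base, after enlarging $S_0$ I may assume that $\mathcal G$ is a reductive group scheme over $\calO_{S_0}$, i.e.\ smooth affine with connected reductive fibres. For every $s \notin S_0$, base change to $\calO_s$ (which is flat, $\calO_s$ being a completion of a localization of $\calO_{S_0}$) identifies $\mathcal G_{\calO_s}$ with the schematic closure of $\bfG$ in $\GL_{N,\calO_s}$, and an element of $\GL_N(\calO_s)$ lying in $\bfG(k_s)$ automatically satisfies the equations cutting out that closure; hence $\mathcal G(\calO_s) = \bfG(k_s) \cap \GL_N(\calO_s) = C_s'$. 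Since $\mathcal G_{\calO_s}$ is a reductive $\calO_s$-group scheme with generic fibre $\bfG$, Bruhat--Tits theory (\cite{KalethaPrasad}) identifies $C_s' = \mathcal G(\calO_s)$ with a hyperspecial, hence maximal, compact subgroup of $\bfG(k_s)$. Therefore $C_s' = C_s$ for all $s$ outside the finite set $S_0$, which is the first assertion.

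The main obstacle is the input from Bruhat--Tits theory used at the end of the previous paragraph: that a reductive integral model gives rise to a hyperspecial point, and that the stabilizer of a hyperspecial point is a \emph{maximal} compact subgroup, not merely a parahoric one. Both facts are standard but should be quoted with some care, since $\bfG$ is not assumed to be simply connected, so that vertex stabilizers can a priori be strictly larger than parahoric subgroups. By contrast, the scheme-theoretic steps --- that the schematic closure of a subgroup is a subgroup scheme, the openness of the smooth/connected/reductive locus, and the identity $\mathcal G(\calO_s) = C_s'$ --- are routine, and I would only indicate them.
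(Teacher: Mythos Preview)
Your proof is correct and follows the same route as the paper: for the finite-index claim you use that $C_s'$ is open in the compact group $C_s$, and for the almost-all-places claim you go through hyperspeciality. The only difference is one of packaging: the paper simply cites \cite[3.9.1]{tits79} for the assertion that $C_s'$ is hyperspecial (hence maximal compact) at almost all finite places, whereas you unpack the argument behind that citation by spreading $\bfG$ out to a reductive $\calO_{S_0}$-group scheme and invoking the Bruhat--Tits correspondence between reductive integral models and hyperspecial points.
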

\begin{proof} We denote by $C_s$ a maximal compact subgroup of $\bfG(k_s)$ containing $C_s'$. Since $C_s'$ is open, it has finite index in the compact group $C_s$. Moreover, $C_s'$ is maximal compact (even hyperspecial) for almost all $s \in V\fin$ by \cite[3.9.1]{tits79}.
\end{proof}
From now on, given $s \in V\fin$, we reserve the latter $C_s$ to denote a maximal compact subgroup of $\bfG(k_s)$ containing $C_s'$ so that $C_s = C_s'$ for almost all $s \in V\fin$. By the Bruhat--Tits fixed-point theorem, the group $C_s$ fixes a point $o_s \in X_s$. By properness of the action, the stabilizes of $o_s$ is then compact, and hence coincides with $C_s$ by maximality. 

To summarize, for every $s \in V$, we have constructed a \cato{} space $X_s$ on which $\bfG(k_s)$ acts properly and cocompactly, and the maximal compact subgroup $C_s < \bfG(k_s)$ is a point stabilizer in $X_s$. In the finite case we could even arrange for $C_s$ to be the stabilizer of a special vertex of $X_s$ but we will have no need for that. 

\begin{lemma}\label{obs:self-normalizing}
For every $s \in V$ the group $C_s$ is self-normalizing in $\bfG(k_s)$.
\end{lemma}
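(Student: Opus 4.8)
The plan is to deduce the lemma from the assertion that $o_s$ is the \emph{unique} point of $X_s$ fixed by $C_s$. Recall that $C_s=\Stab_{\bfG(k_s)}(o_s)$. If $g\in\bfG(k_s)$ normalises $C_s$ then
\[
\Stab_{\bfG(k_s)}(g.o_s)=g\,\Stab_{\bfG(k_s)}(o_s)\,g^{-1}=gC_sg^{-1}=C_s,
\]
so $C_s$ fixes $g.o_s$ as well; granting the uniqueness assertion this forces $g.o_s=o_s$, i.e.\ $g\in\Stab_{\bfG(k_s)}(o_s)=C_s$. As $C_s\subseteq N_{\bfG(k_s)}(C_s)$ is trivial, the lemma would follow. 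Thus the whole content is to show that the fixed-point set $\Fix_{X_s}(C_s)=\{x\in X_s\mid cx=x\text{ for all }c\in C_s\}$ equals $\{o_s\}$. If $\bfG(k_s)$ is compact then $X_s$ is a point and there is nothing to prove; we may therefore assume $X_s$ is non-trivial, in particular $\rk_{k_s}\bfG\ge 1$ in the non-Archimedean case.

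The set $\Fix_{X_s}(C_s)$ is a non-empty closed convex subset of the \cato{} space $X_s$ (fixed-point sets of isometries of \cato{} spaces are convex, see \cite{BridsonHaefliger}) and contains $o_s$; if it were strictly larger it would contain a non-degenerate geodesic segment issuing from $o_s$. In the Archimedean case this is impossible by a classical fact: a maximal compact subgroup of a semisimple real Lie group with finite centre is self-normalising, equivalently it has a unique fixed point in the associated Riemannian symmetric space --- its isotropy representation on the tangent space at that point has no non-zero fixed vector, because the symmetric space of a semisimple group has no Euclidean de Rham factor (see \cite{Helgason}).

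In the non-Archimedean case I would work with the Euclidean building $X_s$ directly. A non-degenerate segment $[o_s,y]\subseteq\Fix_{X_s}(C_s)$ meets only finitely many vertices of $X_s$, so it contains a non-degenerate subsegment lying in the relative interior of a single simplex $\sigma$ with $\dim\sigma\ge 1$. An isometry of $X_s$ fixing a point in the relative interior of $\sigma$ stabilises $\sigma$, so $C_s\subseteq\Stab_{\bfG(k_s)}(\sigma)$; since the latter is compact and $C_s$ is maximal compact, $C_s=\Stab_{\bfG(k_s)}(\sigma)$. Applying the same reasoning to a vertex $v$ of $\sigma$ --- for which $C_s=\Stab_{\bfG(k_s)}(\sigma)\subseteq\Stab_{\bfG(k_s)}(v)$ --- yields $\Stab_{\bfG(k_s)}(v)=C_s$. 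Thus $C_s$ stabilises $\sigma$ while fixing $v$, hence fixes the barycentre of the non-empty simplex of $\lk(v,X_s)$ spanned by the directions from $v$ into $\sigma$. But $\lk(v,X_s)$ is a thick spherical building of rank $\rk_{k_s}\bfG\ge 1$ (links of vertices of a thick building are thick), so it has more than one chamber; and since $\bfG(k_s)$ acts strongly transitively on $X_s$ (Bruhat--Tits, see \cite{BruhatTits1}), $C_s=\Stab_{\bfG(k_s)}(v)$ acts transitively on the chambers of $\lk(v,X_s)$. A chamber-transitive group of automorphisms of a building having more than one chamber cannot fix a point: the carrier $\tau$ of such a point would be a non-empty simplex stabilised by the group, and then chamber-transitivity would force every chamber to contain $\tau$, which is absurd. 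This contradiction shows $\Fix_{X_s}(C_s)=\{o_s\}$ and completes the proof.

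I expect the non-Archimedean case to be the genuine obstacle; the Archimedean statement is entirely classical. The only geometric input needed is that the maximal compact $C_s$ fixes no point of $X_s$ other than $o_s$, and while each ingredient (thickness of the Bruhat--Tits building, strong transitivity of the $\bfG(k_s)$-action, and the resulting chamber-transitivity of vertex stabilisers on links) is standard, the argument must be run with some care when $o_s$ is not itself a vertex --- which is why one first passes to a simplex $\sigma$ met by the fixed segment and then to a vertex of $\sigma$.
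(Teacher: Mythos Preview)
Your reduction to showing $\Fix_{X_s}(C_s)=\{o_s\}$ is sound, and your treatment of the Archimedean case is correct and essentially the same as the paper's (the paper phrases it as transitivity of $C_s$ on Weyl chambers with tip $o_s$ via the Iwasawa decomposition).

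In the non-Archimedean case your argument has a genuine gap. After obtaining the simplex $\sigma$ you assert $\Stab_{\bfG(k_s)}(\sigma)\subseteq\Stab_{\bfG(k_s)}(v)$ for a vertex $v$ of $\sigma$; but $\Stab(\sigma)$ here is the \emph{setwise} stabiliser (that is all ``fixing an interior point'' gives you), and the setwise stabiliser can permute the vertices of $\sigma$ nontrivially whenever the action of $\bfG(k_s)$ on $X_s$ is not type-preserving. This is the typical situation for non-simply-connected semisimple groups: already in $\mathrm{PGL}_2(\Q_p)$ the setwise stabiliser of an edge swaps its endpoints and is itself one of the maximal compact subgroups. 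Your argument goes through cleanly when $o_s$ happens to be a vertex --- one then takes $v=o_s$ and $\sigma$ to be the first simplex the fixed segment enters --- but the paper does not assume this, and your proposed workaround for non-vertex $o_s$ (pass to $\sigma$, then to a vertex of $\sigma$) is exactly where it breaks.

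The paper sidesteps the issue by proving less: rather than $\Fix(C_s)=\{o_s\}$, it shows only that $\Fix(C_s)$ is \emph{bounded}, in fact contained in a single closed chamber. This needs only that the \emph{pointwise} stabiliser of a chamber $c$ acts transitively on the chambers adjacent to $c$ across each panel --- a direct consequence of strong transitivity that does not use type-preservation. Once $\Fix(C_s)$ is bounded, the normaliser $N$ acts on it with bounded orbits, hence has a fixed point by the Bruhat--Tits theorem, hence is compact by properness of the action, hence equals $C_s$ by maximality.
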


\begin{proof}
If $X_s$ is a thick building, we first argue that the fixed-point set $\Fix(C_s)$ of $C_s$ in $X_s$ is bounded, which follows from strong transitivity: if $c \subseteq X_s$ is a (compact) chamber and $p \subseteq \partial c$ is a panel, the stabilizer of $c$ acts transitively on the at least two other chambers that also contain $p$. This shows that $\Fix(\Stab_{\bfG(k_s)}(c)) = c$. It follows that fixed point set of the stabilizer of any point is at most a chamber, hence bounded.

Since the normalizer $N \defeq N_{\bfG(k_s)}(C_s)$ acts on $\mathrm{Fix}(C_s)$ it then has a bounded orbit in $X_s$, hence a fixed-point by the Bruhat--Tits fixed-point theorem. By properness of the action, this implies that $N$ is compact, so by maximality we have $N = C_s$.

If $X_s$ is a symmetric space, the Iwasawa decomposition implies that the stabilizer of a point $x$ acts transitively on the Weyl chambers that have $x$ as (unique) tip. This implies in particular that $x$ is the unique fixed point of this stabilizer, hence it is also fixed by its normalizer.
\end{proof}

The previous lemma is not true if $\bfG$ is merely reductive: if $s$ is finite and one takes the extended model $X_s$ then chambers are generally not compact but products of polysimplices with a flat space; if $s$ is infinite, Weyl chambers generally have a flat space as tip. In both cases the center acts on a copy of $\R^n$ cocompactly but the action on the boundary is trivial. This is the reason why we are restricting to semisimple groups at the moment.

In the sequel we denote by $\mathcal K_s$ the set of all maximal compact subgroups of $\bfG(k_s)$ which are conjugate to $C_s$. By Lemma \ref{obs:self-normalizing} we have bijections
\begin{eqnarray*}
\bfG(k_s).o_s \leftarrow &\bfG(k_s)/C_s & \to \mathcal K_s\\
g.o_s \mapsfrom & gC_s & \mapsto gC_sg^{-1}.
\end{eqnarray*}
In particular, since the orbit $\bfG(k_s).o_s$ is relatively dense in $X_s$, we may coarsely identify $\bfG(k_s)$, $X_s$ and $\mathcal K_s$. 

In our analysis of the spaces $X_s$ an important role will be played by maximal flats, a flat being a subspace isometric to a Euclidean space. If $X_s$ is the symmetric space or Bruhat building of $\bfG(k_s)$ then every flat (and in particular every geodesic, and even every geodesic ray) is contained in a maximal flat. Moreover, $\bfG(k_s)$ acts transitively on maximal flats, and hence all maximal flats have the same dimension, which coincides with the $k_s$-rank of $\bfG$. Following standard building terminology, we are going to refer to a maximal flat as an \emph{apartment}. This terminology is not standard in the Archimedean case, but will help us to avoid case distinctions.

\subsection{The spherical building at infinity}\label{sec:bldg_infty}
Recall that if $X$ is a \cato{} space then two geodesic rays $\rho, \rho'\colon [0,\infty) \to X$ are called \emph{asymptotic} if their images are at bounded Hausdorff distance. This defines an equivalence relation, and we denote by $\rho(\infty)$ the equivalence class of $\rho$. The set $\partial X$ of all such equivalence classes is called the \emph{visual boundary} of $X$, see \cite[Chapter~II.8]{BridsonHaefliger}. There are several natural topologies on the visual boundary of a \cato{} space such as the visual and the Tits topology \cite{BridsonHaefliger}, but we will not use these topologies and only consider $\partial X$ as a set.

Our next goal is to describe some additional structure on the visual boundaries of the spaces $X_s$ from the previous section. By a \emph{simplicial complex} $\Delta$ we shall always mean the geometric realization of an abstract simplicial complex, i.e.\ a space glued together by simplices according to the usual axioms; by a \emph{simplicial structure} on a space $Y$ we shall mean a bijection $\Delta \to Y$, where $\Delta$ is a simplicial complex.

If $\bfG$ is a semisimple group over an arbitrary field $k$ then we denote by $\Delta_{\bfG, k}$ the \emph{spherical building} of the group $\bfG(k)$. By definition this is the unique simplicial complex whose set of simplices $\mathcal S(\Delta_{\bfG, k})$ is given by the proper $k$-parabolic subgroups of $\bfG(k)$, ordered by reverse inclusion. The maximal simplices (i.e.\ minimal $k$-parabolic subgroups) are called the \emph{chambers} of $\Delta_{\bfG, k}$ and their collection is denoted by $\mathcal C(\Delta_{\bfG, k})$. Similarly we denote by
$\mathcal V(\Delta_{\bfG, k})$ the set of vertices, i.e.\ maximal $k$-parabolic subgroups of $\Delta_{\bfG, k}$. By definition, a family of vertices lies in a common simplex if and only if its intersection is a parabolic subgroup. For $i \in \{1, \dots, r\}$ we will denote by $\mathcal V_i(\Delta_{\bfG, k}) \subseteq \mathcal V(\Delta_{\bfG, k})$ the subset of vertices (i.e.\ maximal $k$-parabolic subgroups) of type $i$. Then 
\[
\mathcal V(\Delta_{\bfG, k})=\bigsqcup_{i=1}^r \mathcal V_i(\Delta_{\bfG, k})
\]
and each chamber contains precisely one vertex of each type.  The group $\bfG(k)$ acts on $\mathcal S(\Delta_{\bfG, k})$ by conjugation, and this induces a simplicial and type-preserving action on $\Delta_{\bfG, k}$. By Proposition \ref{prop:min_par_conj} this action is chamber-transitive. If $\bfG$ (respectively $\bfG$ and $k$) are clear from context we simply write $\Delta_k$ (or even $\Delta$) instead of $\Delta_{\bfG, k}$.

If $k$ is an arbitrary field and $\bar{k}$ is a separable closure then the Galois group $\Gal(\bar{k}/k)$ acts on $\Delta_{\bar{k}}$ by simplicial isometries. However, this action is not without inversions on the $1$-skeleton, and hence the fixed point set $\Delta_{\bar{k}}^{\Gal(\bar{k}/k)} \subset \Delta_{\bar{k}}$ is not a subcomplex (but only a subcomplex of the barycentric subdivision of $\Delta_{\bar{k}}$). In any case, this fixed point set carries a simplicial structure and is isomorphic to the spherical building $\Delta_k$, cf.\ \cite[\S 6]{BorelTits65}, \cite[Section~2]{TitsClassSemiSimple}.
If $k'/k$ is an extension and $\bar{k}'$ is a separable closure of $k'$ containing $\bar{k}$ then, since $\bfG$ is split over $\bar{k}$ as well as over $\bar{k}'$, the building $\Delta_{\bar{k}}$ is a chamber subcomplex of $\Delta_{\bar{k}'}$. Thus $\Delta_k$ and $\Delta_{k'}$ are naturally subspaces of $\Delta_{\bar{k'}}$. Since $\bar{k}$ is $\Aut(\bar{k}'/k)$-invariant, so is $\Delta_{\bar{k}}$ and there is an action via $\Gal(\bar{k}'/k') \to \Aut(\bar{k}'/k) \to \Gal(\bar{k}/k)$. It follows that $\Delta_k \subseteq \Delta_{k'}$ as subspaces of $\Delta_{\bar{k}'}$.

We now return to the previous setting where $k$ is a global field with set of places $V$. For each $s \in V$ the space $X_s$ is a union of apartments, and since every geodesic is contained in such an apartment, the visual boundary $\partial X_s$ is the union the boundary spheres $\partial A$, where $A$ ranges over the set $\mathcal A_s$ of maximal flats in $\partial X_s$. Each of these boundary spheres can be identified with an apartment in the spherical building $\Delta_{k_s}$, and these identifications can be glued together to obtain a $\bfG(k_s)$-equivariant bijection $\Delta_{k_s} \to \partial X_s$ (since the corresponding stabilizers match). This defines a $\bfG(k_s)$-invariant simplicial structure on $\partial X_s$, hence we can talk about vertices, simplices and chambers in $\partial X_s$. Although we do not need any topology on $\partial X_s$, we mention in passing that the visual topology on $\partial X_s$ corresponds to the topology on $\Delta_{k_s}$ obtained by topologizing the set of chambers by the quotient topology with respect to $\bfG(k_s)$, whereas the Tits topology corresponds to the topology on $\Delta_{k_s}$ obtained by topologizing the set of chambers by the discrete topology.

We observe for later use that for every $s \in V$ the embedding $k \hookrightarrow k_s$ induces a canonical embedding
\[
\Delta_k \hookrightarrow \Delta_{k_s} \cong \partial X_s,
\]
by the above discussion, i.e.\ the rational building embeds canonically (but not simplicially) into the visual boundary of each of the \cato{} spaces $X_s$.

\subsection{$S$-adic and adelic geometry}\label{sec:sadic_geometry}
We now consider the following situation:
\begin{convention}\label{ConventionSemisimpleGroups}
\begin{enumerate}
\item $k$, $V$, $V\fin$, $V\inf$ are as in Convention \ref{ConventionAdelicGroups}; 
\item $\bfG$ and $\Gamma$ are as in Convention \ref{ConventionAdelicGroups}, but in addition $\bfG$ is assumed to be semisimple;
\item for every $s \in V$ we denote by $X_s$ the \cato{} space associated with $\bfG(k_s)$ and by $o_s \in X_s$ a basepoint such that $C_s \defeq \mathrm{Stab}(o) < \bfG(k_s)$ is maximal compact and contains  $C_s' \defeq \bfG(k_s) \cap \GL_N(\mathcal O_s)$ if $s \in V\fin$; 
\item $S \subset V$ is finite and non-empty.
\end{enumerate}
\end{convention}
\begin{remark}
In the situation of Convention \ref{ConventionSemisimpleGroups}, $X_S \defeq \prod_{s \in S} X_s$ is a \cato{} space, and $\bfG_S$ acts properly and cocompactly on $X_S$. The boundary $\partial X_S$ is the spherical join $\partial X_S = \bigast_{s \in S} \partial X_s$, and the spherical building $\Delta_k$ embeds diagonally into this boundary. Moreover, the stablilizer of the basepoint $o_S \defeq (o_s)_{s \in S}$ is the maximal compact subgroup $C_S \defeq \prod_{s \in S} C_s$ of $\bfG_S$, which is self-normalizing by Lemma \ref{obs:self-normalizing}. In particular, if we denote by $\mathcal K_S$ the collection of conjugates of $C_s$ then $\mathcal K_S$ can be identified with the $\bfG_S$-orbit of $o_S$, and hence we have coarse equivalences
\[
\mathcal K_S \cong X_S \cong \bfG_S.
\]
Note that these spaces are also quasi-isometric to the group $\bfG(\A_S) = \bfG_S \times \prod_{s \in V \setminus S} C_s'$ and the slightly larger group $\bfG_S \times \prod_{s \in V \setminus S} C_s$.
\end{remark}

\begin{remark} In previous sections we have identified the abstract group $\bfG(\A)$ of adelic points with the colimit of the groups $\bfG(\A_S)$ over all finite subsets $S \subseteq V$ containing $V\inf$. In this model of $\bfG(\A)$, the compact groups $C'_s$ play a special role in our model for the group $\bfG(\A)$, whereas in the sequel we prefer to work with the point stabilizers $C_s$, which have a clearer geometric meaning. To this end we observe that by Lemma~\ref{obs:c_c'} the group $\bfG(\A)$ can also be identified with the colimit
\[
\lim_S \left(\bfG_S \times \prod_{s \in V \setminus S} C_s\right).
\]
In the sequel, we will prefer to work in this model of $\bfG(\A)$. All the results of the previous sections remain valid with $C_s'$ replaced by $C_s$.
\end{remark}
\begin{remark}
For every finite $S \subset V$ we have given two coarse models for the groups $\bfG_S$ and $\bfG(\A_S)$, namely $X_S$ and $\mathcal K_S$. There are a similar coarse models for the full adelic group $\bfG(\A)$, although they are no longer proper. It will not technically be needed in what follows, but we think it helps to understand what is going on.

Given a finite subset $S \subseteq V$, we consider the \cato{} space $\hat{X}_S \defeq X_S \times \prod_{s \in V \setminus S} \{o_s\} \cong X_S$. For $S \subseteq S'$ we then have a natural inclusion $\hat X_S \hookrightarrow \hat X_{S'}$. The colimit $X = \lim_S \hat X_S$ over all finite subsets $S \subseteq V$ is then a \cato{} space on which $\bfG(\A)$ acts and the action is proper in the following sense: for every $x \in X$ there is an $r > 0$ such that $\{g \in \bfG(\A) \mid d(g.x,x) < r\}$ is relatively compact in $\bfG(\A)$ (provided we scale the metrics on $X_s, s\in V\fin$ so that edge lengths are uniformly bounded away from $0$). The space $X$ is not complete, the metric topology is strictly weaker than the colimit topology and neither is locally compact. In any case, if we denote by $o = (o_s)_{s \in V} \in X$ the canonical basepoint then $C$ is precisely the stabilizer of $o$ in $\bfG(\A)$.

In the sequel we denote by $\mathcal K$ the collection of all conjugates of $C$ in $\bfG(\A)$. Note that $C$ is self-normalizing by Lemma \ref{obs:self-normalizing}, and hence we may identify $\mathcal K$ with the $\bfG(\A)$-orbit of $o$ in $X$. We then have coarse equivalences
\[
\calK \cong X \cong \bfG(\A)
\]
where $\bfG(\A)$ is equipped with its canonical coarse structure as a locally compact group, $X$ with the coarse structure coming from the metric and $\calK$ with the coarse structure inherited from $\bfG(\A)/C$ or $X$, which coincide. Note that working with $\calK$ as a coarse model for $X$ is in line with \cite{harder69}, and that the rational spherical building $\Delta_k$ still embeds diagonally into the visual boundary $\partial X$.
\end{remark}

We conclude this section with the following almost transitivity result concerning $C$:
\begin{lemma}
\label{lem:iwasawa}
For every $\bfQ \in \mathcal S(\Delta_k)$ the double-coset spaces $C \backslash \bfG(\A) / \bfQ(\A)$ and $\bfQ(\A) \backslash \bfG(\A) /C$ are finite.
\end{lemma}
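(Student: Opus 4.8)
The strategy is to reduce the finiteness assertion to the compactness of an adelic flag variety. Set $\mathbf{Y} \defeq \bfG/\bfQ$; since $\bfQ$ is parabolic, $\mathbf{Y}$ is a smooth projective $k$-variety, and by the Borel--Tits conjugacy theorem (Proposition~\ref{prop:min_par_conj} and its extension to arbitrary parabolics, cf.\ \cite{BorelTits65}) the group $\bfG(K)$ acts transitively on $\mathbf{Y}(K)$ for every field extension $K/k$. In particular $\bfG(k_s)/\bfQ(k_s) = \mathbf{Y}(k_s)$ for every place $s$, and passing to the colimit over finite subsets of $V$ gives a $\bfG(\A)$-equivariant identification $\bfG(\A)/\bfQ(\A) = \mathbf{Y}(\A)$. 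Since $\mathbf{Y}$ is projective, $\mathbf{Y}(k_s)$ is compact for every $s$, and the valuative criterion of properness applied to a smooth $\calO_s$-model gives $\mathbf{Y}(\calO_s) = \mathbf{Y}(k_s)$ for all but finitely many finite $s$; hence $\mathbf{Y}(\A) = \prod_{s\in V}\mathbf{Y}(k_s)$ is compact.

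It then remains to show that $C$ has only finitely many orbits on this compact space. Writing $V = V\inf \sqcup V\fin$ decomposes $\bfG(\A) = \bfG_{V\inf}\times\bfG_{V\fin}$ compatibly with $C = C_{V\inf}\times C_{V\fin}$ and $\bfQ(\A) = \bfQ_{V\inf}\times\bfQ_{V\fin}$, so that
\[
C\backslash\bfG(\A)/\bfQ(\A) \;\cong\; \bigl(C_{V\inf}\backslash\bfG_{V\inf}/\bfQ_{V\inf}\bigr)\times\bigl(C_{V\fin}\backslash\bfG_{V\fin}/\bfQ_{V\fin}\bigr)\text{.}
\]
For the Archimedean factor, each $\bfG(k_s)$ with $s\in V\inf$ is a semisimple real Lie group with finite center whose maximal compact subgroups are all conjugate, so the Iwasawa decomposition gives $\bfG(k_s) = C_s\bfQ(k_s)$; as $V\inf$ is finite the Archimedean double-coset space is a single point. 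For the non-Archimedean factor, $C_{V\fin} = \prod_{s\in V\fin}C_s$ is a compact \emph{open} subgroup of $\bfG_{V\fin}$ (each $C_s\supseteq C_s'$ is open, and $C_s = C_s'$ for almost all $s$), acting on the compact space $\bfG_{V\fin}/\bfQ_{V\fin}\cong\prod_{s\in V\fin}\mathbf{Y}(k_s)$; since the quotient map $\bfG_{V\fin}\to\bfG_{V\fin}/\bfQ_{V\fin}$ is open, every $C_{V\fin}$-orbit is open, and a compact space is covered by finitely many of them. Both factors being finite, so is $C\backslash\bfG(\A)/\bfQ(\A)$; the finiteness of $\bfQ(\A)\backslash\bfG(\A)/C$ follows by applying the homeomorphism $g\mapsto g^{-1}$ and using $C = C^{-1}$.

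I expect the point needing most care to be the identification $\bfG(\A)/\bfQ(\A) = \mathbf{Y}(\A)$ as topological (not merely set-theoretic) spaces, and in the same vein the claim that $\bfG_{V\fin}\to\prod_{s\in V\fin}\mathbf{Y}(k_s)$ is an open quotient map onto a compact space: both rest on the fact that the smooth surjection $\bfG\to\mathbf{Y}$ admits sections over $\calO_s$ for almost all $s$ (Hensel's lemma applied to a smooth model), which is what controls the restricted product. Everything else is formal once this is settled. An alternative, more classical route avoids the product decomposition by invoking the Iwasawa decomposition $\bfG(k_s) = C_s\bfQ(k_s)$ directly at every Archimedean place and at every finite place where $C_s$ is hyperspecial (hence for all but finitely many $s$), and then handling the finitely many remaining finite places by the openness argument above; the version given here seems cleaner.
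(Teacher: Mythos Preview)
Your proof is correct and follows essentially the same approach as the paper: both arguments rest on the compactness of $(\bfG/\bfQ)(\A)$, openness of the compact subgroup at the finite places, and the Iwasawa decomposition at the Archimedean places. The paper packages the argument slightly differently by first citing Godement for the analogous statement with $C$ replaced by the open subgroup $\widehat{C} = \prod_{s\in V\fin} C'_s \times \prod_{s\in V\inf}\bfG(k_s)$ and then passing to $C$ via Lemma~\ref{obs:c_c'} and Iwasawa, whereas you split into Archimedean and non-Archimedean factors directly; the content is the same.
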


\begin{proof} If we replace $C$ by the group

\[
\widehat{C} \defeq \prod_{s \in V\fin} C'_s \times \prod_{s\in V\inf} \bfG(k_s),
\]
then the corresponding statement is established on p.10 in Section~7 of \cite{godement64} based on the fact that $ \bfG(\A) / \bfQ(\A) =  (\bfG/ \bfQ)(\A)$ is compact and $\widehat{C}$ is open. Now, firstly, the difference between $C_s$ and $C_s'$ is immaterial by Lemma \ref{obs:c_c'}. Secondly, for every infinite place the double quotient $C_s \backslash \bfG(k_s) /\bfQ(k_s)$ is reduced to a point by the Iwasawa decomposition, and hence the lemma follows.
\end{proof}

\subsection{Reductive groups}

We now briefly explain, how the results for semisimple groups discussed so far extend to reductive groups, so we take $\bfG$ to be a reductive $k$-group. We use some standard facts. The radical $\bfZ = R(\bfG)$ of $\bfG$ is the connected center and is a torus defined over $k$ \cite[2.15(e)]{BorelTits65}. The derived subgroup $\bfH = \mathscr{D}\bfG$ is semisimple and $\bfG$ is an almost-direct product of $\bfZ$ and $\bfH$ \cite[2.2]{BorelTits65}. In particular, $\bfG/\bfZ$ is semisimple. By our previous discussion, for every $s \in V$ there is a space $X_s$ attached to it on which $\bfG(k_s)$ acts through the quotient. Moreover $\bfG(k_s)$ acts through $(\bfG/\bfH)(k_s)$ on $F_s \defeq \coX[k_s](\bfG/\bfH) \otimes \R$ whose dimension is the rank of $\bfZ$ and can be equipped with a flat metric. Putting things together we get a proper and cocompact action of $\bfG(k_s)$ on the space $X_s^1 = X_s \times F_s$. The space $X_s^1$ is still a symmetric space respectively a Euclidean building. Its boundary $\partial X_s^1 = \partial X_s * \partial F_s$ is a join of that of $X_s$ and a sphere, hence a spherical building. As in the semisimple case we can then choose a maximal compact subgroup $C_s$ of $\bfG(k_s)$ which is given by the stabilizer of some basepoint $o_s \in X_s^1$ and in the non-Archimedean case contains the compact-open subgroup $C_s' \defeq \bfG(k_s) \cap \GL_N(\mathcal O_s)$. We then still have coarse equivalences
\[
\bfG(k_s) \cong \bfG(k_s)/C_s \cong X_s^1,
\]
but the set $\calK_S$ of conjugates of $C_s$ is \emph{not} in general coarsely equivalent to these spaces, since $C_s$ is not-self-normalizing due to the center. In fact, by the almost-direct product decomposition, the action of $\bfH(k_s)$ on $\calK_s$ is cocompact and hence gives rise to coarse equivalences
\[
\bfH(k_s) \cong X_s \cong \calK_s.
\]
In the sequel we will work in the following setting, which is slightly more general than that of Convention \ref{ConventionSemisimpleGroups}:
\begin{convention}\label{ConventionReductiveGrps}
\begin{enumerate}
\item $k$, $V$, $V\fin$, $V\inf$ are as in Convention \ref{ConventionAdelicGroups}; 
\item $\bfG$ and $\Gamma$ are as in Convention \ref{ConventionAdelicGroups}, but in addition $\bfG$ is assumed to be reductive of global rank $r \defeq \rk_k \bfG$ with derived subgroup $\bfH \defeq \mathscr{D}\bfG$;
\item for every $s \in V$ we denote by $X_s$ and $X_s^1$ respectively  the \cato{} space associated with $\bfH(k_s)$ and $\bfG(k_s)$; we also denote by $o_s \in X_s$ a basepoint such that $C_s \defeq \mathrm{Stab}(o) < \bfG(k_s)$ is maximal compact and contains  $C_s' \defeq \bfG(k_s) \cap \GL_N(\mathcal O_s)$ if $s \in V\fin$; 
\item $S \subset V$ is finite and non-empty.
\end{enumerate}
\end{convention}
In this situation we then set 
\[
 X_S^{1} \defeq \prod_{s \in S} X_s^1, \quad  X_S \defeq \prod_{s \in S} X_s \quad \text{and} \quad C_S \defeq \mathrm{Stab}((o_s)_{s\in S}) = \prod_{s \in S} C_s.
\]
We also denote by $\calK_S$ the space of conjugates of $C_S$ in $\bfG(k_S)$ and obtain coarse equivalences
\begin{equation}\label{CEq1}
\bfG(\A_S) \cong \bfG_S \cong \bfG_S/C_S \cong X^1_S \quad \text{and} \quad \bfH(\A_S) \cong \bfH_S \cong X_S \cong \calK_S.
\end{equation}
Finally, we also have infinite versions of these coarse equivalences: For this we denote by $\calK$ the conjugacy class of the maximal compact subgroup $C \defeq \prod_{s \in V} C_s$ in $\bfG(\A)$, which is the stabilizer of $o = (o_s)_{s\in V}$ in 
\[
X^1 = \lim_S \left(X_S^1 \times \prod_{s \in S \setminus V} \{o_s\}\right).
\]
If we similarly denote by $X$ the adelic space associated with the semisimple group $\bfH$, then we obtain coarse equivalences
\begin{equation}\label{CEq2}
\bfG(\A) \cong \bfG(\A)/C \cong X^1 \quad \text{and} \quad \bfH(\A) \cong X \cong \calK.
\end{equation}

\begin{remark}
Our version of adelic reduction theory in Section~\ref{sec:adelic_reduction} will be formulated for general reductive groups $\bfG$, but since it is based on the set $\calK$ of conjugates of $C$ it only really sees the semisimple group $\bfH$ and disregards the center $\bfZ$.
\end{remark}

\subsection{$S$-adic approximate groups of reductive type}
In the following sections we will work in the following section:
\begin{convention}\label{SettingReductiveApprox}
\begin{enumerate}
\item $k$, $V$, $V\fin$, $V\inf$ are as in Convention~\ref{ConventionAdelicGroups}; 
\item $\bfG$, $r$, $S$, the sequences $(o_s)_{s\in V}$, $(X_s)_{s\in V}$, $(X_s^1)_{s \in V}$, $(C_s)_{s \in V}$, $(C'_s)_{s \in V\fin}$ and $(\calK_s)_{s \in V}$ and the products $X^1_S$, $X_S$ and $C_S$ are as in Convention~\ref{ConventionReductiveGrps} and the following remarks; 
\item  $G \defeq \bfG_S$, $H \defeq \bfG_{V \setminus S}^\sharp$ and $\Gamma \defeq \bfG(k)$ so that $(G, H, \Gamma)$ is the adelic generalized cut-and-project scheme with parameters $(k, S, \bfG)$ (cf.\ Definition~\ref{adeliccup});
\item $W_s \subseteq \bfG(k_s)$ is a compact symmetric subset with non-empty interior for all $s \in V \setminus S$ such that $W_s = C_s$ if $s$ is finite, and $W = \prod_{s\in V \setminus S} W_s$;
\item $\bfG(\calO_S)$ denotes the $S$-arithmetic approximate subgroup $\bfG(\calO_S) \subset \bfG(k)$ given by
\[
\bfG(\calO_S) \defeq \Lambda(G, H, \Gamma, W) = \{\gamma \in \bfG(k) \mid \forall\, s \in V \setminus S: \gamma \in W_s\}.
\]
\end{enumerate}
\end{convention}
\begin{remark} In the case where $S$ contains all infinite places, our new definition of $\bfG(\calO_S)$ differs from our previous definition of the $S$-arithmetic group $\bfG(\calO_S)$ in that we use the maximal compact groups $C_s$ instead of the compact-open groups $C'_s$ at the finite places. However, by Lemma \ref{obs:c_c'} this does not change the commensurability class of $\bfG(\calO_S)$, which is all that we care about here. If $s \in V \setminus S$ is infinite, then we allow the corresponding window $W_s$ to be completely arbitrary, but again these choices will not influence the commensurability class of $\bfG(\calO_S)$ by Corollary \ref{Commens}.
\end{remark}
\begin{remark}\label{NonCommutingDiagram}
We now consider the quasi-actions of $\bfG(\calO_S)$ on the space $X_S$ and on the subset $\Delta_k \subset \partial X_S$. It is important to note that these two quasi-actions are not quite compatible, unless $S$ contains all infinite places.

To explain this phenomenon we use the notation of Subsection \ref{SecModelSets} with $\Gamma = \bfG(k) < G \times H < \bfG(A)$.
We recall that there is an isomorphism $\Gamma_G \to \Gamma_H$, $\gamma \mapsto \gamma^*$ so that
\[
\Gamma = \{(\gamma, \gamma^*)\mid \gamma \in \Gamma_G\}
\]
The approximate group $\bfG(\calO_S)$ is by definition a subset of $\Gamma_G \subseteq G = \bfG_S$, and it is through this embedding that $\bfG(\calO_S)$ quasi-acts on $X_S$. Equivalently, we can also consider $\bfG(\calO_S)$ as embedded into $\bfG(\A)$ as $\{(\gamma,1) \mid \gamma \in \bfG(\calO_S)\}$. Then the quasi-action on $X_S$ is via the action of $\bfG(\A)$.

The natural inclusion of $\bfG(\calO_S)$ into $\Gamma = \bfG(k) \subseteq \bfG(\A)$, however, takes $\gamma$ to $(\gamma,\gamma^*)$. It is through this map that $\bfG(\calO_S)$ acts on the rational building $\Delta_k$, and we have the following non-commutative diagram:
\begin{center}
\begin{tikzpicture}[xscale=2,yscale=-2]
\node (Sarith) at (0,0) {$\bfG(\calO_S)$};
\node (global) at (1,0) {$\bfG(k)$};
\node (local) at (0,1) {$\bfG_S$};
\node (adele) at (1,1) {$\bfG(\A)$};
\node (Selt) at (.25,.25) {$\gamma$};
\node (gelt) at (1.5,.25) {$(\gamma,\gamma^*)$};
\node (lelt) at (.25,1.25) {$\gamma$};
\node (aelt1) at (1,1.25) {$(\gamma,1)$};
\node (aelt2) at (1.5,1) {$(\gamma,\gamma^*)$};
\draw[->]  (Sarith) -- (global);
\draw[->] (Sarith) -- (local);
\draw[->] (local) -- (adele);
\draw[->]  (global) -- (adele);
\draw[|->]  (Selt) -- (gelt);
\draw[|->] (Selt) -- (lelt);
\draw[|->] (lelt) -- (aelt1);
\draw[|->]  (gelt) -- (aelt2);
\end{tikzpicture}
\end{center}
In the group case, i.e.\ if $S$ contains all infinite places and $W$ is chosen to be the compact open subgroup $C_{V \setminus S}$, this is not a problem because then $\gamma^* \in C_{V \setminus S}$ acts trivially on $\calK_{V \setminus S}$ and hence the actions of $\bfG(\calO_S)$ on $X_S$ and $\Delta_k \subset \partial X_S$ are compatible. In the approximate setting, however, this is not the case, and hence the two actions are only ``almost compatible''.
\end{remark}

\section{Rescaled Busemann functions and invariant horofunctions}\label{sec:busemann}

In this section we discuss Busemann functions on \cato-spaces. Specifically, Busemann functions on the space $X_S$ from the last section will arise from an algebraically defined map that we call an invariant horofunction.

\subsection{Rescaled Busemann functions for Bruhat-Tits buildings and symmetric spaces}\label{sec:building_busemann}

Recall that if $X$ is a \cato{} space, then points in the visual boundary $\partial X$ of $X$ can also be represented by Busemann functions: Given a geodesic ray $\rho\colon [0,\infty) \to X$ the associated \emph{Busemann function} is
\begin{equation}\label{eq:busemann}
\beta_\rho \colon X \to \R, \quad \beta_\rho(x) \defeq\lim_{t \to \infty} d(\rho(t),\rho(0)) - d(\rho(t), x)
\end{equation}
where $d(\rho(t),\rho(0)) = t$. We say that $\beta_\rho$ is \emph{centered at} $\xi \defeq \rho(\infty) \in \partial X$. Two Busemann functions $\beta,\beta'$ centered at the same point $\xi$ differ from each other by an additive constant, i.e.\ they have the same level-sets and are determined by their zero level-set. If a zero level has been fixed or if it does not matter, we will often denote a Busemann function centered at $\xi$ by $\beta_\xi$. A \emph{rescaled Busemann function} is a function $\alpha \colon X \to \R$ for which there is a constant $b > 0$ and a Busemann function $\beta$ such that $\alpha(x) = b \cdot \beta(x)$ for all $x \in X$. 

\begin{example}[Euclidean space]\label{AffineForms}
If $\Sigma$ is an affine space with underlying vector space $V$ and $\langle \cdot, \cdot \rangle$ is an inner product on $V$, then we refer to the triple $(\Sigma, V, \langle \cdot, \cdot \rangle)$ (or just $\Sigma$ for short) as a \emph{Euclidean space}. Given $x,y \in \Sigma$ we denote by $y-x \in V$ the unique vector such that $x+(y-x) = y$; then $\Sigma$ is a metric space with respect to $d(x,y) \defeq \sqrt{\langle x-y, x-y\rangle}$ and we denote by $\Sigma^1$ the unit sphere in $V$ and by $\Sigma^1_o$ the unit sphere around some $o \in \Sigma$ so that $\Sigma^1 \to \Sigma^1_o$, $v \mapsto o+v$ is an isometry.

A point $o \in \Sigma$ and a non-zero vector $v \in V \setminus \{0\}$ determine a non-constant affine form $\beta_{o,v} \colon V \to \R$, $x \mapsto \gen{v,x-o}$. If $v \in \Sigma^1$, then $\beta_{o,v}$ is the Busemann function associated to the geodesic $\rho_{o,v} \colon t \mapsto o + t v$. In particular, we obtain a homeomorphism $\Sigma^1 \to \partial \Sigma$, $v \mapsto \rho_{o,v}(\infty)$ for every $o \in V$.

More generally, if $v \in V \setminus \{0\}$ (not necessarily a unit vector), then the affine form $\beta_{o,v} = \norm{v} \cdot \beta_{o,v/\norm{v}}$ is a rescaled Busemann function, and hence the map $v \mapsto \beta_{o,v}$ defines a bijection between $V \setminus \{0\}$ and the set of rescaled Busemann functions (a.k.a.\ non-constant affine forms). Under the identification $\Sigma^1 \cong \partial \Sigma$ the center of the affine form $\beta_{o,v}$ corresponds to $v/\norm{v}$. In the sequel we refer to $v$ as the \emph{linear part} of $\beta=\beta_{o,v}$ and denote it by $\mathring{\beta}$.
\end{example}
Since every geodesic in a Euclidean building or Riemannian symmetric space of non-compact type is contained in an apartment, which is a Euclidean space, this example generalizes as follows:
\begin{proposition}\label{prop:busemann_char}
Let $X$ be a Euclidean building or a Riemannian symmetric space of non-compact type and let $c \subseteq \partial X$ be a chamber of the building at infinity. A map $\beta \colon X \to \R$ is a rescaled Busemann function centered at a point of the closed chamber $c$ if and only if the following conditions are satisfied:
\begin{enumerate}
\item For every apartment $\Sigma$ of $X$ that contains $c$ in its boundary the restriction $\beta|_\Sigma$ is affine.\label{eq:busemann_char_affine}
\item For some apartment $\Sigma_0$ of $X$ that contains $c$ in its boundary $\beta|_{\Sigma_0}$ is centered at a point of $c$.\label{eq:busemann_char_direction}
\item If $X$ is a symmetric space, then moreover $\beta(x) = \beta(u.x)$ for every $x \in X$ and every unipotent element $u$ in the stabilizer of $c$ in $\Isom(X)$.\label{eq:busemann_char_unipotent}
\end{enumerate}
\end{proposition}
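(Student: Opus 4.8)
The plan is to reduce both implications to the description of rescaled Busemann functions on apartments given by Example~\ref{AffineForms}, the forward direction being the soft part and the converse carrying the content. For the forward implication, write $\beta = b\,\beta_\xi$ with $b>0$ and $\xi$ in the closed chamber $c$. If $\Sigma$ is any apartment with $c\subseteq\partial\Sigma$, a geodesic ray in $\Sigma$ pointing at $\xi$ has, by Example~\ref{AffineForms}, an affine Busemann function inside the Euclidean space $\Sigma$; since two Busemann functions centered at $\xi$ differ by a constant, $\beta|_\Sigma$ is affine, which gives (1), and its linear part points at $\xi\in c$, which gives (2) for any $\Sigma_0$ through $c$. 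For (3), note that $g\mapsto\beta(g^{-1}x)-\beta(x)$ is independent of $x$, because $\beta\circ g^{-1}$ is again a rescaled Busemann function centered at $g.\xi=\xi$; it thus defines a homomorphism $c_\beta\colon\Stab_{\Isom(X)}(\xi)\to\R$. When $X$ is a symmetric space, $\Stab(c)$ is a minimal parabolic $P$ whose Levi has no nontrivial unipotents — its semisimple part being anisotropic, hence compact over $\R$ — so every unipotent $u\in\Stab(c)$ lies in the unipotent radical $R_u(P)$; and $c_\beta$ vanishes on $R_u(P)$, as one sees either by identifying it with the pairing of $\xi$ against the $\mathfrak{a}$-component of an Iwasawa decomposition adapted to $\xi$, or directly from $a_t^{-1}ua_t\to e$ along a ray $a_t$ pointing at $\xi$ (so that $d(ua_t.x,a_t.x)\to 0$ and hence $c_\beta(u)=0$). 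Therefore $\beta(u.x)=\beta(x)$.

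For the converse, assume (1)--(3). By (2), $\beta|_{\Sigma_0}$ is a nonconstant affine form; let $\xi\in c$ be its center, $b>0$ the norm of its linear part, and set $\tilde\beta\defeq b\,\beta_\xi$, normalized to agree with $\beta$ at a basepoint $o_0\in\Sigma_0$. By Example~\ref{AffineForms}, $\beta$ and $\tilde\beta$ restrict to the same affine form on $\Sigma_0$, and it remains to show they agree everywhere. If $X$ is a Euclidean building, fix $x\in X$ and an apartment $\Sigma$ with $x\in\Sigma$ and $c\subseteq\partial\Sigma$. Since $\Sigma$ and $\Sigma_0$ both contain $c$ in their boundaries, they share a sector $S$ with $S(\infty)=c$, and on $S$ we have $\beta=\beta|_{\Sigma_0}=\tilde\beta|_{\Sigma_0}=\tilde\beta$. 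As $\beta|_\Sigma$ is affine by (1), $\tilde\beta|_\Sigma$ is affine by Example~\ref{AffineForms}, and $S$ is full-dimensional in the Euclidean space $\Sigma$, the two affine forms coincide on $\Sigma$, so $\beta(x)=\tilde\beta(x)$. If instead $X$ is a symmetric space, then $\Stab(c)=P$ acts transitively on $X$ by the Iwasawa decomposition, so $x=u\ell.o_0$ with $u\in R_u(P)$ and $\ell$ in the Levi; by (3) and the forward implication both $\beta$ and $\tilde\beta$ are $u$-invariant, reducing us to points $\ell.o_0$. Now the maximal flat through $o_0$ asymptotic to $c$ is unique — it must contain the (unique) geodesic ray from $o_0$ to the regular boundary point at the barycenter of $c$, and a regular ray lies in a unique maximal flat — so $\Sigma_0$ is that flat; choosing the split torus of $P$ compatibly with $o_0$, the Levi moves $o_0$ within $\Sigma_0$, and therefore $\beta(\ell.o_0)=\tilde\beta(\ell.o_0)$. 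Hence $\beta=\tilde\beta$ on all of $X$.

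The main obstacle, I expect, is the converse in the symmetric-space case: the apartment $\Sigma_0$ furnished by (2) is \emph{a priori} arbitrary among flats asymptotic to $c$, and one must invoke the uniqueness of the flat through a point asymptotic to a chamber — a feature of symmetric spaces not shared by buildings — to identify it with a Levi-adapted flat, while condition (3) is exactly the substitute for the ``common subsector'' argument that handles the building case. A secondary point is the structure-theoretic input in the forward direction (3): that over an Archimedean local field the unipotent elements of a minimal parabolic are exhausted by its unipotent radical, on which the Busemann cocycle vanishes.
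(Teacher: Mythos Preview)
Your proof is correct and follows the paper's overall architecture: verify (1)--(2) via Example~\ref{AffineForms}, handle (3) separately, and for the converse compare $\beta$ with the canonical $\tilde\beta=b\,\beta_\xi$, matching them on $\Sigma_0$ and then propagating. The building case is identical (common sector, two affine forms agreeing on an open set). There are two differences worth noting. First, for the forward direction~(3) the paper simply cites \cite[Lemma~10.26(2)]{BridsonHaefliger}, whereas you supply a self-contained cocycle argument; your contraction argument $a_t^{-1}ua_t\to e$ needs $a_t$ to point at an \emph{interior} point of $c$ rather than at $\xi$ (which may be singular), but your alternative via the Iwasawa projection covers this. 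Second, for the symmetric-space converse the paper argues apartment-by-apartment: the unipotent radical acts transitively on apartments asymptotic to $c$, so for any such $\Sigma$ there is $u$ with $u.\Sigma=\Sigma_0$, and then $\beta(x)=\beta(u.x)=\tilde\beta(u.x)=\tilde\beta(x)$ for $x\in\Sigma$. Your route---write $x=u\ell.o_0$ via Iwasawa, kill $u$ by~(3), and observe $\ell.o_0\in\Sigma_0$ after adapting the Levi to $\Sigma_0$---is equivalent in content but requires the extra observation that the flat through $o_0$ asymptotic to $c$ is unique and is stabilized by the chosen Levi; the paper's version is slightly more direct and keeps the building and symmetric-space cases in closer parallel.
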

\begin{proof} Assume first that $\beta$ is an affine Busemann function centered at a point $\xi \in c$. If $\Sigma$ is an apartment that contains $c$ in its boundary then there exists a geodesic ray $\rho$ contained in $\Sigma$ with $\xi = \rho(\infty)$. Then, up to rescaling we have $\beta|_{\Sigma} = \beta_\rho$, which is an affine function on $\Sigma$ centered at $\xi$ by Example \ref{AffineForms}, and hence (1) and (2) hold. In the symmetric space case, also (3) holds by \cite[Lemma 10.26(2)]{BridsonHaefliger}.

Conversely assume that (1) and (2) (and, in the symmetric space case also (3)) hold. For every $\Sigma$ as in (1), then $\beta|_\Sigma$ is a rescaled Busemann function by Example \ref{AffineForms}. Moreover, by (2) there exists an apartment $\Sigma_0$, a geodesic ray $\rho \colon [0,\infty) \to \Sigma_0$ with $\rho(\infty) = \xi \in c \subset \partial \Sigma_0 \subset \partial X$ and associated Busemann function $\beta' = \beta_\rho: X \to \R$ and a constant $b>0$ such that $\beta|_{\Sigma_0} = (b \cdot \beta')|_{\Sigma_0}$. We claim that in fact $\beta$ and $b\cdot \beta'$ coincide on all of $X$. In fact, it suffices to show that they coincide on all apartments $\Sigma$ which contain $c$ in their boundary, since these cover $X$.

Thus let $\Sigma$ be an apartment containing $c$ in its boundary. If $X$ is a Euclidean building, then $\Sigma$ meets $\Sigma_0$ in a set containing a Weyl chamber. Thus $\beta$ coincides with $b \cdot \beta'$ on an open set of $\Sigma$ and since both are affine on $\Sigma$, they coincide on all of $\Sigma$. If $X$ is a symmetric space then there is an element $u$ as in (3) such that $u.\Sigma = \Sigma_0$. It follows that
\[
b \cdot \beta'(x) = b \cdot \beta'(u.x) = \beta(u.x) = \beta(x)
\]
for all $x \in \Sigma$ and hence in either case $\beta$ and $b\cdot \beta'$ coincide on $\Sigma$. This establishes the claim and thereby finishes the proof.
\end{proof}
Our next goal is to translate the proposition into group-theoretical terms in the specific setting of Convention \ref{SettingReductiveApprox}. For this we need to understand what it means for the restriction of a rescaled Busemann function to an apartment $\Sigma$ to be centered at a point of a given Weyl chamber $c$ at infinity.

We first consider the Archimedean case: In this case, $\Sigma$ is a Euclidean Coxeter complex with Coxeter group $\tilde{W}$ and spherical Coxeter group $W$. The reflection hyperplanes of $W$ decompose $V$ into simplicial cones, called the \emph{Weyl chambers} of $\Sigma$, and every such Weyl chamber $C$ defines a Weyl chamber $c \defeq \partial C \subseteq \partial \Sigma$ at infinity; the latter chambers define the given simplicial structure on $\partial \Sigma$. 

In view of Example \ref{AffineForms} every rescaled Busemann function $\beta$ on $\Sigma$ admits a linear part $\mathring{\beta}$ and a center $\mathring{\beta}/\|\mathring{\beta}\| \in \Sigma^1 \cong \partial \Sigma$, and this center is contained in at least one chamber $c$ at infinity. In order to characterize those rescaled Busemann functions $\beta$ whose centers lie inside $c$, we introduce the following notation:

The Weyl chamber $C$ determines a positive system $\Phi^+_C$ of roots such that $C \subseteq \{x \in V \mid \forall\, \alpha \in \Phi^+_C:\;\langle{\alpha,x}\rangle \ge 0\}$ and a corresponding set $\Pi_C$ of simple roots $\alpha \in \Phi^+_C$ such that $C \cap \{x \in V \mid \langle \alpha,x\rangle = 0\}$ is a maximal face of $C$. 
With this notation we then have the following characterization of rescaled Busemann functions on $\Sigma$ centered inside $c$.
\begin{lemma}\label{CenterBusemannRoots} Let $C \subset \Sigma$ be a Weyl chamber with boundary $c \defeq \partial C$ and let $\beta$ be a rescaled Busemann function on $\Sigma$. 
Then the following are equivalent:
\begin{enumerate}[(a)]
\item $\beta$ is centered at a point of $c$
\item $\mathring{\beta} \in C$
\item $\gen{\alpha,\mathring{\beta}} \ge 0$ for all $\alpha \in \Pi_C$.\qed
\end{enumerate}
\end{lemma}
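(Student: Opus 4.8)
The plan is to obtain Lemma~\ref{CenterBusemannRoots} almost immediately from the dictionary between rescaled Busemann functions and non-zero vectors set up in Example~\ref{AffineForms}, combined with the standard description of a closed Weyl chamber as the dual cone of its positive root system. I would establish the chain of equivalences (a) $\Leftrightarrow$ (b) $\Leftrightarrow$ (c), handling the two links separately. Neither link is substantial; the only points that want a little care are the bookkeeping around the identification $\partial\Sigma\cong\Sigma^1$ and the fact that $C$ (hence $c=\partial C$) is being treated as a \emph{closed} simplicial cone, so that the non-strict inequalities in (c) are the right ones.

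For (a) $\Leftrightarrow$ (b): by Example~\ref{AffineForms} the map $v\mapsto\beta_{o,v}$ identifies $V\setminus\{0\}$ with the set of rescaled Busemann functions on $\Sigma$, the linear part $\mathring\beta\in V\setminus\{0\}$ is independent of the chosen basepoint $o$, and under the canonical identification $\Sigma^1\cong\partial\Sigma$ the center of $\beta$ is the unit vector $\mathring\beta/\norm{\mathring\beta}$. Since $C$ is a closed simplicial cone with apex $0\in V$ and $c=\partial C$ consists of the directions $w/\norm{w}$ with $w\in C\setminus\{0\}$, the center $\mathring\beta/\norm{\mathring\beta}$ lies in $c$ exactly when $\mathring\beta$ is a positive multiple of a non-zero element of $C$; as $\mathring\beta\neq 0$ and $C$ is invariant under positive scaling, this is equivalent to $\mathring\beta\in C$. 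This gives (a) $\Leftrightarrow$ (b).

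For (b) $\Leftrightarrow$ (c): the implication (b) $\Rightarrow$ (c) is immediate from the inclusion $C\subseteq\{x\in V\mid \gen{\alpha,x}\ge 0\text{ for all }\alpha\in\Phi^+_C\}$ recorded just above the lemma, together with $\Pi_C\subseteq\Phi^+_C$. For the converse, one uses that every positive root is a non-negative integer combination of the simple roots in $\Pi_C$, so that $\gen{\alpha,\mathring\beta}\ge 0$ for all $\alpha\in\Pi_C$ already forces $\gen{\alpha,\mathring\beta}\ge 0$ for all $\alpha\in\Phi^+_C$; the solution set of these inequalities is precisely the closed Weyl chamber $C$ (the standard fundamental domain for the $W$-action on $V$), whence $\mathring\beta\in C$. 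The only genuine ``obstacle'' here is expository rather than mathematical: keeping the open/closed conventions for chambers consistent on both sides of each equivalence and matching the identification $\partial\Sigma\cong\Sigma^1$ with the notion of linear part. Once that is fixed, both steps are routine unwinding of the definitions, which is why the statement can be recorded without further comment.
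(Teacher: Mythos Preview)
Your proposal is correct and matches the paper's intent: the lemma is stated with a \qed and no proof, indicating the authors regard it as an immediate consequence of Example~\ref{AffineForms} together with the standard description of a closed Weyl chamber as a fundamental domain cut out by the simple-root inequalities. Your two-step unwinding (a)$\Leftrightarrow$(b) via the identification of centers with normalized linear parts, and (b)$\Leftrightarrow$(c) via the equality $C=\{x\in V\mid \gen{\alpha,x}\ge 0\text{ for all }\alpha\in\Pi_C\}$, is exactly the routine verification one would supply if asked to spell it out.
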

In the Archimedean space the situation is similar: If we fix an apartment $\Sigma$ and an arbitrary basepoint $x \in \Sigma$, then the other apartments through $x$ intersect $\Sigma$ in a hyperplane arrangement. The connected components of the complement, the \emph{Weyl chambers at $x$}, then correspond to the Weyl chambers at infinity, and Lemma \ref{CenterBusemannRoots} holds mutatis mutandis then also in this context.

Now recall that $\mathcal K_s$ denotes the set of conjugates of $C_s$, which is in $G$-equivariant bijection with the orbit of $o_s$ in $X_s$. For the following corollary we will identify $\mathcal K_s$ with this orbit and also identify minimal $k$-parabolics with chambers in $\Delta_k \subseteq \Delta_{k_s}=\partial X_s$.

\begin{corollary}\label{cor:busemann}
In the situation of Convention \ref{SettingReductiveApprox} let $s \in V$ be a place (finite or not). Let $\bfP < \bfG$ a minimal $k$-parabolic and let $\chi \in \X[k](\bfP)$ be a character defined over $k$. Assume that we are given a map $\pi \colon \mathcal{K}_s \to \R_{>0}$ which satisfies
\begin{equation}\label{eq:parabolic_transformation}
\pi({}^gC_s) = \abs{\chi(g)}_s \cdot \pi(C_s) \quad \text{for all}\quad g\in \bfP(k_s)\text{.}
\end{equation}
Then $\log \pi \colon \mathcal{K}_s \to \R$ can be uniquely extended to a map $\beta \colon X_s \to \R$ that is affine on every apartment containing $\bfP$ in its boundary. This extension is a rescaled Busemann function centered at a point of the closure of $\bfP$ if and only if
\begin{equation}\label{eq:parabolic_direction}
\gen{\alpha,\chi} \ge 0 \quad \text{ for every simple root $\alpha$ of $\bfP$.}
\end{equation}
\end{corollary}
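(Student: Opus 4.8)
The plan is to transport the apartment-wise characterisation of rescaled Busemann functions (Proposition~\ref{prop:busemann_char} and Lemma~\ref{CenterBusemannRoots}) along the isometric action of $\bfG(k_s)$ on $X_s$. As in the text I identify $\mathcal{K}_s$ with the orbit $\bfG(k_s).o_s\subseteq X_s$ and $\bfP$ with the simplex $\sigma\subseteq\Delta_k\subseteq\partial X_s$ it determines, so that $\bfP(k_s)=\Stab_{\bfG(k_s)}(\sigma)$ and ${}^gC_s$ corresponds to $g.o_s$; with this dictionary \eqref{eq:parabolic_transformation} reads $\log\pi(g.o_s)=\log\abs{\chi(g)}_s+\log\pi(o_s)$ for $g\in\bfP(k_s)$. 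An apartment $\Sigma$ of $X_s$ has $\sigma$ in its boundary exactly when the maximal $k_s$-split torus $\bfT_\Sigma$ it corresponds to is contained in $\bfP$; in that case $\bfT_\Sigma(k_s)$ fixes $\partial\Sigma$ pointwise, hence lies in $\bfP(k_s)$, and for any $y\in\mathcal{K}_s\cap\Sigma$ the orbit $\bfT_\Sigma(k_s).y$ is affinely spanning in $\Sigma$ (a coset of a full-rank lattice, or all of $\Sigma$ when $X_s$ is a symmetric space). Since the apartments containing $\sigma$ cover $X_s$ and each meets $\mathcal{K}_s$ in an affinely spanning set, any function that is affine on each of them is determined by its restriction to $\mathcal{K}_s$; this gives uniqueness of $\beta$ once existence is known.

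For existence, fix a chamber $c\subseteq\partial X_s$ with $\sigma\subseteq\bar c$ --- say the one corresponding to a minimal parabolic $k_s$-subgroup $\bfP'$ of $\bfG$ with $\bfP'\subseteq\bfP$ --- and, using that every point of $X_s$ lies in an apartment having $c$ in its boundary, an apartment $\Sigma_0\ni o_s$ with $c$ in its boundary; write $\bfT_0\subseteq\bfP'\subseteq\bfP$ for the corresponding maximal $k_s$-split torus. On $\Sigma_0$ let $\beta$ be the affine function with value $\log\pi(o_s)$ at $o_s$ whose linear part $\mathring{\beta}_0$ is the vector representing $\chi|_{\bfT_0}$ under the identification of $\X[k_s](\bfT_0)\otimes\R$ with the space of directions of $\Sigma_0$. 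Because the translation action of $\bfT_0(k_s)$ on $\Sigma_0$ realises $\log\abs{\chi(t)}_s$ as the increment of this affine form under translation by $t$, equation~\eqref{eq:parabolic_transformation} shows $\beta|_{\Sigma_0}$ agrees with $\log\pi$ on $\bfT_0(k_s).o_s$, hence on all of $\mathcal{K}_s\cap\Sigma_0$. Using that $\bfP(k_s)$ acts transitively on the apartments with $\sigma$ in their boundary (maximal $k_s$-split tori of $\bfP$ are $\bfP(k_s)$-conjugate), propagate $\beta$ by setting $\beta|_{g\Sigma_0}(x)\defeq\beta|_{\Sigma_0}(g^{-1}.x)+\log\abs{\chi(g)}_s$ for $g\in\bfP(k_s)$; well-definedness and compatibility on overlaps reduce, via \eqref{eq:parabolic_transformation} and the spanning property, to the equality of $\beta|_{\Sigma_0}(n^{-1}.\,\cdot\,)+\log\abs{\chi(n)}_s$ and $\beta|_{\Sigma_0}$ on $\bfT_0(k_s).o_s$ for $n\in\bfP(k_s)$ stabilising $\Sigma_0$. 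The resulting $\beta\colon X_s\to\R$ is affine on every apartment containing $\sigma$, extends $\log\pi$, and satisfies $\beta(g.x)=\beta(x)+\log\abs{\chi(g)}_s$ for all $g\in\bfP(k_s)$ and $x\in X_s$.

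To identify $\beta$, apply Proposition~\ref{prop:busemann_char} with this chamber $c$. Condition~\eqref{eq:busemann_char_affine} holds because every apartment with $c$ in its boundary has $\sigma$ in its boundary. In the symmetric-space case, condition~\eqref{eq:busemann_char_unipotent} holds because $\chi$ is trivial on every unipotent subgroup, in particular on the unipotent elements of $\bfP(k_s)$ that stabilise $c$ (they lie in $\bfP'(k_s)\subseteq\bfP(k_s)$), so by the transformation law $\beta$ is invariant under them. By Proposition~\ref{prop:busemann_char}, $\beta$ is therefore a rescaled Busemann function centred at a point of $\bar c$ if and only if condition~\eqref{eq:busemann_char_direction} holds, i.e.\ by Lemma~\ref{CenterBusemannRoots} if and only if $\gen{\alpha,\mathring{\beta}_0}\ge 0$ for every simple root $\alpha$ of $\bfP'$. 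Finally, any character of $\bfP$ is conjugation-invariant, so $\mathring{\beta}_0$, which represents $\chi|_{\bfT_0}$, is fixed by the Weyl group of the Levi of $\bfP(k_s)$; hence $\mathring{\beta}_0$ is orthogonal to the roots of that Levi, while its pairings with the remaining $k_s$-simple roots are positive multiples of the pairings of $\chi$ with the $k$-simple roots of $\bfP$ to which they restrict. Thus \eqref{eq:busemann_char_direction} is equivalent to $\gen{\alpha,\chi}\ge 0$ for every simple root $\alpha$ of $\bfP$, i.e.\ to~\eqref{eq:parabolic_direction}, and the same Weyl-invariance forces the centre to lie in $\bar\sigma$ and not merely in $\bar c$.

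The main technical burden is the gluing step above: one has to check carefully, using \eqref{eq:parabolic_transformation}, the transitivity of $\bfP(k_s)$ on apartments containing $\sigma$, and the double-coset finiteness of Lemma~\ref{lem:iwasawa} (together with the Iwasawa decomposition at infinite places), that the apartment-wise affine functions agree on overlaps and restrict to $\log\pi$ on all of $\mathcal{K}_s$. The subtler conceptual point, handled above, is that \eqref{eq:parabolic_direction} is a $k$-rational condition whereas the geometry of $X_s$ is governed by $k_s$-roots; the bridge is that a character of $\bfP$ defined over $k$ restricts over $k_s$ to a class fixed by the Weyl group of the Levi, whose behaviour on $k_s$-simple roots is controlled by the restriction map onto the $k$-simple roots of $\bfP$.
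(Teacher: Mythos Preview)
Your overall strategy---reduce to Proposition~\ref{prop:busemann_char} and Lemma~\ref{CenterBusemannRoots}, verifying conditions~\eqref{eq:busemann_char_affine}--\eqref{eq:busemann_char_unipotent} using the transformation law~\eqref{eq:parabolic_transformation}---is exactly the paper's, and the use of $W_L$-invariance of $\chi$ to pin the centre down to $\bar\sigma$ rather than merely $\bar c$ is a nice touch. Two points deserve attention, though.

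First, your construction of $\beta$ by propagating from a single apartment via the $\bfP(k_s)$-action is more circuitous than necessary, and you explicitly leave the gluing step as a ``technical burden''. The paper avoids this entirely: for \emph{each} apartment $\Sigma$ with torus $\bfS$ and $\sigma\subseteq\partial\Sigma$, one uses the explicit translation formula $\log_{q_s}\abs{\psi(t)}_s=\langle\psi,\omega(t)\rangle$ for the $\bfS(k_s)$-action (see~\eqref{eq:torus_action}) to see directly that $\log\pi|_{\Sigma\cap\calK_s}$ extends to an affine form with linear part $\chi|_{\bfS}$. Since the linear part is intrinsic (it is the restriction of the fixed $\chi$), the affine extensions on overlapping apartments agree on a sector and hence everywhere. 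This sidesteps the normalizer computation your approach requires. Your invocation of Lemma~\ref{lem:iwasawa} here is misplaced: that is an \emph{adelic} double-coset finiteness statement and plays no role in this purely local construction.

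Second, your final step---asserting that pairings of $\mathring\beta_0$ with non-Levi $k_s$-simple roots are ``positive multiples of the pairings of $\chi$ with the $k$-simple roots to which they restrict''---is the right idea but is not justified as stated; restriction $E_s^*\to E^*$ need not respect the inner products in the way this phrasing suggests. The paper's route is cleaner: identify $E_s$ and $E_s^*$ via the scalar product, observe that $\chi$ (being a $k$-character) determines a vector in the subspace $E=\coX[k](\bfT)\otimes\R\subseteq E_s$, and use that the closed $k$-Weyl chamber $\mathfrak{C}\subseteq E$ has boundary equal to the closed simplex $\bar\sigma$ inside $\Delta_k\subseteq\Delta_{k_s}$ (via the inclusion of buildings discussed in Section~\ref{sec:bldg_infty}). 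Then $\hat\chi\in\mathfrak{C}$ is literally condition~\eqref{eq:parabolic_direction}, and $\hat\chi/\|\hat\chi\|\in\bar\sigma\subseteq\bar c$ gives condition~\eqref{eq:busemann_char_direction}.
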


\begin{remark}
If $s$ is an infinite place then $\mathcal{K}_s$ is all of $X_s$ and $\log \pi$ and $\beta$ coincide. If $s$ is finite then $\mathcal{K}_s$ is still relatively dense in every apartment and hence there is at most one affine extension to that apartment.
\end{remark}

\begin{proof}
The plan is to invoke Proposition~\ref{prop:busemann_char}. In short, equation \eqref{eq:parabolic_transformation} will give Condition \eqref{eq:busemann_char_affine} when restricted to tori and Condition \eqref{eq:busemann_char_unipotent} when restricted to the unipotent radical, and the inequalities in \eqref{eq:parabolic_direction} will give condition \eqref{eq:busemann_char_direction}. We now spell out the details.

Let $\bfS \subset \bfQ$ be a maximal $k_s$-split torus inside a minimal $k_s$-parabolic contained in $\bfP$. Geometrically, $\bfQ$ corresponds to a chamber $c$ in $\Delta_{k_s} = \partial X_s$ and $\bfS$ corresponds to an apartment $\Sigma$ of $X_s$ containing $c$ in its boundary. We also fix a maximal $k$-split torus $T$ contained in $S$ and let $\alpha_1,\ldots,\alpha_r \in \X[k](\bfT)$ be the simple roots associated to $\bfP$. Then $\Sigma$ is a Euclidean space with underlying vector space $E_s = \coX[k_s](\bfS) \otimes \R$ equipped with a Weyl group invariant scalar product. Note that this means that the boundaries of $\Sigma$ and of $E_s$ can be identified.

In the building case, the action of $\bfS(k_s)$ on $E_s$ is given by the homomorphism $\omega \colon \bfS(k_s) \to \coX[k_s](\bfS)$ that is characterized by
\begin{equation}\label{eq:torus_action}
\log_{q_s}\abs{\psi(t)}_s = \langle \psi, \omega(t) \rangle
\end{equation}
for any $\psi \in \X[k_s](\bfS)$, where $q_s$ is the order of the residue field of $k_s$ and $\langle \cdot, \cdot \rangle \colon \X[k_s](\bfS) \times \coX[k_s](\bfS) \to \Z$ is the dual pairing, see \cite{KalethaPrasad} for details. In particular, we see from \eqref{eq:parabolic_direction} that $\chi(\omega(g)) = \log_{q_s} \pi({}^gC_s) - \log_{q_s} \pi(C_s)$. Thus the restriction of $\log_{q_s} \pi$ to $\Sigma \cap \calK_s$ extends uniquely to an affine form with linear part $\chi \in E_s^* = \X[k_s](\bfS) \otimes \R$, showing \eqref{eq:busemann_char_affine} of Proposition~\ref{prop:busemann_char}. (Changing the base of the logarithm only amounts to a scaling.)

In the symmetric space case, no extension is necessary: If $\mathfrak s$ denotes the Lie algebra of $\bfS(k_s)$, then $\iota_x: \mathfrak s \to E_s$, $X \mapsto \exp(X).x$ is an isomorphism of affine spaces for any $x \in E_s$. Since $\iota_x^* \log \pi$ is evidently affine on $\mathfrak s$, we deduce that \eqref{eq:busemann_char_affine} of Proposition~\ref{prop:busemann_char} is satisfied also in this case.

We now establish \eqref{eq:busemann_char_direction} in the Euclidean building case. For this it will be convenient to identify $E_s$ and $E_s^*$ via the scalar product. The space $E = \coX[k](\bfT) \otimes \R$ is canonically a subspace of $E_s$ in a way that is compatible with the inclusion $\Delta_k \subseteq \Delta_{k_s}$. The set $\mathfrak{C} = \{\rho \in E \mid \gen{\alpha, \rho} \ge 0 \text{ for all $k$-simple roots $\alpha$ of } \bfP\}$ is a Weyl chamber in $E$. Its boundary is  the (closed) chamber $\bfP$ in $\Delta_k$. Note that $\chi \in \coX[k](\bfT)$ by assumption so it is centered at a point in the boundary of $E$. We know from Lemma \ref{CenterBusemannRoots} that it is centered at $\bfP$ if and only if $\chi \in \mathfrak{C}$, which is precisely condition \eqref{eq:parabolic_direction}.
The set $\mathfrak{C}_s = \{\rho \in E_s \mid \gen{\alpha, \rho} \ge 0 \text{ for all $k_s$-simple roots $\alpha$ of } \bfQ\}$ is a Weyl chamber whose boundary in $\Delta_{k_s} \cong \partial X_s$ is the closed chamber $\bfQ$. This chamber contains the closed simplex $\bfP$ of $\Delta_{k_s}$ which in turn contains the chamber $\bfP$ of $\Delta_k$, see \cite[\S 6.10]{BorelTits65} and the discussion in Section~\ref{sec:bldg_infty}. We deduce that condition \eqref{eq:busemann_char_direction} of Proposition~\ref{prop:busemann_char} holds; in the symmetric space case one argues similarly, using Weyl chambers with a given tip $x$ instead of affine Weyl chambers.

Finally, if $u \in \bfQ(k_s) < \bfP(k_s)$ is unipotent then $\chi(u) = 1$ so \eqref{eq:parabolic_transformation} implies that $\pi({}^uC_s) = \pi(C_s)$ giving \eqref{eq:busemann_char_unipotent}.
\end{proof}

\subsection{Busemann functions vs.\ horofunctions}\label{sec:horo_busemann}

We continue to work in the setting of Convention \ref{SettingReductiveApprox}. If $\bfQ \in \calV(\Delta_k)$ is a maximal $k$-parabolic subgroup, considered a vertex of the rational building, then for every $s \in V$ there is a Busemann function $\beta_{\bfQ, s}$ of $X_s$ centered at the image of $\bfQ$ inside $\partial X_s = \Delta_{k_s}$. Our next goal is to encode all of these Busemann functions for varying values of $s$ into a single function.
\begin{definition}\label{def:pi_trafo}
We say that a function $\pi \colon \mathcal V(\Delta_k) \times \mathcal K \to \mathbb R_{>0}$ is
\begin{enumerate}[(i)]
\item a \emph{horofunction} if $\pi(\bfQ, {}^pK) = \abs{\chi_\bfQ(p)} \cdot \pi(\bfQ, K)$ for all $\bfQ \in \mathcal V(\Delta_k)$, $q\in \bfQ(\A)$ and $K \in \mathcal K$;\label{item:pi_trafo_parabolic}
\item \emph{normalized} if $\pi(\bfQ, C) = 1$ for every $\bfQ \in \mathcal V(\Delta_k)$\;
\item \emph{invariant} if $\pi({}^g\bfQ, {}^gK) = \pi(\bfQ,K)$ for all $g \in \bfG(k)$, $\bfQ \in \mathcal V(\Delta_k)$ and $K\in \mathcal K$.\label{item:pi_trafo_global}
\end{enumerate}
\end{definition}
We will see in the next subsection that normalized invariant horofunctions exist (Proposition \ref{ExNIHoro}), but before we prove this fact we would like to comment on their geometric meaning.
For the following proposition we identify every maximal $k$-parabolic $\bfQ \in \mathcal V(\Delta_k)$ with the image of the corresponding vertex in $\Delta_k$ under the diagonal embedding $\Delta_k \hookrightarrow \partial X_S$. We also consider $\mathcal K_S$ as a subset of $X_S$ in the usual way.
\begin{proposition} Let $\pi$ be a normalized horofunction. Then for every maximal $k$-parabolic $\bfQ \in \mathcal V(\Delta_k)$
There exists a unique family $(\beta_\bfQ)_{\bfQ \in \calV(\Delta_k)}$ of rescaled Busemann functions $\beta_{\bfQ, S} \colon X_S \to \R$ centered at the vertex $\bfQ$ such that
\begin{equation}\label{eq:horo_to_busemann}
 \beta_{\bfQ, S}|_{\calK_S} = \log \pi(\bfQ,\cdot)|_{\calK_S}\text{.}
\end{equation}
\end{proposition}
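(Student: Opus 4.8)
The plan is to reduce the statement to the one-factor situation treated in Corollary~\ref{cor:busemann} and then reassemble the local pieces additively over the places of $S$. Fix $\bfQ\in\calV(\Delta_k)$, say of type $i$, and choose a minimal $k$-parabolic $\bfP\subseteq\bfQ$, so that $\bfQ=\bfP_i$ and $\chi_\bfQ=\chi_i^\bfP$. The first thing I would record, using Lemma~\ref{lem:root_weight}, is that $\gen{\alpha_j,\chi_\bfQ}=0$ for $j\ne i$ while $\gen{\alpha_i,\chi_\bfQ}>0$. This does double duty: it verifies hypothesis \eqref{eq:parabolic_direction} of Corollary~\ref{cor:busemann} for the character $\chi=\chi_\bfQ$, and it shows that the direction of $\chi_\bfQ$ in $\coX[k](\bfT)\otimes\R$ is the ray through the type-$i$ vertex of the closed chamber $\bar\bfP\subseteq\partial X_s$, i.e.\ through $\bfQ$ itself; hence the Busemann functions produced below are centered precisely at $\bfQ$ and not merely somewhere in $\bar\bfP$.

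Next, for each $s\in S$ I would use the inclusion of the $s$-factor $\bfG(k_s)\hookrightarrow\bfG(\A)$ to obtain an embedding $\iota_s\colon\calK_s\hookrightarrow\calK$ with $\iota_s(C_s)=C$ and $\iota_s({}^gC_s)={}^gC$, which is well defined because such a $g$ centralizes the remaining factors of $C=\prod_{s'}C_{s'}$. Put $\pi_s\defeq\pi(\bfQ,\cdot)\circ\iota_s\colon\calK_s\to\R_{>0}$. Since $\pi$ is a normalized horofunction and the idele $\chi_\bfQ(g)$ is supported at $s$ for $g\in\bfP(k_s)\subseteq\bfP(\A)$, one gets $\pi_s({}^gC_s)=\pi(\bfQ,{}^gC)=\abs{\chi_\bfQ(g)}\cdot 1=\abs{\chi_\bfQ(g)}_s\,\pi_s(C_s)$, which is exactly \eqref{eq:parabolic_transformation}. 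Corollary~\ref{cor:busemann} then supplies a rescaled Busemann function $\beta_{\bfQ,s}\colon X_s\to\R$ centered at the image of $\bfQ$ in $\partial X_s=\Delta_{k_s}$ with $\beta_{\bfQ,s}|_{\calK_s}=\log\pi_s$. I would then set $\beta_{\bfQ,S}\defeq\sum_{s\in S}\beta_{\bfQ,s}\circ\pr_s$ on $X_S=\prod_{s\in S}X_s$; since $\partial X_S=\bigast_{s\in S}\partial X_s$ and the diagonal image of $\bfQ$ is the join of the images $\bar\bfQ_s$, the standard description of Busemann functions on products of $\cato$-spaces (cf.\ \cite[Ch.~II.8, II.10]{BridsonHaefliger}) shows that $\beta_{\bfQ,S}$ is again a rescaled Busemann function, centered at $\bfQ$. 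Uniqueness is then automatic: by Proposition~\ref{prop:busemann_char} any rescaled Busemann function centered at $\bfQ$ is affine on every apartment of $X_S$ whose boundary contains $\bfQ$; such apartments cover $X_S$, $\calK_S$ is relatively dense in each of them, and an affine function on a Euclidean space is determined by its values on a relatively dense subset — so \eqref{eq:horo_to_busemann} pins $\beta_{\bfQ,S}$ down.

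The one substantial point that remains — and the hard part of the argument — is to check \eqref{eq:horo_to_busemann} itself, i.e.\ that the function we have built actually restricts to $\log\pi(\bfQ,\cdot)$ on $\calK_S$. Identifying $\calK_S=\prod_{s\in S}\calK_s$ and writing $K=\prod_{s\in S}K_s$, we have $\beta_{\bfQ,S}(K)=\sum_{s\in S}\log\pi(\bfQ,\iota_s(K_s))$, and we must show this equals $\log\pi(\bfQ,K)$, where on the right $K$ is regarded in $\calK$ as $\prod_{s\in S}K_s\times\prod_{s'\notin S}C_{s'}$. In other words, the real content is that $\log\pi(\bfQ,\cdot)$ is additive over the places of $S$ along $\calK_S$. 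I would prove this by writing $K_s={}^{g_s}C_s$ and invoking the Iwasawa decomposition $\bfG(k_s)=\bfP(k_s)C_s$ at each $s\in S$ — classical at the Archimedean places, and available at the finite places for the maximal compact subgroups $C_s$ in use — to factor $g_s=p_sc_s$ with $p_s\in\bfP(k_s)$, $c_s\in C_s$. Then $\iota_s(K_s)={}^{p_s}C$ and $K={}^pC$ with $p=(p_s)_{s\in S}\in\bfP(\A)$, so the horofunction law together with the product structure of the idele norm gives
\[
\sum_{s\in S}\log\pi(\bfQ,\iota_s(K_s))=\sum_{s\in S}\log\abs{\chi_\bfQ(p_s)}_s=\log\abs{\chi_\bfQ(p)}=\log\pi(\bfQ,K).
\]
So the genuine obstacle is precisely this reduction to the $\bfP(\A)$-orbit of the basepoint $C$: once one is there, additivity over $S$ is forced by the product formula structure of $\abs{\cdot}$. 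The whole construction is uniform in $\bfQ$, so it produces the entire family $(\beta_{\bfQ,S})_{\bfQ\in\calV(\Delta_k)}$ simultaneously.
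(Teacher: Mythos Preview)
Your proof is correct and essentially identical to the paper's: both localize via Corollary~\ref{cor:busemann}, sum the resulting rescaled Busemann functions over $s\in S$, and verify \eqref{eq:horo_to_busemann} by using the horofunction law together with normalization after writing each $K_s$ as ${}^{p_s}C_s$ with $p_s\in\bfQ(k_s)$ --- you make this Iwasawa step explicit, while the paper leaves it implicit in its displayed chain of equalities for $\exp\beta_{\bfQ,S}(g.o_S)$. The only other cosmetic difference is that you pin down the center at $\bfQ$ directly via the orthogonality $\gen{\alpha_j,\chi_\bfQ}=0$ for $j\neq i$, whereas the paper reaches the same conclusion by observing that Corollary~\ref{cor:busemann} applies for \emph{every} minimal parabolic $\bfP$ containing $\bfQ$ and then intersecting the resulting closed chambers.
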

\begin{proof} Let $\bfQ \in \calV(\Delta_k)$ be a maximal $k$-parabolic and let $\bfP \in \calC(\Delta_k)$ be a minimal $k$-parabolic contained in $\bfQ$ so that $\bfQ = \bfP_i$ for some $i$. We denote by $\chi \defeq \chi^{\bfQ}$ the canonical character of $\bfQ$; Lemma~\ref{lem:root_weight} implies that $\gen{\alpha,\chi} \ge 0$ for every simple root $\alpha$ of $\bfP$. We now consider the usual embedding $\iota_s: \calK_s \to \calK$ and $i_s: \bfG(k_s) \hookrightarrow \bfG(\A)$ given respectively by $K_s \mapsto K_s \times \prod_{s' \neq s} C_{s'}$ and  $g_s \mapsto (\dots, e, g_s, e, \dots)$ and define
\[
\pi_s: \calK_s \to \R_{>0}, \quad K_s \mapsto  \pi(\bfQ, \iota_s(K_s)).
\]
Since $\pi$ is a horofunction we then have for all $g\in \bfP(k_s) \subset \bfQ(k_s)$ the identity
\[
\pi_s({}^{g_s}C_s) =  \pi(\bfQ, {}^{i_s(g_s)}\iota_s(K_s)) =  \abs{\chi_\A(i_s(g_s))} \cdot   \pi(\bfQ, \iota_s(K_s)) =  |\chi(g_s)|_s\cdot \pi_s(C_s).
\]
It thus follows from Corollary~\ref{cor:busemann} that $\log \pi_s$ can be uniquely extended to a rescaled Busemann function $\beta_{\bfQ,s}: X_s \to \R$ which is centered at a point in the closed chamber corresponding to $\bfP$. Since this holds true for \emph{all} chambers containing the vertex corresponding to $\bfQ$, it is actually centered at the vertex corresponding to $\bfQ$.

Now we consider the places of $S$ simultaneously. We define
\begin{align*}
\beta_{\bfQ,S} \colon X_S &\to \R\\
(x_s)_s &\mapsto \sum_{s \in S} \beta_{\bfQ,s}(x_s)\text{.}
\end{align*}

Note that since $\pi$ is normalized we have for all $g \in \bfG_S$
\[
\exp \beta_{\bfQ,S}(g.o_S) = \prod_{s \in V} \pi(\bfQ,{}^{g_s}C) = \prod_{s \in S} \abs{\chi_{\bfQ}(g_s)}_s \pi(\bfQ,C) = \abs{\chi_{\bfQ}(g)},
\]
but also
\[
\pi(\bfQ, {}^g C) =  \abs{\chi_{\bfQ}(g)} \pi(\bfQ, C) =  \abs{\chi_{\bfQ}(g)},
\]
and hence the restrictions of $\exp \beta_\bfQ$ and of $\pi(\bfQ,\cdot)$ to $\calK_S$ coincide.

Uniqueness of the family $(\beta_\bfQ)_{\bfQ \in \calV(\Delta_k)}$ follows from the fact that $\calK_S$ intersects each apartment $A$ of $X_S$ in a relatively dense set, and that two affine functions agreeing on a relative dense subset of $A$ agree on all of $A$.
\end{proof}
In other words, a normalized horofunction gives rise to a family of rescaled Busemann function, centered at the vertices of the rational building, for each of the \cato{} spaces $X_S$. In fact, these families are compatible in the sense that for $S \subset T$ and $X_S$ is embedded into $X_T$ via $(x_s)_{s\in S} \mapsto (x_s)_{s\in S} \cup (o_t)_{t \in T\setminus S}$, then we have $\beta_{\bfQ,T}|_{X_S} = \beta_{\bfQ,S}$, and hence there exists a function $\beta_{\bfQ}: X = \lim X_S \to \R$ such that $\beta_{\bfQ}|_{X_S} = \beta_{\bfQ, S}$. The function $\beta_{\bfQ}$ can then be seen as a rescaled Busemann function for the infinite-dimensional space $X$ (centered at $\bfQ$), and the normalized horofunction $\pi$ carries exactly the same information as the family $(\beta_{\bfQ})_{\bfQ \in \Delta_k}$ of rescaled Busemann functions on $X$.

Classically, geometric reduction theory of ($S$-)arithmetic groups is described using Busemann functions. Because of the above equivalence, we may as well work with normalized horofunctions, which carry the same information. 

\subsection{Invariant horofunctions}

We continue to work in the setting of Convention \ref{SettingReductiveApprox}. We have seen in the previous subsection that (normalized) horofunctions parametrize families of rescaled Busemann functions on the spaces $X_S$. Our next goal is to show that such horofunctions always exist and can even be chosen to be invariant; in fact, this invariance determines them almost completely.
\begin{proposition}[Normalized invariant horofunctions]\label{ExNIHoro}
\begin{enumerate}[(i)]
\item For every reductive $k$-group $\bfG$ there exists a normalized invariant horofunction $\pi\colon \mathcal V(\Delta_k) \times \mathcal K \to \R_{>0}$.
\item If $\pi$ and $\pi'$ are two invariant horofunctions (not necessarily normalized) then there exists $C = C(\pi, \pi') > 0$ such that 
\begin{equation}\label{HorofunctionsEquivalent}\pushQED{\qed}
1/C \cdot \pi(\bfQ,K) < \pi'(\bfQ,K) < C \cdot \pi(\bfQ,K) \quad (\bfQ \in \mathcal V(\Delta_k), K \in\mathcal K).
\end{equation}
\end{enumerate}
\end{proposition}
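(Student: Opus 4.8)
The plan is to dispose of part (ii) quickly --- it is the softer statement, using only the finiteness of the Iwasawa double cosets from Lemma~\ref{lem:iwasawa} --- and then to prove part (i) by an explicit construction whose only genuinely arithmetic ingredient is the product formula (Proposition~\ref{prop:product_formula}).

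\emph{Part (ii).} Let $\pi,\pi'$ be invariant horofunctions and put $\rho\defeq\pi'/\pi\colon\mathcal V(\Delta_k)\times\mathcal K\to\R_{>0}$. Since $\pi$ and $\pi'$ satisfy the \emph{same} transformation rule $\pi(\bfQ,{}^pK)=|\chi_\bfQ(p)|\,\pi(\bfQ,K)$ for $p\in\bfQ(\A)$, in the ratio these factors cancel: $\rho(\bfQ,{}^pK)=\rho(\bfQ,K)$. Identifying $\mathcal K\cong\bfG(\A)/C$, this says that $\rho(\bfQ,\cdot)$ is constant on each left $\bfQ(\A)$-orbit, and there are only finitely many such orbits by Lemma~\ref{lem:iwasawa}. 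Invariance gives $\rho({}^g\bfQ,{}^gK)=\rho(\bfQ,K)$ for $g\in\bfG(k)$; fixing representatives $\bfP_1,\dots,\bfP_r$ of the $r$ conjugacy types of maximal $k$-parabolics and writing $\bfQ={}^\gamma\bfP_i$ with $\gamma\in\bfG(k)$ we get $\rho(\bfQ,K)=\rho(\bfP_i,{}^{\gamma^{-1}}K)$, which already ranges over the finite value set of $\rho(\bfP_i,\cdot)$. Hence $\rho$ takes at most $\sum_i|\bfP_i(\A)\backslash\bfG(\A)/C|$ distinct values, and taking $C$ to be the largest of these values together with their reciprocals gives \eqref{HorofunctionsEquivalent}.

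\emph{Part (i).} For each type $i$, Lemma~\ref{lem:iwasawa} gives a finite decomposition $\bfG(\A)=\bigsqcup_j\bfP_i(\A)\,g_{i,j}\,C$; choose the $g_{i,j}$ with $g_{i,1}=e$. On the $\bfP_i(\A)$-orbit through $g_{i,j}C$ the transformation rule forces
\[
\pi(\bfP_i,{}^{pg_{i,j}}C)=|\chi^{\bfP}_i(p)|\cdot v_{i,j}\qquad(p\in\bfP_i(\A))
\]
for a single constant $v_{i,j}>0$, and this is consistent because $\chi^{\bfP}_i$ has trivial idele-norm on the compact group $g_{i,j}Cg_{i,j}^{-1}\cap\bfP_i(\A)$. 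Declaring $v_{i,1}=1$ defines $\pi(\bfP_i,\cdot)$ on all of $\mathcal K$, with $\pi(\bfP_i,C)=1$; we then extend to arbitrary vertices of type $i$ by invariance, $\pi({}^\gamma\bfP_i,K)\defeq\pi(\bfP_i,{}^{\gamma^{-1}}K)$. This is well-defined: if ${}^\gamma\bfP_i={}^{\gamma'}\bfP_i$ then $\gamma'^{-1}\gamma\in N_{\bfG(k)}(\bfP_i)=\bfP_i(k)$, and $|\chi^{\bfP}_i|$ is trivial on $\bfP_i(k)$ by the product formula, so the transformation rule gives $\pi(\bfP_i,{}^{\gamma'^{-1}\gamma}K)=\pi(\bfP_i,K)$. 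Using $\chi_{{}^\gamma\bfQ}(p)=\chi_\bfQ(\gamma^{-1}p\gamma)$ from Section~\ref{sec:pars_chars}, one checks that $\pi$ is then a horofunction on all of $\mathcal V(\Delta_k)\times\mathcal K$ and is invariant by construction.

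The remaining requirement --- normalisation, $\pi(\bfQ,C)=1$ for \emph{every} $\bfQ\in\mathcal V(\Delta_k)$, not just for $\bfP_1,\dots,\bfP_r$ --- is where the real work lies, and I expect it to be the main obstacle. By invariance it is equivalent to $\pi(\bfP_i,{}^{\gamma^{-1}}C)=1$ for all $\gamma\in\bfG(k)$, i.e.\ to $\pi(\bfP_i,\cdot)$ taking the value $1$ at every \emph{rational} point of $\mathcal K$. For a rational $\gamma^{-1}=pg_{i,j}c$ this forces $v_{i,j}=|\chi^{\bfP}_i(p)|^{-1}$, so one must show that this prescription is self-consistent: that $|\chi^{\bfP}_i(p)|$ depends only on the double coset $\bfP_i(\A)g_{i,j}C$ and not on the rational element representing it. This is precisely the step that marries the product formula (Proposition~\ref{prop:product_formula}) to the finiteness of Lemma~\ref{lem:iwasawa}, and it is the only place where genuine arithmetic --- as opposed to the soft and geometric inputs used elsewhere --- is needed. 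Once (i) is in hand, part (ii) records that the normalised invariant horofunction is canonical up to a bounded multiplicative constant.
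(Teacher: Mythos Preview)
Your treatment of (ii) and your construction of an invariant horofunction for (i) are essentially the paper's approach: both are packaged there into a parametrization lemma (Lemma~\ref{InvHFFormula}) which shows that invariant horofunctions correspond bijectively to tuples $(\pi_{ij})_{i,j}$ of positive reals indexed by the types $i$ and the finitely many double cosets in $\bfP_i(\A)\backslash\bfG(\A)/C$. Part~(ii) then follows because the ratio $\pi'/\pi$ assumes only the finitely many values $\pi'_{ij}/\pi_{ij}$, and for (i) the paper simply sets $\pi_{ij}=1$.

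Where you diverge from the paper is exactly the point you flag as ``the main obstacle'', and your instinct is sound. The paper does not engage with it: after setting $\pi_{ij}=1$, its proof of (i) simply ends, with no verification of normalization at a general vertex $\bfQ={}^\gamma\bfP_i$. Your reduction is correct---by invariance $\pi(\bfQ,C)=\pi(\bfP_i,{}^{\gamma^{-1}}C)=\lvert\chi^{\bfP}_i(p)\rvert\cdot\pi_{ij}$ for $\gamma^{-1}=p\,\xi_{ij}\,c$---and the consistency condition you isolate is genuine. In fact it already fails for $\SL_2$ over $\Q$: there is a single double coset, and for $\gamma^{-1}=\left(\begin{smallmatrix}1&0\\-2&1\end{smallmatrix}\right)$ all finite-place contributions to $\lvert\chi_\bfP(p)\rvert$ equal $1$ while the Archimedean one equals $1/5$, so $\pi({}^\gamma\bfP,C)=1/5$. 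Thus no invariant horofunction satisfies $\pi(\bfQ,C)=1$ for \emph{every} $\bfQ$; the normalization in (i) must be read in the weaker sense ``$\pi(\bfP_i,C)=1$ for one representative per type'', and together with (ii) this is all the remainder of the paper needs.
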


The proof of the proposition is based on two facts concerning adelic points of maximal $k$-parabolic subgroups of $\bfG$.

Firstly, we recall from Lemma \ref{lem:iwasawa} that for every maximal $k$-parabolic subgroup $\bfQ$ the double coset space $\bfQ(\A) \backslash \bfG(\A)/C$ is finite.

Secondly, for every such $\bfQ$ the canonical character $\chi^{\bfQ}$ defines a character $\chi^{\bfQ}_{\A}\colon \bfQ(\A) \to \mathbb J = \GL_1(\bf A)$, and composing with the idele norm we obtain a multiplicative function
\[
\abs{\chi^{\bfQ}_{\A}(\cdot)} \colon \bfQ(\A) \to (0, \infty)
\]
with the following three properties:
\begin{enumerate}
\item If $g \in \bfG(k)$ then $\chi_{\A}^{\bf Q}(q) = \chi_\A^{{}^g\bf Q}(gqg^{-1})$ for all $q,g \in \bfQ(\A)$. 
\item $\abs{\chi_\A^\bfQ(\cdot)}$ is bounded, hence trivial, on compact subgroups of $\bfQ(\A)$.
\item  $\abs{\chi_\A^\bfQ(\cdot)}$ is trivial on $\bfQ(k)$ by Proposition \ref{prop:product_formula}.
\end{enumerate}
We recall that if $\bfP$ is a minimal $k$-parabolic then $\bfP_i$ denotes the unique maximal $k$-parabolic of type $i$ containing $\bfP$.

\begin{lemma} \label{InvHFFormula}
Let $\bfP \in \calC(\Delta_k)$ and let $\xi_{i1}, \dots, \xi_{im_i}$ be representatives for the finitely many double cosets in $\bfP_i(\A) \backslash \bfG(\A)/C$. Then for all $\pi_{ij} \in (0, \infty)$ there exists a unique invariant horofunction $\pi$ such that
\begin{equation}\label{piij}
\pi({}^g\bfP_i, {}^{gp\xi_{ij}} C) =  \abs{\chi_{\bfP_i}(p)} \cdot \pi_{ij} \quad \text{for all } g\in \bfG(k) \text{ and } p \in \bfP_i(\A),
\end{equation}
and conversely every invariant horofunction is of this form.
\end{lemma}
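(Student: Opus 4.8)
The plan is to show that the prescription \eqref{piij}, for an arbitrary choice of constants $\pi_{ij}\in(0,\infty)$, consistently defines a function $\pi\colon\calV(\Delta_k)\times\calK\to\R_{>0}$; that this function is an invariant horofunction; and that, conversely, every invariant horofunction arises this way. Uniqueness will then fall out of the construction. Throughout I will use the three listed properties of $\abs{\chi^{\bfQ}_\A(\cdot)}$ together with the rewriting of property~(1) as $\chi^{\bfP_i}_\A(g^{-1}qg)=\chi^{{}^g\bfP_i}_\A(q)$ for $g\in\bfG(k)$, $q\in({}^g\bfP_i)(\A)$, the finiteness of the double cosets $\bfP_i(\A)\backslash\bfG(\A)/C$ (Lemma~\ref{lem:iwasawa}), and the self-normalizing properties of parabolics in $\bfG(k)$ and of $C$ in $\bfG(\A)$ (Lemma~\ref{obs:self-normalizing}).

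First I would record how a pair $(\bfQ,K)$ gets a normal form. Since $\bfG(k)$ acts chamber-transitively on $\Delta_k$ (Proposition~\ref{prop:min_par_conj}) it acts transitively on the vertices of each type, so every $\bfQ\in\calV_i(\Delta_k)$ is ${}^g\bfP_i$ for some $g\in\bfG(k)$, with $g$ unique up to right multiplication by $\bfP_i(k)=N_{\bfG(k)}(\bfP_i)$. Since $\calK$ is the $\bfG(\A)$-orbit of $C$, every $K\in\calK$ is ${}^hC$ with $h\in\bfG(\A)$ unique up to right multiplication by $C=N_{\bfG(\A)}(C)$. Given such $(\bfQ,K)$ with $\bfQ$ of type $i$, write $\bfQ={}^g\bfP_i$, $K={}^hC$; then $g^{-1}h$ lies in a \emph{unique} double coset $\bfP_i(\A)\xi_{ij}C$, so $g^{-1}h=p\,\xi_{ij}\,c$ with $p\in\bfP_i(\A)$, $c\in C$, whence $(\bfQ,K)=({}^g\bfP_i,{}^{gp\xi_{ij}}C)$ — exactly the shape appearing in \eqref{piij}, so the formula assigns the value $\abs{\chi_{\bfP_i}(p)}\cdot\pi_{ij}$.

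The crux is well-definedness of this value. Suppose $({}^g\bfP_i,{}^{gp\xi_{ij}}C)=({}^{g'}\bfP_i,{}^{g'p'\xi_{ij'}}C)$. Equality of the parabolics forces $q\defeq g'^{-1}g\in\bfP_i(k)$; equality of the points of $\calK$ forces $p'^{-1}qp\in\xi_{ij'}C\xi_{ij}^{-1}$, and since also $p'^{-1}qp\in\bfP_i(\A)$, this simultaneously yields $j=j'$ (the $\xi_{ij}$ represent distinct cosets) and shows that $p'^{-1}qp$ lies in a compact subgroup of $\bfP_i(\A)$. Because $\abs{\chi^{\bfP_i}_\A(\cdot)}$ is trivial on compact subgroups and on $\bfP_i(k)$ (the latter by the product formula, Proposition~\ref{prop:product_formula}), we get $\abs{\chi_{\bfP_i}(p)}=\abs{\chi_{\bfP_i}(p')}$, so the two values agree. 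This is the step I expect to be the main obstacle; the rest is bookkeeping. Positivity of $\pi$ is clear. \emph{Invariance}: for $g_0\in\bfG(k)$, using $g_0g$ and $g_0h$ as representatives of $({}^{g_0}\bfQ,{}^{g_0}K)$ leaves $(g_0g)^{-1}(g_0h)=g^{-1}h$, hence the value, unchanged. \emph{Horofunction property}: for $q\in\bfQ(\A)$ with $\bfQ={}^g\bfP_i$, the element $\tilde p\defeq g^{-1}qg$ lies in $\bfP_i(\A)$, so ${}^qK={}^{qh}C$ corresponds to the decomposition $(\tilde p\,p)\,\xi_{ij}\,c$, giving value $\abs{\chi_{\bfP_i}(\tilde p\,p)}\pi_{ij}=\abs{\chi_{\bfP_i}(\tilde p)}\abs{\chi_{\bfP_i}(p)}\pi_{ij}$, and $\abs{\chi_{\bfP_i}(\tilde p)}=\abs{\chi^{{}^g\bfP_i}(q)}=\abs{\chi_{\bfQ}(q)}$ by the conjugation formula; thus $\pi(\bfQ,{}^qK)=\abs{\chi_{\bfQ}(q)}\pi(\bfQ,K)$.

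Finally, for the converse and uniqueness: let $\pi$ be any invariant horofunction and set $\pi_{ij}\defeq\pi(\bfP_i,{}^{\xi_{ij}}C)\in(0,\infty)$. Invariance reduces $\pi({}^g\bfP_i,{}^{gp\xi_{ij}}C)$ to $\pi(\bfP_i,{}^{p\xi_{ij}}C)$, and the horofunction property (applied with $p\in\bfP_i(\A)$ to $K={}^{\xi_{ij}}C$) turns this into $\abs{\chi_{\bfP_i}(p)}\pi_{ij}$, so $\pi$ satisfies \eqref{piij}. Conversely, since every $(\bfQ,K)$ admits a representation of the form in \eqref{piij}, that identity determines $\pi$ on all of $\calV(\Delta_k)\times\calK$; hence for a prescribed family $(\pi_{ij})$ the function built above is the unique invariant horofunction with these values, and the displayed computation shows that an arbitrary invariant horofunction coincides with the one built from its own constants $\pi_{ij}=\pi(\bfP_i,{}^{\xi_{ij}}C)$. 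This proves both assertions of the lemma.
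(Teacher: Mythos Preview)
Your proof is correct and follows essentially the same approach as the paper's: both establish well-definedness of \eqref{piij} by using that $\bfP_i$ is self-normalizing in $\bfG(k)$ (to get $g'^{-1}g\in\bfP_i(k)$), that the $\xi_{ij}$ represent distinct double cosets (to get $j=j'$), and that ${}^{\xi_{ij}}C\cap\bfP_i(\A)$ is compact (to kill the remaining factor via property~(2)); then verify invariance and the horofunction property directly; and deduce the converse by setting $\pi_{ij}=\pi(\bfP_i,{}^{\xi_{ij}}C)$. Your organization, which front-loads the normal-form discussion for a general pair $(\bfQ,K)$, is slightly tidier than the paper's, but the argument is the same.
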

\begin{proof} Firstly, it is immediate from the definition that every invariant horofunction $\pi$ must satisfy \eqref{piij} with $\pi_{ij} \defeq \pi(\bfP_i,{}^{\xi_{ij}} C)$.

Secondly, since $\bfG(k)$ acts transitively on $\mathcal V_i(\Delta_k)$ for all $i$ and every $(\bfP_i(\A), C)$-double coset is represented by some $\xi_{ij}$ it is clear that there is at most one function $\pi$ satisfying \eqref{piij}. 

Thirdly, we show that \eqref{piij} gives rise to a well-defined function $\pi$. Assume that $({}^g\bfP_i, {}^{gp\xi_{ij}} C) = ({}^{g'}\bfP_{i'}, {}^{g'p'\xi_{i'j'}} C)$ for some $g, g' \in \bfG(k)$, $p \in \bfP_i(\A)$ and $p' \in  \bfP_{i'}(\A)$. Comparing first coordinates we deduce that $i = i'$. Since $\bfP_i(k)$ is self-normalizing in $\bfG(k)$, we then also deduce that $g^{-1}g' \in \bfP_i(k)$, and in particular $|\chi_\A^{\bfP_i}(g^{-1}g')| = 1$. Comparing second coordinates now yields
\[
{}^{p\xi_{ij}} C =  {}^{p''\xi_{ij'}} C, \quad \text{where }p'' \defeq g^{-1}g'p' \in \bfP_i(\A),
\]
which implies $\bfP_i(\A)\xi_{ij}C = \bfP_i(\A)\xi_{ij'}C$ and hence $j = j'$. Since ${}^{\xi_{ij}}C$ is self-normalizing we also deduce that $p^{-1}p'' \in {}^{\xi_{ij}}C \cap \bfP_i(\A)$, and since the latter is compact this implies $\abs{\chi_\A^{\bfP_i}(p^{-1}p'')}=1$. We conclude that
\[
 \abs{\chi_\A^{\bfP_i}(p')} \cdot \pi_{i'j'} =  \abs{\chi_\A^{\bfP_i}((g^{-1}g')^{-1}p(p^{-1}p''))} \cdot \pi_{ij} =  \abs{\chi_\A^{\bfP_i}(p)} \cdot \pi_{ij},
\]
which shows that \eqref{piij} is well-defined. 

Finally, we show that $\pi$ is an invariant horofunction; invariance is immediate from the formula. To see that it is also a horofunction, let $\bfQ \in \mathcal V(\Delta_k)$ and $q \in \bfQ(\bf A)$. Then there exist $g \in \bfG(k)$, $i \in \{1, \dots, r\}$ and $p \in \bfP_i(\A)$ such that $\bfQ = {}^g\bfP_i$ and $q = gpg^{-1}$.  Given $K \in \mathcal K$ write $K = {}^{gp_1\xi_{ij}}C$ for some  $p_1 \in \bfP_i(\A)$ and $\xi_{ij}$ as above so that
\[
\pi(\bfQ, K) = \pi({}^g\bfP_i, {}^{gp_1\xi_{ij}}C) = \abs{\chi_\A^{\bfP_i}(p_1)} \cdot \pi_{ij}.
\]
Since ${}^qK ={}^{(gpg^{-1})gp_1\xi_{ij}}C = {}^{gpp_1\xi_{ij}}C$ and $\chi_\A^{\bfP_i}(p) = \chi_\A^{\bfQ}(q)$ we obtain
\[
\pi(\bfQ, {}^{q}K) = \pi({}^g\bfP_i, {}^{gpp_1\xi_{ij}}C) = \abs{\chi_\A^{\bfP_i}(p)}\abs{\chi_\A^{\bfP_i}(p_1)} \cdot \pi_{ij} = \abs{\chi_\A^{\bfQ}(q)} \cdot \pi(\bfQ, K).
\]
This shows that $\pi$ is a horofunction and finishes the proof.
\end{proof}
\begin{proof}[Proof of Proposition \ref{ExNIHoro}] (i) Choose $\pi_{ij} \defeq 1$ in \eqref{piij} to obtain a normalized invariant horofunction.

(ii) We may assume that $\bfQ = {}^g\bfP_i$ and $K = {}^{gp\xi_{ij}} C$ for some choice of $i,j$. Then \eqref{piij} implies that
\[
\frac{\pi'(\bfQ, K)}{\pi(\bfQ, K)} = \frac{\pi'_{ij}}{\pi_{ij}}.
\]
It thus suffices to choose $C$ to be larger than $\pi'_{ij}/\pi_{ij}$ and $\pi_{ij}/\pi'_{ij}$ for all possible choices of $i$ and $j$.
\end{proof}


While normalized horofunctions give rise to families of rescaled Busemann functions on $X_S$, normalized \emph{invariant} horofunctions correspond to families of rescaled Busemann functions on $X_S$, which are moreover quasi-invariant under the $S$-arithmetic approximate subgroup $\bfG(\calO_S)$.
\begin{theorem}[Quasi-invariance of Busemann functions]\label{thm:busemann_family}
Let $\pi$ be a normalized horofunction and let $(\beta_\bfQ)_{\bfQ \in \calV(\Delta_k)}$ be the associated family of rescaled Busemann functions on $X_S$. It $\pi$ is invariant, then there is constant $b \geq 0$ such that
\[
\abs{\beta_{{}^g\bfQ}(\lambda.x) - \beta_\bfQ(x)} \le b
\]
for all $\lambda \in \bfG(\calO_S)$ and all $x \in X_S$.
\end{theorem}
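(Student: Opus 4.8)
The plan is to reduce to the case $x \in \calK_S = \bfG_S.o_S$ by a Lipschitz argument, and then to settle that case by combining the explicit description of invariant horofunctions (Lemma~\ref{InvHFFormula}) with the finiteness of the double cosets $\bfP_i(\A)\backslash\bfG(\A)/C$ (Lemma~\ref{lem:iwasawa}). Throughout I read the element written $g$ in the statement as $\lambda$, so what must be bounded is $\abs{\beta_{{}^\lambda\bfQ}(\lambda.x) - \beta_\bfQ(x)}$. For the first reduction: $\bfG_S$ acts cocompactly on $X_S$, so $\calK_S$ is $R$-dense for some $R>0$; and the rescaling constants of the $\beta_\bfQ$ take only finitely many values, since they are determined by the canonical characters $\chi^\bfQ$ which, along each of the $r$ orbits of vertices of $\Delta_k$ under $\bfG(k)$, are related by conjugation by isometries and hence have constant norm. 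Thus every $\beta_\bfQ$ is $L$-Lipschitz on $X_S$ for a single $L$. Choosing, for a given $x$, a point $K \in \calK_S$ with $d(x,K)\le R$ and noting $d(\lambda.x,\lambda.K)\le R$, the triangle inequality leaves us to bound $\abs{\beta_{{}^\lambda\bfQ}(\lambda.K)-\beta_\bfQ(K)}$ uniformly in $\lambda\in\bfG(\calO_S)$, $\bfQ\in\calV(\Delta_k)$ and $K\in\calK_S$; the theorem then holds with $b = 2LR + b_1$ for that uniform bound $b_1$.

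On $\calK_S$ the subtlety of the ``non-commuting diagram'' of Remark~\ref{NonCommutingDiagram} enters. Write $K = {}^{\iota_S(g)}C$, where $g\in\bfG_S$ satisfies $g.o_S = K$ and $\iota_S\colon\bfG_S\hookrightarrow\bfG(\A)$ is the inclusion that is trivial outside $S$. By \eqref{eq:horo_to_busemann}, $\beta_\bfQ(K) = \log\pi(\bfQ, {}^{\iota_S(g)}C)$; and since the action of $\lambda\in\bfG(\calO_S)$ on $X_S$ corresponds, under $\calK_S\hookrightarrow\calK$, to conjugation by $\iota_S(\lambda)$, also $\beta_{{}^\lambda\bfQ}(\lambda.K) = \log\pi({}^\lambda\bfQ, {}^{\iota_S(\lambda g)}C)$. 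On the other hand, the diagonal embedding of $\lambda\in\bfG(k)$ into $\bfG(\A)$ equals $\iota_S(\lambda)\cdot\lambda_\sharp$ with $\lambda_\sharp\in\prod_{s\in V\setminus S}W_s = W$ (this is precisely the condition $\lambda\in\bfG(\calO_S)$), and $\lambda_\sharp$ commutes with $\iota_S(\bfG_S)$. Invariance of $\pi$ under the diagonal copy of $\lambda$ then gives
\[
\beta_\bfQ(K) = \log\pi(\bfQ, {}^{\iota_S(g)}C) = \log\pi({}^\lambda\bfQ, {}^{\lambda\,\iota_S(g)}C) = \log\pi({}^\lambda\bfQ, {}^{\iota_S(\lambda g)\,\lambda_\sharp}C)\text{,}
\]
so, abbreviating $\bfR \defeq {}^\lambda\bfQ$ and $m \defeq \iota_S(\lambda g)\in\bfG_S$, it remains to show that $\abs{\log\pi(\bfR,{}^mC) - \log\pi(\bfR,{}^{m\lambda_\sharp}C)}$ is bounded independently of all choices: conjugating by the window element $\lambda_\sharp$ may change $\pi(\bfR,\cdot)$ only within a bounded factor.

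For this I would invoke Lemma~\ref{InvHFFormula}: fix a minimal $k$-parabolic $\bfP$, representatives $\xi_{ij}$ for the finitely many cosets in $\bfP_i(\A)\backslash\bfG(\A)/C$, and write $\pi$ in the form \eqref{piij} with constants $\pi_{ij}$. Since $\bfR$ is a maximal $k$-parabolic it is $\bfG(k)$-conjugate to a unique $\bfP_i$, say $\bfR = {}^{\lambda'}\bfP_i$ with $\lambda'\in\bfG(k)$; decompose $(\lambda')^{-1}m = p\,\xi_{ij}\,c$ with $p\in\bfP_i(\A)$, $c\in C$, so \eqref{piij} gives $\pi(\bfR,{}^mC) = \abs{\chi_{\bfP_i}(p)}\,\pi_{ij}$. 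As $c\lambda_\sharp$ lies in the $\lambda$-independent compact set $\Omega \defeq C\cdot W$, decompose $\xi_{ij}\,(c\lambda_\sharp) = p_0\,\xi_{ij'}\,c_0$ with $p_0\in\bfP_i(\A)$, $c_0\in C$; then $(\lambda')^{-1}m\lambda_\sharp = (pp_0)\,\xi_{ij'}\,c_0$, so $\pi(\bfR,{}^{m\lambda_\sharp}C) = \abs{\chi_{\bfP_i}(p)}\,\abs{\chi_{\bfP_i}(p_0)}\,\pi_{ij'}$ and the ratio of the two values is $\abs{\chi_{\bfP_i}(p_0)}\cdot\pi_{ij'}/\pi_{ij}$. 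Now $p_0 = \xi_{ij}\,(c\lambda_\sharp)\,c_0^{-1}\,\xi_{ij'}^{-1}$ lies in the fixed compact set $\bfP_i(\A)\cap\bigcup_{j,j'}\xi_{ij}\,\Omega\,C\,\xi_{ij'}^{-1}$, on which the continuous homomorphism $\abs{\chi_{\bfP_i}(\cdot)}\colon\bfP_i(\A)\to(0,\infty)$ is bounded above and below, and the finitely many ratios $\pi_{ij'}/\pi_{ij}$ are bounded; this produces $b_1$, depending only on the fixed data ($\bfP$, the $\xi_{ij}$, $W$, the $\pi_{ij}$).

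I expect the main obstacle to be exactly this reconciliation of the two ``almost compatible'' actions of $\bfG(\calO_S)$ --- on $X_S$ through $\iota_S$, and on $\Delta_k$ through the diagonal embedding. The gap between them is conjugation by the $*$-part $\lambda_\sharp$, which is invisible at the finite places (there $W_s = C_s$ is self-normalizing, so $\lambda_\sharp$ fixes $C$) but genuinely moves the basepoint at the Archimedean places outside $S$; the point is that $\lambda_\sharp$ is nevertheless confined to the fixed compact window $W$ and that only finitely many double cosets of $C$ are involved, which together force the bound. The Lipschitz reduction of the first paragraph, and the routine bookkeeping that the bound $b_1$ is genuinely uniform over $\bfQ$ and $K$, are the straightforward parts.
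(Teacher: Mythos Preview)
Your proof is correct and follows the same overall strategy as the paper --- reduce to $\calK_S$, use invariance of $\pi$ under the diagonal $\bfG(k)$, and then bound the discrepancy coming from the window element $\lambda_\sharp\in W$ --- but the final compactness bound is obtained differently. The paper (Proposition~\ref{prop:horo_quasi-inv}, via Corollary~\ref{cor_quasiinv1} and Lemma~\ref{CompactnessArgument}) localizes to each infinite place $s_o\in V\inf\setminus S$, uses the Iwasawa decomposition to write $w_{s_o}$ as a parabolic element times something in $C_{s_o}$, and then bounds $\abs{\chi^{\bfQ}(q_{s_o})}$ uniformly in $\bfQ$ by a continuity argument over the compact flag variety $\mathcal F^i_{k_{s_o}}$. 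Your approach stays entirely adelic: you use the explicit double-coset formula of Lemma~\ref{InvHFFormula} with the fixed minimal parabolic $\bfP$, so the ratio becomes $\abs{\chi_{\bfP_i}(p_0)}\cdot\pi_{ij'}/\pi_{ij}$ with $p_0$ confined to the single compact set $\bfP_i(\A)\cap\bigcup_{j,j'}\xi_{ij}\,CW\,C\,\xi_{ij'}^{-1}$. Your route is arguably cleaner in that it treats all places uniformly and avoids both Iwasawa and the flag-variety continuity lemma; the paper's route has the small advantage of making transparent that the obstruction vanishes place by place when $W_{s}=C_{s}$, immediately yielding $B=1$ in the group case $S\supseteq V\inf$ --- though you correctly identify this phenomenon in your closing remarks. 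Your Lipschitz reduction to $\calK_S$ is also a bit more explicit than the paper's (which simply asserts the reformulation as Proposition~\ref{prop:horo_quasi-inv}).
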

Here the action of $\bfG(\calO_S)$ is through the embedding $\bfG(\calO_S) \hookrightarrow \bfG(k)$, whereas the action on $X_S$ is through the inclusion $\bfG(\calO_S) \hookrightarrow \bfG(k_s)$, see the discussion in Remark \ref{NonCommutingDiagram}. We will see that in the case where $S$ contains all infinite places (and hence $\bfG(\calO_S)$ is a group), the constant $b$ can be chosen to be $0$, but this is no longer the case in general. The remainder of this subsection is devoted to the proof of Theorem \ref{thm:busemann_family}. Since $G$ acts transitively on $\calK_S$, we can reformulate Theorem \ref{thm:busemann_family} in terms of normalized horofunctions as follows:
\begin{proposition}[Quasi-invariance of invariant horofunctions]\label{prop:horo_quasi-inv} There exists $B>0$ such that
for all $g \in G$, $\lambda \in \bfG(\calO_S)$ and $\bfQ \in \mathcal V(\Delta_k)$ we have
\[
\frac 1 B \cdot \pi(\bfQ, {}^{(g, 1_H)} C) \leq {\pi({}^\lambda\bfQ, {}^{(\pi_G(\lambda)g, 1_H)}C)} \leq B \cdot \pi(\bfQ, {}^{(g, 1_H)} C)\text{.}
\]
Moreover, if $S \supset V\inf$, then we may choose $B=1$, i.e.\
\begin{equation*}\label{eq:S-invariance}
\pi({}^\lambda \bfQ,{}^{(\pi_G(\lambda), 1_H)}K) = \pi(\bfQ,K) \quad \text{ for all } \lambda \in \bfG(\calO_S), \bfQ \in \calV(\Delta_k) \text{ and } K \in \calK_S\text{.}
\end{equation*}
\end{proposition}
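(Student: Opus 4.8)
The plan is to peel off the outer $\bfG(k)$--conjugation in $\pi\bigl({}^\lambda\bfQ,\,{}^{(\pi_G(\lambda)g,1_H)}C\bigr)$ using the invariance of $\pi$; this leaves behind a correction factor governed by the $*$--image $\lambda^{*}$ of $\lambda$, and since $\lambda^{*}$ ranges over the compact window $W$ this factor can be bounded geometrically. It disappears entirely when there are no Archimedean places outside $S$.

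Concretely, fix $\lambda\in\bfG(\calO_S)$ and put $w\defeq(\lambda^{*})^{-1}$; since $\lambda^{*}\in W$ and $W$ is symmetric we have $w\in W$. Under $\bfG(k)\hookrightarrow\bfG(\A)\cong\bfG_S\times\bfG_{V\setminus S}$ the element $\lambda$ corresponds to $(\pi_G(\lambda),\lambda^{*})$, and in $\bfG(\A)$ one has the elementary identity
\[
{}^{(\pi_G(\lambda),\lambda^{*})}\bigl({}^{(g,(\lambda^{*})^{-1})}C\bigr)={}^{(\pi_G(\lambda)g,1_H)}C .
\]
Applying the invariance condition of Definition~\ref{def:pi_trafo} with the $\bfG(k)$--element $(\pi_G(\lambda),\lambda^{*})$, which is exactly the element through which $\lambda$ acts on the rational building (Remark~\ref{NonCommutingDiagram}), gives
\[
\pi\bigl({}^\lambda\bfQ,\,{}^{(\pi_G(\lambda)g,1_H)}C\bigr)=\pi\bigl(\bfQ,\,{}^{(g,w)}C\bigr).
\]
So the Proposition reduces to a ``window'' statement with no outer conjugation left: there is $B>0$, depending only on the data of Convention~\ref{SettingReductiveApprox}, such that $\tfrac1B\,\pi(\bfQ,{}^{(g,1_H)}C)\le\pi(\bfQ,{}^{(g,w)}C)\le B\,\pi(\bfQ,{}^{(g,1_H)}C)$ for all $\bfQ\in\calV(\Delta_k)$, $g\in G$ and $w\in W$, and with $B=1$ whenever $V\inf\subseteq S$.

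For this comparison I would pass to the rescaled Busemann functions $\beta_{\bfQ}=\sum_{s}\beta_{\bfQ,s}$ attached to $\pi$ in the previous subsection (working, if one prefers, on the finite‑dimensional $X_{S^{+}}$ with $S^{+}=S\cup V\inf$), so that $\exp\beta_{\bfQ}$ agrees with $\pi(\bfQ,\cdot)$ on the orbit of $o$ and $\beta_{\bfQ,s}(o_s)=\log\pi(\bfQ,C)=0$ by normalization. Comparing $(g,w).o$ with $(g,1_H).o$ coordinatewise: the $S$--coordinates coincide (both equal $g_s.o_s$), and for a finite place $s\notin S$ we have $w_s\in W_s=C_s=\Stab(o_s)$, so the $s$--coordinate coincides as well; hence
\[
\log\frac{\pi(\bfQ,{}^{(g,w)}C)}{\pi(\bfQ,{}^{(g,1_H)}C)}=\sum_{s\in V\inf\setminus S}\beta_{\bfQ,s}(w_s.o_s).
\]
Each $\beta_{\bfQ,s}$ is a rescaled Busemann function, hence Lipschitz; and because $\pi({}^h\bfP_i,{}^hK)=\pi(\bfP_i,K)$ and $h\in\bfG(k)$ acts on $X_s$ by isometries, $\beta_{\bfQ,s}$ is an isometric copy of $\beta_{\bfP_i,s}$ when $\bfQ$ has type $i$; since there are finitely many types and (as $k$ is a number field, or a function field in which case the index set is empty) finitely many $s\in V\inf\setminus S$, a single Lipschitz constant $L$ serves for all of them. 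Therefore $|\beta_{\bfQ,s}(w_s.o_s)|\le L\,d(w_s.o_s,o_s)\le L\sup_{u\in W_s}d(u.o_s,o_s)<\infty$ by compactness of $W_s$, and $B\defeq\exp\bigl(L\sum_{s\in V\inf\setminus S}\sup_{u\in W_s}d(u.o_s,o_s)\bigr)$ works. If $V\inf\subseteq S$ the sum is empty, so in fact ${}^{(g,w)}C={}^{(g,1_H)}C$, the estimate holds with $B=1$, and rewriting this through the transitive action of $G$ on $\calK_S$ yields the asserted equality $\pi({}^\lambda\bfQ,{}^{(\pi_G(\lambda),1_H)}K)=\pi(\bfQ,K)$.

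The one point that I expect to require genuine care is the uniformity of $B$ over the infinitely many rational parabolics $\bfQ\in\calV(\Delta_k)$. A direct algebraic estimate via the character values $|\chi_{\bfQ}(w_s)|_s$ is not obviously available, since conjugating $W_s$ by the element of $\bfG(k)$ carrying $\bfP_i$ to $\bfQ$ need not stay in any fixed compact set. The geometric reformulation sidesteps exactly this: the displacement $d(w_s.o_s,o_s)$ does not involve $\bfQ$ at all, and the only residual $\bfQ$--dependence — the Lipschitz constant of $\beta_{\bfQ,s}$ — is controlled solely by the finiteness of the set of parabolic types.
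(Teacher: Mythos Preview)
Your argument is correct and the overall strategy coincides with the paper's: both use invariance of $\pi$ under $\bfG(k)$ to rewrite $\pi({}^\lambda\bfQ,{}^{(\pi_G(\lambda)g,1_H)}C)$ as $\pi(\bfQ,{}^{(g,w)}C)$ with $w=(\lambda^*)^{-1}\in W$, observe that the finite places outside $S$ contribute nothing, and then bound the contribution from the Archimedean places $s\in V\inf\setminus S$ uniformly in $\bfQ$.

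The difference lies in how this last uniform bound is obtained. The paper argues algebraically: at each such $s$ it uses the Iwasawa decomposition to replace $w_s$ by an element $q_s\in\bfQ(k_s)$ with ${}^{w_s}C_s={}^{q_s}C_s$, observes that $q_s$ stays in the fixed compact set $W_sC_s$, and then invokes continuity of $(q,\bfQ)\mapsto\chi^{\bfQ}(q)$ over the compact flag variety $\mathcal F^i_{k_s}$ (Lemma~\ref{CompactnessArgument}). Your route is more geometric: you read off the ratio as $\sum_{s\in V\inf\setminus S}\beta_{\bfQ,s}(w_s.o_s)$ and bound each term by a Lipschitz estimate, the uniformity in $\bfQ$ coming from the fact that the Lipschitz constant of $\beta_{\bfQ,s}$ depends only on the type of $\bfQ$. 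This bypasses both the Iwasawa step and the flag-variety compactness argument, and uses only machinery already set up in Section~\ref{sec:horo_busemann}. One small point: your justification that $\beta_{\bfQ,s}$ is ``an isometric copy of $\beta_{\bfP_i,s}$'' via the global identity $\pi({}^h\bfP_i,{}^hK)=\pi(\bfP_i,K)$ only gives $\sum_s\beta_{\bfQ,s}(h_s.x_s)=\sum_s\beta_{\bfP_i,s}(x_s)$, not the place-by-place statement. What you actually need---equality of Lipschitz constants---follows more directly from the local transformation rule $\beta_{\bfQ,s}(p.x)-\beta_{\bfQ,s}(x)=\log|\chi_{\bfQ}(p)|_s$ together with $\chi_{\bfQ}=\chi_{\bfP_i}\circ\mathrm{Ad}(h^{-1})$ and the $\bfG(k_s)$-invariance of the metric.
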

Here the constant $B$ is the exponential of the constant $b$ from Theorem \ref{thm:busemann_family} and $ {}^{(g, 1_H)} C \in \calK_S$ corresponds to a point $x \in X_S$. Let us first deal with the case where $S \supset V\inf$. If we write $\lambda \in \bfG(\calO_S)$ as $\lambda = ((\lambda)_{s \in S}, (\lambda)_{v \in V \setminus S}) \in G \times H$, then by definition of $\bfG(\calO_S)$ we have $\lambda \in W_v = C_v$ for all $v \in V \setminus S$ and hence ${}^{(g,\pi_H(\lambda)^{-1})}C = {}^{(g,1_H)}C$ for all $g \in G$. By invariance of $\pi$ this implies 
\[
{\pi({}^\lambda\bfQ, {}^{(\pi_G(\lambda)g, 1_H)}C)} = {\pi(\bfQ, {}^{(g, \pi_H(\lambda)^{-1})}C)} ={\pi(\bfQ, {}^{(g, 1_H)}C)} \text{,}
\]
thus establishing invariance under $\bfG(\calO_S)$ in the case where $S \supset V\inf$. 

In the general case we will no longer have ${}^{(g,\pi_H(\lambda)^{-1})}C = {}^{(g,1_H)}C$ for all $\lambda \in \bfG(\calO_S)$ and $g \in G$, since $W_v C_v \neq C_v$ for infinite places $v \in V\inf \setminus S$. In this case, the proof of Proposition \ref{prop:horo_quasi-inv} relies on the following compactness result. If $\bfQ \in \mathcal V_i(\Delta_k)$ is a maximal parabolic subgroup of type $i$ and $s \in V^{\inf}$ then $\bfQ(k_s)$ is a parabolic subgroup of $\bfG(k_s)$ and we denote by $\mathcal F^i_{k_s}$ the conjugacy class of this parabolic subgroup in $\bfG(k_s)$. Then $\mathcal F_{k_s}$ depends only on $\bfG$, $k_s$ and $i$, but not on $\bfQ$, and is compact. This implies:
\begin{lemma}\label{CompactnessArgument} Let $s \in V^{\inf}$ and let
\[
\mathcal Q^i \defeq \{(q, Q) \in \bfG(k_s) \times \mathcal F^i_{k_s}\ \mid q \in Q\} \subseteq \bfG(k_s) \times \mathcal F^i_{k_s}.
\]
Then the map $\chi\colon \mathcal Q^i \to \R^\times$, $(q, \bfQ) \mapsto \chi^{\bfQ}(q)$ is continuous. In particular, if $L_s \subseteq \bfG(k_s)$ is compact then there exists $C>0$ such that
\[
\frac{1}{C} \leq |\chi^{\bfQ}(q)| \leq C \quad \text{ for all } \bfQ \in \calV_i(\Delta_k) \text{ and }p \in\bfQ(k_s) \cap L_{k_s}.
\]
\end{lemma}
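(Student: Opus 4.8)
The plan is to exhibit the assignment $(q,\bfQ)\mapsto\chi^\bfQ(q)$ as induced by a morphism of $k$-varieties, and then to combine its continuity on $k_s$-points with the compactness of $\mathcal F^i_{k_s}$ that was recorded just before the statement.

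The key preliminary observation is an algebraic reformulation of the canonical character. For a maximal parabolic $\bfR$ with unipotent radical $R_u(\bfR)$, the determinant is multiplicative along the filtration $R_u(\bfR)=\bfU_0>\ldots>\bfU_{\ell+1}=1$ used in the definition of $\chi^\bfR$, and for a vector group the conjugation action agrees with the adjoint action on the Lie algebra; hence $\chi^\bfR(g)=\det\bigl(\mathrm{Ad}(g)|_{\mathrm{Lie}(R_u(\bfR))}\bigr)$ for all $g\in\bfR$. Now let $\mathbf{Par}_i=\bfG/\bfP_i$ be the projective $k$-variety of parabolic subgroups of type $i$, on which $\bfG$ acts transitively by conjugation, and let $\mathbf{Inc}_i=\{(g,Q)\in\bfG\times\mathbf{Par}_i\mid g\in Q\}$ be the incidence subvariety, a closed $k$-subvariety which is a group scheme over $\mathbf{Par}_i$ with fibre $Q$ over a point $Q$. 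The Lie algebras $\mathrm{Lie}(R_u(Q))$ are the $\bfG$-translates of $\mathrm{Lie}(R_u(\bfP_i))$, hence form an algebraic vector subbundle of $\mathfrak g\times\mathbf{Par}_i$; the fibrewise adjoint action of $\mathbf{Inc}_i$ on it is algebraic, and taking fibrewise determinants yields a morphism $\widehat\chi\colon\mathbf{Inc}_i\to\GL_1$ of $k$-varieties whose restriction to the fibre over $Q$ is $\chi^Q$. (Alternatively one can argue étale-locally on $\mathbf{Par}_i$, writing $Q={}^{\sigma(Q)}\bfP_i$ for an algebraic section $\sigma$ and $\widehat\chi(q,Q)=\chi^{\bfP_i}(\sigma(Q)^{-1}q\,\sigma(Q))$, these patches glueing because $\chi^{\bfP_i}$ is invariant under $N_\bfG(\bfP_i)=\bfP_i$.)

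Next I would pass to $k_s$-points. Since two parabolic $k_s$-subgroups of $\bfG$ lying in a common geometric conjugacy class are already $\bfG(k_s)$-conjugate, the conjugation orbit map identifies $\mathcal F^i_{k_s}$ with $\mathbf{Par}_i(k_s)$ and $\mathcal Q^i$ with $\mathbf{Inc}_i(k_s)$. Taking $k_s$-points is functorial and continuous for the analytic topology, so $\widehat\chi$ induces a continuous map $\mathcal Q^i\to k_s^\times$; postcomposing with the continuous normalized absolute value $\abs{\cdot}_s\colon k_s^\times\to\R_{>0}$ gives the continuity of $(q,\bfQ)\mapsto\abs{\chi^\bfQ(q)}_s$, and when $k_s=\R$ one gets continuity of the $\R^\times$-valued map $(q,\bfQ)\mapsto\chi^\bfQ(q)$ literally as stated. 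For the uniform bound, let $L_s\subseteq\bfG(k_s)$ be compact. Since $\mathcal F^i_{k_s}$ is compact and $\mathbf{Inc}_i$ is closed, $\mathcal Q^i\cap(L_s\times\mathcal F^i_{k_s})$ is a closed subset of the compact space $L_s\times\mathcal F^i_{k_s}$, hence compact, and it contains $(q,\bfQ(k_s))$ whenever $\bfQ\in\calV_i(\Delta_k)$ and $q\in\bfQ(k_s)\cap L_s$; the continuous, strictly positive function above attains a maximum $C_1$ and a minimum $C_2>0$ there, and $C\defeq\max(C_1,1/C_2)$ does the job.

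The one genuinely delicate step is that the canonical character depends \emph{algebraically}, not merely pointwise, on the parabolic subgroup; the determinant-of-adjoint-action formula reduces this to the standard fact that $\mathrm{Lie}(R_u(Q))$ forms an algebraic subbundle over $\mathbf{Par}_i$, so I do not expect any real difficulty, and compactness of $\mathcal F^i_{k_s}$ is available from the discussion preceding the statement. Everything after Step~1 is then a routine functoriality-plus-compactness argument.
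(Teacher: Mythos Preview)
Your proof is correct but takes a different route from the paper. You construct an algebraic family version of the canonical character by reinterpreting $\chi^\bfR$ as $\det(\mathrm{Ad}|_{\mathrm{Lie}(R_u(\bfR))})$ and then arguing that the Lie algebras of unipotent radicals form an algebraic subbundle over the variety $\mathbf{Par}_i$ of type-$i$ parabolics; continuity then comes from functoriality of taking $k_s$-points. The paper instead exploits conjugation-invariance directly at the analytic level: fixing one $Q_o \in \mathcal F^i_{k_s}$, the surjection $p\colon Q_o \times \bfG(k_s) \to \mathcal Q^i$, $(q_o, g) \mapsto ({}^gq_o, {}^gQ_o)$, is a quotient map, and $\chi \circ p(q_o, g) = \chi^{Q_o}(q_o)$ depends only on the first coordinate, so is manifestly continuous. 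This is essentially your ``alternatively'' parenthetical carried out over $k_s$-points rather than \'etale-locally---the transitivity of $\bfG(k_s)$ on $\mathcal F^i_{k_s}$ makes a global topological section available, so no gluing is needed. Your argument buys generality (it shows $\widehat\chi$ is a morphism of varieties, not merely continuous on Archimedean points) at the cost of invoking more machinery; the paper's three-line argument buys brevity. The compactness step for the uniform bound is identical in both proofs.
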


\begin{proof} If we fix $Q_o \in \mathcal F^i_{k_s}$ then we have a quotient map
\[
p\colon Q_o \times \bfG(k_s) \to \mathcal Q^i, \quad (q_o, g) \mapsto ({}^g q_o, {}^gQ_o).
\]
Since $p^*\chi(q_o, g) = \chi_{{}^gQ_o}({}^gq_o) = \chi_{Q_o}(q_o)$, the map $p^*\chi$ is continuous, and hence $\chi$ is continuous as well. This implies in particular that $\chi$ is bounded on subsets of the compact set $\mathcal Q^i \cap (L_s\times \mathcal F_{k_s})$.
\end{proof}
This implies the following quasi-invariance at the infinite places; the key point of the corollary is that the bound $B$ depends only on the sets $W_{s_o}$, but neither on $w$ nor on the parabolic $\bfQ$.
\begin{corollary}\label{cor_quasiinv1} Let $V_0 \subseteq V^{\inf}$ and for $s_o \in V_0$ let $W_{s_o} \subseteq \bfG(k_{s_o})$ be relatively compact. Then there exists $B>0$ with the following property: If $w_s \in Z_{\bfG(k_s)}(C_s)$ for all $s \in V \setminus V_0$ and $g_{s_o}\in Z(\bfG(k_{s_o}))$ and $w_{s_o}\in W_{s_o}$ for all $s_o \in V_0$ then $g = (g_s)_{s \in S}$ and $w = (w_s)_{s \in S}$ satisfy
\[
\frac{1}{B} \le \frac{\pi(\bfQ,{}^{wg}C)}{\pi(\bfQ,{}^gC)} \le B \quad \text{ for all }\bfQ \in \calV(\Delta_k).
\]
\end{corollary}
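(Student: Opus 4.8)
The plan is to reduce the ratio $\pi(\bfQ,{}^{wg}C)/\pi(\bfQ,{}^gC)$ to a product of one local factor per place of $V_0$, to rewrite each such factor as the value of a canonical character $\chi^{\bfQ}$ at a point of a fixed compact set, and then to bound these values uniformly in $\bfQ$ by Lemma~\ref{CompactnessArgument}. The element $g$ will drop out entirely, because the hypotheses force $wg$ and $g$ to act the same way on $\calK$ away from $V_0$.

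First I would record the coordinatewise comparison of ${}^{wg}C$ and ${}^gC$ inside $\calK=\prod_s\calK_s$. For $s\notin V_0$ the place $s$ is finite, so $C_s$ is open and hence Zariski dense in $\bfG(k_s)$; thus $Z_{\bfG(k_s)}(C_s)=Z(\bfG)(k_s)$ is central in $\bfG(k_s)$, conjugation by $w_s$ is trivial on $\calK_s$, and (a central element acting trivially on $X_s$) the $s$-coordinates of ${}^{wg}C$ and ${}^gC$ agree. For $s_o\in V_0$ the element $g_{s_o}$ is central, hence acts trivially on $\calK_{s_o}$, so the $s_o$-coordinate of ${}^{wg}C$ is ${}^{w_{s_o}}C_{s_o}$ and that of ${}^gC$ is $C_{s_o}$. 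Passing to the family of rescaled Busemann functions $(\beta_{\bfQ,s})_s$ attached to the normalized invariant horofunction $\pi$ (so that $\log\pi(\bfQ,\cdot)=\sum_s\beta_{\bfQ,s}$ under $\calK_s\subseteq X_s$), and using that the additive normalizations of the $\beta_{\bfQ,s}$ cancel in the ratio, I obtain
\[
\log\frac{\pi(\bfQ,{}^{wg}C)}{\pi(\bfQ,{}^gC)}=\sum_{s_o\in V_0}\bigl(\beta_{\bfQ,s_o}(w_{s_o}.o_{s_o})-\beta_{\bfQ,s_o}(o_{s_o})\bigr)\text{.}
\]

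Next I would estimate each summand. Fixing $s_o\in V_0$ and letting $i$ be the type of $\bfQ$, the Iwasawa decomposition $\bfG(k_{s_o})=\bfQ(k_{s_o})C_{s_o}$ lets me write $w_{s_o}=qk$ with $q\in\bfQ(k_{s_o})$ and $k\in C_{s_o}$; since $C_{s_o}$ fixes $o_{s_o}$ and $\beta_{\bfQ,s_o}$ transforms under $\bfQ(k_{s_o})$ by $\chi^{\bfQ}$ (the horofunction property of $\pi$ restricted to $s_o$, cf.\ Definition~\ref{def:pi_trafo} and Corollary~\ref{cor:busemann}), this gives $\beta_{\bfQ,s_o}(w_{s_o}.o_{s_o})-\beta_{\bfQ,s_o}(o_{s_o})=\log\abs{\chi^{\bfQ}(q)}_{s_o}$. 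The key point is that $q=w_{s_o}k^{-1}$ and $q^{-1}=kw_{s_o}^{-1}$ both lie in the relatively compact set $L_{s_o}\defeq\overline{W_{s_o}C_{s_o}\cup C_{s_o}W_{s_o}^{-1}}$, which depends only on $W_{s_o}$, while $q,q^{-1}\in\bfQ(k_{s_o})$. Applying Lemma~\ref{CompactnessArgument} to $L_{s_o}$ (once for each of the finitely many types $i$) yields a constant $C_{s_o}\ge 1$, depending only on $W_{s_o}$, with $\abs{\log\abs{\chi^{\bfQ}(q)}_{s_o}}\le\log C_{s_o}$ for all $\bfQ\in\calV_i(\Delta_k)$ and all admissible $w_{s_o}$. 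Summing over the finite set $V_0$ and setting $B\defeq\prod_{s_o\in V_0}C_{s_o}$ completes the proof, with $B$ depending only on $(W_{s_o})_{s_o\in V_0}$.

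The step I expect to be the main obstacle — and the whole reason Lemma~\ref{CompactnessArgument} is invoked — is the uniformity of the bound over $\calV_i(\Delta_k)$: this is a single infinite $\bfG(k)$-orbit of parabolics, and a naive Lipschitz estimate for $\beta_{\bfQ,s_o}$ would come with a constant a priori depending on $\bfQ$. Lemma~\ref{CompactnessArgument} removes this dependence by exhibiting $(q,\bfQ)\mapsto\chi^{\bfQ}(q)$ as a continuous map on $\mathcal Q^i$, hence bounded on the compact slice lying over $L_{s_o}$; at bottom this rests on the compactness of the conjugacy class $\mathcal F^i_{k_{s_o}}$ of type-$i$ parabolics in the real Lie group $\bfG(k_{s_o})$. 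A secondary thing to be careful about is the bookkeeping in the first step — exactly which coordinates of ${}^{wg}C$ and ${}^gC$ coincide and why central elements (the $w_s$ for $s\notin V_0$ and the $g_{s_o}$ for $s_o\in V_0$) contribute nothing — but this is routine once the Zariski-density remark above is in place.
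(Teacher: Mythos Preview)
Your proof is correct and follows essentially the same route as the paper: reduce away the places outside $V_0$, use the Iwasawa decomposition at each $s_o\in V_0$ to rewrite the local ratio as $|\chi^{\bfQ}(q)|_{s_o}$ with $q$ lying in the fixed compact set $\overline{W_{s_o}}C_{s_o}$, and invoke Lemma~\ref{CompactnessArgument} for the uniformity in $\bfQ$. The only differences are packaging --- you pass through the additive decomposition $\log\pi=\sum_s\beta_{\bfQ,s}$ and treat all of $V_0$ simultaneously, whereas the paper first reduces to a singleton $V_0=\{s_o\}$ and writes ${}^{wg}C={}^{(e,\dots,q_{s_o},\dots,e)}({}^gC)$ directly before applying the horofunction property.

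One small slip: your claim that $s\notin V_0$ forces $s$ to be finite is not warranted, since nothing in the statement prevents $V_0\subsetneq V^{\inf}$. At an infinite place $s\notin V_0$ the group $C_s$ is a maximal compact in a real Lie group and is \emph{not} Zariski dense, so your justification for $Z_{\bfG(k_s)}(C_s)=Z(\bfG(k_s))$ does not apply verbatim there. The conclusion nonetheless holds (a maximal compact subgroup of a real semisimple group has central centralizer, e.g.\ via irreducibility of the isotropy representation on $\mathfrak p_s$), so the gap is easily patched; but as written the sentence needs an extra clause.
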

\begin{proof} Let us fix a parabolic $\bfQ \in \calV(\Delta_k)$. Since $V^{\inf}$ is finite, we may assume that $V_0 = \{s_o\}$ is a singleton. Since $k_{s_o}$ is Archimedean and thus admits an Iwasawa decomposition, we can find for every $g \in \bfG(k_{s_o})$ some $q_{s_o} \in \bfQ(k_{s_o})$ such that ${}^g C_{s_o} = {}^{q_{s_o}}C_{s_o}$. In particular,
${}^{w_{s_o}} C_{s_o} = {}^{q_{s_o}}C_{s_o}$. Since $w_{s_o}$ and $q_{s_o}$ both commute with $g_{s_o}$ this implies that
\[
{}^{w_{s_o}g_{s_o}}C_{s_o} = {}^{q_{s_o}g_{s_o}}C_{s_o}, \quad \text{and hence} \quad {}^{wg}C = {}^{(e, \dots,e, q_{s_o}, e, \dots)}({}^gC).
\]
Since $\pi$ is a horofunction this implies that
\[
\frac{\pi(\bfQ,{}^{wg}C)}{\pi(\bfQ,{}^gC)} = |\chi_\A^{\bfQ}(e, \dots, e, q_{s_o},e, \dots)| = |\chi^{\bfQ}(q_{s_o})|\text.
\]
Since $w_sC_s = q_sC_s$, the point $q_s$ is contained in the compact subset $L_{s_o} \defeq \overline{W}_{s_o}C_{s_o}$ of $\bfG(k_{s_o})$, hence the proposition follows from Lemma \ref{CompactnessArgument} (and the fact that there are only finitely many types of vertices).
\end{proof}
The desired proposition is now immediate:
\begin{proof}[Proof of Proposition \ref{prop:horo_quasi-inv}]  Since $\pi$ is invariant we have
\[
\pi({}^\lambda\bfQ, {}^{(\pi_G(\lambda)g, 1_H)}C) = \pi(\bfQ, {}^{(g,\pi_H(\lambda)^{-1})}C) = \pi(\bfQ, {}^{(e_G, \pi_H(\lambda)^{-1})(g, 1_H)} \bfQ).
\]
Then the proposition follows from Corollary~\ref{cor_quasiinv1}.
\end{proof}
At this point we have finished the proof of Theorem \ref{thm:busemann_family}.

\subsection{Dual invariant horofunctions}

We continue to work in the setting of Convention \ref{SettingReductiveApprox}. In view of Proposition \ref{ExNIHoro} we may fix an invariant horofunction $\pi\colon \calV(\Delta_k) \times \calK \to \R_{>0}$. If $\pi$ is normalized, then the functions
\[
\pi_j\colon \mathcal C(\Delta_k) \times \mathcal K \to \R_{>0}, \quad (\bfP, K) \mapsto \pi(\bfP_j, K).
\]
are (restrictions of) families of exponentials of rescaled Busemann functions on $X$, and for every chamber $\bfP \in \mathcal C(\Delta_k)$ the rescaled Busemann functions corresponding to $\pi_1(\bfP, \cdot), \dots, \pi_r(\bfP, \cdot)$ are centered at the vertices of the respective chamber. (The index $i$ of $\pi_i$ indicates the type of the vertex.) If $\pi$ is invariant, but not normalized, then there is no immediate such geometric interpretation, but in view of Proposition \ref{ExNIHoro} the difference to the normalized case is negligible. In either case, as long as $\pi$ is invariant, the functions $\pi_i$ transform as follows: For all $p \in \bfP_j(\A)$ (and in particular for all $p \in \bfP(\A)$) and all $g \in \bfG(k)$ we have
\begin{equation}\label{pij}
    \pi_j(\bfP, {}^pK) =  |\chi^{\bfP_j}(p)|\cdot \pi_j(\bfP, K) \; \text{and} \;  \pi_j({}^g\bfP, {}^gK) = \pi_j(\bfP,K)\text{.}
\end{equation}
For short, the functions $\pi_j(\bfP, \cdot)$ transform according to the canonical characters of the vertices of $\bfP$. We would now like to define a dual set of functions on $\mathcal K$ which instead transform according to the \emph{roots} of $\bfP$. We recall from \eqref{RootsVsCharacters} that the roots of $\bfP$ are related to the canonical characters of the vertices $\bfP_j$ by the formulas
\[
\alpha^\bfP_i = \sum_{j=1}^r c_{ij}\chi^{\bfP_j} \quad \text{and} \quad \chi^{\bfP_j} = \sum_{i=1}^r n_{ji} \alpha^\bfP_i\text{.}
\]
This motivates the following definition:
\begin{definition}\label{def:dual_invariant} Given an invariant horofunction $\pi\colon \mathcal V(\Delta_k) \times \mathcal K\to (0, \infty)$, the corresponding \emph{dual invariant horofunctions} $\nu_1, \dots, \nu_r$ are defined as
\begin{equation}\label{eq:horofunction_to_dual}
\nu_i\colon \mathcal C(\Delta) \times \mathcal K \to (0, \infty), \quad \nu_i \defeq \prod_{j=1}^r \pi_j^{c_{ij}}\text{.}
\end{equation}
\end{definition}
It is immediate from the above formulas that, dually,
where the $c_{ij}$ are defined by \eqref{RootsVsCharacters}.  It follows that
\begin{equation}\label{eq:dual_to_horofunction}
\pi_j = \prod_{i= 1}^r \nu_i^{n_{ji}}\text{.}
\end{equation}
From \eqref{pij} and \eqref{RootsVsCharacters} we obtain the following transformation behavior of  the dual invariant horofunctions:
\begin{lemma}\label{lem:nu_trafo}
For all $\bfP \in \mathcal C(\Delta)$, $K \in \mathcal K$ and $i \in \{1,\dots, r\}$ the following hold.
\begin{enumerate}[(i)]
\item $\nu_i(\bfP,{}^gK) = \abs{\alpha_i^{\bfP}(g)} \cdot \nu_i(\bfP,K)$ for $g  \in \bfP(\A)$.\label{item:nu_trafo_parabolic}
\item $\nu_i({}^g\bfP,{}^gK) = \nu_i(\bfP,K)$ for $g \in \bfG(k)$.\label{item:nu_trafo_global}\qed
\end{enumerate}
\end{lemma}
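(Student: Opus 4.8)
The plan is to deduce both assertions formally from the transformation behaviour \eqref{pij} of the functions $\pi_j$, together with the defining formula \eqref{eq:horofunction_to_dual}, namely $\nu_i = \prod_{j=1}^r \pi_j^{c_{ij}}$, and the relation $\alpha_i^\bfP = \sum_{j=1}^r c_{ij}\chi^{\bfP_j}$ recorded in \eqref{RootsVsCharacters}. No new geometry is needed; this is a computation on the character lattice.

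For \eqref{item:nu_trafo_parabolic} I would fix $g \in \bfP(\A)$. Since $\bfP \subseteq \bfP_j$ we have $g \in \bfP_j(\A)$ for every $j$, so \eqref{pij} applies and gives $\pi_j(\bfP, {}^gK) = \abs{\chi^{\bfP_j}(g)} \cdot \pi_j(\bfP, K)$. Raising to the power $c_{ij}$ and multiplying over $j$ yields
\[
\nu_i(\bfP, {}^gK) = \Bigl(\prod_{j=1}^r \abs{\chi^{\bfP_j}(g)}^{c_{ij}}\Bigr) \cdot \nu_i(\bfP, K),
\]
so it only remains to identify the bracketed factor with $\abs{\alpha_i^\bfP(g)}$. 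Here one must translate between the additive notation on characters and the multiplicative behaviour of the idele norm: the map $\X[k](\bfP) \otimes \R \to \R$, $\chi \mapsto \log\abs{\chi_\A(g)}$, is $\R$-linear (only multiplicativity of $\abs{\cdot}$ is used, not the product formula), and applying it to $\alpha_i^\bfP = \sum_j c_{ij}\chi^{\bfP_j}$ gives $\log\abs{\alpha_i^\bfP(g)} = \sum_j c_{ij}\log\abs{\chi^{\bfP_j}(g)}$; exponentiating gives exactly $\abs{\alpha_i^\bfP(g)} = \prod_j \abs{\chi^{\bfP_j}(g)}^{c_{ij}}$, which finishes (i).

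For \eqref{item:nu_trafo_global} I would fix $g \in \bfG(k)$ and use two facts from Subsection~\ref{sec:pars_chars}: the assignment $\bfP \mapsto \bfP_j$ (the maximal parabolic of type $j$ containing $\bfP$) is $\bfG(k)$-equivariant, i.e.\ $({}^g\bfP)_j = {}^g(\bfP_j)$, and the constants $c_{ij}$ in \eqref{eq:horofunction_to_dual} do not depend on the chamber. Hence $\pi_j({}^g\bfP, {}^gK) = \pi({}^g(\bfP_j), {}^gK) = \pi(\bfP_j, K) = \pi_j(\bfP, K)$ by the invariance half of \eqref{pij}, and taking the product of the $c_{ij}$-th powers over $j$ gives $\nu_i({}^g\bfP, {}^gK) = \nu_i(\bfP, K)$. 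The only points requiring any care are thus the additive-to-multiplicative translation in (i) and the equivariance/universality statements quoted in (ii); there is no real obstacle.
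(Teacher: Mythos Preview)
Your proposal is correct and matches the paper's approach: the lemma is stated with a \qed and the text immediately preceding it says only that the transformation behaviour follows from \eqref{pij} and \eqref{RootsVsCharacters}, which is exactly the derivation you spell out. The two mild points you flag---the additive-to-multiplicative translation for (i) and the equivariance $({}^g\bfP)_j = {}^g(\bfP_j)$ together with the independence of the $c_{ij}$ from the chamber for (ii)---are precisely the content that makes the deduction immediate.
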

Note that \eqref{item:nu_trafo_parabolic} only holds for elements of the \emph{minimal} parabolic $\bfP(\A)$ (rather than for elements of $\bfP_i(\A)$), since we need to apply \eqref{pij} for all $j\in \{1, \dots, r\}$ simultaneously. If we denote, as before, by  $(\beta_\bfQ)_{\bfQ \in \calV(\Delta_k)}$ the unique family of rescaled Busemann functions such that
\[\log \beta_\bfQ|_{\calK_S} = \pi(\bfQ,\cdot)|_{\calK_S}\text{,}\]
then we obtain from \eqref{eq:horofunction_to_dual} that
\begin{equation}\label{Defmus}
\log  \nu_i(\bfP, \cdot)|_{\calK_S} = \mu_{i}^{\bfP}|_{\calK_S}, \quad \text{where} \quad \mu^{\bfP}_i =  \sum_{j=1}^r c_{ij} \beta_{\bfP_j}.
\end{equation}
The functions $\mu^{\bfP}_i$ are not rescaled Busemann functions, but as linear combinations of rescaled Busemann functions centered at the vertices of $\bfP$, they are at least affine on apartments that contain the chamber $\bfP$ in their boundary, but Condition (2) of Proposition \ref{prop:busemann_char} is violated. (As before we include maximal flats in symmetric spaces when speaking about apartments).


\section{Adelic reduction theory}\label{sec:adelic_reduction}

\subsection{The fundamental theorems of adelic reduction theory}
The purpose of classical reduction theory is to exhibit a weak form of fundamental domain for an $S$-arithmetic group $\bfG(\calO_S)$ on its ambient group $\bfG_S$. 
Since $\bfG_S$ acts properly and cocompactly on the associated \cato{} space $X_S$, this amounts to the same as providing a fundamental domain, in the same weak sense, for $\bfG(\calO_S)$ acting on $X_S$. Here we would like to develop a similar version of reduction theorem for $S$-arithmetic approximate groups. Instead of working with $\bfG(\calO_S) \subset \bfG_S$ directly, it will be more convenient for us to first develop a reduction theory for $\bfG(k) \subset \bfG(\A)$ in the adelic setting and then deduce the $S$-arithmetic case by a descent based on Proposition~\ref{prop:descent}. This approach has two main advantages: Firstly, the adelic reduction theory for $\bfG(k) \subset \bfG(\A)$ has already been developed in the generality needed for our purposes, and secondly we can work with the group $\bfG(k)$ rather than the approximate group $\bfG(\calO_S)$. 

Throughout this section we will work in the following setting.
\begin{convention}\label{ConventionReductiveGroups}
\begin{enumerate}
\item $k$, $V$, $\bfG$, $r$, $C$, $\calK$ are as in Convention~\ref{SettingReductiveApprox} and the subsequent remarks; in particular, $\bfG$ is reductive;
\item $\pi\colon \mathcal V(\Delta_k) \times \mathcal K \to \mathbb R_{>0}$ denotes a fixed choice of invariant horofunction (in the sense of Definition \ref{def:pi_trafo}) and $\nu_1, \dots, \nu_r\colon \mathcal C(\Delta_k) \times \mathcal K \to \R_{>0}$ are the associated dual invariant horofunctions (Definition \ref{def:dual_invariant});
\item $\bfG$ is assumed to be $k$-isotropic.
\end{enumerate}
\end{convention}
\begin{remark}
From a geometric point of view, one should choose the invariant horofunctions in (2) to be normalized, so that it corresponds to a family of rescaled Busemann functions. However, in view of Proposition \ref{ExNIHoro}, all invariant horofunctions are equivalent for our purposes, and hence the precise choice does not matter. The assumption that $\bfG$ be isotropic is actually not necessary at all, but the anisotropic case is of little interest. Indeed, if $\bfG$ is $k$-anisotropic then $\Delta$ is empty and $\bfG(k)$ is a uniform lattice in $\bfG(\A)^\circ$. It is a basic exercise in formal logic to interpret the following statements in such a way that they become tautologically true in this case ($\bfG$ itself is the only minimal $k$-parabolic \ldots). Since it does not produce any new results, we leave this exercise to the interested reader.
\end{remark}

Given real numbers $c_1<c_2$ we introduce, for every minimal parabolic $\bfP$ the subsets

\begin{align}
\Omega^\bfP_{c_1} &\defeq \{ g \in \bfG(\A) \mid \forall i\ c_1 \le \nu_i(\bfP,{}^gC)\}\label{OmegaPc1}
\\
\Omega^\bfP_{c_1,c_2} &\defeq \{ g \in \bfG(\A) \mid \forall i\ c_1 \le \nu_i(\bfP,{}^gC) \le c_2\}\label{OmegaPc1c2}\text{.}
\end{align}

The following three theorems, which hold in arbitrary characteristic, constitute adelic reduction theory.

\begin{theorem}[First fundamental theorem]\label{thm:lower}
There exists a $c_1 > 0$ such that for every $g \in \bfG(\A)$ there is a minimal parabolic $\bfP$ with $g \in \Omega^\bfP_{c_1}$. 
\end{theorem}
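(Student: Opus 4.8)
The plan is to read Theorem~\ref{thm:lower} off the adelic Siegel-domain theorem (Godement \cite{godement64} in characteristic zero, Harder \cite{harder69} in positive characteristic), using the dictionary between Siegel domains and the dual invariant horofunctions $\nu_i$ developed in Section~\ref{sec:busemann}. Fix a minimal $k$-parabolic $\bfP_0$ with maximal $k$-split torus $\bfT_0 < \bfP_0$; these exist since $\bfG$ is $k$-isotropic. The input I would isolate from reduction theory is the covering statement: there are a compact set $\omega \subseteq \bfP_0(\A)$ and a real number $t_0 > 0$ such that
\[
\bfG(\A) \;=\; \bfG(k)\cdot\omega\cdot\bfT_0(\A)_{t_0}\cdot C, \qquad \bfT_0(\A)_{t_0} \defeq \bigl\{\, a \in \bfT_0(\A) \mid \abs{\alpha_i^{\bfP_0}(a)} \ge t_0 \text{ for all } i \,\bigr\}.
\]
Conjugacy of all minimal $k$-parabolics (Proposition~\ref{prop:min_par_conj}) is what permits working with the single parabolic $\bfP_0$. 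For reductive (as opposed to semisimple) $\bfG$ the central torus $\bfZ \subseteq \bfT_0$ is harmless: every root $\alpha_i^{\bfP_0}$ is trivial on $\bfZ$ and $\bfZ(\A)$ acts trivially on $\calK$, so one may first strip off the norm-nontrivial central directions and then quote reduction theory for the remaining semisimple part.

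Granting this, the argument is brief. Let $g \in \bfG(\A)$ and write $g = \gamma\, p\, a\, k$ with $\gamma \in \bfG(k)$, $p \in \omega$, $a \in \bfT_0(\A)_{t_0}$, $k \in C$; put $\bfP \defeq {}^\gamma\bfP_0$, again a minimal $k$-parabolic. Since $k \in C$ we have ${}^gC = {}^{\gamma p a}C = {}^\gamma\bigl({}^{pa}C\bigr)$, and $pa \in \bfP_0(\A)$, so invariance of $\nu_i$ (Lemma~\ref{lem:nu_trafo}\eqref{item:nu_trafo_global}) followed by the root transformation rule (Lemma~\ref{lem:nu_trafo}\eqref{item:nu_trafo_parabolic}) yields, for every $i$,
\[
\nu_i(\bfP,{}^gC) \;=\; \nu_i(\bfP_0,{}^{pa}C) \;=\; \abs{\alpha_i^{\bfP_0}(p)}\cdot\abs{\alpha_i^{\bfP_0}(a)}\cdot\nu_i(\bfP_0,C).
\]
Now $p \mapsto \abs{\alpha_i^{\bfP_0}(p)}$ is continuous and strictly positive on $\bfP_0(\A)$ — it is the idele norm of the value of the $k$-character $\alpha_i^{\bfP_0}$ — so compactness of $\omega$ gives $m_i \defeq \inf_{p \in \omega}\abs{\alpha_i^{\bfP_0}(p)} > 0$, while $\abs{\alpha_i^{\bfP_0}(a)} \ge t_0$ by definition of $\bfT_0(\A)_{t_0}$. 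Hence $\nu_i(\bfP,{}^gC) \ge m_i\,t_0\,\nu_i(\bfP_0,C)$ for all $i$, and with $c_1 \defeq \min_{1 \le i \le r}\bigl(m_i\,t_0\,\nu_i(\bfP_0,C)\bigr) > 0$, a constant depending only on $\bfP_0$, $\omega$, $t_0$ and the chosen horofunction (and not on $g$), we obtain $g \in \Omega^{\bfP}_{c_1}$, as desired.

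The main obstacle — and the only place where the characteristic enters at all — is assembling the displayed covering statement with the correct normalizations. This breaks into: (i) matching conventions, i.e.\ verifying that the ``large'' end of a classical Siegel domain is cut out precisely by the inequalities $\nu_i(\bfP_0,\cdot) \ge t_0'$ for a suitable $t_0' > 0$; here one uses that the $\nu_i$ transform by the simple roots and that $\log\nu_i(\bfP_0,\cdot)$ restricts to $\mu^{\bfP_0}_i$ on $\calK$, so that $\Omega^{\bfP_0}_{c_1}$ is exactly a Siegel set saturated over the semisimple part of $\bfP_0$, as announced in the introduction; (ii) dealing with the central torus in the reductive case, as indicated above; and (iii) extracting the covering itself from Godement \cite{godement64} in characteristic zero, respectively Harder \cite{harder69} in positive characteristic. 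Once that input is in place, the transfer to an arbitrary $\bfG(k)$-conjugate parabolic via invariance of the $\nu_i$, carried out in the middle paragraph, is elementary and characteristic-free.
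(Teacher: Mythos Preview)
Your approach is essentially the paper's: both deduce Theorem~\ref{thm:lower} from the adelic Siegel covering (Godement/Harder), translate it via the transformation rules for the $\nu_i$ (Lemma~\ref{lem:nu_trafo}), and use $\bfG(k)$-transitivity on minimal parabolics to pass from the fixed $\bfP_0$ to an arbitrary conjugate. The core computation $\nu_i({}^\gamma\bfP_0,{}^gC) = \nu_i(\bfP_0,{}^{pa}C) = \abs{\alpha_i(p)}\abs{\alpha_i(a)}\,\nu_i(\bfP_0,C)$ is exactly what the paper does in Lemma~\ref{lem:nu_alpha} and the subsequent proof.

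The one point where your proposal is slightly too clean: your displayed covering $\bfG(\A) = \bfG(k)\cdot\omega\cdot\bfT_0(\A)_{t_0}\cdot C$ omits the finite set of double-coset representatives $Z = \{\zeta_1,\ldots,\zeta_m\}$ for $\bfP_0(\A)\backslash\bfG(\A)/C$ that appear in Godement's formulation (there is no adelic Iwasawa decomposition $\bfG(\A) = \bfP_0(\A)\,C$, only one up to finitely many cosets; cf.\ Lemma~\ref{lem:iwasawa}). The paper's decomposition~\eqref{gDecomp} reads $g = \gamma f^{-1} t \zeta_j x$, and Lemma~\ref{lem:nu_alpha} then bounds $\nu_i(\bfP_0,{}^{\zeta_j}C)$ uniformly over the finitely many $j$ by the constant $\varepsilon$ in~\eqref{DefEpsDelta}. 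Your argument goes through verbatim once you insert this finite set: write $g = \gamma\,p\,a\,\zeta_j\,k$, compute $\nu_i(\bfP,{}^gC) = \abs{\alpha_i(pa)}\,\nu_i(\bfP_0,{}^{\zeta_j}C)$, and absorb the finitely many values $\nu_i(\bfP_0,{}^{\zeta_j}C)$ into the constant $c_1$. You flag ``assembling the displayed covering statement with the correct normalizations'' as the main obstacle, and this is precisely the missing ingredient.
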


Any $c_1$ satisfying the conclusion of Theorem~\ref{thm:lower} will be called a \emph{lower reduction bound}. A corresponding constant $c_2$ as in the following theorem will be an \emph{upper reduction bound}.

\begin{theorem}[Second fundamental theorem]\label{thm:uniqueness}
For every lower reduction bound $c_1 > 0$ there exists a $c_2 > 0$ such that if $g \in \Omega_{c_1}^\bfP \cap \Omega_{c_1}^{\bfP'}$ and $\nu_i(\bfP,{}^gC) \ge c_2$ then $\bfP_i = \bfP'_i$.
\end{theorem}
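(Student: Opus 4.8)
The plan is to argue by contradiction and compactness in $\bfG(\A)$, exploiting that the double cosets $\bfP(\A)\backslash \bfG(\A)/C$ are finite (Lemma~\ref{lem:iwasawa}) so that everything can be reduced to finitely many bounded situations. Suppose no $c_2$ works for a given lower reduction bound $c_1$. Then for every $n$ there is a $g_n \in \Omega^\bfP_{c_1} \cap \Omega^{\bfP'}_{c_1}$ with $\nu_i(\bfP, {}^{g_n}C) \ge n$ for some fixed index $i$ (pass to a subsequence so that $i$ and the ordered pair of ``double coset positions'' of $g_n$ relative to $\bfP$ and to $\bfP'$ are constant), yet $\bfP_i \ne \bfP'_i$. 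By invariance of $\pi$ (and hence of the $\nu_j$) under $\bfG(k)$ we may translate by $\bfG(k)$ and assume $\bfP$ and $\bfP'$ are among a fixed finite list of minimal $k$-parabolics; by the finiteness of the double coset spaces and the transformation rule \eqref{item:nu_trafo_parabolic} we may further write $g_n = p_n \kappa$ with $p_n \in \bfP(\A)$ and $\kappa \in \bfG(\A)$ ranging over a fixed finite set of double-coset representatives, so that $\nu_j(\bfP,{}^{g_n}C) = |\alpha_j^\bfP(p_n)| \cdot \nu_j(\bfP,{}^{\kappa}C)$, and similarly on the $\bfP'$ side.

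Next I would extract quantitative information from the hypotheses. The condition $g_n \in \Omega^\bfP_{c_1}$ says $|\alpha_j^\bfP(p_n)| \ge c_1 / \nu_j(\bfP,{}^{\kappa}C)$ for all $j$, i.e. all $|\alpha_j^\bfP(p_n)|$ are bounded below by a positive constant independent of $n$; the condition $\nu_i(\bfP,{}^{g_n}C) \to \infty$ says $|\alpha_i^\bfP(p_n)| \to \infty$. Now I would use the structure theory of $\bfP(\A)$: writing $p_n = u_n t_n m_n$ with $u_n$ in the unipotent radical, $t_n$ in the maximal split torus part and $m_n$ in an anisotropic kernel (all modulo compacts, which do not affect the $\nu_j$), the $|\alpha_j^\bfP(\cdot)|$ depend only on $t_n$, and the hypothesis ``$\bfP_i \ne \bfP'_i$'' must be contradicted by showing that a point ${}^{g_n}C \in \calK$ with all $\alpha_j^\bfP$-values bounded below and $\alpha_i^\bfP$-value blowing up is forced to lie deep in the ``$i$-direction'' of the chamber $\bfP$. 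Concretely: the dual horofunction $\nu_i(\bfP,\cdot)$, being (the exponential of) the affine function $\mu_i^\bfP = \sum_j c_{ij}\beta_{\bfP_j}$ on apartments through $\bfP$, detects precisely the vertex $\bfP_i$; a sequence of conjugates ${}^{g_n}C$ on which it diverges must accumulate to the vertex $\bfP_i$ at infinity. If simultaneously $g_n \in \Omega^{\bfP'}_{c_1}$, the same analysis on the $\bfP'$-side keeps ${}^{g_n}C$ in a region ``bounded away from'' all the walls of $\bfP'$, and the only way both can happen with $\nu_i$ unbounded is if the vertex $\bfP_i$ is a face of (equivalently, compatible with) $\bfP'$; since both are maximal $k$-parabolics of the same type $i$, compatibility forces $\bfP_i = \bfP'_i$, a contradiction.

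The main obstacle, I expect, is making the last geometric/combinatorial step precise at the adelic level: translating ``$\nu_i(\bfP,{}^{g_n}C)$ is large while $g_n$ also lies in $\Omega^{\bfP'}_{c_1}$'' into the statement ``$\bfP_i$ and $\bfP'$ lie in a common minimal parabolic'' (hence $\bfP_i \subseteq \bfP'_i$, hence equality by type). In the single-place building/symmetric-space picture this is the standard fact that two chambers $\bfP, \bfP'$ at infinity whose ``Busemann coordinates'' are simultaneously controlled share a face in the appropriate direction; the work is to run this argument place-by-place over $S' \defeq$ the (finite) set of places where ${}^{g_n}C$ actually moves, using that the $\nu_i$ split as products over places and that the $\alpha_j^\bfP$ are trivial on the compact factors. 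Once this local comparison is in hand, the finiteness of double cosets (Lemma~\ref{lem:iwasawa}) and the $\bfG(k)$-invariance reduce the global statement to the local one, and one reads off the constant $c_2$ from the finitely many bounds $\nu_j(\bfP,{}^{\kappa}C)$ and $c_1$.
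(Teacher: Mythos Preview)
Your approach diverges from the paper's and has a genuine gap at the crucial step. The paper does not argue by contradiction or compactness; instead it translates the condition $g \in \Omega_{c_1}^{\bfP}$ into Godement's Siegel-set language via Lemma~\ref{lem:godement_translation} and then invokes Godement's Lemme~3, reproduced as Lemma~\ref{lem:godement}. That lemma is the substantive arithmetic input: given two Iwasawa-type decompositions $g = \gamma f^{-1} t \zeta_j x$ and $\lambda g = \gamma'(f')^{-1}t'\zeta_{j'}x'$ with $t^{-1},(t')^{-1} \in T_\infty^+(d_1)$, it controls the Bruhat $w$-component of the rational element $\beta = (\gamma')^{-1}\lambda\gamma$ and forces $w \in \PGod_i(k)$ once $|\alpha_i(t)|$ is small enough. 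From this one reads off $\lambda \in \PGod_i(k)$ and hence $\bfP_i = \bfP'_i$. The constant $c_2$ is obtained explicitly from Godement's $d_2$ and the constants $\delta,\varepsilon$ of Lemma~\ref{lem:nu_alpha}.

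Your reduction has two concrete problems. First, you cannot simultaneously pin \emph{both} $\bfP$ and $\bfP'$ to a finite list by translating with $\bfG(k)$: transitivity lets you fix $\bfP = \PGod$, but then $\bfP' = {}^\lambda\PGod$ with $\lambda$ ranging over $\bfG(k)/\PGod(k)$, which is infinite. The relative position of $\bfP$ and $\bfP'$ (encoded by $\lambda$) is precisely what the argument must control, and this is exactly what Lemma~\ref{lem:godement} does via the Bruhat decomposition of $\beta$. Second, the ``standard fact'' you appeal to in your last paragraph---that a point simultaneously in two Siegel-type regions with one $\nu_i$ large forces the corresponding chambers to share the type-$i$ vertex---is not a soft consequence of \cato{} geometry; it is essentially a restatement of the theorem itself. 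The constraint on $\nu_i$ is a constraint on an idele norm (a product over all places), so the place-by-place reduction you sketch does not apply directly, and even at a single place the statement you want is already the local form of the Second Fundamental Theorem rather than something one can quote. The finiteness of double cosets (Lemma~\ref{lem:iwasawa}) is indeed used in the paper, but only to set up the decomposition \eqref{gDecomp}; the comparison of the two decompositions requires Godement's lemma, which your outline neither proves nor cites.
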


The formulation of Mahler's compactness criterion refers to the group $\bfG(\A)^\circ$ which was defined in \eqref{eq:ring}.

\begin{theorem}[Mahler's compactness criterion]\label{thm:mahler}
Let $c_1 > 0$ be a lower reduction bound.
A subset $A \subseteq \bfG(\A)^\circ$ is relatively compact modulo $\bfG(k)$ if and only if there is a $c' > c_1$ such that $A \subseteq \bigcup_{\bfP} \Omega_{c_1,c'}^\bfP$.
\end{theorem}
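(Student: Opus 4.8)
The plan is to prove the two implications separately. By Proposition~\ref{ExNIHoro}(ii) the precise horofunction is immaterial, so I may assume $\pi$ is normalized; then on $\calK$ one has $\log\nu_i(\bfP,\cdot)=\mu^{\bfP}_i=\sum_j c_{ij}\beta_{\bfP_j}$ as in \eqref{Defmus}, a combination of rescaled Busemann functions that vanishes at the basepoint $o$. Thus $\nu_i(\bfP,\cdot)$ is the exponential of a Lipschitz function on $X$ whose Lipschitz constant is $L_i:=\sum_j|c_{ij}|\,b_j$, where $b_j$ is the norm of the linear part of a rescaled Busemann function centered at a vertex of type $j$; by transitivity of $\bfG(k)$ on $\calV_j(\Delta_k)$ (Proposition~\ref{prop:min_par_conj}) and isometry invariance, $b_j$ depends only on $j$, so $L_i$ is independent of $\bfP$.

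\textbf{Necessity.} Suppose $A$ is relatively compact modulo $\bfG(k)$, say $A\subseteq\bfG(k)L$ with $L$ compact. As $\bfG(\A)$ is the strict colimit of the $\bfG(\A_S)$, the set $L$ lies in a single $\bfG(\A_{S_0})$, and since $\bfG_{S_0}$ acts cocompactly on $X_{S_0}$ the orbit $\{h.o:h\in L\}$ is bounded, say of radius $R$ about $o$. Given $g=\gamma h\in A$ with $\gamma\in\bfG(k)$, $h\in L$: because $c_1$ is a lower reduction bound there is a minimal parabolic $\bfP'$ with $h\in\Omega^{\bfP'}_{c_1}$, and by the Lipschitz estimate $\nu_i(\bfP',{}^hC)=e^{\mu^{\bfP'}_i(h.o)}\le e^{L_iR}$ for every $i$. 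Setting $c':=\max_i e^{L_iR}$ we get $h\in\Omega^{\bfP'}_{c_1,c'}$, and since $\Omega^{{}^\gamma\bfP'}_{c_1,c'}={}^\gamma\Omega^{\bfP'}_{c_1,c'}$ by $\bfG(k)$-invariance of the $\nu_i$ (Lemma~\ref{lem:nu_trafo}(ii)), we conclude $g\in\Omega^{{}^\gamma\bfP'}_{c_1,c'}$. Hence $A\subseteq\bigcup_{\bfP}\Omega^{\bfP}_{c_1,c'}$.

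\textbf{Sufficiency.} Conversely assume $A\subseteq\bigcup_{\bfP}\Omega^{\bfP}_{c_1,c'}$ and $A\subseteq\bfG(\A)^\circ$. Fix a reference minimal parabolic $\bfP_0$. Every minimal parabolic is ${}^\gamma\bfP_0$ with $\gamma\in\bfG(k)$, and $\Omega^{{}^\gamma\bfP_0}_{c_1,c'}={}^\gamma\Omega^{\bfP_0}_{c_1,c'}$ by Lemma~\ref{lem:nu_trafo}(ii), so $\bigcup_{\bfP}\Omega^{\bfP}_{c_1,c'}=\bfG(k)\,\Omega^{\bfP_0}_{c_1,c'}$; as $\bfG(\A)^\circ$ is a group containing $\bfG(k)$, it suffices to show $\Omega^{\bfP_0}_{c_1,c'}\cap\bfG(\A)^\circ\subseteq\bfG(k)L$ for some compact $L$. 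By Lemma~\ref{lem:iwasawa} there are finitely many $\xi_l$ with $\bfG(\A)=\bigcup_l\bfP_0(\A)\xi_l C$; writing $g=p\xi_l c$ gives ${}^gC={}^{p\xi_l}C$, so by Lemma~\ref{lem:nu_trafo}(i) $\nu_i(\bfP_0,{}^gC)=|\alpha^{\bfP_0}_i(p)|\,\nu_i(\bfP_0,{}^{\xi_l}C)$, whence $g\in\Omega^{\bfP_0}_{c_1,c'}$ bounds each $|\alpha^{\bfP_0}_i(p)|$ above and below, while $g\in\bfG(\A)^\circ$ (see \eqref{eq:ring}) bounds each $|\chi(p)|$, $\chi\in\X[k](\bfG)$, above and below (the factors $\xi_l,c$ contributing only bounded corrections). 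Now write $\bfP_0=\bfU_0\rtimes\bfM_0$ with $\bfM_0=Z_{\bfG}(\bfT)$ the Levi and $\bfM_0=\bfT\cdot\mathscr D(\bfM_0)$ its almost-direct product, where $\mathscr D(\bfM_0)$ is $k$-anisotropic (a $k$-split torus in it would enlarge $\bfT$). The $\alpha^{\bfP_0}_i$ and the $\chi\in\X[k](\bfG)$ are trivial on $\bfU_0$ and on $\mathscr D(\bfM_0)$, so the bounds only constrain the $\bfT(\A)$-component of $p$, and together these characters span $\X[k](\bfT)\otimes\Q$. I would then conclude from the classical compactness statements: (a) $\bfU_0(k)\backslash\bfU_0(\A)$ is compact; (b) $\mathscr D(\bfM_0)(k)\backslash\mathscr D(\bfM_0)(\A)$ is compact by Godement's criterion for the anisotropic group $\mathscr D(\bfM_0)$ (cf.\ the remark after Convention~\ref{ConventionReductiveGroups}); (c) a subset of $\bfT(\A)$ on which a spanning family of characters is bounded above and below is relatively compact modulo $\bfT(k)$, by Fujisaki's theorem on compactness of norm-one idele classes of tori. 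Assembling (a)--(c) and absorbing $C$, the finitely many $\xi_l$, and the finite class group arising from the isogeny $\bfT\times\mathscr D(\bfM_0)\to\bfM_0$ gives $\Omega^{\bfP_0}_{c_1,c'}\cap\bfG(\A)^\circ\subseteq\bfP_0(k)L_0\subseteq\bfG(k)L$, as needed.

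\textbf{Expected main obstacle.} The necessity direction and the reduction of sufficiency to a single parabolic are formal consequences of the earlier results; the real content is the triple of compactness statements (a)--(c), which \emph{is} adelic reduction theory for a minimal parabolic and which in this treatment is imported from Godement \cite{godement64} (and Harder \cite{harder69} in positive characteristic). Among these, step (c) is the delicate one: one must correctly match the sublattice spanned by the roots $\alpha^{\bfP_0}_i$ against $\X[k](\bfG)$ inside $\X[k](\bfT)\otimes\R$, so that the bounds from $\Omega^{\bfP_0}_{c_1,c'}$ together with those from $\bfG(\A)^\circ$ genuinely pin down the torus component up to $\bfT(k)$ and a finite correction, and one must keep track of the finite class group introduced by the isogenies.
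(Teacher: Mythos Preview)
Your proof is correct, but both directions take a different route from the paper.

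\textbf{Necessity.} Your argument is cleaner than the paper's. The paper argues by contradiction: given a sequence $g_n$ with $\nu_i(\bfP_n,{}^{g_n}C)\to\infty$, it invokes the Second Fundamental Theorem (Theorem~\ref{thm:uniqueness}) to force the maximal parabolics $(\bfP_n)_i$ to stabilize to a single $\bfQ$, then shows $\pi(\bfQ,{}^{g_n}C)\to\infty$ via \eqref{eq:dual_to_horofunction} and derives a contradiction from convergence of $g_n$. Your direct Lipschitz bound on the $\mu_i^{\bfP'}$ (uniform in $\bfP'$ by $\bfG(k)$-transitivity on vertex types and invariance of $\pi$) avoids the uniqueness theorem entirely and gives the upper bound $c'$ in one stroke. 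This is a genuine simplification.

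\textbf{Sufficiency.} Here the two arguments are equivalent in depth but organized differently. The paper reduces everything to a single black box: Lemma~\ref{lem:godement_translation} shows that $\Omega^{\PGod}_{c_1,c'}$ is sandwiched between truncated saturated Siegel sets $(\OmegaGod^{\PGod}_{d_2,d_1})^{-1}$ in Godement's sense, and Corollary~\ref{RelCompact} then quotes the fact that these are relatively compact modulo $\PGod(k)$, which is immediate from Godement's setup. You instead unpack what lies behind that fact: the Levi decomposition, anisotropy of $\mathscr{D}(\bfM_0)$, compactness of $\bfU_0(k)\backslash\bfU_0(\A)$, and Fujisaki for the torus. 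Your identification of the spanning issue in step~(c) is exactly right; the roots span the semisimple part of $\X[k](\bfT)\otimes\Q$ and $\X[k](\bfG)$ supplies the central part, so together with the $\bfG(\A)^\circ$ constraint they do pin down the $\bfT$-component modulo $\bfT(k)$ and a compact set. Both approaches ultimately rest on the same inputs; the paper's has the virtue of making the dependence on \cite{godement64} explicit and short, while yours makes the structural mechanism visible.
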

While all three theorems hold in arbitrary characteristic, there is so far no characteristic-free proof known. Adelic reduction theory in positive characteristic was developed by Harder \cite{harder67,harder68,harder69,harder77}; the above geometric formulation of the three main theorems was given by Bux, K\"ohl and Witzel in \cite[Section~11]{bux13}, hence there is nothing more to say for us.

Adelic reduction theory in characteristic $0$ was developed by Godement \cite{godement64}. In the next subsection we derive the above three main theorems from Godement's theory, following \cite{niesdroy15}. Our approach is in close analogy with \cite[Section~11]{bux13}.

\subsection{Proof of the fundamental theorems in characteristic zero}

From now on we assume that $k$ is of characteristic $0$, i.e.\ a global number field. The sets $\Omega^{\bfP}_c$ and $\Omega^{\bfP}_{c_1, c_2}$ are defined as in the previous section. Note that by Lemma \ref{lem:nu_trafo}.(ii) we have for every $\gamma \in \bfG(k)$ the identities 
\begin{equation}\label{OmegaEquivariance}
\gamma^{-1} \Omega^\bfP_{c_1} = \Omega^{{}^\gamma \bfP}_{c_1} \quad \text{and} \quad \gamma^{-1} \Omega^\bfP_{c_1,c_2} = \Omega^{{}^\gamma \bfP}_{c_1,c_2}
\end{equation} 
In order to deduce the three main theorems from \cite{godement64} we need to briefly introduce the setup of this article. Following Godement (albeit with slightly different notation) we fix a minimal parabolic $\PGod$ and a maximal $k$-split torus $\TGod \subseteq \PGod$ and then choose a compact subset $M \subseteq \bfG(A)$ and an open relatively compact subset $F \subseteq \PGod(\A)^\circ$ such that $\bfG(\A) = M \PGod(\A)$ and $\PGod(\A)^\circ = F \PGod(k)$. Godement shows that $\PGod(\A) = \TGod_{V\inf}^+ \PGod(\A)^\circ$, where $\TGod_{V\inf}^+$ denotes the identity-component of $\TGod_{V\inf}$. 

We now denote by $Z\defeq\{\zeta_1,\ldots,\zeta_m\}$ a set of representatives for the finitely many double cosets in $\PGod(\A)\backslash \bfG(\A)/C$. Then $\bfG(\A) = C Z^{-1} \PGod(\A)$, hence we may (and will) decide to choose $M\defeq CZ^{-1}$. Combining all of these decompositions we see that
\begin{equation}\label{GADecompositions}
    \bfG(\A) 
    = C Z^{-1}\TGod_{V\inf}^+F \PGod(k) 
    = \PGod(k)F^{-1}\TGod_{V\inf}^+Z C.
\end{equation}
In particular, every $g \in \bfG(\A)$ can be written as
\begin{equation}\label{gDecomp}
  g = \gamma f^{-1}t\zeta_jx \quad \text{with} \; \gamma \in \PGod(k), f \in F, t \in \TGod_{V\inf}^+, 1\leq j \leq m, x\in C.
\end{equation}
In \cite[Section 10]{godement64}, Godement constructs Siegel sets for the group $\bfG(k)$ in $\bfG(\A)$. For every $d>0$ he first defines a subset
\[
T_\infty^+(d) = \{t \in \TGod_{V\inf}^+ \mid \abs{\alpha_i(t)} < d \text{ for all }i\in \{1, \dots, r\}\} \subseteq \TGod_{V\inf}^+;
\]
here we will also need the similarly defined subsets
\[
T_\infty^+(d_2, d_1) = \{t \in \TGod_{V\inf}^+ \mid d_2 < \abs{\alpha_i(t)} < d_1 \text{ for all }i\in \{1, \dots, r\}\}.
\]
He then defines for every $d>0$ the corresponding \emph{Siegel set} $CZ^{-1}T_\infty^+(d) F$. We are going to work with the $\PGod(k)$-saturations of these Siegel sets and their truncated versions, i.e.
\[
\OmegaGod_d^{\PGod} = CZ^{-1} T_\infty^+(d) F \PGod(k) \quad \text{and} \quad \OmegaGod_{d',d}^{\PGod}\defeq CZ^{-1} T_\infty^+(d',d) F \PGod(k).
\]
By the following lemma the filtrations by these sets are essentially equivalent to the filtrations by our sets $\Omega_c^\bfP$ and $\Omega_{c,c'}^{\bfP}$, except for a difference in parametrization and passing to inverses. In fact, the parametrizations are opposite in the sense that increasing the parameter $c$ increases the sets  $\OmegaGod_c^{\PGod}$ and decreases our sets $\Omega_c^\bfP$.
\begin{lemma}\label{lem:godement_translation}
For every $c>0$ and every $d>0$ there is a $c'>0$ and a $d'>0$ such that
\[
\Omega_c^{\PGod} \subseteq (\OmegaGod_{d'}^{\PGod})^{-1}\quad\text{and}\quad\OmegaGod_{d}^{\PGod}\subseteq (\Omega_{c'}^{\PGod})^{-1}.
\]
\end{lemma}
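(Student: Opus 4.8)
The plan is to unwind both sides of each inclusion and reduce everything to a statement about the split torus $\TGod_{V\inf}^+$ and its action on the relatively compact pieces $M = CZ^{-1}$, $F$. Recall that $\OmegaGod_d^{\PGod} = CZ^{-1}T_\infty^+(d)F\PGod(k)$, so that $(\OmegaGod_d^{\PGod})^{-1} = \PGod(k)F^{-1}T_\infty^+(d)^{-1}ZC$; by \eqref{GADecompositions} every $g \in \bfG(\A)$ admits a decomposition $g = \gamma f^{-1} t \zeta_j x$ as in \eqref{gDecomp}. The core observation is that the value $\nu_i(\bfP,{}^gC)$ — or rather, using the equivariance \eqref{OmegaEquivariance} with $\bfP = {}^{\gamma^{-1}}\PGod$ — is governed up to bounded multiplicative error by $\abs{\alpha_i^{\PGod}(t)}$. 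Concretely, by \eqref{item:nu_trafo_global} of Lemma~\ref{lem:nu_trafo} we may absorb $\gamma$, and then $\nu_i(\PGod, {}^{f^{-1}t\zeta_j x}C)$ is computed by pushing $f^{-1}$ and $t$ through using \eqref{item:nu_trafo_parabolic} (both lie in $\PGod(\A)$) while the remaining factor ${}^{\zeta_j x}C$ ranges over a finite $C$-stable set, on which $\nu_i$ takes finitely many values. Thus $\nu_i(\PGod,{}^gC) = \abs{\alpha_i^{\PGod}(f^{-1})}\cdot\abs{\alpha_i^{\PGod}(t)}\cdot\nu_i(\PGod,{}^{\zeta_j x}C)$, and since $F$ is relatively compact the factor $\abs{\alpha_i^{\PGod}(f^{-1})}$ is bounded above and below, as is $\nu_i(\PGod,{}^{\zeta_j x}C)$.

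With this dictionary in hand, both inclusions become elementary. For $\Omega_c^{\PGod} \subseteq (\OmegaGod_{d'}^{\PGod})^{-1}$: take $g \in \Omega_c^{\PGod}$, so $\nu_i(\PGod,{}^gC) \ge c$ for all $i$. Write $g^{-1} = \gamma f^{-1} t \zeta_j x$ via \eqref{GADecompositions} (note we decompose $g^{-1}$, since the target set is a set of inverses); equivalently decompose $g$ and invert. The bound $\nu_i(\PGod,{}^gC)\ge c$ translates into an upper bound $\abs{\alpha_i^{\PGod}(t)} < d'$ for a $d'$ depending only on $c$ and on the uniform bounds coming from $F$ and from the finite set of values $\nu_i(\PGod,{}^{\zeta_j x}C)$. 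Hence $t \in T_\infty^+(d')$ and $g^{-1} \in CZ^{-1}T_\infty^+(d')F\PGod(k) = \OmegaGod_{d'}^{\PGod}$, i.e.\ $g \in (\OmegaGod_{d'}^{\PGod})^{-1}$. The reverse inclusion $\OmegaGod_d^{\PGod} \subseteq (\Omega_{c'}^{\PGod})^{-1}$ is the same computation run backwards: membership in $\OmegaGod_d^{\PGod}$ gives $\abs{\alpha_i^{\PGod}(t)} < d$, which forces $\nu_i(\PGod,{}^{g^{-1}}C) \ge c'$ for a suitable $c' = c'(d) > 0$, i.e.\ $g^{-1} \in \Omega_{c'}^{\PGod}$.

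The one point requiring genuine care — and the step I expect to be the main obstacle — is the precise relationship between the roots $\alpha_i^{\PGod}$ appearing in Godement's sets $T_\infty^+(d)$ and the dual invariant horofunctions $\nu_i(\PGod,\cdot)$, together with the fact that $\nu_i$ is only controlled at the \emph{infinite} places by the torus coordinate $t$, whereas at finite places the coset ${}^{\zeta_j x}C$ must contribute only boundedly. This is exactly the content of Lemma~\ref{lem:nu_trafo}\eqref{item:nu_trafo_parabolic}: since $t \in \TGod_{V\inf}^+ \subseteq \PGod(\A)$, one has $\nu_i(\PGod,{}^{t\kappa}C) = \abs{\alpha_i^{\PGod}(t)}\cdot\nu_i(\PGod,{}^\kappa C)$, and $\abs{\alpha_i^{\PGod}(t)} = \prod_{s\in V\inf}\abs{\alpha_i^{\PGod}(t_s)}_s$ since $t$ is supported on the infinite places; this is precisely Godement's quantity up to normalization of absolute values. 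Once one checks that the normalization of $\abs{\cdot}$ used to define $\nu_i$ (via the invariant horofunction) agrees up to a fixed positive power with the absolute values Godement uses in the definition of $T_\infty^+(d)$, the translation of bounds is immediate and the constants $c', d'$ can be written down explicitly, though we will not need their exact form.
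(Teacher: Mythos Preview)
Your proposal is correct and follows essentially the same approach as the paper: the key identity you derive in the first paragraph is precisely the paper's Lemma~\ref{lem:nu_alpha}, and both inclusions follow from it exactly as you outline. One minor note: for the first inclusion the clean execution is to decompose $g$ (not $g^{-1}$) via \eqref{gDecomp} and then invert --- as you yourself parenthetically suggest and as the paper does --- since your formula controls $\nu_i(\PGod,{}^gC)$ in terms of the decomposition of $g$; your worry in the last paragraph about normalization of absolute values is unnecessary, as both sides use the same idele norm.
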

To establish the lemma we need to estimate each dual invariant horofunctions $\nu_i$ in terms of the corresponding simple root $\alpha_i$. To this end we fix $\varepsilon, \delta >0$ such that
\begin{equation}\label{DefEpsDelta}
1/\varepsilon < \nu_i(\PGod, {}^{\zeta_j}C)
< \varepsilon \quad \text{and}\quad 1/\delta < \abs{\alpha_i(f)} < \delta
\end{equation}
for all $i \in \{1, \dots, r\}$, $j \in \{1, \dots, m\}$ and $f \in F$ and compute:
\begin{lemma}\label{lem:nu_alpha}
If $g \in \bfG(\A)$ is decomposed as in \eqref{gDecomp} then for every $i \in \{1,\dots, r\}$ we have
\[
\delta^{-1}\varepsilon^{-1} \abs{\alpha_i(t)} < \nu_i(\PGod,{}^gC) < \delta\varepsilon\abs{\alpha_i(t)}.
\]
\end{lemma}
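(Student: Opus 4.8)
The plan is to trace the decomposition \eqref{gDecomp} through the two transformation rules for the dual invariant horofunctions recorded in Lemma~\ref{lem:nu_trafo} and then to insert the crude bounds of \eqref{DefEpsDelta}. Fix $i$ and write $g = \gamma f^{-1} t \zeta_j x$ as in \eqref{gDecomp}, so that $\gamma \in \PGod(k)$, $f \in F \subseteq \PGod(\A)^\circ$, $t \in \TGod_{V\inf}^+$ and $x \in C$. Since $x \in C$ we have ${}^xC = C$, hence ${}^gC = {}^{\gamma f^{-1}t\zeta_j}C$ and the factor $x$ drops out entirely.

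Next I would peel off $\gamma$ and $f^{-1}t$ in turn. As $\gamma \in \PGod(k) \subseteq \bfG(k)$ and minimal parabolics are self-normalizing, ${}^\gamma\PGod = \PGod$; so Lemma~\ref{lem:nu_trafo}(\ref{item:nu_trafo_global}) applied with $\bfP = \PGod$ gives $\nu_i(\PGod, {}^\gamma L) = \nu_i({}^\gamma\PGod, {}^\gamma L) = \nu_i(\PGod, L)$, whence $\nu_i(\PGod, {}^gC) = \nu_i(\PGod, {}^{f^{-1}t\zeta_j}C)$. Now $f \in \PGod(\A)$ and $t \in \TGod_{V\inf}^+ \subseteq \TGod(\A) \subseteq \PGod(\A)$, so $f^{-1}t \in \PGod(\A)$; this is exactly the situation covered by Lemma~\ref{lem:nu_trafo}(\ref{item:nu_trafo_parabolic}) --- note that it is essential here that $\PGod$ is the \emph{minimal} parabolic. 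Applying it, and using that $\alpha_i$ is a character and the idele norm is multiplicative, yields
\[
\nu_i(\PGod, {}^gC) = \abs{\alpha_i(f^{-1}t)} \cdot \nu_i(\PGod, {}^{\zeta_j}C) = \abs{\alpha_i(f)}^{-1}\abs{\alpha_i(t)} \cdot \nu_i(\PGod, {}^{\zeta_j}C).
\]

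Finally I would insert the bounds of \eqref{DefEpsDelta}: since $1/\varepsilon < \nu_i(\PGod, {}^{\zeta_j}C) < \varepsilon$ and $1/\delta < \abs{\alpha_i(f)} < \delta$ (hence also $1/\delta < \abs{\alpha_i(f)}^{-1} < \delta$), multiplying these inequalities gives $\delta^{-1}\varepsilon^{-1}\abs{\alpha_i(t)} < \nu_i(\PGod, {}^gC) < \delta\varepsilon\abs{\alpha_i(t)}$, which is the assertion. There is no genuine obstacle in this argument; the only points deserving a line of justification are that the $C$-factor $x$ can be discarded because ${}^xC = C$, that $\gamma$ can be discarded because it normalizes $\PGod$, and that $f^{-1}t$ lies in $\PGod(\A)$ so that the root-transformation rule Lemma~\ref{lem:nu_trafo}(\ref{item:nu_trafo_parabolic}) is actually applicable.
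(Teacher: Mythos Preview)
Your proof is correct and follows essentially the same route as the paper: drop $x$ since it normalizes $C$, drop $\gamma$ since it normalizes $\PGod$ (via Lemma~\ref{lem:nu_trafo}\eqref{item:nu_trafo_global}), apply Lemma~\ref{lem:nu_trafo}\eqref{item:nu_trafo_parabolic} to $f^{-1}t \in \PGod(\A)$, and then insert the bounds from \eqref{DefEpsDelta}. The paper's version is just a terser presentation of exactly these steps.
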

\begin{proof} Since $\gamma$ normalizes $\PGod$ and $x$ normalizes $C$ we deduce with Lemma \ref{lem:nu_trafo} that
\[
\nu_i({}\PGod,{}^gC) 
= \nu_i(\PGod,{}^{f^{-1}t\zeta_j}C)
= \abs{\alpha_i(f^{-1})}\abs{\alpha_i(t)}\nu_i(\PGod,{}^{\zeta_j}C)\text{.}
\]
The claim now follows from \eqref{DefEpsDelta}.
\end{proof}
\begin{proof}[Proof of Lemma \ref{lem:godement_translation}] 
Assume first that $g \in \Omega_c^{\PGod}$ so that $\nu_i(\PGod, {}^gC)\geq c$
and decompose $g$ as in \eqref{gDecomp}. By Lemma \ref{lem:nu_alpha} we then have
\[
|\alpha_i(t)| > \delta^{-1}\varepsilon^{-1} \nu_i(\PGod,{}^gC) \geq  \delta^{-1}\varepsilon^{-1}c  \implies |\alpha_i(t^{-1})| < \delta\varepsilon c^{-1} \defeq d'.
\]
This shows that $t^{-1} \in T^+_\infty(d')$ and hence $g^{-1} = x^{-1}\zeta_j^{-1}t^{-1}f\gamma^{-1} \in \OmegaGod_{d'}^{\PGod}$.

Conversely assume that $g^{-1}\in \OmegaGod_{d'}^{\PGod}$ and decompose $g$ as in \eqref{gDecomp}. Then $t^{-1} \in T^+_\infty(d)$, i.e. $\abs{\alpha_i(t)}>d^{-1}$,
and hence by Lemma \ref{lem:nu_alpha} we have
\[
\nu_i(\PGod,{}^gC) \geq \delta^{-1}\varepsilon^{-1} \abs{\alpha_i(t)} > \delta^{-1}\varepsilon^{-1}d^{-1} \eqdef c'.
\]
This shows that $g \in \Omega_{c'}^{\PGod}$ and finishes the proof.
\end{proof}
The same argument also shows that for all $\infty > c_2 > c_1$ there exists $0 < d_2 < d_1$ such that
\[
\Omega^{\PGod}_{c_1,c_2} \subseteq (\OmegaGod^{\PGod}_{d_2, d_1})^{-1},
\]
and since $\OmegaGod^{\PGod}_{d_1, d_2}$ is relatively compact modulo $\PGod(k)$ we may record:
\begin{corollary}\label{RelCompact}
The image of $\Omega^{\PGod}_{c_1,c_2}$ in $\PGod(k)\backslash \bfG(\A)$ is relatively compact.\qed
\end{corollary}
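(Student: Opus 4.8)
The plan is to read this off the two-sided analogue of Lemma~\ref{lem:godement_translation} recorded in the sentence preceding the statement. That analogue furnishes, for the given $c_1 < c_2$, constants $0 < d_2 < d_1$ with
\[
\Omega^{\PGod}_{c_1,c_2} \subseteq \bigl(\OmegaGod^{\PGod}_{d_2,d_1}\bigr)^{-1},
\]
so it suffices to show that $\bigl(\OmegaGod^{\PGod}_{d_2,d_1}\bigr)^{-1}$, equivalently $\OmegaGod^{\PGod}_{d_2,d_1}$, is a $\PGod(k)$-translate of a relatively compact subset of $\bfG(\A)$. Here I would simply unravel the definition: $\OmegaGod^{\PGod}_{d_2,d_1} = C Z^{-1}\, T_\infty^+(d_2,d_1)\, F\, \PGod(k)$, and the set $L_0 \defeq C Z^{-1}\, T_\infty^+(d_2,d_1)\, F$ is relatively compact in $\bfG(\A)$ by Godement's construction \cite[Section~10]{godement64}: $C$ is compact, $Z$ is finite, $F$ was chosen relatively compact, and $T_\infty^+(d_2,d_1)$ is a relatively compact slice of $\TGod_{V\inf}^+$. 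Taking inverses, $\bigl(\OmegaGod^{\PGod}_{d_2,d_1}\bigr)^{-1} = \PGod(k)\, L_0^{-1}$ with $L_0^{-1} = F^{-1}\, T_\infty^+(d_2,d_1)^{-1}\, Z\, C$ again relatively compact.

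It then remains to conclude. From $\Omega^{\PGod}_{c_1,c_2} \subseteq \PGod(k)\, L_0^{-1}$ the image of $\Omega^{\PGod}_{c_1,c_2}$ in $\PGod(k)\backslash\bfG(\A)$ is contained in the image of $L_0^{-1}$. Since $\PGod(k)$ is a subgroup of the discrete subgroup $\bfG(k) < \bfG(\A)$ it is itself discrete, hence closed, so $\PGod(k)\backslash\bfG(\A)$ is a locally compact Hausdorff space; the image of the compact set $\overline{L_0^{-1}}$ is then compact, and the image of $\Omega^{\PGod}_{c_1,c_2}$, being a subset of it, has compact closure.

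I do not expect a genuine obstacle: all of the analytic work is already done in Lemma~\ref{lem:nu_alpha} and Lemma~\ref{lem:godement_translation}, and what is left is essentially formal. The two points that merit attention are the bookkeeping of left- versus right-$\PGod(k)$-cosets (and the corresponding passage to inverses) and the invocation of Godement \cite{godement64} for the relative compactness of the truncated Siegel set $\OmegaGod^{\PGod}_{d_2,d_1}$ modulo $\PGod(k)$; since this is precisely how those sets are built, it causes no trouble.
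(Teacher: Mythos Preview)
Your argument follows the paper's: both use the inclusion $\Omega^{\PGod}_{c_1,c_2} \subseteq (\OmegaGod^{\PGod}_{d_2,d_1})^{-1}$ together with the relative compactness of $\OmegaGod^{\PGod}_{d_2,d_1}$ modulo $\PGod(k)$, which the paper simply records from Godement without unpacking.

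One caveat in your unpacking: the claim that $T_\infty^+(d_2,d_1)$, and hence $L_0$, is relatively compact in $\bfG(\A)$ is not correct when $k$ has more than one Archimedean place. The conditions $d_2 < \abs{\alpha_i(t)} < d_1$ bound only the idele norm $\prod_{s \in V\inf}\abs{\alpha_i(t_s)}_s$, not the individual factors, so $T_\infty^+(d_2,d_1)$ contains the (non-compact) subgroup $\TGod_{V\inf}^+ \cap \PGod(\A)^\circ$. The fix is immediate: that subgroup is contained in $\PGod(\A)^\circ = F\,\PGod(k)$ and is therefore absorbed by the right $\PGod(k)$-factor, so $\OmegaGod^{\PGod}_{d_2,d_1} = L_0\,\PGod(k)$ is still of the form $(\text{relatively compact})\cdot\PGod(k)$ even though $L_0$ itself need not be relatively compact. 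With this adjustment your argument goes through and coincides with the paper's.
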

We can now derive the first fundamental theorem from Godement's work:
\begin{proof}[Proof of Theorem~\ref{thm:lower}]
The statement of \cite[Théorème~7]{godement64} is that there exists a $d$ such that $\bfG(\A) = \OmegaGod_d^{\PGod} \bfG(k)$. With Lemma~\ref{lem:godement_translation} and \eqref{OmegaEquivariance} we get
\[
\bfG(\A) = \bfG(k) (\OmegaGod_d^{\PGod})^{-1} \subseteq \bfG(k) \Omega_{c'}^{\PGod} = \bigcup_{\gamma \in \bfG(k)} \Omega_{c'}^{{}^\gamma \PGod}=\bigcup_{\bfP \in \mathcal C(\Delta)} \Omega_{c'}^\bfP,
\]
for some $c'>0$, and hence we can choose $c_1 \defeq c'$.
\end{proof}
The second fundamental theorem of reduction theory is essentially \cite[Lemme~3]{godement64}. We reproduce the statement here (in our notation) for cleaner reference. 
\begin{lemma}\label{lem:godement}
For every $d_1 \in \R^\times$ there is a $d_2 \in \R^\times$ such that the following holds.
Let $m,m' \in CZ^{-1}$, $t,t' \in T_\infty^+(d_1)^{-1}$, and $f,f' \in F$ be such that $(m'(t')^{-1}f')^{-1}(mt^{-1}f) = \beta \in \bfG(k)$ and write $\beta = (\pi')^{-1} w \pi$ with $\pi,\pi' \in \bfP(k)$ and $w \in N(\bfT)(k)$ according to the Bruhat decomposition and let $i \in \{1, \dots, r\}$. Then $\abs{\alpha_j(t)} \le d_2$ implies $w \in \bfP_i(k)$.
\end{lemma}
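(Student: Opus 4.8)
The statement is a reformulation, in the geometric language developed above, of Godement's Lemme~3 in \cite{godement64} (it is the key step toward Theorem~\ref{thm:uniqueness}, the Second Fundamental Theorem). Accordingly the plan is to reduce it to Godement's lemma and to verify the translation, the only genuinely new ingredient being the comparison of the dual invariant horofunctions with the simple roots (Lemma~\ref{lem:nu_trafo}, Lemma~\ref{lem:nu_alpha}). First I would make the hypothesis explicit: writing $m = cz^{-1}$, $m' = c'(z')^{-1}$ with $c,c' \in C$, $z,z' \in Z$ and using $C = C^{-1}$, one gets
\[
\beta = (m'(t')^{-1}f')^{-1}(mt^{-1}f) = (f')^{-1}\, t'\, \bigl(z'(c')^{-1}cz^{-1}\bigr)\, t^{-1}\, f ,
\]
so $\beta$ lies in $F^{-1}\bigl(T_\infty^+(d_1)\bigr)^{-1}\,\Xi\,T_\infty^+(d_1)\,F$, where $\Xi \defeq \bigcup_{z,z' \in Z} z'Cz^{-1}$ is a fixed compact subset of $\bfG(\A)$, $F$ is relatively compact in $\PGod(\A)^\circ$, and the two torus factors are constrained by $d_1$. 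This is exactly the configuration treated by Godement: $\beta$ is sandwiched between fixed bounded data and a single Archimedean torus part.

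The mechanism is then standard. One takes the Bruhat decomposition $\beta = (\pi')^{-1}w\pi$ with respect to $(\PGod,\TGod)$ and tests it, for each $i$, in a rational representation $(\rho_i, V_i)$ whose highest weight is a positive multiple of the canonical character $\chi^{\bfP_i}$ and whose highest-weight line $[v_i]$ is stabilised by $\bfP_i$, with $\PGod$ acting on it through $\chi^{\bfP_i}$. Using that $\pi, \pi' \in \PGod(k) \subseteq \bfP_i(k)$ act on $[v_i]$ via $\chi^{\bfP_i}$, that $\rho_i(F)v_i$ is bounded (since $|\chi^{\bfP_i}| \equiv 1$ on $\PGod(\A)^\circ$), and that $\rho_i(\Xi)$ is bounded (as $\Xi$ is compact), one estimates a height of the vector $\rho_i(\beta)v_i$ in terms of $|\alpha_i(t)|$, $|\alpha_i(t')|$ and bounded quantities, with a contribution from the weight line $[v_i]$ present exactly when $w$ fixes $[v_i]$, i.e.\ exactly when $w \in \bfP_i(k)$. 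Since $\beta \in \bfG(k)$, the coordinates of $\rho_i(\beta)v_i$ lie in $k$, so by the product formula (Proposition~\ref{prop:product_formula}) they obey the familiar dichotomy ``a coordinate is either $0$ or of adelic norm $\ge 1$''; if $w \notin \bfP_i(k)$ this is incompatible with the boundedness of $t$, $t'$, $\Xi$ and $F$ unless $|\alpha_i(t)|$ exceeds a threshold depending only on $d_1$, $\Xi$ and $F$, so taking $d_2$ below that threshold yields the contrapositive. (The representations $\rho_i$ and the relevant norm are available over a number field exactly as Godement sets them up; this is the only point where $\chr(k)=0$ enters.)

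I expect the main obstacle to lie entirely inside Godement's argument: the representation-theoretic bookkeeping that turns ``$w$ moves the type-$i$ vertex'' into a genuine size obstruction. A single matrix coefficient of $\rho_i(\beta)$ along $[v_i]$ does not suffice, since the unipotent parts of $\pi$ and $\pi'$ (which lie in $R_u(\PGod)$) can regenerate a highest-weight component after $w$ has moved the line; one must instead work with a suitable height, or pass to the lowest-weight covector, equivalently to the contragredient $V_i^*$. As this is carried out in \cite{godement64}, on our side only the translation remains: by \eqref{DefEpsDelta} and Lemma~\ref{lem:nu_alpha} the dual invariant horofunctions $\nu_i$ and the root values $\abs{\alpha_i(\cdot)}$ differ by the bounded multiplicative constants $\varepsilon,\delta$, and by Lemma~\ref{obs:c_c'} replacing the $C'_s$ by the $C_s$ is harmless; both only affect the eventual value of $d_2$.
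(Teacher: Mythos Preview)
Your identification is correct, and in fact the paper gives no proof at all: it simply states Lemma~\ref{lem:godement} as a restatement of \cite[Lemme~3]{godement64} ``for cleaner reference'' and then immediately uses it in the proof of Theorem~\ref{thm:uniqueness}. Your sketch of the underlying Godement argument (representation with highest weight a multiple of $\chi^{\bfP_i}$, product formula on the matrix coefficients, contrapositive yielding the threshold $d_2$) is the standard mechanism and is accurate.

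One small remark: the translation via $\nu_i$, Lemma~\ref{lem:nu_alpha}, and the constants $\varepsilon,\delta$ from \eqref{DefEpsDelta} is not needed for the lemma itself, since the statement is already phrased entirely in Godement's coordinates $(t,t',F,Z,C)$; that translation is what the paper performs \emph{afterward}, in deriving Theorem~\ref{thm:uniqueness} from Lemma~\ref{lem:godement}. So your final paragraph, while correct, belongs to the application rather than to the proof of the lemma.
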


\begin{proof}[Proof of Theorem~\ref{thm:uniqueness}]
Since $\bfG(k)$ is transitive on minimal parabolics, we may assume that $\bfP = \PGod$ and $\bfP' = {}^\lambda \PGod$ for some $\lambda \in \bfG(k)$. Assume this from now on and let $g \in \Omega_{c_1}^\bfP \cap \Omega_{c_1}^{\bfP'}$. By Lemma~\ref{lem:godement_translation} and \eqref{OmegaEquivariance} there then exists $d_1' >0$ such that $g^{-1} \in \OmegaGod^{\PGod}_{d_1} \cap \lambda^{-1} \OmegaGod^{\PGod}_{d_1}$. Thus if we decompose $g$ as in \eqref{gDecomp}, i.e. $g=\gamma f^{-1}t\zeta_jx$ with $t^{-1}\in T^+_\infty(d_1)$. Applying the same argument to $\lambda g$ we obtain a similar decomposition of the form $\lambda g = \gamma' (f')^{-1}t'\zeta_{j'}x'$ with $(t')^{-1} \in  T^+_\infty(d_1)$. If we now set $m\defeq (\zeta_j x)^{-1}$ and $m' \defeq (\zeta_j'x')^{-1}$ then we obtain
\[
(m'(t')^{-1}f')^{-1}(mt^{-1}f) = (\gamma')^{-1}\lambda g g^{-1} \gamma= (\gamma')^{-1}\lambda \gamma =\colon \beta\in \bfG(k).
\]
Now let $d_2$ be the constant from Lemma \ref{lem:godement} (which depends only on $d_1$, and hence only on $c_1$) and set $c_2 \defeq d_2/\delta \varepsilon$. If we assume that $\nu_i(\bfP,{}^gC) \ge c_2$ then by Lemma \ref{lem:nu_alpha} we have $\abs{\alpha_i(t)} \leq \delta \varepsilon c_2 = d_2$, and hence we may apply Lemma \ref{lem:godement}. We then deduce that $\beta  \in \PGod_i(k)$ and since $\gamma, \gamma' \in \PGod(k)$ and $\beta = (\gamma')^{-1}\lambda \gamma$ we obtain $\lambda \in \PGod_i(k)$. This implies that
\[
\bfP_i = \PGod_i = {}^\lambda\PGod_i = ({}^\lambda\PGod)_i = \bfP'_i.\qedhere
\]
\end{proof}

\begin{proof}[Proof of Theorem~\ref{thm:mahler}] The set $\Omega_{c_1,c'}^{\PGod}$ is relatively compact in $\bfG(k)\backslash \bfG(\A)$ by Corollary \ref{RelCompact}. Now by \eqref{OmegaEquivariance} the assumption on $A$ amounts to
\[
A \subseteq \bigcup_{\bfP} \Omega_{c_1,c'}^\bfP = \bigcup_{\gamma \in \bfG(k)} \Omega_{c_1,c'}^{{}^\gamma\PGod} = \bigcup_{\gamma \in \bfG(k)}\gamma^{-1} \Omega_{c_1,c'}^{{}\PGod} = \bfG(k)\Omega_{c_1,c'}^{\PGod},
\]
and hence shows that $A$ is relatively compact modulo $\bfG(k)$.

Conversely assume that $A \subseteq \bfG(\A)^\circ$ is relatively compact and let $(g_n)_{n \in \N}$ be a sequence in $\bfG(\A)$. Since $c_1$ is a lower reduction bound we can find a sequence $\bfP_n$ of minimal parabolic subgroups such that $g_n \in \Omega_{c_1}^{\bfP_n}$. We have to show that $\nu_i(\bfP_n, {}^{g_n}C) \leq c'$ for some uniform constant $c'$. Assume otherwise; passing to a subsequence we may assume that $g_n \to g \in \overline{A}$ and that $\nu_i(\bfP_n,{}^{g_n}C) \to \infty$ for some $i$. From the latter we deduce with Theorem~\ref{thm:uniqueness} that the sequence $(\bfP_n)_i$ eventually stabilizes to some maximal parabolic $\bfQ$ of type $i$, and we may assume that $(\bfP_n)_i = \bfQ$ for all $n \in \mathbb N$. Now observe that
\[
\pi(\bfQ,{}^{g_n}C) = \prod_{i=1}^r \nu_j((\bfP_n)_i,{}^{g_n}C)^{n_{ji}} \ge \nu_i((\bfP_n)_i,{}^{g_n}C)^{n_{ii}}
\]
using first \eqref{eq:dual_to_horofunction} and then Lemma~\ref{lem:root_weight}. It follows that $\pi(\bfQ,{}^{g_n}C)$ tends to infinity. Now let $\gamma \in \bfG(k)$ be such that ${}^{\gamma}\bfQ = \PGod_i$ and write $\gamma g_n = p_n \zeta_{j_n} x_n$ with $p_n \in \PGod(\A)$ and $x_n \in C$ according to (a coarser version of) \eqref{gDecomp}. Passing to a subsequence we may assume that $\zeta_{j_n} = \zeta_j$ is constant and that $x_n \to x \in C$. Then
\begin{align*}
\pi(\bfQ,{}^{g_n}C) = \pi(\PGod_i,{}^{\gamma g_n}C) = \pi(\PGod_i,{}^{p_n \zeta_{j} x_n}C) = \abs{\chi^{\PGod}_i(p_n)} \cdot \pi(\PGod_i,{}^{\zeta_{j}} C)\text{.}
\end{align*}
The second factor on the right is constant so it follows that $\abs{\chi^{\PGod}_i(p_n)}$ tends to infinity. But since $g_n$ converges to $g$, the sequence $p_n = \gamma^{-1} g_n x_n^{-1} \zeta_{j}^{-1}$ converges to $\gamma^{-1} g x^{-1}\zeta_{j}^{-1}$, a contradiction.
\end{proof}

\section{Reduction theory for $S$-arithmetic (approximate) groups}\label{sec:s-arithmetic_reduction}

We now want to pass from the adelic setting to the (approximate) $S$-adelic setting by a cut-and-project construction. More precisely, we will consider the following setting:
\begin{convention}\label{ConventionReductiveCAP}
\begin{enumerate}
\item $k$, $V$, $S$, $\bfG$ and $r$ are as in Convention \ref{SettingReductiveApprox}; as before we assume in addition that $\bfG$ is isotropic to avoid trivial cases;
\item $\pi$, $\nu_1, \dots, \nu_r$ are given as in Convention \ref{ConventionReductiveGroups};
\item $(G, H, \Gamma)$ is the generalized adelic cut-and-project scheme with parameters $(k, \bfG, S)$ and $\Lambda \defeq \bfG(\calO_S) = \Lambda(G, H, \Gamma, W)$ is the $S$-adic approximate group from Convention \ref{SettingReductiveApprox};
\item given a minimal $k$-parabolic $\bfP<\bfG$ and constants $c_2 > c_1 >0 $ we define subsets of $G$ by \[\Omega_{c_1}^{\bfP, S} \defeq \pi_G(\Omega_{c_1}^{\bfP} \cap (G \times \{e_H\})) \quad \text{and} \quad \Omega_{c_1, c_2}^{\bfP, S} \defeq \pi_G(\Omega_{c_1, c_2}^{\bfP} \cap (G \times \{e_H\}))\text{,}\] where $\Omega_{c_1}^{\bfP}$ and $\Omega_{c_1}^{\bfP}$ are defined by \eqref{OmegaPc1} and \eqref{OmegaPc1c2} respectively. 
\end{enumerate}
\end{convention}
Note that our notation suppresses the dependence of $\bfG(\calO_S)$ of the window $W$, but this will not cause any problems to us since we only care about the commensurability class of $\bfG(\calO_S)$ (and anyway a similar abuse of notation is already present in the group case, where $\bfG(\calO_S)$ depends on the choice of representation of $\bfG$). Technically also the sets $\Omega_{c_1}^{\bfP, S}$ and $\Omega_{c_1, c_2}^{\bfP, S}$ depend on various choices, but for the qualitative results that we are interested in, none of these choices will matter.

The $S$-adic versions of the two fundamental theorems of adelic reduction theory (Theorems \ref{thm:lower} and \ref{thm:uniqueness}) can be stated as follows:
\begin{theorem}[Fundamental theorems of $S$-adic reduction theory]\label{thm:lower_s} If $c_1>0$ is a lower reduction bound and $c_2 \geq c_1 > 0$ is an upper reduction bound for $c_1$ then the following hold:
\begin{enumerate}[(i)]
    \item For every $k$-minimal parabolic $\bfP$ there exist a finite set $F \subseteq \pi_S(\bfG(k))$ such that \[\bfG_S \subseteq \bfG(\calO_S)F\Omega^{\bfP, S}_{c_1}.\]
\item If $g \in \Omega_{c_1}^{\bfP, S} \cap \Omega_{c_1}^{\bfP', S}$ and $\nu_i(\bfP,{}^gC) \ge c_2$ then $\bfP_i = \bfP'_i$.
\end{enumerate}
\end{theorem}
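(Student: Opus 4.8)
The plan is to derive the two statements of Theorem~\ref{thm:lower_s} from their adelic counterparts (Theorems~\ref{thm:lower} and~\ref{thm:uniqueness}) by the descent principle (Proposition~\ref{prop:descent}), following exactly the philosophy of Godement's passage from the adelic to the $S$-adic picture, but recast in the geometric language of the $\Omega$-sets. Throughout, we use the splitting $\bfG(\A) \cong G \times H$ with $G = \bfG_S$, $H = \bfG_{V\setminus S}^\sharp$, $\Gamma = \bfG(k)$ sitting diagonally, and the approximate group $\Lambda = \bfG(\calO_S) = \Lambda(G,H,\Gamma,W)$. The key observation is that the dual invariant horofunctions $\nu_i(\bfP, {}^gC)$ depend on $g \in \bfG(\A)$ only through a bounded-distortion amount when $g$ is modified within $G \times W$: more precisely, because $\pi$ (hence each $\nu_i$) is an invariant horofunction and $W$ is compact, Corollary~\ref{cor_quasiinv1} (applied with $V_0 = V^{\inf}\setminus S$ and with the relatively compact windows $W_s$) gives a constant $B > 0$ with
\[
\tfrac{1}{B}\,\nu_i(\bfP,{}^{(g,e_H)}C) \;\le\; \nu_i(\bfP,{}^{(g,h)}C) \;\le\; B\,\nu_i(\bfP,{}^{(g,e_H)}C)
\]
for all $g \in G$, $h \in W$, $\bfP \in \mathcal C(\Delta_k)$, $i$. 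This is what lets the compact "error" in the descent be absorbed into a bounded rescaling of the reduction bounds.

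For part (i): Fix a minimal $k$-parabolic $\bfP$. Theorem~\ref{thm:lower} gives a lower reduction bound $c_1$ with $\bfG(\A) = \bigcup_{\bfP'} \Omega^{\bfP'}_{c_1} = \bfG(k)\,\Omega^{\bfP}_{c_1}$ (using \eqref{OmegaEquivariance} and transitivity of $\bfG(k)$ on minimal parabolics). Restricting to the slice $G \times \{e_H\}$: given $g \in G$, write $(g,e_H) = \gamma\, \omega$ with $\gamma \in \bfG(k)$ and $\omega \in \Omega^{\bfP}_{c_1}$. Then $\pi_H(\gamma^{-1}) = \pi_H(\omega)$ lies in the compact set $K \defeq \pi_H(\Omega^{\bfP}_{c_1,c'})$ after first using Mahler's criterion (Theorem~\ref{thm:mahler}) to replace $\Omega^{\bfP}_{c_1}$ by $\Omega^{\bfP}_{c_1,c'}$ on the relevant relatively compact piece — more carefully, since $(g,e_H)\in \bfG(\A)^\circ$ and we only need $g$ ranging over $G$, one covers $G\times\{e_H\}$ by a relatively compact-modulo-$\bfG(k)$ set and applies Mahler. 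Now apply Proposition~\ref{prop:descent} to the cut-and-project scheme $(G,H,\Gamma)$ with window $W$, with $I \defeq \{e_H\}$, $K$ as above, and $\Pi \defeq \{1_G\}$: this yields a finite set $F \subseteq \Gamma_G = \pi_S(\bfG(k))$ (Remark~\ref{rem:descent_detail}) with
\[
\Lambda F \;\supseteq\; \pi_G\bigl(\Gamma\,(\{1_G\}\times K) \cap (G\times\{e_H\})\bigr) \;\ni\; g.
\]
But the $\gamma$ witnessing membership can be arranged so that $\pi_G(\gamma)^{-1}g \in \Omega^{\bfP,S}_{c_1}$ by definition of the slice-projected $\Omega$-sets, up to the bounded rescaling from Corollary~\ref{cor_quasiinv1}; enlarging $c_1$ by the factor $B$ (or, more cleanly, noting that a lower reduction bound remains one after shrinking) we conclude $\bfG_S \subseteq \bfG(\calO_S)\,F\,\Omega^{\bfP,S}_{c_1}$.

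For part (ii): This is nearly formal. Suppose $g \in \Omega^{\bfP,S}_{c_1} \cap \Omega^{\bfP',S}_{c_1}$ with $\nu_i(\bfP,{}^gC) \ge c_2$. By definition of $\Omega^{\bfP,S}_{c_1}$ there is $\tilde g \in \Omega^{\bfP}_{c_1} \cap (G\times\{e_H\})$ with $\pi_G(\tilde g) = g$, and likewise $\tilde g' \in \Omega^{\bfP'}_{c_1} \cap (G\times\{e_H\})$ with $\pi_G(\tilde g') = g$; since the $H$-coordinate is $e_H$ in both cases, in fact $\tilde g = \tilde g' = (g,e_H) \in \bfG(\A)$, so $(g,e_H) \in \Omega^{\bfP}_{c_1} \cap \Omega^{\bfP'}_{c_1}$ as a genuine adelic point. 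Moreover $\nu_i(\bfP,{}^{(g,e_H)}C) = \nu_i(\bfP,{}^gC) \ge c_2$ (here I use that $\nu_i$ evaluated on the slice agrees with the $S$-adic value, which is how the $S$-adic $\nu_i$ in the statement is defined). Now apply the adelic Theorem~\ref{thm:uniqueness} with the same bounds $c_1, c_2$ to conclude $\bfP_i = \bfP'_i$.

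The main obstacle is bookkeeping in part (i): one must make sure the compact set $K$ fed into Proposition~\ref{prop:descent} is independent of $g$, which forces a preliminary application of Mahler's compactness criterion (Theorem~\ref{thm:mahler}) to bound the "$\gamma$-part" uniformly; and one must track that the bounded distortion of the $\nu_i$ under modification inside $G\times W$ (Corollary~\ref{cor_quasiinv1}) only costs a fixed rescaling of $c_1$, so that the output is still phrased in terms of a genuine lower reduction bound. Part (ii), by contrast, requires essentially no work beyond unwinding that a point of the slice $G\times\{e_H\}$ is literally an adelic point on which the adelic theorem applies verbatim.
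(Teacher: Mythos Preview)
Your treatment of part (ii) is correct and is exactly what the paper does: a point $g \in \Omega^{\bfP,S}_{c_1}$ is by definition the $G$-projection of $(g,e_H) \in \Omega^{\bfP}_{c_1}$, so the adelic uniqueness theorem (Theorem~\ref{thm:uniqueness}) applies verbatim. No descent, no Mahler, no quasi-invariance is needed here.

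Part (i), however, has a genuine gap. Two things go wrong:
\begin{itemize}
\item Applying Proposition~\ref{prop:descent} with $\Pi = \{1_G\}$ cannot produce a conclusion involving $\Omega^{\bfP,S}_{c_1}$: that choice yields at best $g \in \Lambda F$ whenever $(g,e_H) \in \Gamma(\{1_G\}\times K)$, and the set $\Omega^{\bfP,S}_{c_1}$ never appears on the right-hand side. The descent principle must be applied with $\Pi = \Omega^{\bfP,S}_{c_1}$ (this is precisely Corollary~\ref{cor:descent}(ii)), so that the output reads $\bfG(\calO_S) F\, \Omega^{\bfP,S}_{c_1}$.
\item The appeal to Mahler's criterion is both unnecessary and unjustified: $G \times \{e_H\}$ is not relatively compact modulo $\bfG(k)$ when $\bfG$ is $k$-isotropic, so Theorem~\ref{thm:mahler} does not apply to it, and in any case no compactness of the $H$-projection is needed.
\end{itemize}

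The paper's argument for (i) is much more direct. Writing $\bar\Omega \defeq \Omega^{\bfP,S}_{c_1}$, one shows $\Gamma(\bar\Omega \times W) = \bfG(\A)$: given $g \in \bfG(\A)$, the adelic covering $\bfG(\A)=\Gamma\,\Omega^{\bfP}_{c_1}$ gives $g=\gamma\omega$, and density of $\Gamma_H$ in $H$ provides $\gamma'\in\Gamma$ with $\pi_H(\gamma'\omega)\in W$, so $g=(\gamma\gamma'^{-1})(\gamma'\omega)$ lies in $\Gamma(\bar\Omega\times W)$. Intersecting with $G\times\{e_H\}$ and projecting, Corollary~\ref{cor:descent}(ii) then gives $G \subseteq \bfG(\calO_S)F\bar\Omega$. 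Note that Mahler is not invoked and Corollary~\ref{cor_quasiinv1} is not needed for this part; the only ingredients are the adelic covering, density of $\Gamma_H$, and the descent corollary with the correct choice of $\Pi$.
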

Mahler's compactness criterion (Theorem \ref{thm:mahler}) can also be formulated in the $S$-adic setting, at least in the case where $\X[k](\bfG) = 0$ (which is the relevant case for us later on).

 However, one runs into the technical issue that $\bfG(\calO_S) \subseteq G$ is not a subgroup, but only a subset, hence it does not make sense to talk about compactness modulo $\bfG(\calO_S)$. This issue can be resolved by formulating cocompactness in metric terms, using the fact that, given a discrete subgroup $\Lambda < G$, a subset $M \subseteq G$ has compact image in $\Lambda \backslash G$ iff it is at bounded distance from $\Lambda$ with respect to some (hence any) left-admissible pseudo-metric on $G$. 
\begin{theorem}[Mahler's compactness criterion, $S$-adic version]\label{thm:mahler_s}
Let $d_G$ be a left-admissible pseudo-metric on $G$ and let $c_1$ be a lower reduction bound. If $\X[k](\bfG) = 0$ then 
a subset $A \subseteq G$ has bounded distance from $\bfG(\calO_S)$ if and only if there is a $c' > c_1$ such that $A \subseteq \bigcup_{\bfP} \Omega^{\bfP,S}_{c_1,c'}$.
\end{theorem}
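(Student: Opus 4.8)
The plan is to deduce the $S$-adic Mahler criterion from the adelic Mahler criterion (Theorem~\ref{thm:mahler}) via the descent principle (Proposition~\ref{prop:descent}), running the translation in both directions. Recall that under the assumption $\X[k](\bfG) = 0$ we have $\bfG(\A)^\circ = \bfG(\A)$, so Theorem~\ref{thm:mahler} says that a subset of $\bfG(\A)$ is relatively compact modulo $\bfG(k)$ iff it is contained in $\bigcup_\bfP \Omega_{c_1,c'}^\bfP$ for some $c' > c_1$; note also that since $H = \bfG_{V\setminus S}^\sharp$ is a closed subgroup of the locally compact group $\bfG_{V\setminus S}$, a subset of $\bfG(\A) \cong G \times H$ is relatively compact iff both its projections are. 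Throughout, fix a left-admissible pseudo-metric $d_G$ on $G$; by \cite[Theorem~2.B.4]{CornulierDeLaHarpe16} and Lemma~\ref{ModelCoarseMetric} the notion of ``bounded distance from $\bfG(\calO_S)$'' is independent of this choice, and equivalently one can phrase it as having compact image in $\widetilde\Lambda^\infty \backslash (G \times H)$ after lifting.

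\textbf{Sufficiency.} Suppose $A \subseteq \bigcup_\bfP \Omega_{c_1,c'}^{\bfP,S}$ for some $c' > c_1$. By definition $\Omega_{c_1,c'}^{\bfP,S} = \pi_G(\Omega_{c_1,c'}^\bfP \cap (G \times \{e_H\}))$, so setting $K \defeq \{e_H\}$, $I \defeq W$ (a window in $H$), and taking $\Pi \defeq A$, Proposition~\ref{prop:descent} produces finite sets $E, F \subseteq \Gamma_G = \pi_G(\Gamma)$ with
\[
\bfG(\calO_S) F A \;\supseteq\; \pi_G\bigl(\Gamma(A \times \{e_H\}) \cap (G \times W)\bigr).
\]
I would first check that $A$ itself is contained in the right-hand side: an element $g \in A \cap \Omega_{c_1,c'}^{\bfP,S}$ lifts to $(g, e_H) \in \Omega_{c_1,c'}^\bfP$, and since $\Omega_{c_1,c'}^\bfP$ is relatively compact modulo $\bfG(k)$ by Theorem~\ref{thm:mahler}, one writes $(g,e_H) = \gamma \cdot (a, h)$ with $\gamma \in \bfG(k)$ and $(a,h)$ ranging in a fixed relatively compact transversal set; then $\pi_G(\gamma) \in \Gamma_G$ differs from $g$ by the relatively compact piece, and $\pi_H(\gamma) = h^{-1}$ lies in a fixed compact set. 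Here one must be slightly careful: I would instead apply Proposition~\ref{prop:descent} with $K$ chosen to be a compact transversal for the $\bfG(k)$-action covering $\bigcup_\bfP \Omega_{c_1,c'}^\bfP$ (which exists precisely by Theorem~\ref{thm:mahler}), so that the inclusion $A \subseteq \pi_G(\Gamma(A \times K) \cap (G \times W))$ holds tautologically. Then $A \subseteq \bfG(\calO_S) F A \cap E \bfG(\calO_S) A$ — but that is not yet a bounded-distance statement. The correct move is subtler: one takes $\Pi = \{1_G\}$ and $K$ the transversal, concluding $\pi_G(\Gamma(\{1_G\} \times K) \cap (G \times W))$ is finite and lies within bounded distance of $\bfG(\calO_S)$, then translates. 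I would organize this exactly as in the proof of Corollary~\ref{Commens}: the projection $\pi_G(\widetilde\Lambda^\infty \cap (G \times W'))$ for a slightly larger window $W'$ is, by Corollary~\ref{Commens}, commensurable to $\bfG(\calO_S)$, hence at bounded distance; and every point of $A$ lies within the fixed compact $d_G$-distance (the diameter of $\pi_G$ of the transversal) of such a point.

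\textbf{Necessity.} Conversely, suppose $A$ has bounded distance $R$ from $\bfG(\calO_S)$. Then the lift $\{(a, e_H) : a \in A\} \subseteq \bfG(\A)$ lies within bounded distance of $\widetilde\Lambda^\infty = \{(\gamma, \gamma^*) : \gamma \in \bfG(\calO_S)\}$ in the metric $\max\{d_G, d_H\}$ — more precisely, each $(a, e_H)$ is distance $\le R$ in $d_G$ from some $(\gamma, \gamma^*)$ with $\gamma \in \bfG(\calO_S)$, and $\gamma^* \in W$ by the defining property of $\bfG(\calO_S)$, so $d_H(e_H, \gamma^*)$ is bounded by $\mathrm{diam}(W)$. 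Hence $\{(a,e_H)\}$ lies within a bounded neighborhood $N$ of $\widetilde\Lambda^\infty \subseteq \bfG(k)$, and $N$ can be taken of the form $\widetilde\Lambda^\infty \cdot (B_R^{d_G} \times W)$; since $\widetilde\Lambda^\infty \subseteq \bfG(k)$ and $B_R^{d_G} \times W$ is relatively compact, the image of $\{(a,e_H)\}$ in $\bfG(k)\backslash\bfG(\A)$ is relatively compact. By Theorem~\ref{thm:mahler} there is $c' > c_1$ with $\{(a, e_H)\} \subseteq \bigcup_\bfP \Omega_{c_1, c'}^\bfP$; intersecting with $G \times \{e_H\}$ and projecting gives $A \subseteq \bigcup_\bfP \Omega_{c_1, c'}^{\bfP, S}$, as desired.

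\textbf{Main obstacle.} The genuine subtlety — and the step I expect to cost the most care — is the bookkeeping in the sufficiency direction, where ``relatively compact modulo $\bfG(k)$'' in the adelic picture must be converted into ``bounded distance from $\bfG(\calO_S)$'' in the $S$-adic picture without the luxury of $\bfG(\calO_S)$ being a group. The point is that $\Gamma = \bfG(k)$ carries the compact transversal on the adelic side, but after cut-and-project the $H$-component $\gamma^*$ of $\gamma \in \bfG(\calO_S)$ wanders over a compact set rather than sitting at $e_H$ (this is exactly the non-commutativity of the diagram in Remark~\ref{NonCommutingDiagram}), so one cannot directly say ``$g$ is close to $\gamma \in \bfG(\calO_S)$''. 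The resolution is to absorb this $H$-wandering into the window via Corollary~\ref{Commens} (enlarging $W$ changes $\bfG(\calO_S)$ only up to commensurability, hence only by bounded distance) and to use the descent principle precisely as in the proof of Corollary~\ref{Commens} to control the finitely many cosets that appear. Once this translation dictionary is set up cleanly, both implications are immediate from Theorem~\ref{thm:mahler}; the rest is routine.
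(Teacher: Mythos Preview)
Your necessity direction is correct and essentially identical to the paper's: lift $A$ to $A \times \{e_H\}$, observe that each $(a,e_H)$ is within bounded $\max\{d_G,d_H\}$-distance of some $(\gamma_a,\gamma_a^*) \in \Gamma$ (because $\gamma_a \in \bfG(\calO_S)$ forces $\gamma_a^* \in W$), conclude that $A \times \{e_H\}$ is relatively compact modulo $\bfG(k)$, and apply the adelic Mahler criterion.

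Your sufficiency direction has the right ingredients but the application of the descent principle is muddled. You try Proposition~\ref{prop:descent} first with $\Pi = A$, which yields only $A \subseteq \bfG(\calO_S) F A$ --- a tautology, as you notice --- and then with $\Pi = \{1_G\}$, which gives information about $\pi_G(\Gamma(\{1_G\} \times K) \cap (G \times I))$, a set that does not contain $A$. Neither choice works. The clean move, which the paper takes, is this: by Theorem~\ref{thm:mahler} the set $\bigcup_\bfP \Omega^\bfP_{c_1,c'}$ is relatively compact modulo $\bfG(k)$, hence contained in $\Gamma \cdot (\Pi \times K)$ for some \emph{fixed compact} $\Pi \subseteq G$ and $K \subseteq H$. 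Then
\[
A \subseteq \pi_G\bigl(\Gamma(\Pi \times K) \cap (G \times \{e_H\})\bigr),
\]
and Corollary~\ref{cor:descent} (the specialization of Proposition~\ref{prop:descent} with $I = \{e_H\}$) gives $A \subseteq \bfG(\calO_S) F \Pi$ with $F$ finite. Since $F\Pi$ is compact and $d_G$ is left-invariant, $A$ lies at bounded distance from $\bfG(\calO_S)$. The point is that $\Pi$ in the descent principle should be the \emph{compact transversal}, not the arbitrary set $A$ nor the singleton $\{1_G\}$; once you make that substitution the argument is two lines and the detour through Corollary~\ref{Commens} and window-enlargement is unnecessary.
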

Note that the statement is independent of the choice of left-admissible pseudo-metric, since any two such pseudo-metrics are coarsely equivalent. We will later use pseudo-metrics induced from associated metric spaces. The following reformulation of Theorem~\ref{thm:mahler_s} is adapted to our later application.
\begin{corollary}\label{cor:mahler_s}
Assume that $\X[k](\bfG) = 0$. Fix $c_1 > 0$. For every $r > 0$ there is a $c' > c_1$ such that $N_r(\bfG(\calO_S)) \subseteq \bigcup_{\bfP} \Omega^{\bfP.S}_{c_1,c'}$.
For every $c' > c_1$ there is an $r > 0$ such that $\bigcup_{\bfP} \Omega^{\bfP,S}_{c_1,c'} \subseteq N_r(\bfG(\calO_S))$.
\end{corollary}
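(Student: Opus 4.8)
The plan is to deduce both statements from Theorem~\ref{thm:mahler_s} by unwinding the relationship between ``bounded distance from $\bfG(\calO_S)$'' and the neighbourhoods $N_r(\bfG(\calO_S))$ that appear in the corollary. Here $N_r$ refers to $r$-neighbourhoods with respect to a fixed left-admissible pseudo-metric $d_G$ on $G$; since all such pseudo-metrics are coarsely equivalent, it is harmless to fix one for the entire argument, and in the intended application $d_G$ will be the pseudo-metric pulled back from the $\cato$-space $X_S$ via the orbit map (cf.\ the proof of Proposition~\ref{CheckFn}). The key observation is simply that a subset $A \subseteq G$ has bounded distance from $\bfG(\calO_S)$ \emph{if and only if} $A \subseteq N_r(\bfG(\calO_S))$ for some $r > 0$: indeed ``bounded distance $\le r$'' is literally the statement $A \subseteq N_r(\bfG(\calO_S))$, and the supremum of the distances of points of $A$ to $\bfG(\calO_S)$ is the least such $r$ (possibly $\infty$).

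With this in hand the two halves of the corollary fall out as follows. For the first assertion, fix $c_1$ and $r > 0$ and set $A \defeq N_r(\bfG(\calO_S))$. Since $\bfG(\calO_S)$ is clearly at distance $0$ from itself, $A$ is at bounded distance $r$ from $\bfG(\calO_S)$, so Theorem~\ref{thm:mahler_s} (applicable because $\X[k](\bfG) = 0$ by the hypothesis carried along from Theorem~\ref{MainTheorem}) yields some $c' > c_1$ with $A = N_r(\bfG(\calO_S)) \subseteq \bigcup_{\bfP} \Omega^{\bfP,S}_{c_1,c'}$, which is exactly what is claimed (the superscript $\bfP.S$ in the statement being a typo for $\bfP,S$). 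For the second assertion, fix $c' > c_1$ and set $A \defeq \bigcup_{\bfP} \Omega^{\bfP,S}_{c_1,c'}$. By the ``if'' direction of Theorem~\ref{thm:mahler_s}, this $A$ has bounded distance from $\bfG(\calO_S)$, say distance $\le r$; then by the observation above $A \subseteq N_r(\bfG(\calO_S))$, as required.

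There is essentially no obstacle here: the corollary is a verbatim translation of Theorem~\ref{thm:mahler_s} from the language of ``bounded distance'' to the language of metric neighbourhoods, and the only point that needs a word is that the equivalence ``bounded distance from $\bfG(\calO_S)$ $\iff$ contained in some $N_r(\bfG(\calO_S))$'' holds for \emph{every} subset, which is immediate from the definition of the neighbourhood filtration. The one genuinely substantive input is Theorem~\ref{thm:mahler_s} itself, which in turn rests on the $S$-adic reduction theory of the previous section (via the descent principle, Proposition~\ref{prop:descent}) and on adelic Mahler compactness (Theorem~\ref{thm:mahler}); but all of that is already available, so the corollary's proof is a two-line bookkeeping argument.
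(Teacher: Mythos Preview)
Your proof is correct and matches the paper's approach: the paper does not give a separate proof but simply introduces the corollary as ``the following reformulation of Theorem~\ref{thm:mahler_s}'', which is exactly what you have spelled out. Your observation that ``bounded distance from $\bfG(\calO_S)$'' is literally the same as containment in some $N_r(\bfG(\calO_S))$ is the whole content of the passage from theorem to corollary.
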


We now turn to the proofs of Theorems \ref{thm:lower_s} and \ref{thm:mahler_s}. In the case where $S \supseteq V\inf$ and hence $\bfG(\calO_S)$ is an actual $S$-arithmetic \emph{subgroup} of $\bfG_S$ by Proposition~\ref{SGroupVsApproxGp}, both theorems are well-known. In this classical setting, adelic reduction theory is traditionally deduced from the Archimedean case (where $S = V\inf$), and general $S$-adic reduction theory is obtained as an intermediate step, see \cite[§8]{Borel}. It was first suggested by Godement and Weil \cite[§12]{godement64} that $S$-adic reduction theory could also be obtained from adelic reduction theory via descent. The main advantage of this descent approach for our purposes is that it applies equally well to the case where $S$ does not contain all infinite places, as we are going to demonstrate. We are going to apply Proposition~\ref{prop:descent} in the following form:
\begin{corollary}\label{cor:descent} 
(i) For all compact subsets $K \subseteq H$ there exists a finite set $F \subseteq G$ such that for all subsets $\Pi \subseteq G$,
\[
\pi_G(\Gamma (\Pi\times K) \cap (G \times \{e_H\})) \subseteq \bfG(\mathcal O_S) F \Pi \text{.} 
\]
(ii) For every $\bar \Omega \subseteq G$ there exists a finite set $F \subseteq \Gamma_G$ such that
\[
\pi_G(\Gamma(\bar \Omega \times W) \cap (G \times \{e_H\})) \subseteq \bfG(\mathcal O_S)F\bar \Omega.
\]
\end{corollary}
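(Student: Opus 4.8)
The plan is to obtain Corollary~\ref{cor:descent} as a direct instantiation of the General Descent Principle (Proposition~\ref{prop:descent}), applied to the adelic generalized cut-and-project scheme $(G,H,\Gamma)=(\bfG_S,\bfG_{V\setminus S}^\sharp,\bfG(k))$ with window $W$ (so that $\Lambda=\bfG(\calO_S)$ by Convention~\ref{ConventionReductiveCAP}(3)). Recall that Proposition~\ref{prop:descent} produces, for any compact $I,K\subseteq H$, finite sets $E,F\subseteq G$ with
\[
\Lambda F\Pi\supseteq \pi_G(\Gamma(\Pi\times K)\cap(G\times I))
\]
for all $\Pi\subseteq G$. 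For part (i) I would simply take $I\defeq\{e_H\}$ (a compact subset of $H$) and the given compact $K$; then Proposition~\ref{prop:descent} yields a finite $F\subseteq G$ with $\pi_G(\Gamma(\Pi\times K)\cap(G\times\{e_H\}))\subseteq\Lambda F\Pi=\bfG(\calO_S)F\Pi$ for every $\Pi\subseteq G$, which is exactly the claimed inclusion.

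For part (ii) the situation is slightly different because the set $W$ plays the role previously played by the window, and the ``outer'' compact set is again $\{e_H\}$. Here I would apply Proposition~\ref{prop:descent} with $K\defeq W$ and $I\defeq\{e_H\}$, and with $\Pi\defeq\bar\Omega$: the hypothesis of Proposition~\ref{prop:descent} does not require $\Pi$ to be compact, only that $W$ (and $I$) be compact, which holds since $W$ is a window and hence compact. This gives a finite $F\subseteq G$ with $\pi_G(\Gamma(\bar\Omega\times W)\cap(G\times\{e_H\}))\subseteq\bfG(\calO_S)F\bar\Omega$. To get the refinement $F\subseteq\Gamma_G=\pi_G(\Gamma)$ claimed in the statement, I would invoke Remark~\ref{rem:descent_detail}, which records precisely that the finite sets produced by Proposition~\ref{prop:descent} may be taken inside $\Gamma_G$.

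There is essentially no obstacle here: this corollary is a bookkeeping specialization of Proposition~\ref{prop:descent}, and the only points needing a word of care are (a) checking that the relevant sets fed into Proposition~\ref{prop:descent} are compact --- $\{e_H\}$ trivially, $W$ by definition of a window, and the given $K$ by hypothesis --- and (b) noting that $\Lambda=\bfG(\calO_S)$ under the conventions in force, so that $\Lambda F\Pi$ literally reads $\bfG(\calO_S)F\Pi$. If one wishes to avoid relying on the compactness of $W$ one can alternatively cite the Remark after Proposition~\ref{prop:descent} stating that compactness of $W$ is not needed there. In either formulation the proof is two lines; I would simply write ``Both statements are instances of Proposition~\ref{prop:descent}: for (i) take $I=\{e_H\}$, for (ii) take $K=W$, $I=\{e_H\}$ and $\Pi=\bar\Omega$; the assertion $F\subseteq\Gamma_G$ in (ii) is Remark~\ref{rem:descent_detail}.''
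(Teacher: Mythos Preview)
Your proposal is correct and matches the paper's own proof essentially verbatim: the paper also derives (i) from Proposition~\ref{prop:descent} with $I=\{e_H\}$, derives (ii) by additionally taking $K=W$ and $\Pi=\bar\Omega$, and invokes Remark~\ref{rem:descent_detail} for the refinement $F\subseteq\Gamma_G$.
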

\begin{proof} The first statement is Proposition~\ref{prop:descent} applied to the triple $(G, H, \Gamma)$ from Convention \ref{ConventionReductiveCAP} with $I \defeq \{e_H\}$, and the second statement with $F \subseteq G$ follows by choosing $\Pi \defeq \bar \Omega$ and $K \defeq W$; the fact that $F$ can be chosen inside  $\Gamma_G$ follows from  Remark~\ref{rem:descent_detail}.
\end{proof}

\begin{proof}[Proof of Theorem \ref{thm:lower_s}]
(i) Let $G, H, \Gamma$ as in Convention \ref{ConventionReductiveCAP}. Since $c_1$ is a lower reduction bound, we have $\bfG(\A) = \bigcup_{\bfP \in \mathcal C(\Delta_k)} \Omega^{\bfP}_{c_1}$. We now fix a minimal $k$-parabolic $\bfP \in \mathcal C(\Delta_k)$. Since $\Gamma = \bfG(k)$ acts transitively on $\mathcal C(\Delta_k)$ 
by Proposition~\ref{prop:min_par_conj}, we then have $\bfG(\A) = \Gamma \Omega$, where $\Omega \defeq \Omega^{\bfP}_{c_1}$. We now claim that $\Gamma(\bar{\Omega} \times W) = \bfG(\A)$, where $\bar{\Omega} \defeq \pi_G(\Omega \cap (G \times \{e_H\}))$.

To prove the claim, we fix $g \in \bfG(\A)$ and decompose $g = \gamma \omega$ with $\gamma \in \Gamma$ and $\omega \in \Omega$. Since $\Gamma$ projects densely to $H$ and $W$ has non-empty interior, there exists a $\gamma' \in \Gamma$ with $\pi_H(\gamma'\omega) \in W$. Thus $g = (\gamma \gamma'^{-1}) (\gamma'\omega)$ with $\gamma\gamma'^{-1} \in \Gamma$ and $\gamma'\omega \in \bar{\Omega} \times W$, and the claim follows.

We deduce that $\pi_G(\Gamma(\bar \Omega \times W) \cap (G \times \{e_H\})) = G$ and thus $G \subseteq \bfG(\mathcal O_S)F\bar \Omega$ for some finite set $F \subseteq \Gamma_G$ by Corollary \ref{cor:descent}.

(ii) This is immediate from Theorem \ref{thm:uniqueness} applied to $(g,e_H) \in G \times H$, since the latter is a uniqueness rather than an existence theorem.
\end{proof}

\begin{proof}[Proof of Theorem \ref{thm:mahler_s}] We fix an auxiliary left-admissible pseudo-metrics $d_H$ on $H$ and equip $G \times H$ with the left-admissible pseudo-metric
\[
d((g_1,h_1),(g_2,h_2)) \defeq \max\{d_G(g_1,g_2),d_H(h_1,h_2)\}\text{.}
\]
Assume first that $A \subseteq G$ is a subset whose distance from $\bfG(\calO_S)$ is bounded by $D$. Since $W$ is compact we may assume that $d(w, e_H) \leq D$ for all $w \in W$. For every $a \in A$ we pick an element $\gamma_a \in \bfG(\calO_S)$ with $d_G(a, \gamma_a) \leq D$ and denote by $\gamma_a^*$ the unique element of $W$ such that $(\gamma_a, \gamma_a^*) \in \Gamma$. For every $a \in A$ we then have
\[
d((a, e_H), (\gamma_a, \gamma_a^*)) = \max\{d_G(a,\gamma_a), d_H(\gamma_a^*, e_H)\} \leq D,
\]
hence $A \times \{e_H\}$ is at bounded distance from $\Gamma$ and hence relatively compact modulo $\Gamma$. We deduce from Theorem \ref{thm:mahler} that there is $c'>c_1$ such that
\[
A \times \{e_H\} \subseteq \bigcup_{\bfP} \Omega^{\bfP}_{c_1,c'}, \quad \text{and hence} \quad A \subseteq \bigcup_{\bfP} \Omega^{\bfP, S}_{c_1,c'}.
\]

Conversely assume that $A \subseteq \bigcup_{\bfP} \Omega^{\bfP, S}_{c_1,c'}$ for some $c'>c_1$. By Theorem \ref{thm:mahler} there exist compact subsets $\Pi \subseteq G$ and $K \subseteq H$ such that $\bigcup_{\bfP} \Omega_{c_1, c'}^{\bfP}  \subseteq \Gamma(\Pi \times K)$ and hence
\[
A \subseteq \pi_G\left(\bigcup_{\bfP} \Omega_{c_1, c'}^{\bfP} \cap (G \times \{e_H\})\right) \subseteq \pi_G(\Gamma(\Pi \times K) \cap (G \times W)).
\]
By Corollary \ref{cor:descent} we thus have $A \subseteq \bfG(\calO_S)F\Pi$, and since $d_G$ is left-invariant and bounded on the compact set $F\Pi$ we deduce that $A$ is at bounded distance from $\bfG(\calO_S)$ with respect to $d_G$.
\end{proof}

\section{Geometric reduction theory}\label{sec:geometric_reduction}

In this section we finally translate algebraic reduction theory as developed in the previous sections to the geometric language of \cite[Section~12]{bux13}. We include infinite places (i.e.\ symmetric spaces) for completeness although we will not need them for the proof of the Main Theorem. 

\subsection{Some convex geometry}

For our geometric formulation of reduction theory we need some notions from convex geometry in Euclidean spaces. Throughout, $\Sigma$ denotes a Euclidean space with underlying Euclidean vector space  $(V, \langle \cdot, \cdot \rangle)$, cf.\ Example \ref{AffineForms}. Given subsets $A \subset \Sigma$ (or $A \subset V$) and $B \subset V$ we denote by $A + B \defeq \{a+b \mid a \in A, b \in B\}$ their Minkowski sum. Note that $A + B \subseteq \Sigma$ or $A + B \subseteq V$ depending on where $A$ was taken from.
We write $a+B \defeq \{a\}+B$.
Given a closed convex subset $P \subset \Sigma$, we  denote by $p_P \colon \Sigma \to P$ the closest point projection.

A \emph{polyhedron} $P$ in $V$ or $\Sigma$ is a finite intersection of closed affine halfspaces. A polyhedron $P$ in $V$ or $\Sigma$ is called a \emph{(generalized) cone} if it contains a unique minimal non-empty face; this face is then denoted $T(P)$ and called the \emph{tip} of $P$, and $P$ is called a \emph{proper cone} if its tip is a singleton.

A (generalized) \emph{linear cone} $C \subset V$ an intersection of finitely many closed linear halfspaces, i.e.\ halfspaces bounded by a hyperplane through the origin. Its tip $T(C)$ is the unique maximal vector subspace of $C$ and $C = T(C) + C_o$ where $C_o \defeq C \cap T(C)^{\perp}$ is a proper linear cone. Every cone $P \subset \Sigma$ can be written as $o+C$ for a linear cone $C \subset V$ and a basepoint $o \in \Sigma$; its tip is $T(P) = o+T(C)$ and we have $P = T(P) + C_o$ with $C_o \defeq C \cap T(C)^\perp$. This decomposition is unique up to the choice of $o$ in $T(C)$.

Now let $P$ be a polyhedron in $\Sigma$. We denote by $\mathcal F_P$ the poset of faces of $P$ and by $\mathcal F_P^*$ the set of \emph{non-empty} faces of $P$. Given $F \in \mathcal F_P^*$ we denote by $\mathring F$ the relative interior of $F$ in its affine span.  This is $F$ without its proper faces so if $F$ is minimal in $F_P^*$ we have $\mathring F = F$.

 \begin{definition} The \emph{normal cone} of $F \in \mathcal F_P^*$ is 
 \[
 \NC(F) \defeq  \{v \in V \mid \gen{v,f-p} \ge 0 \text{ for all }f \in F, p \in P\} \subseteq V
 \] 
 and its \emph{normal set} is
 \[
N_F \defeq \mathring{F} + \NC(F)
\]
(the dependence on $P$ is implicit).
 \end{definition}

The collection $\mathcal N_P \defeq \{\NC(F) \mid F \in \mathcal F_P^*\}$ is  a complete fan, called the \emph{normal fan} of the polyhedron $P$, and the map $\mathcal F_P^\times \to \mathcal N_P$, $F \mapsto \NC(F)$ is an inclusion-reversing bijection such that 
\begin{equation}\label{DimNormalCone}
\dim \NC(F) = \dim \Sigma - \dim F.
\end{equation}
In particular, if $P$ is a cone, then $\NC(T(P))$ is the unique cone of maximal dimension in $\mathcal N_P$.

\begin{remark}
A more geometric description of the normal cone is
\begin{align}
\NC(F)  &=  \{x - p_P(x) \mid x \in \Sigma, p_p(x) \in \mathring F\}
\end{align}
which shows that $N_F = p_P^{-1}(F)$.
\end{remark}

Now let $F \in \mathcal F_P$. Then $N_F$ is convex, and it follows from \eqref{DimNormalCone} that  $N_F$ has dimension $\dim \Sigma$ and hence non-empty interior. It is closed if and only if $F$ is a minimal non-empty face. Note that
\begin{equation}\label{NormalSetProject}
  \Sigma = \dot\bigcup_{F \in \mathcal{F}_P} N_F
\end{equation}
by the remark above, i.e.\ the normal sets partition $\Sigma$.

If $C$ is a generalized linear cone in $V$ and $(o_t)_{t\in \R}$ is a sequence of basepoints in $\Sigma$, then $(Y_t \defeq o_t+C)_{t\in \R}$ is called a \emph{properly nested family of generalized cones} modeled on $C$ if 
\[
\bigcup_{t \in \R} Y_t = \Sigma, \quad \bigcap_{t \in \R}Y_t = \emptyset \quad \text{and} \quad Y_s \subset \mathring{Y}_t \text{ for all }s < t.
\]
We observe that if $(Y_t)_{t \in \R}$ is a properly nested family of cones modeled on $C = T(C) +C_o$, then $(Y_t' \defeq T(Y_{-t})+(-C_o))$ is also a properly nested family of cones, modeled on $-C$.

\begin{example}\label{ex:busemann_nested_cones}
If $b_1, \dots, b_k$ are non-constant affine functions on $\Sigma$ (i.e.\ rescaled Busemann functions), then 
\[
Y_t \defeq \{x \in \Sigma \mid b_j(x) \leq t \text{ for all } 1\leq j\leq k \}
\]
is a properly nested family of generalized cones in $\Sigma$.
\end{example}

\begin{lemma}\label{ConeInclusion} Let $(N_t)_{t \in \R}$, $(Z_t)_{t \in \R}$ be two properly nested families of generalized cones modeled on the same generalized cone $C \subset V$. Then for every $t \in \R$ there exist $t_0, t_1 \in \R$ such that $Z_{t_0} \subset \mathring N_{t} \subset N_t \subset \mathring Z_{t_1}$.
\end{lemma}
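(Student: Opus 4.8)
Both families are modeled on the same generalized cone $C = T(C) + C_o \subset V$. Write $N_t = o_t + C$ and $Z_t = o_t' + C$ for sequences of basepoints $o_t, o_t' \in \Sigma$. Since the modeling cone is the same, the two families differ only in the placement of their basepoints, so the whole statement is really an assertion about how far apart the basepoints drift as $t$ varies. The key observation is that $x \in \mathring{N}_t$ if and only if $x - o_t$ lies in the relative interior of $C$ (interpreted correctly: $x - o_t \in T(C) + \mathring{C_o}$, where $\mathring{C_o}$ is the relative interior of the proper cone $C_o$ inside the vector subspace it spans), and similarly for $Z_{t_1}$.

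\textbf{Key steps.} First I would fix $t$ and exhibit a single point $p$ deep inside $\mathring{N}_t$ — for instance, pick $p = o_t + v$ where $v \in T(C) + \mathring{C_o}$ is chosen so that $p$ lies a definite distance from the boundary of $N_t$ (this uses that $N_t$ is a generalized cone with nonempty interior, hence $\mathring{N}_t \neq \emptyset$). Since $\bigcup_s Z_s = \Sigma$ and $\bigcap_s Z_s = \emptyset$, and the $Z_s$ are properly nested, there is a smallest-ish index with $p \in Z_{t_0}$; more carefully, because $Z_{s} \subset \mathring{Z}_{s'}$ for $s < s'$ and the union exhausts $\Sigma$, the set $\{s : p \in Z_s\}$ is an up-set, so I can choose $t_0$ with $p \in \mathring{Z}_{t_0}$ and then, by properness, an even smaller index still containing $p$; what I actually want is $Z_{t_0} \subseteq \mathring{N}_t$. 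For that I argue contrapositively: if no $Z_{t_0}$ were contained in $\mathring{N}_t$, then for every $t_0$ there is a point $z_{t_0} \in Z_{t_0} \setminus \mathring{N}_t$; but as $t_0 \to -\infty$ the cones $Z_{t_0}$ shrink toward $\bigcap_s Z_s = \emptyset$, which forces the basepoints $o_{t_0}'$ to escape every bounded set in the ``$C_o$-negative'' directions, and one checks that once $o_{t_0}'$ is far enough in those directions the translate $o_{t_0}' + C$ is swallowed by $o_t + \mathring{C}$. The cleaner way to organize this: translate so $o_t = 0$, so $\mathring{N}_t = T(C) + \mathring{C_o}$; the family $(Z_s)$ being properly nested and modeled on $C$ means $o_s' - o_{s'}' \in \mathring{C}$ for $s < s'$ (after accounting for the $T(C)$-ambiguity), and $\bigcap Z_s = \emptyset$ forces the $C_o$-component of $o_s'$ to tend to $-\infty$ along $-C_o$; for $s$ sufficiently negative, $o_s' + C \subseteq \mathring{C} = \mathring{N}_t$. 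The second inclusion $N_t \subseteq \mathring{Z}_{t_1}$ is symmetric: take any point of $N_t$, it lies in some $Z_s$ since the $Z$'s exhaust $\Sigma$, but to get all of $N_t$ inside one $\mathring{Z}_{t_1}$ I use that $N_t$, being $o_t + C$ with $C$ a cone, is ``pointed toward'' the tip, and that as $t_1 \to +\infty$ the cones $\mathring{Z}_{t_1}$ grow to fill $\Sigma$ from the $C_o$-positive side; concretely, in the translated picture, $N_t = C$ and I need $o_{t_1}'$ far enough in the $+C_o$ direction that $C \subseteq o_{t_1}' + \mathring{C}$, which holds once the $C_o$-component of $o_{t_1}'$ is sufficiently negative-of... — the precise sign bookkeeping is exactly the routine calculation I would not grind through here, but it is forced by the definitions.

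\textbf{Main obstacle.} The genuine content — and the step I expect to be fussiest — is handling the tip direction $T(C)$ correctly. Along $T(C)$ the cone $C$ is a full subspace, so the basepoint is only defined modulo $T(C)$, and ``properly nested'' only constrains the $C_o$-components of the basepoint differences; I need to make sure the argument never tries to control the $T(C)$-part of $o_t'$ (which is genuinely unconstrained) and only uses monotonicity of the $C_o$-components, together with the facts $\bigcup Y_t = \Sigma$ and $\bigcap Y_t = \emptyset$ to pin down that those $C_o$-components run off to $\pm\infty$. Once the decomposition $C = T(C) \oplus C_o$ is used to reduce everything to a statement about the one-dimensional-ish parameter measuring ``how deep in $C_o$'' a basepoint sits, the inclusions drop out. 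So the plan is: decompose, reduce to basepoint $C_o$-components, use exhaustion/intersection-empty to get the components diverging, and then the two required inclusions follow by choosing $t_0$ (resp. $t_1$) with sufficiently extreme component.

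\begin{proof}
Write $C = T(C) + C_o$ with $C_o = C \cap T(C)^\perp$ a proper linear cone, and decompose $V = T(C) \oplus T(C)^\perp$ with corresponding projections $q\colon V \to T(C)^\perp$. Fix generators so that $N_t = o_t + C$ and $Z_t = o_t' + C$. For $v \in \Sigma$ (after fixing an origin) we have $v \in \mathring{N}_t$ iff $q(v - o_t) \in \mathring{C_o}$, the relative interior of $C_o$ in $T(C)^\perp$, and similarly for the $Z$'s; here $\mathring{N}_t \ne \emptyset$ since $N_t$ has nonempty interior.

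By hypothesis $(Z_s)_s$ is properly nested on $C$, so for $s < s'$ we have $Z_s \subset \mathring Z_{s'}$, which forces $q(o_{s'}' - o_s') \in \mathring{C_o}$; thus the function $s \mapsto q(o_s')$ moves strictly in the $\mathring{C_o}$ direction. Combined with $\bigcap_s Z_s = \emptyset$ and $\bigcup_s Z_s = \Sigma$, the $C_o$-component $q(o_s')$ escapes every compact set as $s \to -\infty$ in the $-C_o$ directions and as $s \to +\infty$ in the $+C_o$ directions; the analogous statement holds for $(N_t)_t$.

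Now fix $t$. Choose $t_0$ so small that $q(o_t - o_{t_0}') \in \mathring{C_o}$; this is possible by the previous paragraph. Then for any $z \in Z_{t_0}$ we have $q(z - o_{t_0}') \in C_o$, so $q(z - o_t) = q(z - o_{t_0}') - q(o_t - o_{t_0}') \in C_o - \mathring{C_o} \subseteq \mathring{C_o}$, whence $z \in \mathring{N}_t$. This proves $Z_{t_0} \subset \mathring N_t$. Symmetrically, choose $t_1$ so large that $q(o_{t_1}' - o_t) \in \mathring{C_o}$; then for $x \in N_t$ we get $q(x - o_{t_1}') = q(x - o_t) - q(o_{t_1}' - o_t) \in C_o - \mathring{C_o} \subseteq \mathring{C_o}$, so $x \in \mathring Z_{t_1}$, giving $N_t \subset \mathring Z_{t_1}$. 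Together with the trivial inclusion $\mathring N_t \subset N_t$, this establishes $Z_{t_0} \subset \mathring N_t \subset N_t \subset \mathring Z_{t_1}$.
\end{proof}
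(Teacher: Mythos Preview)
Your overall strategy --- split off the tip $T(C)$ and reduce to the proper cone $C_o$ in $T(C)^\perp$ --- is exactly the paper's. The execution, however, has a genuine error: the inclusion $C_o - \mathring{C_o} \subseteq \mathring{C_o}$ that you invoke twice is false (already for $C_o = [0,\infty) \subset \R$ one has $0 - 1 = -1 \notin (0,\infty)$). The correct identity is $C_o + \mathring{C_o} = \mathring{C_o}$, and to land on a \emph{sum} rather than a difference you need the opposite sign in your conditions on $t_0$ and $t_1$. This traces back one paragraph: from $Z_s \subset \mathring{Z}_{s'}$ for $s < s'$ one deduces $o'_s - o'_{s'} \in \mathring{C}$, hence $q(o'_s - o'_{s'}) \in \mathring{C_o}$, not $q(o'_{s'} - o'_s) \in \mathring{C_o}$; so $q(o'_s)$ drifts in the $+C_o$ direction as $s \to -\infty$, the reverse of what you wrote.

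Even after repairing all signs a subtler gap remains. You would then need $q(o'_{t_0}) \in q(o_t) + \mathring{C_o}$ for sufficiently small $t_0$, justified by ``$q(o'_s)$ escapes every compact set as $s \to -\infty$''. But monotone $\mathring{C_o}$-drift together with $\bigcap_s Z_s = \emptyset$ does not force $q(o'_s)$ to enter the \emph{specific} translate $q(o_t)+\mathring{C_o}$: one coordinate of $q(o'_s)$ could diverge while another stays bounded (e.g.\ $C_o=\R_{\ge 0}^2$ and $o'_s=(-s,-e^s)$ gives a properly nested family whose basepoints never enter $\mathring{C_o}$). The paper avoids tracking basepoint asymptotics altogether: after placing the origin at the tip of $N_t$ (so $N_t = C$) and quotienting by $T(C)$, it uses $\bigcup_s Z_s = V$ directly to find $t_1'$ with $0 \in Z_{t_1'}$; then $0 \in \mathring{Z}_{t_1}$ for any $t_1$ beyond $t_1'$, and $C + \mathring{C} = \mathring{C}$ gives $N_t = C \subset \mathring{Z}_{t_1}$. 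The remaining inclusion is then argued by exchanging the roles of the two families.
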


\begin{proof} Picking an origin of $\Sigma$ in $T(N_t)$ we may assume $\Sigma = V$ and $N_t = C$. Taking the quotient by $T(C)$ we may further assume that $C$ is a proper cone. Since $\bigcup_t Z_t = V$ there is a $t_1'$ such that $0 \in Z_{t_1'}$. Thus for $t_1 < t_1'$ we have
\[
  Z_{t_1} \subseteq \mathring{Z}_{t_1'} = T(Z_{t_1'}) + C \subseteq 0 + C = \mathring{N}_t\text{.}
\]
Exchanging the roles of $(N_t)_t$ and $(Z_t)_t$ one obtains $t_0$.
\end{proof}

If $(Y_t)_{t \in \R}$ is a properly nested family of generalized cones modeled on $C \subset V$, then $(N_{T(Y_t)})_{t \in \R}$ is again a family of generalized cones (modeled on the normal cone $\NC(C)$). This new family need not be properly nested in general, see Figure~\ref{fig:desc_norm} below. However, if $(Y_t)_t$ is given by rescaled Busemann functions as in Example~\ref{ex:busemann_nested_cones}, the functions can be rescaled to get a new properly nested family of generalized cones such that the normal sets of the tips are properly nested as well:

\begin{lemma}\label{lem:rescale}
Let $\tilde{\beta}_1,\ldots,\tilde{\beta}_d \colon \Sigma \to \R$
be affine forms with linearly independent linear part. Then there exist constants $s_1,\ldots,s_d >0$ such that for $t > 0$ the normal set of the tip in $Y_t = \{x \in \Sigma \mid s_i\tilde{\beta}_i(x) \le t\}$ is contained in the interior of the normal set of the tip in $Y_0 = \{x \in \Sigma \mid \tilde{\beta}_i(x) \le 0\}$.
\end{lemma}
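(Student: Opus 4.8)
The statement concerns affine forms $\tilde\beta_1,\dots,\tilde\beta_d$ on a Euclidean space $\Sigma$ whose linear parts $\mathring{\tilde\beta}_1,\dots,\mathring{\tilde\beta}_d$ are linearly independent. The polyhedron $Y_0 = \{x \mid \tilde\beta_i(x) \le 0\}$ is a cone, and its tip $T(Y_0)$ is the common zero set of all the $\tilde\beta_i$ (an affine subspace of codimension $d$), since the linear parts are independent. I want to find scalars $s_i > 0$ so that, writing $Y_t = \{x \mid s_i\tilde\beta_i(x)\le t\}$, the normal set $N_{T(Y_t)}$ of the tip of $Y_t$ sits inside the interior of $N_{T(Y_0)}$. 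First I would reduce to the case where the $\tilde\beta_i$ are linear forms vanishing at a common point $o \in T(Y_0)$, which I take as origin, so $\Sigma = V$; then quotient by $T(Y_0) = \bigcap_i \ker\tilde\beta_i$ (which is the lineality space of the cone), reducing to the case $d = \dim \Sigma$, the $\tilde\beta_i$ a basis of $V^*$, and $Y_0$ a proper simplicial cone with tip $\{0\}$.

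\textbf{The geometry after reduction.} In this reduced situation the normal fan of $Y_0$ is simplicial: $\NC(T(Y_0)) = \NC(\{0\})$ is the full-dimensional cone $\sum_i \R_{\ge 0}\, v_i$, where $v_i \in V$ is the vector representing $\mathring{\tilde\beta}_i$ under the inner product; indeed $\{v_i\}$ is exactly the dual basis data for $Y_0$ and $N_{T(Y_0)} = \{0\} + \NC(\{0\}) = \sum_i\R_{\ge0}v_i$. Rescaling $\tilde\beta_i \mapsto s_i\tilde\beta_i$ replaces $v_i$ by $s_i v_i$, which does \emph{not} change the cone $\sum_i \R_{\ge0} v_i$; the point of rescaling is instead to move the \emph{tip} of $Y_t$. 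The tip $T(Y_t)$ is the single point $x_t$ with $s_i\tilde\beta_i(x_t) = t$ for all $i$, i.e. $\tilde\beta_i(x_t) = t/s_i$; expanding in the basis dual to $\{\tilde\beta_i\}$ one gets $x_t = t\sum_i (1/s_i)\, w_i$ where $\{w_i\}$ is the basis of $V$ dual to $\{\tilde\beta_i\}$ (so $\tilde\beta_i(w_j) = \delta_{ij}$). Then $N_{T(Y_t)} = x_t + \sum_i\R_{\ge0}v_i$. So the claim $N_{T(Y_t)} \subset \mathring N_{T(Y_0)} = \operatorname{int}\big(\sum_i\R_{\ge0}v_i\big)$ for all $t > 0$ amounts to: $x_t \in \operatorname{int}\big(\sum_i\R_{\ge0}v_i\big)$ for all $t > 0$ (since adding the cone to an interior point of the cone stays in the interior, as the cone is convex and full-dimensional). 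Because $x_t = t x_1$ and the interior of a cone is stable under positive scaling, it suffices to choose the $s_i$ so that $x_1 = \sum_i (1/s_i) w_i$ lies in the interior of $\sum_i\R_{\ge0}v_i$.

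\textbf{Choosing the scalars.} The interior of $\sum_i\R_{\ge0}v_i$ is $\{\sum_i \lambda_i v_i : \lambda_i > 0\}$. So I need $\sum_i(1/s_i)w_i = \sum_i\lambda_i v_i$ with all $\lambda_i > 0$; equivalently, writing the change-of-basis, $1/s_j = \tilde\beta_j\big(\sum_i\lambda_i v_i\big) = \sum_i \lambda_i \tilde\beta_j(v_i) = \sum_i \lambda_i \langle \mathring{\tilde\beta}_j, \mathring{\tilde\beta}_i\rangle$. The matrix $M = (\langle\mathring{\tilde\beta}_j,\mathring{\tilde\beta}_i\rangle)$ is the Gram matrix of a basis, hence symmetric positive definite; so for $\lambda_i$ all positive and small (or just any fixed positive vector $\lambda$), the vector $M\lambda$ has all entries — well, not automatically positive, but I may instead simply \emph{pick} any interior vector, e.g. $v^\circ \defeq \sum_i v_i \in \operatorname{int}(\sum_i\R_{\ge0}v_i)$, and set $1/s_j \defeq \tilde\beta_j(v^\circ) = \sum_i\langle\mathring{\tilde\beta}_j,\mathring{\tilde\beta}_i\rangle > 0$? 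That sum of Gram entries need not be positive either. The clean fix: take $\lambda_i \defeq 1$ for all $i$ only after replacing the inner product is not allowed; instead just \emph{define} $s_j$ directly by requiring $x_1$ to equal the specific interior point $\sum_i v_i$. That forces $1/s_j = \tilde\beta_j(\sum_i v_i)$, which could be $\le 0$. So the correct statement is: the system $1/s_j = \sum_i\lambda_i M_{ji}$ in unknowns $s_j$ is solvable with all $s_j > 0$ whenever all $\lambda_i > 0$ and $M\lambda$ has positive entries; and since $M$ is positive definite, $\lambda \mapsto M\lambda$ is a linear isomorphism carrying the open positive orthant onto an open cone which is easily checked to meet the positive orthant (e.g. $M$ applied to the eigenbasis-weighted combination, or: $(M\lambda)_j > 0$ for $\lambda$ close to the all-ones vector scaled appropriately is not guaranteed, but $(M\lambda)_j = \langle\mathring{\tilde\beta}_j, \sum_i\lambda_i\mathring{\tilde\beta}_i\rangle$, and choosing $\lambda$ so that $\sum_i\lambda_i\mathring{\tilde\beta}_i$ has positive pairing with every $\mathring{\tilde\beta}_j$ is exactly choosing a point in the nonempty interior of the dual cone — which is nonempty precisely because $\{\mathring{\tilde\beta}_i\}$ is a basis). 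Setting $1/s_j \defeq (M\lambda)_j$ for such a $\lambda$ then gives $s_j > 0$ and $x_1 = \sum_i\lambda_i v_i \in \operatorname{int}(\sum_i\R_{\ge0}v_i)$, so $x_t = tx_1$ lies there too for all $t>0$, and $N_{T(Y_t)} = x_t + \sum_i\R_{\ge0}v_i \subset \mathring N_{T(Y_0)}$.

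\textbf{Main obstacle.} The only subtle point is the bookkeeping in the last step: tracking how rescaling the forms repositions the tip $x_t$ while leaving the normal cone direction unchanged, and pinning down that "an interior point plus the cone stays in the interior". Everything else — the reduction to the proper simplicial case, the identification of tips and normal sets via \eqref{DimNormalCone} and the remark characterizing $N_F = p_P^{-1}(F)$, and the linear algebra with the positive definite Gram matrix — is routine. I would organize the write-up as: (1) reductions; (2) explicit description of $T(Y_t)$ and $N_{T(Y_t)}$; (3) the choice of $s_i$ via an interior point of the dual cone; (4) conclusion via scaling-invariance of cone interiors.
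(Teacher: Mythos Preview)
Your approach matches the paper's: linearize by placing the origin in $\bigcap_i \tilde\beta_i^{-1}(0)$, observe that rescaling leaves the normal cone $\sum_i \R_{\ge 0}v_i$ unchanged while translating the tip of $Y_t$, reduce by homogeneity to $t=1$, and choose the $s_i$ so that the tip $x_1$ lands in the interior of the normal set of $Y_0$. The paper's proof then simply picks a point $v$ in that interior and sets $s_i \defeq 1/\tilde\beta_i(v)$.

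There is one genuine gap in your write-up, precisely at the point you yourself flag. You need a vector $v = \sum_i \lambda_i v_i$ with both $\lambda_i>0$ (so that $v\in\mathring C$, where $C = \sum_i\R_{\ge 0}v_i$, and hence $v$ becomes the tip of $Y_1$) and $\langle v_j,v\rangle>0$ for all $j$ (so that $s_j = 1/\tilde\beta_j(v)>0$); that is, $v \in \mathring C \cap \mathring C^*$. Your parenthetical does not establish this: noting that the dual cone $C^*$ has nonempty interior ``because $\{v_i\}$ is a basis'' produces a point of $\mathring C^*$, not one that also lies in $\mathring C$, and the phrase ``choosing $\lambda$ so that $\sum_i\lambda_i v_i$ has positive pairing with every $v_j$'' is the condition to be met, not an argument for its feasibility. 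A one-line fix: if $\mathring C \cap \mathring C^*=\emptyset$, a separating hyperplane with nonzero normal $n$ would give $n\in C^*$ and $-n\in C^{**}=C$, whence $0\le\langle n,-n\rangle=-\lvert n\rvert^2$, forcing $n=0$, a contradiction. With this patch your argument is complete --- and on this particular point you are in fact more scrupulous than the paper's own proof, which sets $s_i = 1/\tilde\beta_i(v)$ for $v$ ``any point in the interior of the normal set'' without checking that $\tilde\beta_i(v)>0$.
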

\begin{proof}
By choosing the origin to lie in the intersection of the zero-levels, we can assume that the $\tilde{\beta}_i$ are linear forms on a Euclidean vector space.
Now note that if a tuple of $s_i$ works for $t = 1$ then it works for any $t$. Since the normal cones are translates of each other, we only need to ensure that some/any vector $v$ with $s_i\tilde{\beta}_i(x) = 1$ lies in the interior of the normal set of $Y_0$. A solution is therefore to simply take $s_i \defeq 1/\tilde{\beta}_i(v)$, where $v$ is any point in the interior of the normal set of $Y_0$.
\end{proof}

\subsection{Generalized cones from renormalized Busemann functions}
 We now return to the setting of Convention~\ref{ConventionReductiveCAP} from the last section. In particular, $\Lambda = \bfG(\calO_S)$ is an $S$-arithmetic approximate subgroup of a reductive group $\bfG$ over a global field $k$ where $S$ is a finite set of places, which acts on the \cato{}-space $X_S = \prod_{s \in S} X_s$. We denote by $\Delta = \Delta_k$ the spherical building associated to $\bfG(k)$, which we consider as a subset of $\partial X_S$. In addition we assume that the invariant horofunction $\pi$ is normalized so that by \eqref{eq:horo_to_busemann} there exists a unique family $(\tilde{\beta}_{\bfQ,S})_{\bfQ \in \calV(\Delta_k)}$ of rescaled Busemann functions such that $\tilde{\beta}_{\bfQ, S}$ is centered at $\bfQ$ and satisfies
\[
 \tilde{\beta}_{\bfQ, S}|_{\calK_S} = \log \pi(\bfQ,\cdot)|_{\calK_S}\text{.}
\]
As in \eqref{Defmus} we then set $ \mu^{\bfP}_i =  \sum_{j=1}^r c_{ij} \tilde{\beta}_{\bfP_j,S}$ so that 
\[
 \mu_{i}^{\bfP}|_{\calK_S} = \log  \nu_i(\bfP, \cdot)|_{\calK_S}
\]
(the reason for decorating the rescaled Busemann functions with a tilde will become clear shortly).

In our geometric formulation of reduction theory, it seems more natural to us to employ geometric terminology when referring to elements of $\Delta$, i.e.\ we will speak of vertices, chambers, simplices rather than maximal parabolics, minimal parabolics, parabolics and containment will be reversed. We also denote by $\mathrm{typ}(v) \in \typ(\Delta) \defeq \{1, \dots, r\}$ the type of the (parabolic corresponding to) the vertex $v$, cf.\ Section~\ref{sec:pars_chars}. If $v$ is a vertex corresponding to a maximal parabolic $\bfQ$, then we will denote the corresponding Busemann function by \[\tilde{\beta}_v \defeq \beta_{\bfQ, S}.\] Similarly, if $c$ is a chamber, corresponding to a minimal parabolic $\bfP$, then we write  $\mu_i^c \defeq  \mu_{i}^{\bfP}$. If necessary we denote the stabilizer of a vertex $v$ or chamber $c$ by $\bfP_v$ or $\bfP_c$ respectively.

The rescaled Busemann functions $(\tilde{\beta}_v)_{v \in \Vt(\Delta)}$ are a main ingredient for a \emph{geometric reduction datum} as in \cite[Section~1]{bux13}. We will not work with these functions directly but rather rescale them as follows. For each type $i \in \typ(\Delta)$ we will choose a constant $s_i > 0$ (to be specified later) and define
\begin{equation}\label{eq:beta_scaling}
\beta_v \defeq s_{\typ(v)} \tilde{\beta}_v\text{.}
\end{equation}
We summarize the transformation behavior of the functions $\beta_v$, $\mu_i^c$ and the relationship between them by the following formulae that are additive versions of Definition~\ref{def:pi_trafo}\eqref{item:pi_trafo_parabolic}, Lemma~\ref{lem:nu_trafo}\eqref{item:nu_trafo_parabolic}, \eqref{eq:horofunction_to_dual} and \eqref{eq:dual_to_horofunction} respectively, taking into account the rescaling \eqref{eq:beta_scaling}:

\begin{align}
\beta_v(g.x) &= s_{\typ(v)}\log(\abs{\chi_{\bfP_v}(g)}) + \beta_v(x) &\text{for}\ g \in (\bfP_v)_S\text{,}\label{eq:busemann_trafo}\\
\mu_i^c(g.x) &= \log(\abs{\alpha^{\bfP_c}_i(g)}) + \mu_i^c(x) &\text{for}\ g \in (\bfP_c)_S\text{,}\label{eq:root_trafo}\\
\mu_i^c &= \sum_{j = 1}^r \frac{\log c_{ij}}{s_j} \beta_j & \text{for }v < c \text{ of type }j\text{,}\label{eq:beta_to_mu}\\
\beta_v &= s_j \cdot \sum_{i=1}^r \log n_{ji} \cdot \mu_i^c & \text{for }v < c \text{ of type }j\text{.}\label{eq:mu_to_beta}
\end{align}

An important consequence of all these formulae is that the situation described by the $\beta_v$ and $\mu^c_i$ looks the same around each chamber in the following sense:

\begin{lemma}\label{lem:all_chambers_equal}
Let $c,c' \in \Ch(\Delta) \subset \partial X_S$ be chambers and let $\Sigma,\Sigma' \subset X_S$ be apartments with $c \subseteq \partial \Sigma$ and $c' \subseteq \partial \Sigma'$. Let $v_i$ respectively $v_i'$ be the vertices of $c$ respectively $c'$ of type $i \in \typ(\Delta)$. There is an isometry $\iota \colon \Sigma \to \Sigma'$ such that
\begin{align}
\beta_{v_i}|_\Sigma &= \beta_{v_i'}|_{\Sigma'} \circ \iota &&\text{and}\label{eq:beta_same}\\
\mu^{c}_i|_\Sigma &= \mu^{c'}_i|_{\Sigma'} \circ \iota &&\text{for all }i\text{.}\label{eq:mu_same}
\end{align}
\end{lemma}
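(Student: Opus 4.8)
The statement asserts a kind of homogeneity: up to an isometry of apartments, the Busemann functions $\beta_{v_i}$ and the auxiliary functions $\mu_i^c$ look the same around any chamber. The plan is to reduce everything to the transitivity of $\bfG(k)$ on chambers of $\Delta_k$ together with the $\bfG(k)$-equivariance encoded in \eqref{eq:busemann_trafo}--\eqref{eq:mu_to_beta}. First I would use Proposition~\ref{prop:min_par_conj}: since $\bfG(k)$ acts transitively on minimal $k$-parabolics, there is $g \in \bfG(k)$ with ${}^g\bfP_c = \bfP_{c'}$, equivalently $g.c = c'$, and moreover this $g$ carries the type-$i$ vertex $v_i$ of $c$ to the type-$i$ vertex $v_i'$ of $c'$, since the $\bfG(k)$-action on $\Delta_k$ is type-preserving. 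Recalling the (slightly delicate) discussion in Remark~\ref{NonCommutingDiagram}, the relevant action on $X_S$ is via the inclusion $\bfG(k) \hookrightarrow \bfG(k_S)$, i.e.\ via $\pi_S$; write $g_S \defeq \pi_S(g)$ for the corresponding isometry of $X_S$. Since $g$ maps the apartment $\Sigma$ (whose boundary contains $c$) to some apartment whose boundary contains $c'$, and since $\bfG(k_S)$ acts transitively on apartments containing a given chamber at infinity in their boundary, after post-composing with an element of the stabilizer of $c'$ fixing $\Sigma'$ pointwise-at-infinity appropriately --- or more simply, by transitivity of $\bfG(k_S)$ on pairs (apartment, chamber in its boundary) --- we may choose $g$ so that $g_S.\Sigma = \Sigma'$. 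Then I set $\iota \defeq g_S|_\Sigma \colon \Sigma \to \Sigma'$, which is an isometry.

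It remains to verify \eqref{eq:beta_same} and \eqref{eq:mu_same}. For this I would invoke the global invariance of the horofunction $\pi$ (Definition~\ref{def:pi_trafo}\eqref{item:pi_trafo_global}), which on the level of Busemann functions says precisely that $\tilde\beta_{{}^g\bfQ}(g_S.x) = \tilde\beta_{\bfQ}(x)$ for all $g \in \bfG(k)$; this is exactly the $g$-term in \eqref{eq:busemann_trafo} being trivial because $|\chi_{\bfQ}(g)| = 1$ by the product formula (Proposition~\ref{prop:product_formula}). Hence $\tilde\beta_{v_i}(x) = \tilde\beta_{v_i'}(g_S.x)$ for all $x \in \Sigma$, and applying the common rescaling factor $s_{\typ(v_i)} = s_i$ from \eqref{eq:beta_scaling} gives $\beta_{v_i}|_\Sigma = \beta_{v_i'}|_{\Sigma'} \circ \iota$. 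The identity \eqref{eq:mu_same} for $\mu_i^c$ follows immediately, since by \eqref{eq:beta_to_mu} each $\mu_i^c$ is a fixed linear combination (with coefficients $\log c_{ij}/s_j$ independent of $c$) of the $\beta_j$ over the vertices $v_j < c$, and we have just matched each $\beta_{v_j}|_\Sigma$ with $\beta_{v_j'}|_{\Sigma'} \circ \iota$.

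The one point that requires genuine care --- and which I expect to be the main obstacle --- is the choice of $g$ so that $g_S$ simultaneously carries $\Sigma$ to $\Sigma'$, not merely $c$ to $c'$. Transitivity on chambers gives a $g \in \bfG(k)$ with $g.c = c'$, but $g_S.\Sigma$ is then \emph{some} apartment of $X_S$ containing $c'$ in its boundary, possibly different from $\Sigma'$. To fix this I would use that $\bfG(k_S)$ --- in fact each local factor $\bfG(k_s)$ --- acts transitively on apartments of $X_s$ containing a fixed chamber of $\partial X_s$ in their boundary (standard for Bruhat--Tits buildings and for symmetric spaces, via the corresponding parabolic acting transitively on the opposite unipotent / on flats), so there is $h \in \bfG(k_S)$ fixing $c'$ and moving $g_S.\Sigma$ to $\Sigma'$. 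But then $h$ need not be of the form $\pi_S(g')$ for $g' \in \bfG(k)$, so composing naively would break equivariance. The clean fix is to observe that $h$ lies in $(\bfP_{c'})_S$ (the $S$-points of the minimal parabolic stabilizing $c'$), and for such elements the Busemann functions $\tilde\beta_{v_i'}$ transform only by the character term in \eqref{eq:busemann_trafo}; so instead of demanding $g_S.\Sigma = \Sigma'$ on the nose, one allows $\iota \defeq (hg)_S|_\Sigma$ and absorbs the resulting additive constants. Since two rescaled Busemann functions centered at the same boundary point differ by an additive constant and $\Sigma, \Sigma'$ share the chamber $c'$ at infinity, and since a linear combination as in \eqref{eq:beta_to_mu}--\eqref{eq:mu_to_beta} of Busemann functions centered at the vertices of $c'$ is determined on $\Sigma'$ by its restriction to any Weyl chamber (an open subset), these constants are forced to vanish on the overlap and the two functions agree on all of $\Sigma'$, exactly as in the proof of Proposition~\ref{prop:busemann_char}. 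This gives \eqref{eq:beta_same} and \eqref{eq:mu_same} and completes the proof.
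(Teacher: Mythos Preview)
Your approach via $\bfG(k)$-equivariance is a natural idea, but as written it has a real gap. The claim that invariance of $\pi$ (Definition~\ref{def:pi_trafo}\eqref{item:pi_trafo_global}) yields $\tilde\beta_{{}^g\bfQ}(g_S.x)=\tilde\beta_{\bfQ}(x)$ is not correct. The invariance of $\pi$ is with respect to the \emph{diagonal} embedding $\bfG(k)\hookrightarrow\bfG(\A)$, so it gives $\pi({}^g\bfQ,{}^{(g_S,g_H)}K)=\pi(\bfQ,K)$; translating to $X_S$ you would need ${}^{(g_S h,1_H)}C={}^{(g_S h,g_H)}C$, i.e.\ that $g_H$ normalises $C_{V\setminus S}$. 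For a generic $g\in\bfG(k)$ this fails, and the product formula does not help: it concerns the full idele norm $|\chi(g)|=\prod_{s\in V}|\chi(g)|_s$, not the partial product over $S$. This is exactly the non-commutativity of the diagram in Remark~\ref{NonCommutingDiagram} that you cite, and it is why Theorem~\ref{thm:busemann_family} only obtains \emph{quasi}-invariance. Your attempt to absorb additive constants in the last paragraph addresses the adjustment by $h\in(\bfP_{c'})_S$, not this defect from $g$; moreover the claim that ``$\Sigma,\Sigma'$ share the chamber $c'$ at infinity'' is wrong ($\Sigma$ contains $c$, not $c'$), so that overlap argument does not go through.

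The paper's proof avoids the issue entirely by not using $\bfG(k)$-equivariance at all. It takes an abstract isometry $\iota\colon\Sigma\to\Sigma'$ sending $c$ to $c'$ (hence $v_i$ to $v_i'$ in the boundary spheres), observes that $\beta_{v_i}|_\Sigma$ and $\beta_{v_i'}|_{\Sigma'}\circ\iota$ are affine with the same centre, matches the \emph{slopes} using only the local transformation rule~\eqref{eq:busemann_trafo} (which involves $\bfP_v(k_s)$, not $\bfG(k)$), and then adjusts $\iota$ by a translation to match the zero levels of all $\beta_{v_i}$ simultaneously; \eqref{eq:mu_same} then follows from~\eqref{eq:beta_to_mu}. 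Your route could be repaired along similar lines: one can show that $\tilde\beta_{{}^g\bfQ,S}\circ g_S$ and $\tilde\beta_{\bfQ,S}$, being rescaled Busemann functions with the same centre and (by~\eqref{eq:busemann_trafo}) the same slope, differ by a constant depending only on $g$ and the type, and then absorb these $r$ constants into a translation of $\iota$ (using that the linear parts $\mathring\beta_{v_i}$ are linearly independent). But that is precisely the slope-and-translation argument of the paper, not the product-formula shortcut you propose.
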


\begin{proof} Firstly, $\mathrm{Is}(X_S)$ acts transitively on triples $(\Sigma, c)$, where $\Sigma$ is an apartment and $c \subseteq\partial \Sigma$ is a chamber, hence there exists an isometry $\iota: \Sigma \to \Sigma'$ mapping $c$ to $c'$. Then for every given $y \in \typ(\Delta)$ the functions $\beta_{v_i}|_\Sigma$ and $\beta_{v_i'}|_{\Sigma'} \circ \iota$ are two affine functions on $\Sigma$ with the same center. Modifying $\iota$ if necessary we can ensure that the zero level sets of these two affine functions agree, but then these two affine functions differ only by a scaling. However, it follows from Equation \eqref{eq:busemann_trafo} that
\[
\max_{\substack{x,y \in X\\x\ne y}} \frac{\beta_{v_i}(x) - \beta_{v_i}(y)}{d(x,y)} = \max_{\substack{x',y' \in X\\x' \ne y'}} \frac{\beta_{v_i'}(x') - \beta_{v_i'}(y')}{d(x',y')}
\]
and this maximum is realized when $x$ lies on the ray $[y,v_i)$ and $x'$ lies on the ray $[y',v_i')$, so in particular in the apartments $\Sigma$ and $\Sigma'$. This implies that the scaling is the same, hence there is a map $\iota$ that takes $c$ to $c'$ and satisfies \eqref{eq:beta_same} for our fixed choice of $i \in \typ(\Delta)$. Since the functions $\beta_{v_i}|_\Sigma$ are linearly independent, \eqref{eq:beta_same} can be satisfied for all types simultaneously. Then \eqref{eq:mu_same} just follows from \eqref{eq:beta_to_mu}.
\end{proof}



Now let $\tau$ be a simplex in $\Delta \subset \partial X_S$ and let $\Sigma$ be an apartment in $X_S$ with $\tau \subset \partial \Sigma$. We denote by $(V, \langle \cdot, \cdot \rangle)$ the underlying Euclidean vector space of $\Sigma$. We may identify $\partial \Sigma$ with the $1$-sphere $S(V)$ in $V$ such that a unit vector $v \in S(V)$ corresponds to the class of $(t \mapsto o + tv)$ for some (hence any) basepoint $o \in \Sigma$. Under this identification, the vertices of $\tau$ correspond to points in $S(V)$ and we can define a generalized cone in $V$ by
\[
C_{\Sigma, \tau} = \{w \in V \mid \langle w, v \rangle \geq 0 \text{ for every vertex }v\text{ of }\tau\}.
\]
Then for $t \in \R$ the sets
\[
Y_{\Sigma,\tau}(t) \defeq \{x \in \Sigma \mid \beta_v(x) \le t\text{ for every vertex }v\text{ of }\tau\}
\]
form a properly nested family of generalized cones modeled on $C_{\Sigma, \tau}$. Note that the tip of $Y_{\Sigma,\tau}(t)$ is given by
\[
F_{\Sigma,\tau}(t) \defeq  \{x \in \Sigma \mid \beta_v(x) = t\text{ for every vertex }v\text{ of }\tau\}.
\]
\begin{remark}
If $\tau = v$ is a vertex, then $Y_{\Sigma,\tau}(t)$ is just a sublevel set of the affine function $\beta_v$, hence an affine halfspace in $\Sigma$ and equal to its tip. If $\tau = c$ is a chamber, then the dimension of the tip $F_{\Sigma, \tau}(t)$ is given by $\left(\sum_{s \in S}\mathrm{rk}_{k_s}(\bfG(k_s))\right) -\mathrm{rk}_k(\bfG(k))$, and hence the generalized cone $Y_{\Sigma,\tau}(t)$ is typically not proper.
\end{remark}

We now return to the \cato{}-space $X_S$. Given $t \in \R$, an apartment $\Sigma \subset X_S$ and a simplex $\tau$ in $\partial \Sigma$ we denote by $N_{\Sigma, \tau}(t)$ the normal set of the tip $F_{\Sigma,\tau}(t)$ in $Y_{\Sigma,\tau}(t)$. Explicitly, $N_{\Sigma, \tau}(t)$ is the generalized cone given by
\begin{equation}
N_{\Sigma, \tau}(t) = \{x \in \Sigma \mid \beta_{v}(\pr_{Y_{\Sigma,t}} x) = t\text{ for every vertex }v\text{ of }\tau\}\text{.}
\end{equation}

From the properly nested family $(Y_{\Sigma,c}(t))_{t \in \R}$ of generalized cones (modeled on $C_{\Sigma, \tau}$) we have thus produced another family of generalized cones $(N_{\Sigma, \tau}(t))_{t\in \R}$ (modeled on the normal cone of $C_{\Sigma, \tau}$). However, depending on our choice of the rescaling parameters $s_1, \dots, s_r$ this second family of generalized cones may or may not be properly nested (see Figure~\ref{fig:desc_norm} for an illustration).
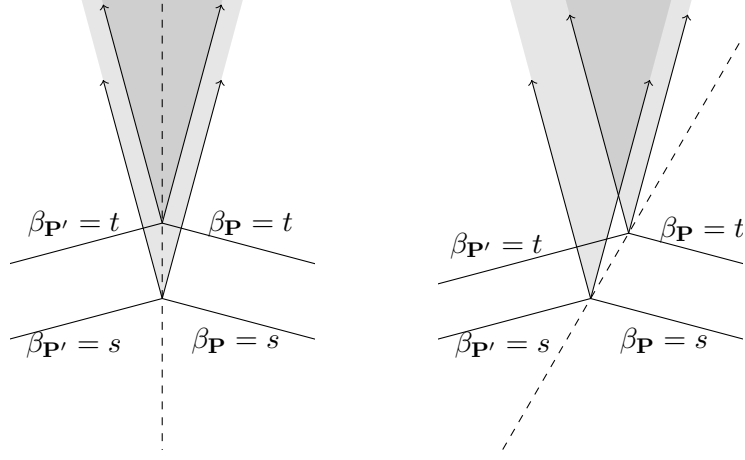
\begin{figure}
\hspace{\fill}
\begin{tikzpicture}
\newcommand{\coneang}{150}
\newcommand{\vecang}{0}
\clip (-2,-2) rectangle (2,4);
\draw (0,0) -- ({-90+\coneang/2}:5) (0,0) -- ({-90-\coneang/2}:5);
\fill[color = black, opacity=.1] (0,0) -- ({\coneang/2}:10) -- ({180-\coneang/2}:10) -- cycle;
\draw (0,0) edge[->] ({\coneang/2}:3) (0,0) edge[->] ({180-\coneang/2}:3);
\draw[dashed] ({90-\vecang}:-4) -- ({90-\vecang}:4);
\draw[shift={({90-\vecang}:1)}] (0,0) -- ({-90+\coneang/2}:5) (0,0) -- ({-90-\coneang/2}:5);
\fill[shift={({90-\vecang}:1)},color = black, opacity=.1] (0,0) -- ({\coneang/2}:10) -- ({180-\coneang/2}:10) -- cycle;
\draw[shift={({90-\vecang}:1)}] (0,0) edge[->] ({\coneang/2}:3) (0,0) edge[->] ({180-\coneang/2}:3);
\node[anchor=north] at ({-90+\coneang/2}:1) {$\beta_{\bfP} = s$};
\node[anchor=north] at ({-90-\coneang/2}:1.2) {$\beta_{\bfP'} = s$};
\node[shift={({90-\vecang}:1)},anchor=south] at ({-90+\coneang/2}:1.2) {$\beta_{\bfP} = t$};
\node[shift={({90-\vecang}:1)},anchor=south] at ({-90-\coneang/2}:1.2) {$\beta_{\bfP'} = t$};
\end{tikzpicture}
\hspace{\fill}
\begin{tikzpicture}
\newcommand{\coneang}{150}
\newcommand{\vecang}{30}
\clip (-2,-2) rectangle (2,4);
\draw (0,0) -- ({-90+\coneang/2}:5) (0,0) -- ({-90-\coneang/2}:5);
\fill[color = black, opacity=.1] (0,0) -- ({\coneang/2}:10) -- ({180-\coneang/2}:10) -- cycle;
\draw (0,0) edge[->] ({\coneang/2}:3) (0,0) edge[->] ({180-\coneang/2}:3);
\draw[dashed] ({90-\vecang}:-4) -- ({90-\vecang}:4);
\draw[shift={({90-\vecang}:1)}] (0,0) -- ({-90+\coneang/2}:5) (0,0) -- ({-90-\coneang/2}:5);
\fill[shift={({90-\vecang}:1)},color = black, opacity=.1] (0,0) -- ({\coneang/2}:10) -- ({180-\coneang/2}:10) -- cycle;
\draw[shift={({90-\vecang}:1)}] (0,0) edge[->] ({\coneang/2}:3) (0,0) edge[->] ({180-\coneang/2}:3);
\node[anchor=north] at ({-90+\coneang/2}:1) {$\beta_{\bfP} = s$};
\node[anchor=north] at ({-90-\coneang/2}:1.2) {$\beta_{\bfP'} = s$};
\node[shift={({90-\vecang}:1)},anchor=south] at ({-90+\coneang/2}:1) {$\beta_{\bfP} = t$};
\node[shift={({90-\vecang}:1)},anchor=south] at ({-90-\coneang/2}:1.8) {$\beta_{\bfP'} = t$};
\end{tikzpicture}
\hspace{\fill}
\caption{The rescaled Busemann function on the left satisfy the nested normal cones property, the ones on the right do not: the normal cone for the larger value $t$ is not contained in the one for the smaller value $s$.}
\label{fig:desc_norm}
\end{figure}

\begin{definition} We say that $(\beta_v)_{v \in \Vt(\Delta)}$ satisfies the \emph{nested normal set} property if for some (hence any) pair $(c,\Sigma)$ with $c \in \Ch(\Delta)$ and $c \subseteq \partial \Sigma$ the family $(N_{\Sigma,c}(t))_{t \in \R}$ is properly nested.
\end{definition}
\begin{proposition}\label{prop:nested_normal_cones}
The constants $(s_i)_{i \in \typ(\Delta)}$ can be chosen in such a way that the functions $(\beta_v)_v$ satisfy the nested normal set property.
\end{proposition}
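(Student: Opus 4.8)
The plan is to produce the rescaling constants $s_1, \dots, s_r$ directly from Lemma~\ref{lem:rescale}, applied in a single apartment, and then to propagate the conclusion by an elementary convexity argument together with the homogeneity furnished by Lemma~\ref{lem:all_chambers_equal}.

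First I would fix a chamber $c \in \Ch(\Delta)$, label its vertices $v_1, \dots, v_r$ so that $\typ(v_i) = i$, and choose an apartment $\Sigma \subset X_S$ with $c \subseteq \partial\Sigma$; write $(V, \langle\cdot,\cdot\rangle)$ for the Euclidean vector space underlying $\Sigma$. By Lemma~\ref{lem:all_chambers_equal} it suffices to establish proper nestedness of $(N_{\Sigma,c}(t))_{t\in\R}$ for this one pair, and the constants found for it will work for every other pair. I then need that the affine forms $\tilde\beta_{v_1}|_\Sigma, \dots, \tilde\beta_{v_r}|_\Sigma$ on $\Sigma$ have linearly independent linear parts inside $V$. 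By the construction of the $\tilde\beta_v$ (Section~\ref{sec:building_busemann} and Corollary~\ref{cor:busemann}) these linear parts lie in the diagonally embedded subspace $E = \coX[k](\bfT)\otimes\R \subseteq V$ and are, up to positive scaling, the canonical characters $\chi_1^{\bfP_c}, \dots, \chi_r^{\bfP_c}$; since the latter form the weight basis of $\X[k](\bfP_c)\otimes\R$ (Section~\ref{sec:pars_chars}) they are linearly independent — this is the same independence already invoked inside the proof of Lemma~\ref{lem:all_chambers_equal}.

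With this verified I would apply Lemma~\ref{lem:rescale} with $d = r$ to the tuple $(\tilde\beta_{v_1}|_\Sigma, \dots, \tilde\beta_{v_r}|_\Sigma)$: it yields $s_1, \dots, s_r > 0$ so that, putting $\beta_v = s_{\typ(v)}\tilde\beta_v$ as in \eqref{eq:beta_scaling}, one has $N_{\Sigma,c}(1) \subseteq \mathring{N}_{\Sigma,c}(0)$, where $N_{\Sigma,c}(t)$ denotes the normal set of the tip of $Y_{\Sigma,c}(t)$. It remains to upgrade this single inclusion to proper nestedness of the whole family. Here I would use that each $N_{\Sigma,c}(t)$ is a translate $q(t) + D$ of a fixed generalized cone $D$, namely the normal cone of the model cone $C_{\Sigma,c}$, with $q(t) = q_0 + t u$ affine in $t$ and $u \ne 0$ by the linear independence above. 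The inclusion $N_{\Sigma,c}(1) \subseteq \mathring{N}_{\Sigma,c}(0)$ then reads $u + D \subseteq \mathring{D}$, hence $u \in \mathring{D}$; since $D$ is a convex cone, $\mathring{D} + D \subseteq \mathring{D}$, so for $s < t$
\[
N_{\Sigma,c}(t) = q(t) + D = q(s) + (t-s)u + D \subseteq q(s) + \mathring{D} = \mathring{N}_{\Sigma,c}(s),
\]
while the remaining two conditions $\bigcup_t N_{\Sigma,c}(t) = \Sigma$ and $\bigcap_t N_{\Sigma,c}(t) = \emptyset$ follow routinely from $u \ne 0$. Thus $(N_{\Sigma,c}(t))_{t\in\R}$ is a properly nested family modelled on $D$ (in the parameter direction fixed by the sign of $u$), which is exactly the nested normal set property for $(\beta_v)_v$.

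I expect the only step needing genuine care to be the linear independence of $\tilde\beta_{v_1}|_\Sigma, \dots, \tilde\beta_{v_r}|_\Sigma$ inside $V$: the apartment $\Sigma$ has dimension $\sum_{s \in S}\rk_{k_s}\bfG$, which is typically strictly larger than $r$, so the independence is not formal and must be traced back to the vectors in question spanning the weight basis of the rational maximal split torus. Granting this, the proposition is a direct application of Lemma~\ref{lem:rescale} together with the propagation step above and Lemma~\ref{lem:all_chambers_equal}.
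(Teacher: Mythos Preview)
Your proposal is correct and follows essentially the same route as the paper: fix one pair $(\Sigma,c)$, apply Lemma~\ref{lem:rescale} to obtain the constants $s_i$, and then transport the conclusion to all other pairs via Lemma~\ref{lem:all_chambers_equal}. You supply two pieces of detail that the paper leaves implicit --- the verification that the linear parts of the $\tilde\beta_{v_i}|_\Sigma$ are independent, and the explicit translation argument upgrading the single inclusion $N_{\Sigma,c}(1) \subseteq \mathring{N}_{\Sigma,c}(0)$ to proper nestedness of the full family --- but these are exactly the points one would have to check when unpacking the paper's terser invocation of Lemma~\ref{lem:rescale}.
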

\begin{proof}
We first fix a chamber $c$ in the boundary of some apartment $\Sigma$. By Lemma~\ref{lem:rescale} there exist constants $s_i > 0, i \in \typ(\Delta)$ such that the rescaled Busemann functions $s_{\typ(v)}\tilde{\beta}_v$ for $v \in \Vt(c)$ satisfy $N_{\Sigma,c}(s)$ contains $N_{\Sigma,c}(t)$ in its interior whenever $s < t$. It then follows from Lemma~\ref{lem:all_chambers_equal} that the same holds for any pair $(\Sigma', c')$ where $\Sigma'$ is an apartment and $c'$ is a chamber in its boundary. By definition, this means that $(\beta_v)_{v \in \Delta}$ has the nested normal set property.
\end{proof}
The reason that we are interested in the nested normal cone property is that it allows us to relate the normal sets  $N_{\Sigma,c}(t)$ with the sets
\[
Z_{\Sigma,c}(t) \defeq \{x \in \Sigma \mid \forall i \in \{1, \dots, r\}:\; \mu_i^c(x) \ge t\}
\]
induced by dual invariant horofunctions. 
\begin{lemma}\label{lem:roots_versus_normal_cones}
Assume that $(\beta_v)_{v \in \Vt(\Delta)}$ have the nested normal cones property. Then for every $t \in \R$ there exist $t_0, t_1 \in \R$ such that for every apartment $\Sigma$ and every chamber $c \subseteq \partial \Sigma$,
\[
Z_{\Sigma,c}(t_0) \subseteq N_{\Sigma,c}(t) \subseteq Z_{\Sigma,c}(t_1) \quad \text{and} \quad
N_{\Sigma,c}(t_0) \subseteq Z_{\Sigma,c}(t) \subseteq N_{\Sigma,c}(t_1)\text{.}
\]
\end{lemma}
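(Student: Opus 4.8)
The plan is to recognize both families $(N_{\Sigma,c}(t))_{t\in\R}$ and $(Z_{\Sigma,c}(t))_{t\in\R}$ as properly nested families of generalized cones modeled on one and the same generalized cone, and then to invoke Lemma~\ref{ConeInclusion} — twice, with the roles of the two families exchanged — to produce the parameters $t_0,t_1$; the uniformity over all pairs $(\Sigma,c)$ will then come for free from Lemma~\ref{lem:all_chambers_equal}.

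So I would first fix an apartment $\Sigma$ with underlying Euclidean vector space $V$ and a chamber $c\subseteq\partial\Sigma$, and let $v_1,\dots,v_r$ denote the vertices of $c$, with $v_i$ of type $i$. The functions $\mu_1^c,\dots,\mu_r^c$ are affine on $\Sigma$ and non-constant, since $\bfG$ is $k$-isotropic and hence the simple roots $\alpha_i^{\bfP_c}$ are non-trivial; by Example~\ref{ex:busemann_nested_cones} applied to the forms $-\mu_i^c$, the sets $Z_{\Sigma,c}(-s)$ thus form a properly nested family of generalized cones. On the other side, $(N_{\Sigma,c}(t))_t$ is automatically a family of generalized cones — the normal sets of the tips of the properly nested family $(Y_{\Sigma,c}(t))_t$ — and it is \emph{properly} nested precisely because of the standing nested normal cones hypothesis. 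Reparametrizing by $s=-t$, I then have two properly nested families $(N_{\Sigma,c}(-s))_s$ and $(Z_{\Sigma,c}(-s))_s$ of generalized cones.

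The heart of the argument is checking that these are modeled on the same cone. By \eqref{eq:busemann_trafo} the linear part of $\beta_{v_i}$ is a positive multiple of the canonical character $\chi_i^{\bfP_c}$, so — identifying $V^*$ with $V$ via the scalar product — the vertex direction $v_i\in\partial\Sigma$ is a positive multiple of $\chi_i^{\bfP_c}$; similarly by \eqref{eq:root_trafo} the linear part of $\mu_i^c$ is a positive multiple of the simple root $\alpha_i^{\bfP_c}$. Let $L\subseteq V$ be the orthogonal complement of $\operatorname{span}(v_1,\dots,v_r)=\operatorname{span}(\chi_1^{\bfP_c},\dots,\chi_r^{\bfP_c})=\operatorname{span}(\alpha_1^{\bfP_c},\dots,\alpha_r^{\bfP_c})$ and put $E\defeq L^\perp$. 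Both $N_{\Sigma,c}(t)$ and $Z_{\Sigma,c}(t)$ have lineality space $L$, so it is enough to compare their images in $E$. That of $Z_{\Sigma,c}(t)$ is a translate of $\{w\in E\mid\langle\alpha_i^{\bfP_c},w\rangle\ge 0\ \text{for all }i\}$, which by the relations $\langle\alpha_i,\chi_j\rangle=0$ for $i\ne j$ and $\langle\alpha_i,\chi_i\rangle>0$ from Lemma~\ref{lem:root_weight} is precisely the closed Weyl chamber $\operatorname{cone}(\chi_1^{\bfP_c},\dots,\chi_r^{\bfP_c})$. For $N_{\Sigma,c}(t)$, the tip $F_{\Sigma,c}(t)$ of $Y_{\Sigma,c}(t)$ is an affine translate of $L$, and modulo $L$ the cone $Y_{\Sigma,c}(t)$ is the simplicial cone in $E$ cut out by the $r$ inequalities $\beta_{v_i}\le t$ with outward normals the $v_i$; hence $\NC(F_{\Sigma,c}(t))$ is the normal cone at a vertex of that simplicial cone, which equals $\operatorname{cone}(v_1,\dots,v_r)=\operatorname{cone}(\chi_1^{\bfP_c},\dots,\chi_r^{\bfP_c})$ and has dimension $r=\dim E$ in accordance with \eqref{DimNormalCone}. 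Therefore $N_{\Sigma,c}(t)=F_{\Sigma,c}(t)+\NC(F_{\Sigma,c}(t))$ and $Z_{\Sigma,c}(t)$ are generalized cones with the same lineality $L$ and the same proper part $\operatorname{cone}(\chi_1^{\bfP_c},\dots,\chi_r^{\bfP_c})$, i.e.\ they are modeled on the common generalized cone $L+\operatorname{cone}(\chi_1^{\bfP_c},\dots,\chi_r^{\bfP_c})$.

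With this in hand, applying Lemma~\ref{ConeInclusion} to $(N_{\Sigma,c}(-s))_s$ and $(Z_{\Sigma,c}(-s))_s$ — and once more with the two families interchanged — and translating back to the $t$-parametrization produces, for the fixed pair $(\Sigma,c)$, reals $t_0,t_1$ (take the extremes of the parameters from the two applications) with $Z_{\Sigma,c}(t_0)\subseteq N_{\Sigma,c}(t)\subseteq Z_{\Sigma,c}(t_1)$ and $N_{\Sigma,c}(t_0)\subseteq Z_{\Sigma,c}(t)\subseteq N_{\Sigma,c}(t_1)$. Finally, these same $t_0,t_1$ work for every other pair $(\Sigma',c')$: by Lemma~\ref{lem:all_chambers_equal} there is an isometry $\iota\colon\Sigma\to\Sigma'$ intertwining $\beta_{v_i}$ with $\beta_{v_i'}$ and $\mu_i^c$ with $\mu_i^{c'}$ for all $i$, and such an isometry carries $Y_{\Sigma,c}(t)$, $N_{\Sigma,c}(t)$, $Z_{\Sigma,c}(t)$ onto $Y_{\Sigma',c'}(t)$, $N_{\Sigma',c'}(t)$, $Z_{\Sigma',c'}(t)$ respectively, so all four inclusions transport verbatim. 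I expect the only real difficulty to be the convex-geometry bookkeeping in the third step — tracking the lineality $L$ coming from the non-$k$-rational directions of the product apartment $\Sigma$, and fixing orientations so that the normal cone of the tip of $Y_{\Sigma,c}(t)$ genuinely coincides with, rather than being opposite to, the recession cone of $Z_{\Sigma,c}(t)$; this is exactly the point at which Lemma~\ref{lem:root_weight} is indispensable.
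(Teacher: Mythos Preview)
Your proposal is correct and follows essentially the same route as the paper: fix one pair $(\Sigma,c)$, recognize both $(N_{\Sigma,c}(t))_t$ and $(Z_{\Sigma,c}(t))_t$ as properly nested families of generalized cones, show they are modeled on the same cone, invoke Lemma~\ref{ConeInclusion}, and transport to all pairs via Lemma~\ref{lem:all_chambers_equal}. The only cosmetic difference is in the verification that the two underlying cones agree: the paper argues directly that $N_{\Sigma,c}(0)=Z_{\Sigma,c}(0)$ by choosing the origin on the common zero level and reading off the relation between the linear parts of the $\tilde\beta_v$ and the $\mu_i^c$, whereas you compute both cones explicitly as $L+\operatorname{cone}(\chi_1^{\bfP_c},\dots,\chi_r^{\bfP_c})$ using the orthogonality relations of Lemma~\ref{lem:root_weight}; both amount to the same duality between simple roots and fundamental weights.
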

\begin{proof} Firstly,  if we prove the statement for some pair $(\Sigma,c)$, then it will hold for all such pairs by Lemma~\ref{lem:all_chambers_equal}, hence we assume from now on that the pair $(\Sigma, c)$ is fixed.

Next we observe that $(Z_{\Sigma,c}(t))$ is a properly nested family of generalized cones, modeled on the generalized cone
\[
C'_{\Sigma, c} \defeq \{v \in V \mid \forall i \in \{1, \dots, r\}:\; \mathring \mu_i^c(v) \geq 0\},
\]
and by assumption $(N_{\Sigma,c}(t))_{t \in \R}$ is a properly nested family of generalized cones, modeled on the normal cone of $C_{\Sigma, c}$. In view of Lemma \ref{ConeInclusion} it thus suffices to show that these two cones coincide; we prove this by showing that $N_{\Sigma,c}(0) = Z_{\Sigma,c}(0)$.

Note that $\tilde{\beta}_v \ge 0$ if and only if $\beta_v \ge 0$ so as far as the cones with $t = 0$ are concerned, we may work with $\tilde{\beta}_v$ instead of $\beta_v$. We regard the Euclidean space $\Sigma$ as a vector space by picking an origin in the subspace $\{x \in \Sigma \mid \tilde{\beta}_v(x) = 0 \text{ for all } v \in \Vt(c)\}$. Then the restrictions of $\tilde{\beta}_v$ and $\mu_i^c$ can be regarded as elements of the dual vector space. The transformation matrix between them is the matrix $(c_{ij})_{ij}$ from \eqref{RootsVsCharacters}. Now $c_{ij} = 0$ for $i \ne j$ and $c_{ii} > 0$. Therefore, for $v \in \Vt(c)$ of type $i$ and $x \in \Sigma$ the condition $\mu_i^c(x) \ge 0$ is equivalent to the condition $\tilde{\beta}_v(\pr_{Y_{\Sigma,t}(0)}) \ge 0$. This shows that $N_{\Sigma,c}(0) = Z_{\Sigma,c}(0)$ and finishes the proof.
\end{proof}

\subsection{Reduction data}
We can now reformulate $S$-adic reduction theory in geometric terms. The key notion is that of a \emph{reduction datum} from \cite{bux13} (with some necessary modification, see the discussion in Subsection \ref{CompareBKW} below).

Let $\tau$ be a simplex in $\Delta$ and let $x \in X_S$. There then exists an apartment $\Sigma$ with $x \in \Sigma$ and $\tau \subset \partial \Sigma$. For every $t \in \R$ the set $Y_{\Sigma, \tau}(t)$ is a closed convex subset of $\Sigma$ and it follows from Lemma \ref{lem:all_chambers_equal} that for every vertex $v \in \tau$ the value $\beta_v(\pr_{Y_{\Sigma,\tau}(t)})$ is independent of the chosen apartment $\Sigma$. In particular, the set 
\[
\mathcal V(\sigma_t(x,\tau)) \defeq \{v\text{ vertex of } \tau \mid \beta_v(\pr_{Y_{\Sigma,\tau}(t)}) = t\}
\]
is independent of the choice of apartment $\Sigma$, and it is the set of vertices of a unique subsimplex $\sigma_t(x,\tau)$ of $\tau$. By constriction, $\sigma_t(x,\tau)$ is the largest face of $\tau$ such that $x$ is contained in $N_{\Sigma,\tau}(t)$ for some (hence any) apartment $\Sigma$ as above.
\begin{observation} Given a chamber $c \subset \Delta$ and a point $x \in X$ we say that an apartment $\Sigma \subset X$ is a \emph{$(c,x)$-apartment} if $x \in \Sigma$ and $c \subset \partial \Sigma$. Then
the following are equivalent:
\begin{enumerate}[(i)]
\item $\sigma_t(x, c) = c$.
\item $\beta_v(\pr_{Y_{\Sigma,\tau}(t)}) = t$ for every vertex $v \in c$ and some (hence any) $(c,x)$-apartment $\Sigma$.
\item $\pr_{Y_{\Sigma,\tau}(t)}$ is contained in the tip of $Y_{\Sigma, \tau}$ for some (hence any) $(c,x)$-apartment $\Sigma$.
\item $x \in N_{\Sigma, c}(t)$ for some (hence any) $(c,x)$-apartment $\Sigma$.
\end{enumerate}
In this case we say that the chamber $c$ \emph{$t$-reduces} $x$.
\end{observation}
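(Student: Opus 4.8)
The plan is to unwind the definitions: all four conditions are merely rephrasings of the single assertion that the closest-point projection of $x$ onto $Y_{\Sigma,c}(t)$ lands in the tip $F_{\Sigma,c}(t)$ of that generalized cone. The only structural input is the independence of this information from the choice of $(c,x)$-apartment, which has already been recorded in the paragraph preceding the statement: by Lemma~\ref{lem:all_chambers_equal}, the numbers $\beta_v(\pr_{Y_{\Sigma,c}(t)}(x))$ (for $v$ a vertex of $c$) do not depend on the $(c,x)$-apartment $\Sigma$. Granting this, every ``some (hence any)'' clause is legitimate, and I would fix one $(c,x)$-apartment $\Sigma$ for the rest of the argument.

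For (i)$\iff$(ii): by construction $\sigma_t(x,c)$ is the face of $c$ whose vertex set is $\mathcal V(\sigma_t(x,c)) = \{v \text{ vertex of } c \mid \beta_v(\pr_{Y_{\Sigma,c}(t)}(x)) = t\}$, a subset of the vertex set of $c$. Since a face of $c$ is determined by its vertices, $\sigma_t(x,c) = c$ holds exactly when this set exhausts $\mathcal V(c)$, i.e.\ when $\beta_v(\pr_{Y_{\Sigma,c}(t)}(x)) = t$ for every vertex $v$ of $c$; this is condition~(ii).

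For (ii)$\iff$(iii): recall that the tip of $Y_{\Sigma,c}(t)$ is $F_{\Sigma,c}(t) = \{y \in \Sigma \mid \beta_v(y) = t \text{ for every vertex } v \text{ of } c\}$, so $\pr_{Y_{\Sigma,c}(t)}(x) \in F_{\Sigma,c}(t)$ says precisely that $\beta_v$ takes the value $t$ at this point for all vertices $v$ of $c$, which is (ii). For (iii)$\iff$(iv): applying the description of the normal set of a face $F$ of a polyhedron $P$ as $p_P^{-1}(F)$ (the remark following the definition of the normal set) to $P = Y_{\Sigma,c}(t)$ and its tip $F = F_{\Sigma,c}(t)$ gives $N_{\Sigma,c}(t) = \pr_{Y_{\Sigma,c}(t)}^{-1}(F_{\Sigma,c}(t))$; hence $x \in N_{\Sigma,c}(t)$ if and only if $\pr_{Y_{\Sigma,c}(t)}(x) \in F_{\Sigma,c}(t)$, which is (iii). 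Chaining these equivalences finishes the proof.

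I do not expect a genuine obstacle here: the content is entirely definitional. The one point that needs to be flagged rather than proved afresh is the well-definedness across $(c,x)$-apartments, and this is exactly what the homogeneity statement of Lemma~\ref{lem:all_chambers_equal} (invoked just before the Observation) supplies; everything else is substitution of one defining formula into another.
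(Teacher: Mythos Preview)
Your proposal is correct. The paper does not supply a proof for this Observation at all---it is stated as self-evident from the definitions, with the sentence immediately preceding it (``By construction, $\sigma_t(x,\tau)$ is the largest face of $\tau$ such that $x$ is contained in $N_{\Sigma,\tau}(t)$ for some (hence any) apartment $\Sigma$ as above'') already recording the essential content of (i)$\iff$(iv). Your unwinding of the chain (i)$\iff$(ii)$\iff$(iii)$\iff$(iv) is exactly the intended justification and uses precisely the ingredients available: the definition of $\mathcal V(\sigma_t(x,c))$, the description of the tip $F_{\Sigma,c}(t)$, and the characterization $N_F = p_P^{-1}(F)$ for the minimal face (where $\mathring F = F$).
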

We can now state the main definition of this section.
\begin{definition}\label{def:reduction_datum}
Let $R > r> 0$. Then $((\beta_v)_{v \in \Vt(\Delta)}, r, R)$ is called a \emph{reduction datum} if $\beta_v$ is a rescaled Busemann function centered at $v$ for every $v \in \Vt(\Delta)$ and 
 the following properties hold:
\begin{enumerate}
\item the family $(\beta_v)_{v \in \Vt(\Delta)}$ satisfies the nested normal set property;\label{item:reduction_datum_normal_cones}
\item for every point $x \in X$ there is a chamber $c \in \Ch(\Delta)$ that $r$-reduces $x$;\label{item:reduction_datum_existence}
\item for any chamber $c$ that $r$-reduces $x$, the simplex $\sigma_R(x,c)$ is contained in every chamber $c'$ that $r$-reduces $x$.\label{item:reduction_datum_uniqueness}
\end{enumerate}
We say that a reduction datum is \emph{$d$-uniform} if for every subset $B$ of $X$ of diameter at most $d$ there is a chamber $c$ that reduces every point in $B$. Given such a reduction datum and $t \in \R$ we then define
\[
Y_t \defeq \left\{x \in X \mathrel{\Big\vert} \beta_v(x) \le t\ \substack{\textstyle\text{ for some chamber }c\text{ that $r$-reduces }x\\[.3em]\textstyle\text{ and all vertices }v\text{ of }c}\right\}\text{.}
\]
\end{definition}
It follows from \eqref{item:reduction_datum_existence} every reduction datum is $0$-uniform and that the sets $Y_t$ cover $X$.

\begin{definition} Let  $\mathcal R=((\beta_v)_{v \in \Vt(\Delta)}, r, R)$ be a \emph{reduction datum} and let $\Lambda \subset \bfG_S$ be an approximate subgroup.  Then $\mathcal R$ is
\begin{itemize}
\item \emph{$\Lambda$-quasi invariant} if there is a constant $c \ge 0$ such that
\[
\beta_v(x) - c \le \beta_{\gamma.v}(\gamma.x) \le \beta_v(x) + c \quad \text{ for all }x\in X, \gamma \in \Lambda;
\]
\item \emph{$\Lambda$-cobounded} if $Y_t$ is at bounded distance from some (hence any) quasi-orbit $\Lambda.o$ of $\Lambda$ in $X$.
\end{itemize}
\end{definition}
In \cite{bux13} the notions of an \emph{invariant}, respectively \emph{cocompact} reduction datum was introduced for a subgroup $\Gamma < \bfG_S$. The above notions, which apply in the wider context of approximate subgroups, are more general, i.e.\ in the group case invariance implies quasi-invariance and cocompactness implies coboundedness. We also note:

\begin{observation}\label{obs:cobounded_orbit_distance}
Let $\Lambda \subseteq \bfG_S$ be an approximate subgroup. A reduction datum is $\Lambda$-quasi invariant and $\Lambda$-cobounded if and only if for sufficiently large $t$ the space $Y_t$ has finite Hausdorff distance from some (hence any) quasi-orbit $\Lambda.o$.
\end{observation}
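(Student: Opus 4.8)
The plan is to translate both conditions into metric statements about the increasing filtration $(Y_t)_{t\in\R}$ of $X_S$ (recall $\bigcup_tY_t=X_S$ by property~(2) of a reduction datum) and then check the two implications directly. A preliminary step I would establish is the structural estimate: there is a constant $L\ge 1$ with $Y_t\subseteq N_{L(t-s)}(Y_s)$ for all $r\le s\le t$. This comes from the cone structure: if $x\in Y_t$ is witnessed by a chamber $c$ that $r$-reduces $x$ in a $(c,x)$-apartment $\Sigma$, then $x\in N_{\Sigma,c}(r)\subseteq N_{\Sigma,c}(s)$, so $\pr_{Y_{\Sigma,c}(s)}(x)$ lands in the tip $F_{\Sigma,c}(s)$; this projected point is therefore still $r$-reduced by $c$, it lies in $Y_s$, and it is within distance $O(t-s)$ of $x$ (with a constant independent of $(\Sigma,c)$ by Lemma~\ref{lem:all_chambers_equal}). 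With this in hand, ``for all sufficiently large $t$'' and ``for one sufficiently large $t_0$'' become equivalent, so it suffices to work with a single level.

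For $(\Rightarrow)$ I would argue as follows. $\Lambda$-coboundedness is exactly the statement that $Y_t\subseteq N_{D(t)}(\Lambda.o)$ for every $t$, so the only thing to prove is that $\Lambda.o$ lies in a bounded neighbourhood of some $Y_{t_*}$. Pick $t_0\ge r$ with $o\in Y_{t_0}$, witnessed by a chamber $c$ that $r$-reduces $o$ in a $(c,o)$-apartment $\Sigma$; then $o\in N_{\Sigma,c}(r)$ and $\beta_v(o)\le t_0$ for every vertex $v$ of $c$. Fix $\gamma\in\Lambda$. Since the $\Lambda$-action on $\Delta\subseteq\partial X_S$ is the restriction of the isometric action on $X_S$, the set $\gamma.\Sigma$ is an apartment with $\gamma.c\subseteq\partial(\gamma.\Sigma)$. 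Now $N_{\gamma.\Sigma,\gamma.c}(r)$ is the Minkowski sum of its tip $F_{\gamma.\Sigma,\gamma.c}(r)$ with the cone $\NC(C_{\gamma.\Sigma,\gamma.c})$, and the same holds for $\gamma.N_{\Sigma,c}(r)$ with the \emph{same} cone (it depends only on the directions $\gamma.v$, $v\in c$); since $\beta_v\equiv r$ on $F_{\Sigma,c}(r)$, $\Lambda$-quasi-invariance with constant $c_0$ forces $|\beta_{\gamma.v}(\gamma.x)-r|\le c_0$ there, so $\gamma.F_{\Sigma,c}(r)$, hence $\gamma.N_{\Sigma,c}(r)$, is within Hausdorff distance $O(c_0)$ of $N_{\gamma.\Sigma,\gamma.c}(r)$. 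As $\gamma.o\in\gamma.N_{\Sigma,c}(r)$, there is a point $p\in N_{\gamma.\Sigma,\gamma.c}(r)$ with $d(\gamma.o,p)=O(c_0)$. This $p$ is $r$-reduced by $\gamma.c$ (with apartment $\gamma.\Sigma$), and each vertex $w=\gamma.v$ of $\gamma.c$ satisfies $\beta_w(p)\le\beta_{\gamma.v}(\gamma.o)+O(c_0)\le\beta_v(o)+O(c_0)\le t_0+O(c_0)=:t_*$; hence $p\in Y_{t_*}$ and $\gamma.o\in N_{O(c_0)}(Y_{t_*})$, with $t_*$ and the neighbourhood radius independent of $\gamma$. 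Combined with coboundedness this gives $d_{\mathrm{Haus}}(Y_t,\Lambda.o)<\infty$ for all $t\ge t_*$.

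For $(\Leftarrow)$ I would first dispatch $\Lambda$-coboundedness: if $d_{\mathrm{Haus}}(Y_{t_0},\Lambda.o)=D<\infty$ for some $t_0\ge r$, then $Y_t\subseteq Y_{t_0}\subseteq N_D(\Lambda.o)$ for $t\le t_0$, while the structural estimate gives $Y_t\subseteq N_{L(t-t_0)+D}(\Lambda.o)$ for $t\ge t_0$. For $\Lambda$-quasi-invariance, the key observation is that $\Lambda.o$ is \emph{uniformly} coarsely $\Lambda$-invariant: from $\Lambda^2\subseteq\Lambda F_\Lambda$ one gets $\gamma.(\Lambda.o)=(\gamma\Lambda).o\subseteq\Lambda.(F_\Lambda.o)\subseteq N_{R_0}(\Lambda.o)$ for all $\gamma\in\Lambda$, with $R_0=\max_{f\in F_\Lambda}d(f.o,o)$, and applying the isometry $\gamma$ to the corresponding inclusion for $\gamma^{-1}\in\Lambda$ gives the reverse, so $d_{\mathrm{Haus}}(\gamma.(\Lambda.o),\Lambda.o)\le R_0$. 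Since $\gamma$ acts isometrically, this yields $d_{\mathrm{Haus}}(\gamma.Y_{t_0},Y_{t_0})\le 2D+R_0=:B$, uniformly in $\gamma$. Finally I would convert this into a pointwise bound: inside any apartment $\Sigma$ with a chamber $c\ni v$ in $\partial\Sigma$, along the cusp direction pointing to $v$ the region $Y_{t_0}$ is bounded by the level set $\{\beta_v=t_0\}$ (the other defining inequalities being slack on the relative interior of the relevant boundary facet); since $\beta_v|_\Sigma$ is affine with the \emph{prescribed} linear part $\mathring\beta_v$, that boundary piece, known up to Hausdorff error $\eta$, determines $\beta_v|_\Sigma$ up to an additive $O(\eta)$. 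Applying this to $\gamma.\Sigma,\gamma.c,\gamma.v$ and using $d_{\mathrm{Haus}}(\gamma.Y_{t_0},Y_{t_0})\le B$ gives $|\beta_{\gamma.v}(\gamma.x)-\beta_v(x)|\le c_0$ with $c_0=O(B)$ independent of $v,\gamma$ (only finitely many vertex types, and the facet–$\beta_v$ relation is the same around every chamber by Lemma~\ref{lem:all_chambers_equal}); as every point of $X_S$ lies in such an apartment, $\mathcal R$ is $\Lambda$-quasi-invariant.

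The step I expect to be the main obstacle is the last one of $(\Leftarrow)$: recovering a pointwise bound on the affine functions $\beta_v$ from the mere Hausdorff-closeness of the two \emph{unbounded}, cone-shaped regions $\gamma.Y_{t_0}$ and $Y_{t_0}$. This is a convex-geometric rigidity argument, and the point that makes it go through with a $\gamma$-uniform constant is precisely that each $\beta_v$ is affine on apartments with a fixed linear part, so a single level set — read off from the appropriate boundary facet of $Y_{t_0}$ — already pins it down and one never has to let the level $t$ grow.
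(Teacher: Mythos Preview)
The paper records this statement as an \emph{Observation} and gives no proof; only the forward implication is ever used (in Section~\ref{sec:proof_main}, to identify the filtration $(Y_t)_{t\ge R}$ with $(N_r(\Lambda.o))_{r\ge 0}$). Your argument for $(\Rightarrow)$ is correct and considerably more detailed than what the authors presumably had in mind. Two minor remarks: the structural estimate $Y_t\subseteq N_{L(t-s)}(Y_s)$ is not actually needed --- once $\Lambda.o\subseteq N_D(Y_{t_*})$ and coboundedness gives $Y_t\subseteq N_{D'(t)}(\Lambda.o)$, finite Hausdorff distance for all $t\ge t_*$ follows from $Y_{t_*}\subseteq Y_t$; and your sentence ``the $\Lambda$-action on $\Delta\subseteq\partial X_S$ is the restriction of the isometric action on $X_S$'' is correct (since $\gamma\in\bfG(k)\subseteq\bfG(k_s)$ acts compatibly on $\Delta_k\subseteq\Delta_{k_s}$), but deserves a word given Remark~\ref{NonCommutingDiagram}, which addresses a different incompatibility (the adelic embedding, which is what makes the $\beta_v$ only quasi-invariant).

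Your $(\Leftarrow)$ argument for quasi-invariance has a genuine gap. The rigidity step asserts that because two affine hyperplanes in $\gamma.\Sigma$ --- the relevant boundary pieces of $Y_{t_0}$ and of $\gamma.Y_{t_0}$ --- are Hausdorff-close, the affine functions $\beta_{\gamma.v}$ and $\beta_v\circ\gamma^{-1}$ differ by a bounded additive constant. But this inference requires the two functions to have the \emph{same} linear part, not merely the same center: two parallel hyperplanes at distance $B$ are level sets of affine functions that differ by an arbitrary multiplicative factor. Definition~\ref{def:reduction_datum} only demands that each $\beta_v$ be \emph{some} rescaled Busemann function centered at $v$; it does not force $\beta_{\gamma.v}$ and $\beta_v\circ\gamma^{-1}$ to share a scaling. (In the paper's actual construction this holds via Lemma~\ref{lem:all_chambers_equal}, and indeed the ``hence any'' in the nested-normal-set property tacitly relies on that lemma, but for a general reduction datum it is an extra hypothesis.) A second, smaller issue is your identification of the boundary of $Y_{t_0}\cap\Sigma$ in the cusp direction with the level set $\{\beta_v=t_0\}$: since membership in $Y_{t_0}$ is defined via ``some chamber that $r$-reduces $x$'', describing $\partial Y_{t_0}$ in an apartment is not immediate, and one has to invoke the uniqueness clause~(3) of Definition~\ref{def:reduction_datum} carefully (far enough in the cusp, every reducing chamber contains $v$, so the constraint $\beta_v\le t_0$ is forced regardless of which reducing chamber witnesses it). You flag this step as the main obstacle, and rightly so; as written it is a sketch rather than a proof, and without an equal-scaling hypothesis the conclusion may simply fail.
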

We can now state the fundamental theorems of $S$-adic reduction theory (Theorem \ref{thm:lower_s}) and Mahler's compactness criterion (Theorem \ref{thm:mahler_s}) in the following geometric form; recall that $\Lambda \defeq \bfG(\calO_S) \subset \bfG_S$.
\begin{theorem}[Existence of good reduction data]\label{thm:reduction_datum}
In the setting of Convention~\ref{ConventionReductiveCAP}, there exists a $d$-uniform $\Lambda$-quasi invariant reduction datum for every $d>0$.
If $\X[k](\bfG) = 0$ then it can be chosen to be $\Lambda$-cobounded.
\end{theorem}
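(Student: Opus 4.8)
The plan is to exhibit $((\beta_v)_{v\in\Vt(\Delta)},r,R)$ as the desired reduction datum, where $(\beta_v)_v$ is the rescaled family \eqref{eq:beta_scaling} with the constants $s_i$ fixed once and for all by Proposition~\ref{prop:nested_normal_cones}, and where $r<R$ are chosen at the end. Two of the required properties come essentially for free. Since $\pi$ is normalized (Proposition~\ref{ExNIHoro}, Convention~\ref{ConventionReductiveCAP}), each $\beta_v$ is a rescaled Busemann function centered at $v$, and by the very choice of the $s_i$ the family has the nested normal set property, so property~\eqref{item:reduction_datum_normal_cones} of Definition~\ref{def:reduction_datum} holds. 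And $\Lambda$-quasi-invariance is immediate from Theorem~\ref{thm:busemann_family}: as $\bfG(k)$ acts type-preservingly on $\Vt(\Delta)$, the rescaling constant in $\beta_{\lambda.v}$ is the same as in $\beta_v$, so $c\defeq(\max_i s_i)\,b$ witnesses quasi-invariance (with $b=0$ when $S\supseteq V\inf$). It therefore remains to pin down $r$ and $R$ so that properties~\eqref{item:reduction_datum_existence} and~\eqref{item:reduction_datum_uniqueness} hold, the datum is $d$-uniform, and---when $\X[k](\bfG)=0$---it is $\Lambda$-cobounded.

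The bridge to $S$-adic reduction theory is the following dictionary. For $g.o_S\in\calK_S$ one has $\mu_i^c(g.o_S)=\log\nu_i(\bfP_c,{}^{(g,e_H)}C)$ (the extra rescaling drops out of $\mu_i^c$), so membership $g.o_S\in Z_{\Sigma,c}(\log c_1)$ is exactly $g\in\Omega^{\bfP_c,S}_{c_1}$, and likewise $g.o_S\in Z_{\Sigma,c}(\log c_1)\cap\bigcap_i\{\mu_i^c\le\log c_2\}$ is $g\in\Omega^{\bfP_c,S}_{c_1,c_2}$. By Lemma~\ref{lem:roots_versus_normal_cones} these $Z$-sets are sandwiched, up to a bounded shift of the level, between the normal sets $N_{\Sigma,c}(t)$, which are precisely the regions $t$-reduced by $c$. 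All of this, as well as the \emph{nesting rate}---that $N_{\Sigma,c}(s)$ has distance $\geq\lambda(t-s)$ from $\partial N_{\Sigma,c}(t)$ for some fixed $\lambda>0$ whenever $s<t$---is uniform over $(\Sigma,c)$ by Lemma~\ref{lem:all_chambers_equal}.

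For existence and $d$-uniformity I would proceed as follows. Fix a lower reduction bound $c_1$ and an upper reduction bound $c_2$. By Theorem~\ref{thm:lower_s}(i), $\bfG_S=\bfG(\calO_S)F\,\Omega^{\bfP,S}_{c_1}$ for a fixed minimal $k$-parabolic $\bfP$ with chamber $c_0$ and a finite set $F\subseteq\pi_S(\bfG(k))$; writing $g=\lambda f\omega$, the point $\omega.o_S$ lies in $Z_{\Sigma,c_0}(\log c_1)$ and hence is reduced by $c_0$ at a fixed level. Left-translation by $f$ relates, up to an additive constant, each of the finitely many functions $\tilde\beta_{{}^{\gamma_f}v,S}\circ f$ ($v\in\Vt(c_0)$) to $\tilde\beta_{v,S}$, and by invariance of $\pi$ that constant depends only on $\pi_{V\setminus S}(\gamma_f)$ and on $v$, hence ranges over a finite set; so $f\omega.o_S$ is reduced by ${}^{\gamma_f}c_0$ at a fixed level, and applying $\Lambda$-quasi-invariance, $g.o_S=\lambda.(f\omega.o_S)$ is reduced by $\lambda.{}^{\gamma_f}c_0$ at a fixed level---hence, by the nested normal set property, at every larger one. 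Using coarse density of $\calK_S$ in $X_S$ together with the uniform nesting rate (a point reduced at level $t$ lies at distance $\geq\lambda^{-1}(t'-t)$ from $\partial N_{\Sigma,c}(t')$), we get a fixed $r_{\min}$ such that every point of $X_S$ is $r_{\min}$-reduced by some chamber. Taking $r\defeq r_{\min}+\lambda^{-1}d$ then yields property~\eqref{item:reduction_datum_existence} (existence is monotone in $r\geq r_{\min}$), and since each $r_{\min}$-reduced point then lies at distance $\geq d$ from the boundary of its $N_{\Sigma,c}(r)$, also $d$-uniformity. For uniqueness, if $c,c'$ both $r$-reduce $x$, write $x=g.o_S$ up to coarse density; Lemma~\ref{lem:roots_versus_normal_cones} puts $g$ in $\Omega^{\bfP_c,S}_{\hat c_1}\cap\Omega^{\bfP_{c'},S}_{\hat c_1}$ for a lower reduction bound $\hat c_1$ depending on $r$, while a type-$i$ vertex $v\in\sigma_R(x,c)$ forces $\nu_i(\bfP_c,{}^{(g,e_H)}C)$ past a threshold growing with $R$; choosing $R$ large enough that this threshold beats the upper reduction bound attached to $\hat c_1$, Theorem~\ref{thm:lower_s}(ii) gives $(\bfP_c)_i=(\bfP_{c'})_i$, i.e.\ $v$ is a vertex of $c'$, which is property~\eqref{item:reduction_datum_uniqueness}.

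For the last claim, assume $\X[k](\bfG)=0$. If $x\in Y_t$, some chamber $c$ $r$-reduces $x$ and $\beta_v(x)\leq t$ for all $v\in c$; since each $\beta_v$ with $v\in c$ is a nonnegative combination of the $\mu_i^c$ with positive diagonal coefficient (see \eqref{eq:mu_to_beta} and Lemma~\ref{lem:root_weight}), and $r$-reduction bounds the $\mu_i^c(x)$ from below (Lemma~\ref{lem:roots_versus_normal_cones}), these inequalities trap each $\mu_i^c(x)$ in a bounded interval $[\,\mathrm{const},\log c'(t)\,]$. Hence $x$ lies within a fixed distance of $\bigcup_\bfP\Omega^{\bfP,S}_{c_1,c'(t)}.o_S$, which by Corollary~\ref{cor:mahler_s} lies within bounded distance of $\Lambda.o_S$; so $Y_t\subseteq N_\rho(\Lambda.o_S)$ for all sufficiently large $t$, and together with quasi-invariance and Observation~\ref{obs:cobounded_orbit_distance} this yields $\Lambda$-coboundedness. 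The hard part is not any single step---all the ingredients sit in the preceding sections, following \cite[Section~12]{bux13}---but the bookkeeping: keeping the constants from Lemmas~\ref{lem:roots_versus_normal_cones} and~\ref{lem:all_chambers_equal} genuinely uniform over all chambers, correctly tracking the opposed parametrizations of the families $(N_{\Sigma,c}(t))_t$ and $(Z_{\Sigma,c}(t))_t$, and cleanly shuttling between the orbit $\calK_S$ and the full space $X_S$ (where one must use that $r$-reduced points for large $r$ sit deep inside the star of the reducing chamber).
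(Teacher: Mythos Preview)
Your proposal is correct and assembles the same ingredients as the paper: the rescaled Busemann family from Theorem~\ref{thm:busemann_family} and Proposition~\ref{prop:nested_normal_cones}, the translation between the $N$-sets and $Z$-sets via Lemma~\ref{lem:roots_versus_normal_cones} (with uniformity from Lemma~\ref{lem:all_chambers_equal}), the two fundamental reduction theorems, and Corollary~\ref{cor:mahler_s} for coboundedness. Your uniqueness and coboundedness arguments are essentially the paper's.

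The one genuine difference is the existence step. You invoke the $S$-adic Theorem~\ref{thm:lower_s}(i), write $g=\lambda f\omega$ for a \emph{fixed} chamber $c_0$, and then transport the fact that $c_0$ reduces $\omega.o_S$ to the statement that $\lambda.{}^{\gamma_f}c_0$ reduces $g.o_S$, absorbing the level shifts produced by the finite set $F$ (via an argument parallel to Corollary~\ref{cor_quasiinv1}) and by $\Lambda$ (via quasi-invariance). The paper instead applies the \emph{adelic} first fundamental theorem (Theorem~\ref{thm:lower}) directly to $(g,e_H)\in\bfG(\A)$: this yields, for each $g\in\bfG_S$ separately, a chamber $c$ with $(g,e_H)\in\Omega^{\bfP_c}_{c_1'}$, which is precisely $g.o_S\in Z_{\Sigma,c}(\log c_1')$, hence $g.o_S\in N_{\Sigma,c}(r')$ by Lemma~\ref{lem:roots_versus_normal_cones}; the passage to arbitrary $x$ and to the $d$-ball then costs only a uniform enlargement of the level via closed stars. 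This route avoids the decomposition entirely, so no quasi-invariance for $F$ is needed and the bookkeeping you rightly flag as the hard part largely disappears. Your approach has the virtue of staying inside the $S$-adic world, but since Theorem~\ref{thm:lower_s} is itself deduced from the adelic theorem in Section~\ref{sec:s-arithmetic_reduction}, nothing is gained; the paper's route is simply shorter.
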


\begin{proof} As before we denote by $(\beta_v)_{v \in \Vt(\Delta)}$ the quasi-invariant rescaled Busemann functions obtained from Theorem~\ref{thm:busemann_family}, rescaled to satisfy the nested normal set property using Proposition~\ref{prop:nested_normal_cones}. We need to determine constants $R>r>0$ such that $((\beta_v)_{v \in \Vt(\Delta)}, r, R)$ is a reduction datum with the required properties. We will obtain the constants $r$ and $R$ by taking logarithms of the constants $c_1$ and $c_2$ from Theorem~\ref{thm:lower} and Theorem~\ref{thm:uniqueness} into the constants $r$ and $R$ by taking logarithms and applying Lemma~\ref{lem:roots_versus_normal_cones}. Throughout we fix a constant $d \geq 0$.

Thus let $c_1'$ be as in Theorem~\ref{thm:lower}. Let $x \in X$ be a vertex in the $\bfG_S$-orbit of $o$. Let $g \in \bfG_S$ be such that $g.o = x$. Using Lemma~\ref{lem:roots_versus_normal_cones} we can find a constant $r'$ such that $N_{\Sigma,c}(r') \supseteq Z_{\Sigma,c}(\log c_1')$ for every pair of an apartment $\Sigma$ and a chamber $c$ in its boundary. Let $r \le r'$ be such that $N_{\Sigma,c}(r)$ contains the $d$-ball around every closed star of every vertex in $N_{\Sigma,c}(r')$ for every $\Sigma$ and $c$.

Now let $x \in X$ be arbitrary and let $y \in B_d(x)$. Let $g \in \bfG_S$ be such that $x$ lies in the closed star of $g.o$. By Theorem~\ref{thm:lower} there exists a chamber $c \in \Ch(\Delta)$ such that $c_1' \le \nu_i(\bfP_c,{}^gC)$ for all $i$. Chasing definitions this translates into $\log c_1' \le \mu_i^c(g.o)$ and into $x \in Z_{\Sigma,c}(\log c_1')$ where $\Sigma$ is an apartment that contains $x$ and contains $c$ in its boundary. By the choice of constants we have $x \in N_{\Sigma,c}(r')$ and thus $y \in N_{\Sigma,c}(r)$.

Let $c_1$ be such that $Z_{\Sigma,c}(\log c_1)$ contains the closed star of every vertex in $N_{\Sigma,c}(r)$ for all $\Sigma$ and $c$, using Lemma~\ref{lem:roots_versus_normal_cones}. Let $c_2$ be as in Theorem~\ref{thm:uniqueness} for that $c_1$. Let $R'$ be such that $Z_{\Sigma,c}(\log c_2) \supseteq N_{\Sigma,c}(R')$ for all $\Sigma$ and $c$. Let $R \ge R'$ be such that every vertex of $N_{\Sigma,c}(R)$ is contained in the closed star of a vertex in $N_{\Sigma,c}(R')$, for all $\Sigma$ and $c$.

Now let $c \subseteq \partial \Sigma$ and $c' \subseteq \partial \Sigma'$ be chambers that share a vertex $v$, and assume that $x \in N_{\Sigma,c}(r) \cap N_{\Sigma,c'}(r)$ and $x \in N_{\Sigma,v}(R) \cap N_{\Sigma',v'}(R)$. Let $g \in \bfG_S$ be such that $x$ lies in the closed star of $g.o$ and $g.o \in Z_{\Sigma,c}(\log c_1) \cap N_{\Sigma',c'}(\log c_1)$ and $g.o \in N_{\Sigma,v}(R') \cap N_{\Sigma',v'}(R')$. Then $g \in \Omega_{c_1}^{\bfP_{c}} \cap \Omega_{c_1}^{\bfP_{c'}}$ and $\mu^c_i(g.o),\mu^{c'}_i(g.o) \ge \log c_2$ meaning $\nu_i(\bfP_c,{}^gC),\nu_i(\bfP_{c'},{}^gC) \ge c_2$. Theorem~\ref{thm:uniqueness} then ensures that $(\bfP_c)_i = (\bfP_c')_i$, i.e.\ that the vertices of type $i$ of $c$ and $c'$ are the same.

We have shown that $((\beta_v)_{v \in \Vt(\Delta)},r, R)$ is a $d$-uniform reduction datum, and by Theorem \ref{thm:busemann_family} this reduction datum is $\Lambda$-quasi-invariant. Finally we note that $Y_t$ is contained in a bounded neighborhood of $(\bfG_S \cap \bigcup_{\bfP} \Omega_{c_1,c_2}^\bfP).o$, and if $\X[k](\bfG) = 0$, then it is therefore contained in a bounded neighborhood of $\bfG(\calO_S).o$ by Corollary~\ref{cor:mahler_s} and thus $\Lambda$-cobounded.
\end{proof}

\begin{remark} We have assumed throughout this section that $\bfG$ is $k$-isotropic; this implies that  our definition of $Y_t$ coincides with the definition of $Y_t$ on \cite[p.~318]{bux13} by \cite[Remark~12.13]{bux13}. The case that $\bfG$ is $k$-anisotropic is of little interest to us here, since in this case $\Delta$ is empty. Nevertheless, Theorem \ref{thm:reduction_datum} also works in this setting, albeit trivially: The only chamber is the empty simplex and not having any vertices it trivially reduces every point. Thus $Y_t = X$ for all $t$ and $\bfG(\calO_S).o$ is cobounded in $X$.
\end{remark}

\subsection{Comparison with Bux--Köhl--Witzel}\label{CompareBKW}

Apart from inclusion of characteristic $0$ and approximate groups there is one fundamental difference between the notion of a reduction datum in \cite{bux13} and ours: while in \cite{bux13} the scaling constants $(s_i)_{i \in \typ(\Delta)}$ are chosen so as to make the functions $(\beta_v)_v$ actual Busemann functions, we chose them to make the rescaled Busemann functions $(\beta_v)_v$ satisfy the nested normal cones property.

The reason is that, as we have seen, the nested normal cones property is crucial and it is implicitly also used in \cite[Observation~12.7]{bux13}. However, actual Busemann functions do not generally satisfy the nested normal cones property as the following example shows.

\begin{example}\label{ex:counter_example}
The roots and principal weights of the root system of type $D_4$ are the columns of the matrices (see \cite[Planche~IV]{BourbakiLie46})
\[
A = \begin{pmatrix}
1 & -1 & &\\
 & 1 &-1&\\
& & 1 & -1\\
& & 1 & 1
\end{pmatrix}
\quad\text{and}\quad
W = \begin{pmatrix}
1 & 1& 1/2 & 1/2\\
 & 1 & 1/2 & 1/2\\
&  & 1/2 & 1/2\\
& & -1/2 & 1/2
\end{pmatrix}\text{.}
\]

The only principal weight that does not have unit length is the second. Therefore the matrix $\bar{W}$ of normalized weights is $W$ with the second column replaced by $1/\sqrt{2}(1,1,0,0)^T$. The vector $v$ satisfying $v.\bar{W} = (1,1,1,1)$ is $v = (1,\sqrt{2}-1,2-\sqrt{2},0)$. The roots evaluate on it to $v.A = (2-\sqrt{2},2\sqrt{2}-3,2-\sqrt{2},2-\sqrt{2})$ with the second entry being $<0$. So $v$ does not lie in the cone defined by $x.A > 0$. As a remark, note that using $W$ instead of $\bar{W}$ would result in the vector $v' = (1,0,1,0)$ with $v'.A = (1,-1,1,1)$ thus using the principal weights without scaling does not resolve the issue.
\end{example}

\begin{remark}
Calculations for Coxeter types of small rank suggest that $(1,\ldots,1)\bar{W}^{-1}A \ge 0$ for linear root systems ($A_n$, $B_n$, $C_n$, $F_4$, $G_2$) but not for non-linear ones ($D_n$, $E_6$, $E_7$, $E_8$). That is, Busemann functions should work in the former cases but not in the latter. This suggests also that rescaling constants could be described explicitly depending on the Coxeter diagram.
\end{remark}

Now we have seen how our choice of scaling resolves an issue of \cite{bux13}. Since we will eventually use the methods and results from \cite{bux13}, we are left with the task of verifying that our choice of scaling does not break anything in \cite{bux13}. Since the Morse function is based on measuring distance from one fixed set $Y_R$ it turns out that only one statement needs to be adjusted.
Namely \cite[Proposition~2.4]{bux13} remains true as stated but the proof needs to be changed slightly. To explain the change let $(r_i)_{i \in \typ(\Delta)}$ be such that $r_{\typ(v)}\beta_v$ is a Busemann function and define $r_{\text{min}} = \min_{i \in \typ(\Delta)} r_i$. 
Now the set $Y_{s+R}$ needs to be replaced by $Y_{s/r_{\text{min}}+R}$. The displayed chain of inequalities becomes
\begin{multline*}
\textrm{dist}(x_{\Sigma,c},x) \ge r_{\typ(v)}(\beta_v(x) - \beta_v(x_{\Sigma,c})) >\\
r_{\typ(v)}((s/r_{\text{min}}+R) - R) = (r_{\typ(v)}/r_{\text{min}})s \ge s\text{.}
\end{multline*}

%

\section{Proof of the main theorem and discussion}\label{sec:proof_main}

We are now ready to prove our main theorem (Theorem \ref{MainTheorem}). Recall that it is concerned with the following setup:

\begin{convention}\label{ConventionMainThm}
\begin{enumerate}
\item $S$ is a finite, non-empty set of finite places of $k$;
\item $\bfG$ is a non-commutative almost simple $k$-isotropic $k$-group;
\item $\Lambda = \bfG(\calO_S) = \Lambda(G, H, \Gamma, W \cap H)$ is the $S$-arithmetic approximate subgroup of $\bfG$ where $G$, $H$, and $W$ are chosen as before (Convention~\ref{ConventionReductiveCAP}).
\item $X_s$ is the Bruhat--Tits building associated to $\bfG(k_s)$ for $s \in S$ and $X \defeq \prod_{s \in S} X_s$;
\item $d \defeq \sum_{s \in S} \rk_{k_s} \bfG$ is the dimension of $X$.
\end{enumerate}
\end{convention}
Note that $X$ is a product of buildings because $S$ contains only finite places. With these conventions in place the main theorem says that the approximate group $\Lambda$ is of type $F_{d-1}$ but not of type $F_d$. In view of Proposition~\ref{CheckFn} we have to show that the filtration $(N_r(\Lambda.o))_{r \ge 0}$ is essentially $(d-2)$-connected but not essentially $(d-1)$-connected. In view of Lemma~\ref{lem:equivalent_filtrations} we may replace this filtration by an equivalent one, which does not involve $\Lambda$ directly.

To find a more geometric filtration in the equivalence class of $(N_r(\Lambda.o))_{r \ge 0}$ we first observe that the assumptions that $\bfG$ be non-commutative and almost simple imply that $\X[k](\bfG) = 0$, hence  by Theorem~\ref{thm:reduction_datum} there exists a $\Lambda$-quasi invariant and $\Lambda$-cobounded reduction datum $\mathcal R=((\beta_v)_{v \in \Vt(\Delta)}, r, R)$. We fix such a reduction datum once and for all. By Observation~\ref{obs:cobounded_orbit_distance}, the filtration $(Y_t)_{t \ge R}$ is then equivalent to the filtration $(N_r(\Lambda.o))_{r \ge 0}$ by $(Y_t)_{t \ge R}$, hence we see that our main theorem is equivalent to the following geometric statement, which no longer involves the approximate group $\Lambda$ directly, but only the reduction datum $\mathcal R$:
\begin{theorem}\label{MainThmConvenient} The filtration $(Y_t)_{t \ge R}$ is essentially $(d-2)$-connected but not essentially $(d-1)$-connected
\end{theorem}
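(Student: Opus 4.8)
\textbf{Outline of the proof of Theorem~\ref{MainThmConvenient}.}
The plan is to follow the geometric strategy of \cite{bux13} and deduce both halves of the statement from the structure of the reduction datum $\mathcal R$ together with a Morse-theoretic analysis of the complement $X \setminus Y_t$. The key geometric object, as in \cite{bux13}, is a \emph{Morse function} $h\colon X \to \R_{\ge 0}$ measuring combinatorial distance from the fixed cobounded piece $Y_R$, designed so that its descending links (or rather the relative links of the sublevel filtration) can be identified. First I would recall that, by the existence of a $d$-uniform $\Lambda$-quasi-invariant reduction datum (Theorem~\ref{thm:reduction_datum}), the sets $Y_t$ for $t \ge R$ form a filtration whose "coconnectivity'' is governed by the topology of the pieces that get added as $t$ grows, and that by $\Lambda$-coboundedness (using $\X[k](\bfG)=0$) this filtration is cofinal with $(N_r(\Lambda.o))_r$.

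\textbf{Positive direction (essential $(d-2)$-connectedness).}
Here I would argue that for $t' \gg t \ge R$ the inclusion $Y_t \hookrightarrow Y_{t'}$ is $(d-2)$-connected, equivalently that $\pi_i(Y_t) \to \pi_i(Y_{t'})$ is trivial for $i \le d-2$. The mechanism is to fill a cycle in $Y_t$ by pushing it toward $Y_R$ using the reduction datum: a point $x$ that is $r$-reduced by a chamber $c$ can be moved along the direction in which all $\beta_v(x)$, $v \in \Vt(c)$, decrease, i.e.\ along the normal cone $N_{\Sigma,c}(\cdot)$, and the nested-normal-set property (Definition~\ref{def:reduction_datum}\eqref{item:reduction_datum_normal_cones}) guarantees that these pushing directions are coherent across chambers that simultaneously reduce $x$, so the flows patch together. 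The relevant links one encounters while flowing into $Y_R$ are, by \cite[Remark~12.13]{bux13} and the standard analysis, joins of spherical buildings $\Delta_{k_s}$ and of the rational building $\Delta_k$, sitting inside $\partial X$; these are Cohen--Macaulay of the appropriate dimension and highly connected, which supplies the fillings for $i \le d-2$. I would cite the relevant connectivity of spherical buildings (Solomon--Tits) and the join formula for connectivity, then invoke the Morse-theoretic lemmas of \cite[Sections~2, 13]{bux13} verbatim, noting (as flagged in Subsection~\ref{CompareBKW}) that the only adjustment needed is the rescaling of $Y_{s+R}$ to $Y_{s/r_{\min}+R}$ in the analogue of \cite[Proposition~2.4]{bux13}.

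\textbf{Negative direction (not essential $(d-1)$-connectedness).}
For this I would exhibit, for each $t$, a nontrivial class in $\tilde H_{d-1}(Y_t)$ (or $\pi_{d-1}$) that survives into $Y_{t'}$ for all $t' \ge t$; by Remark~\ref{rem:hurewicz} and the positive direction it suffices to work homologically. The source of non-vanishing homology is a single apartment $\Sigma \subset X$: intersecting the filtration $(Y_t)$ with $\Sigma$ produces, by the reduction datum, a nested family of generalized cones whose complement in $\Sigma$ retracts onto a subcomplex of the boundary sphere $\partial \Sigma = S^{d-1}$, and one checks that the fundamental class of this sphere (suitably truncated) is not killed. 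Concretely one produces an essential $(d-1)$-cycle by a "Abels--Brown'' type argument: the restriction of the Morse function to $\Sigma$ has descending links that are spheres of dimension $d-1$, and these generate persistent homology because the flow used in the positive direction cannot contract a cycle that wraps the compact core of an apartment — here the key is that the rational building $\Delta_k$ has dimension $r-1$ strictly smaller than $d-1$ (as $S$ is nonempty and $\bfG$ is isotropic), so it cannot provide a filling disc. This is exactly the mechanism by which the finiteness length is forced to be exactly $d-1$ rather than larger.

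\textbf{Main obstacle.}
The hard part will be the bookkeeping in the positive direction: organizing the Morse flow so that it is genuinely defined on $X$ and not just on an apartment, which requires the nested-normal-set property to ensure the local pushing directions agree on the overlaps $N_{\Sigma,c}(t) \cap N_{\Sigma',c'}(t)$, and then identifying the relevant relative links as joins of buildings uniformly. Since \cite{bux13} carries out precisely this analysis in positive characteristic, and our reduction datum (Theorem~\ref{thm:reduction_datum}) was engineered to have exactly the properties their argument uses --- with the single scaling fix of Subsection~\ref{CompareBKW} --- I would import their Morse-theoretic machinery more or less wholesale, the only genuinely new input being the verification that everything is insensitive to the characteristic and that coboundedness (rather than cocompactness) of $Y_t$ suffices, which is immediate from Observation~\ref{obs:cobounded_orbit_distance} and Proposition~\ref{CheckFn}.
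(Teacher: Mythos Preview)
Your overall strategy—importing the Morse-theoretic machinery of \cite{bux13} and ultimately appealing to \cite[p.~345]{bux13}—is exactly what the paper does. However, you have misidentified where the real work lies.

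You say the main obstacle is organizing the Morse flow and identifying the relative links, and that this can be taken over verbatim from \cite{bux13}. That part is indeed true: Sections~1--10 of \cite{bux13} make no assumptions on characteristic or on the existence of a cocompact group action, so the Morse function $f$ and the analysis of its descending links apply unchanged. What you miss is that \cite{bux13} does \emph{not} apply this Morse theory directly to $(Y_t)_{t \ge R}$; rather it passes through a chain of auxiliary filtrations
\[
(Y_t)_{t \ge R} \;\sim\; (\hat h^{-1}([0,t]))_{t \ge 0} \;\sim\; (h^{-1}([0,t]))_{t \ge 0} \;\sim\; (f^{-1}(t,0,0))_{t \ge 0},
\]
and in \cite{bux13} the equivalence of these filtrations is never argued: it is automatic because all of them are invariant and cocompact under the same $S$-arithmetic group. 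In the present setting there is no such group, only an approximate group, and invariance/cocompactness fails. The paper therefore establishes these equivalences by hand, and this is its actual contribution to the proof of Theorem~\ref{MainThmConvenient}---in particular, it proves a direct metric lemma replacing \cite[Proposition~2.4]{bux13} to show that $(Y_t)$ and $(\hat h^{-1}([0,t]))$ are interleaved without any group action. Your only remark about Proposition~2.4 concerns the scaling fix of Subsection~\ref{CompareBKW}, which is an orthogonal issue. The references you give at the end (Observation~\ref{obs:cobounded_orbit_distance} and Proposition~\ref{CheckFn}) address only the equivalence $(Y_t) \sim (N_r(\Lambda.o))$, which is the reduction \emph{to} Theorem~\ref{MainThmConvenient}, not part of its proof.

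A smaller point: your sketch of the negative direction asserts that the rational building $\Delta_k$ has dimension $r-1$ \emph{strictly} smaller than $d-1$ and uses this to explain why the $(d-1)$-cycle cannot be filled. But $r = d$ is possible (take $|S|=1$ with $\rk_{k_s}\bfG = \rk_k\bfG$), so this heuristic is not the correct mechanism; the actual obstruction in \cite{bux13} is obtained differently.
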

The remainder of this section is devoted to the proof of Theorem \ref{MainThmConvenient} along the lines of \cite{bux13}. Note that Sections~1--10 of \cite{bux13} do not make any assumptions on the characteristic of the local fields underlying the Bruhat--Tits buildings, or in fact any assumption that the buildings be Bruhat--Tits. Our Sections~\ref{sec:adelic_reduction} and~\ref{sec:s-arithmetic_reduction} will act as replacements of Sections~11 and~12 of \cite{bux13}.

A large part of the proof of Theorem \ref{MainThmConvenient}, as given in \cite{bux13}, consists in replacing the filtration $(Y_t)_{t \ge R}$ by a better behaved equivalent filtration several times. We are going to use exactly the same filtrations as in \cite{bux13}, but we need a few additional arguments to see that these filtrations are still equivalent in our more general setting. In fact, in the setting of \cite{bux13} equivalence of the various filtrations is immediate from the fact that they are all invariant under and cocompact with respect to the same group action (and therefore not even mentioned explicitly), but this argument does not apply in our setting. We therefore trace the proof in \cite{bux13} arguing that the various filtrations considered there remain equivalent even in the absence of a cocompact group action.

We adapt \cite[Section~2]{bux13} using literally the same definitions. For a point $x \in X$, as apartment $\Sigma$ containing $x$ and a chamber $c$ in the boundary of $\Sigma$ that reduces $x$ we define $x_{\Sigma,c}$ to be the metric projection of $x$ to $Y_{\Sigma,c}$ and put $\hat{h}_{\Sigma,c}(x) = d(x,x_{\Sigma,c})$. Proposition~2.1 and Corollary~2.2 of \cite{bux13} apply verbatim to show that $\hat{h}_{\Sigma,c}$ does not depend on $\Sigma$ or $c$, hence we may define $\hat{h}(x) = \hat{h}_{\Sigma,c}(x)$ where $c$ is a chamber reducing $x$ and $\Sigma$ is an apartment containing $x$ and containing $c$ in its boundary.

The following is a metric version of \cite[Proposition~2.4]{bux13}:

\begin{lemma}
For every $s$ there exists a $t$ such that 
\[
\hat{h}^{-1}([0,s]) \subseteq Y_{R+t} \quad \text{and} \quad Y_{R+s} \subseteq \hat{h}^{-1}([0,t])\text{.}
\]
\end{lemma}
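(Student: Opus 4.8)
The plan is to show that the super-level filtration $(\hat h^{-1}([0,s]))_{s \ge 0}$ of the Morse function $\hat h$ and the filtration $(Y_t)_{t \ge R}$ are equivalent, which amounts to the two displayed inclusions. Both are essentially estimates comparing the ``height'' $\hat h(x) = d(x, x_{\Sigma,c})$ with the reduction level $t$ for which $x \in Y_t$. The key point is that $\hat h$ measures the distance from $x$ to the nested family of cones $Y_{\Sigma,c}(\cdot)$, and this distance grows (coarsely) linearly in the level parameter because the relevant Busemann functions have bounded gradient, bounded away from zero on the rays defining the cones.

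First I would record the two elementary one-apartment facts. Fix a point $x$, a chamber $c$ that $R$-reduces $x$ (which exists since $(Y_t)$ covers $X$, indeed $r$-reduces it for every small enough level by Definition~\ref{def:reduction_datum}), and a $(c,x)$-apartment $\Sigma$. For the inclusion $\hat h^{-1}([0,s]) \subseteq Y_{R+t}$: if $\hat h(x) \le s$ then $x$ lies within distance $s$ of $Y_{\Sigma,c}(R)$, and since each $\beta_v$ restricted to $\Sigma$ is an affine function with a uniform Lipschitz constant $L$ (independent of $\Sigma$ and $c$ by Lemma~\ref{lem:all_chambers_equal}), we get $\beta_v(x) \le R + Ls$ for every vertex $v$ of $c$; hence $x \in Y_{R + Ls}$ and one takes $t = Ls$. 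Conversely, for $Y_{R+s} \subseteq \hat h^{-1}([0,t])$: if $x \in Y_{R+s}$ then there is a chamber $c$ that $r$-reduces $x$ with $\beta_v(x) \le R+s$ for all $v \in c$, and working in a $(c,x)$-apartment $\Sigma$ one must bound $d(x, Y_{\Sigma,c}(R))$ from above by a function of $s$. Here the relevant estimate is the reverse Lipschitz bound: moving from $x$ towards $Y_{\Sigma,c}(R)$ along the geodesic to its metric projection decreases each $\beta_v$ at a rate bounded below by a positive constant $\ell > 0$ (this is exactly the content of the inequality in the proof sketch of \cite[Proposition~2.4]{bux13}, with the rescaling correction discussed in Subsection~\ref{CompareBKW}); consequently $d(x, Y_{\Sigma,c}(R)) \le \max_v (\beta_v(x) - R)/\ell \le s/\ell$, so one takes $t = s/\ell$ (more precisely $t = s/r_{\text{min}}$ in the notation of Subsection~\ref{CompareBKW}).

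The one subtlety that goes beyond \cite{bux13} is that in their setting $\hat h$ is $\bfG_S$-invariant and the argument runs on a single orbit, whereas here I must check that the constants $L$ and $\ell$ can be chosen uniformly over all points $x$, all chambers $c$ reducing $x$, and all $(c,x)$-apartments $\Sigma$. This is precisely what Lemma~\ref{lem:all_chambers_equal} delivers: the pair $(\Sigma, c)$ together with the restrictions of the functions $\beta_v$ and $\mu_i^c$ is unique up to isometry, so the Lipschitz constant of $\beta_v|_\Sigma$ and the rate of decrease towards the tip are genuinely independent of the choice of $(\Sigma, c)$. Combined with the fact that $\hat h_{\Sigma,c}(x)$ is independent of the choices of $\Sigma$ and $c$ (the verbatim analogue of \cite[Proposition~2.1, Corollary~2.2]{bux13}, which I would invoke as already established in the paragraph preceding the lemma), the uniform one-apartment estimates above immediately yield the two global inclusions.

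I expect the main obstacle to be purely bookkeeping: making sure that the rescaling of the Busemann functions performed in Section~\ref{sec:geometric_reduction} (to enforce the nested normal set property) is correctly accounted for in the lower bound $\ell$, so that the constant $r_{\text{min}} = \min_{i} r_i$ from Subsection~\ref{CompareBKW} appears in the right place; this is exactly the point where \cite{bux13} needs the small correction described there, and the present lemma is the metric reformulation that makes that correction transparent. No new geometric input is required beyond Lemma~\ref{lem:all_chambers_equal} and the convexity of the cones $Y_{\Sigma,c}(t)$ in the $\cato$-space $X$.
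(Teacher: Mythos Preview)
Your overall plan is right and matches the paper's: both inclusions come from Lipschitz-type estimates on the $\beta_v$ restricted to an apartment, and uniformity over all pairs $(\Sigma,c)$ is supplied by Lemma~\ref{lem:all_chambers_equal}. The first inclusion $\hat h^{-1}([0,s]) \subseteq Y_{R+t}$ is exactly as you describe, with $L = 1/r_{\min}$; this is also what the rescaling correction in Subsection~\ref{CompareBKW} is about.

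The gap is in the second inclusion. Your claim that ``each $\beta_v$ decreases at a rate bounded below by $\ell > 0$'' along the geodesic from $x$ to $x_{\Sigma,c}$ is false: the direction $\xi$ from $x_{\Sigma,c}$ to $x$ lies in the \emph{closed} chamber $\bar c$, and when $\xi$ lies on a wall the slope $a_{v,\xi}$ of the corresponding $\beta_v$ vanishes. What survives is that the \emph{maximum} of the slopes is bounded below: $\max_{v \in \Vt(c)} a_{v,\xi} \ge u > 0$, where $u = \min_{\xi \in \bar c} \max_v a_{v,\xi}$ is obtained by a compactness argument on $\bar c$ (using that simplices in the barycentric subdivision have diameter $<\pi/2$). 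This is the argument the paper actually runs, and it yields $\hat h(x) \le \tfrac{1}{u}\max_v(\beta_v(x)-\beta_v(x_{\Sigma,c}))$, hence $t = s/u$, not $s/r_{\min}$. Your reference to Subsection~\ref{CompareBKW} for this direction is misplaced: the inequality corrected there is the forward Lipschitz bound used in the \emph{first} inclusion, not the reverse bound needed here. Once you replace ``each $\beta_v$'' by ``some $\beta_v$'' and supply the compactness argument for $u$, your outline becomes the paper's proof.
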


\begin{proof}
Let $(r_i)_{i \in \typ(\Delta)}$ be constants such that $r_{\typ(v)}\beta_v$ are actual Busemann functions and let $r_{\text{min}} = \min_{i \in \typ(\Delta)} r_i$. Then
\[
r_{\typ(v)}(\beta_v(x) - R) \le r_{\typ(v)}(\beta_v(x) - \beta_v(x_{\Sigma,c}))\le  d(x,x_{\Sigma,c}) = \hat{h}(x)
\]
for all $v \in \Vt(c)$, and hence $\beta_v(x) \le \frac{1}{r_\text{min}} \hat{h}(x) + R$. In particular $\hat{h}^{-1}([0,s]) \subseteq Y_{R + s/r_{\text{min}}}$.

For the converse direction let us fix an apartment $\Sigma$ and a chamber $c \in \partial \Sigma$. If $\rho$ is a geodesic ray in $\Sigma$ starting in a point $x \in Y_{R}$ and tending to a point $\xi$ in the closure of $c$ then for every vertex $v \in \Vt(c)$ the function $\beta_v \circ \rho$ is affine
\begin{equation}\label{eq:buse_ray}
\beta_v(\rho(t)) = a_{v,\xi} t+b_{v,x,\xi}
\end{equation}
with slope $a_{v,\xi} \ge 0$ because the diameter of $c$ is at most $\pi/2$. Moreover $\max_{v \in \Vt(c)} a_{v,\xi}$ is bounded away from zero because simplices in the barycentric subdivision of $c$ have diameter $< \pi/2$. By compactness of $\bar{c}$ it follows that
\[
u \defeq \min_{\xi \in \bar{c}} \max_{v \in \Vt(c)} a_{v,\xi}
\]
exists and is positive. Taking a ray with $\rho(0) = x_{\Sigma,c}$ and $\rho(d(x,x_{\Sigma,c})) = x$ it follows from \eqref{eq:buse_ray} that 
\[
\hat{h}(x) = d(x,x_{\Sigma,c}) \le \max_{v \in \Vt(c)} \frac{1}{a_{v,\xi}}(\beta_v(x) - \beta_v(x_{\Sigma,c})) \le \frac{1}{u}\max_{v \in \Vt(c)}(\beta_v(x) - \beta_v(x_{\Sigma,c}))\text{.}
\]
By Lemma~\ref{lem:all_chambers_equal} this whole discussion is independent of $\Sigma$ and $c$. It follows that $\hat{h}^{-1}([0,s]) \subseteq Y_{r+s/u}$.
\end{proof}

The lemma states that the filtrations $(Y_t)_{t \ge R}$ and $(\hat{h}^{-1}([0,t]))_{t \ge 0}$ are equivalent so we may work with the latter from now on. We claim that \cite{bux13} contains a proof that the filtration $(\hat{h}^{-1}([0,t]))_{t \ge 0}$ is essentially $(d-2)$-connected and not essentially $(d-1)$-connected.

In \cite[Section~5]{bux13} the height function $\hat{h}$ is replaced by a function $h$. It is clear from the definition that $h \le \hat{h}$ and \cite[Observation~5.5]{bux13} shows that there is a constant $C$ such that $\hat{h} \le h + C$. Hence the filtrations $(\hat{h}^{-1}([0,t]))_{t \ge 0}$ and $(h^{-1}([0,t]))_{t \ge 0}$ are equivalent and we may work with the latter. The ultimate Morse function $f$ on \cite[p.338]{bux13} has $h$ (maximized over a simplex) as its first component. Consequently $(h^{-1}([0,t]))_{t \ge 0}$ and $(f^{-1}(t,0,0))_{t \ge 0}$ are equivalent filtrations. In summary we have established an equivalence of filtrations
\begin{equation}\label{FiltEq}
(Y_t)_{t \ge R} \sim (\hat{h}^{-1}([0,t]))_{t \ge 0} \sim (h^{-1}([0,t]))_{t \ge 0} \sim (f^{-1}(t,0,0))_{t \ge 0}.
\end{equation}

\begin{proof}[Proof of Theorem \ref{MainThmConvenient}] In view of \eqref{FiltEq} it remains to show only  that the filtration of $X$ by sublevel sets of the function $f$ is essentially $(d-2)$-connected but not essentially $(d-1)$-connected. This is precisely what is established (in greater generality) on \cite[p.~345]{bux13}.
\end{proof}
At this point we have concluded the proof of our main theorem.

\section{Reductive, anistoropic, and commutative groups}\label{sec:reductive}

In this short section we discuss the necessity of the assumptions in our main theorem; we also establish a number of variants, including Corollary~\ref{cor:reductive}. 

\begin{remark}[The anisotropic case]\label{AnistropicCase}
In the previous section we have only considered the case in which $\bfG$ is $k$-isotropic. If $\bfG$ is $k$-anisotropic then $\bfG(k)$ is a uniform lattice in $\bfG(\A)$ by  \cite[Theorem~5.5]{PlatonovRapinchuk}. This statement can be seen as a tautological form of our Theorem~\ref{thm:mahler}. Now a direct application of Proposition~\ref{prop:descent} shows that $\Lambda$ is relatively dense in $\bfG_S$. Thus $\Lambda$ is coarsely equivalent to $\bfG_S$ and hence to $X_S$ which is contractible. This shows that $\Lambda$ is of type $F_\infty$ in this case.
\end{remark}

This shows that the assumption that $\bfG$ be isotropic is necessary.

\begin{remark}[The abelian case]
If $\bfG$ is absolutely isomorphic to $\GL_1$ (a $1$-dimensional torus) then it is either anisotropic ($k$-rank $0$) so that $\Lambda$ is of type $F_\infty$ by Remark \ref{AnistropicCase}, or it is $k$-isomorphic to $\GL_1$ ($k$-rank $1$). Writing $S^+ = S \cup V\inf$ and $\Lambda^+ = \bfG(\calO_{S^+})$, the group $\Lambda^+$ is virtually free abelian by the Dirichlet unit theorem \cite[Theorem p.72]{CasselsFroehlich}. It follows that $\Lambda$ is a discrete approximate subgroup of some $\R^d$ and therefore is relatively dense in some linear subspace by \cite[Proposition~II.2, p.~315]{Schreiber73}, \cite[Theorem~2.2]{Fish19}.
In particular, it is of type $F_\infty$.

If $\bfG$ is absolutely isomorphic to the additive group then there is $k$-epi\-mor\-phism to the additive group with finite kernel \cite[Theorem~2.1]{Russell70} (if $k$ is a number field, it is an isomorphism). It therefore suffices to consider the case where $\bfG$ is the additive group in which case we claim that $\Lambda = \calO_S$ is not coarsely connected.
Indeed, $\Lambda$ consists of elements $g \in k$ with $\abs{g}_s \le 1$ for $s \in V\fin \setminus S$ and $\abs{g}_s \le c_s$ for $s \in V\inf \setminus S$ where $c_s \ge 0$ is some constant. Pick a finite place $s \in S$. Now for $r \ge 0$ arbitrarily large we can use the Chinese remainder theorem (i.e.\ strong approximation for the additive group) to find an element $g \in \calO_S$ with $\abs{g}_{s} \ge r$, and hence $d_{k_s}(0,g) = \abs{g - 0}_s \ge r$. Since $k_s$ is ultrametric, it follows that there is no sequence of elements from $0=g_0,\ldots,g_k=g$ with $\abs{g_i - g_{i-1}}_s \le r$. This shows that $\Lambda$ is not coarsely connected thus not of type $F_1$.
\end{remark}

This shows that the assumption that $\bfG$ be non-abelian is necessary.

We now come to the proof of Corollary~\ref{cor:reductive}.

\begin{proof}[Proof of Corollary~\ref{cor:reductive}]
Let $\bfG^0$ denote the connected component of $\bfG$ and let $\tilde{\bfG}$ be the universal cover of $\bfG^0$, i.e.\ the simply connected $k$-group with central isogeny $\tilde{\bfG} \to \bfG^0$. Then $\tilde{\bfG}$ is an almost direct product of a central torus $\bfZ$ and its derived subgroup $\mathscr{D} \tilde{\bfG}$ which is semisimple \cite[Proposition~2.2]{BorelTits65}. There is a finite extension $\ell/k$ and a semisimple $\ell$-group $\bfH$ such that $\mathscr{D} \tilde{\bfG} \cong R_{\ell/k} \bfH$ as $k$-groups and such that $\bfH \cong \bfH_1 \times \ldots \times \bfH_k$ where the $\bfH_i$ are \emph{absolutely} almost simple, see \cite[6.21(ii)]{BorelTits65}. Let $T$ be the set of places of $\ell$ that lie above places in $S$. 

The Main~Theorem asserts that $\bfH_i(\calO_T)$ is of type $F_{d_i-1}$ but not of type $F_{d_i}$ where $d_i = \sum_{s \in T} \rk_{\ell_s} \bfH_i$. Hence $(\bfZ \times \bfH)(\calO_T)$ is of type $F_{d-1}$ but not of type $F_d$ where $d = \infimum_i d_i$ by Lemma~\ref{lem:coarse_product}. The infimum is understood to be $\infty$ if $k = 0$.

Now we claim that the composition
\begin{equation}\label{eq:isogeny_coarse_equiv}
(\bfZ \times \bfH)(\calO_T) \to (\bfZ \times \mathscr{D} \tilde{\bfG})(\calO_S) \to \tilde{\bfG}(\calO_S) \to \bfG^0(\calO_S) \to \bfG(\calO_S)
\end{equation}
is a coarse equivalence, which will conclude the proof in view of Corollary~\ref{cor:coarse_connected_invariant}.

Put $S^+ = S \cup V_k\inf$ and $T^+ = T \cup V_\ell\inf$ and note that $T^+$ is the set of places above $S^+$. Note that for every $k$-group $\bfL$ the $S$-arithmetic approximate group $\bfL(\calO_S)$ is a model set of the cut-and-project scheme $(\bfL_S,\bfL_{S^+\setminus S},\bfL(\calO_{S^+}))$ and similarly for $T$. Using Lemma~\ref{lem:coarse_equiv} it therefore suffices to show that the composition \eqref{eq:isogeny_coarse_equiv} has finite kernel and finite-index image when $S$ is replaced by $S^+$ and $T$ is replaced by $T^+$. We verify this from right to left: $\bfG^0(\calO_{S^+})$ has finite index in $\bfG(\calO_{S^+})$; the map $\tilde{\bfG}(\calO_{S^+}) \to \bfG^0(\calO_{S^+})$ has finite kernel and finite-index image by \cite[Satz~1]{Behr68}; the same argument applies to the map $\bfZ \times \mathscr{D}\tilde{\bfG} \to \tilde{\bfG}$; $(\bfZ \times \bfH)(\calO_{T^+})$ and $(\bfZ \times \mathscr{D} \tilde{\bfG})(\calO_{S^+})$ are commensurable by \cite[\S 1.3]{Weil82}, see also \cite[Lemma~I.3.1.4]{Margulis91}.
\end{proof}

\providecommand{\bysame}{\leavevmode\hbox to3em{\hrulefill}\thinspace}
\providecommand{\MR}{\relax\ifhmode\unskip\space\fi MR }
\providecommand{\MRhref}[2]{%
  \href{http://www.ams.org/mathscinet-getitem?mr=#1}{#2}
}
\providecommand{\href}[2]{#2}

\end{document}